 \newcommand{ \un }{\mathds{1}}
\newcommand{\bl}[1]{{\color{blue}{#1}}}
\DeclareFontFamily{U}{mathx}{\hyphenchar\font45}
\DeclareFontShape{U}{mathx}{m}{n}{
      <5> <6> <7> <8> <9> <10>
      <10.95> <12> <14.4> <17.28> <20.74> <24.88>
      mathx10
      }{}
\DeclareSymbolFont{mathx}{U}{mathx}{m}{n}
\DeclareMathAccent{\widecheck}{\mathalpha}{mathx}{"71}
\newcommand{\eqnsection}{
\renewcommand{\theequation}{\thesection.\arabic{equation}}
   \makeatletter
   \csname  @addtoreset\endcsname{equation}{section}
   \makeatother}
\def\Gl{\mathcal{G}}
\def\r{{\mathbb R}}
\def\e{{\mathbb E}}
\def\p{{\mathbb P}}
\def\P{{\bf P}}
\def\E{{\bf E}}
\def\Q{{\bf Q}}
\def\z{{\mathbb Z}}
\def\N{{\mathbb N}}
\def\T{{\mathbb T}}
\def\1{{\mathds{1}}}
\def\en{\mathcal{E}}
 \def\pa{\overleftarrow}
\def\cB{\mathcal{B}}
\def\mS{{\underline{S}}}
\def\MV{{\overline{V}}}
\def\MS{{\overline{S}}}
\def\uS{{\mathbb{S}}}
\def\Hinf{\mathcal{H}}
\def\Ren{\mathcal{R}}
\newtheorem{theo}{Theorem}[section]
\newtheorem{prop}[theo]{Proposition}
\newtheorem{lem}[theo]{Lemma}
\newtheorem{cor}[theo]{Corollary}
\newtheorem{rem}[theo]{Remark}
\newtheorem{fact}[theo]{Fact}
\newcommand{\R}{\mathbb{R}}
\renewcommand{\T}{\mathbb{T}}
\renewcommand{\epsilon}{\varepsilon}
\def\cb{{\bf c}}
\def\Cb{{\bf C}}
\newcommand{ \mB}{\textgoth{m}}
\newcommand{\V}{\mathbb{V}ar}
\newcommand{\Ve}{\mathbb{V}^{\mathcal{E}}\!\!ar}
\title{\bf Range and critical generations of a random walk on Galton-Watson trees}
\author{Pierre Andreoletti\footnote{Laboratoire MAPMO - C.N.R.S. UMR 7349 - F\'ed\'eration Denis-Poisson, Universit\'e d'Orl\'eans
(France). 
 \newline \vspace{0.1cm} \hspace{0.2cm} $\dag$Institut Camille Jordan - C.N.R.S. UMR 5208 - Universit\'e Claude Bernard Lyon 1
(France). 
\newline \vspace{0.1cm}    MSC 2000  60K37; 60J80 ; 60G50. \newline \vspace{0.5cm} \textit{Key words :  random walks, range, random environment, branching random walk} } $\ $,$\ $ Xinxin Chen$^\dag$  }
\begin{document}

\baselineskip=17pt
\setcounter{page}{1}

\maketitle

In this paper we consider a random walk in random environment on a tree and focus on the 
boundary case for the underlying branching potential. We study the range $R_n$ of this walk up to time $n$ and obtain its correct asymptotic in probability which is of order $n/\log n$. This
result is a consequence of the asymptotical behavior of the number of visited sites at generations of order $(\log n)^2$,
which turn out to be the most visited generations. Our proof which involves a quenched analysis
gives a description of the typical environments responsible for the behavior of $R_n$.

\section{Introduction}
\label{Intro}

Let us consider a random walk with a random environment given by a branching random walk. 
{This branching random walk is governed by a point process $\mathcal{L}:=\{A_1,A_2,\cdots,A_N\}$ on the real line, where $N$ is also random in $\mathbb{N}\cup\{\infty\}$. The initial ancestor (i.e. the root), denoted by $\phi$,  gives birth to $N$ children with displacements $A_1,A_2,\cdots,A_N$ they  form  the first generation.  Then, for any integer $n\geq1$, each individual in the $n$-th generation gives birth independently of all others to its own children in the $(n+1)$-th generation. Their displacements are given by independent copies of $\mathcal{L}$. 

We thus obtain a genealogical tree, denoted by $\T$, which is a Galton-Watson tree with offspring $N$. For each vertex (individual or site) $z\in\T$, $A(z)$ denotes its displacement and $V(z)$ its position with respect to the root. If $y$ is the parent of $z$, write $\pa{z}=y$, also if $y$ is an ancestor of $z$, write $y<z$. $V$ can then be written as
\[
V(z)=\sum_{\phi<y\leq z}A(y),
\]
with $V(\phi)=0$. In particular $\mathcal{L}=\{V(z), |z|=1\}$, with $|z|$ the generation of $z$.

The branching random walk $(V(z), z\in\T)$ serves as a  random environment $\en$ (also called random potential).
Conditionally on the environment $\en=(V(z), z\in\T)$, a random walk $(X_n, n \in \N^*, X_0= \phi)$ starting from the root and taking values on the vertices of $\T$ can be defined, with  probabilities of transition:
\begin{align}\label{jump}
p^\en(z,u)=\begin{cases}
\frac{e^{-V(u)}}{e^{-V(z)}+\sum_{v: \pa{v}=z}e^{-V(v)}}, \textrm{ if $u$ is a child of }z, \vspace{0.3cm} \\  
\frac{e^{-V(z)}}{e^{-V(z)}+\sum_{v: \pa{v}=z}e^{-V(v)}}, \textrm{ if $u$ is the parent of }z.
\end{cases}
\end{align}

For convenience, we add a parent $\pa{\phi}$ to the root and assume that \eqref{jump} holds also for $z=\phi$ with $p^\en(\pa{\phi},\phi)=1$.\\
Let $\P$ be the probability measure of the environment and $\P^*$, the probability conditioned on the survival set of the tree $\T$ (which is assumed to be supercritical, see \eqref{hyp0} below). Let $\p^{\en}$, the quenched probability measure of this random walk that is $\p^{\en}(\cdot):=\p(\cdot|\en)$ and $\p(\cdot):=\int\p^{\en(w)}(\cdot)\P(dw)$ the annealed probability measure. Similarly we also define $\p^*$ with respect to $\P^*$. \\
{The walk $(X_n, n \in \N^*, X_0= \phi)$ belongs to the family of biased random walks on a tree first introduced by R. Lyons (\cite{Lyons} and \cite{Lyons2}). In our case where the bias is random, the first references go back to R. Lyons and R. Pemantle \cite{LyonPema} and M.V. Menshikov and D. Petritis \cite{MenPet}. These works give a classification of these random walks on a regular tree in term of recurrence criteria, their results are extended lately for Galton-Watson trees by G. Faraud \cite{Faraud}. This classification which can be determined from the fluctuations of the $\log$-Laplace transform $\psi$ defined below is resumed in Figure \ref{fig2}.  Assume that there exists $ \theta>0$, such that  $\forall s\in [-1,1+ \theta ]$ 
\[\qquad \psi(s):=\log \E\Big(\sum_{|z|=1} e^{-sV(z)} \Big)<+ \infty, \] }
{where $\sum_{|z|=k}$  with $k\in\N_+$ means sum over all the individuals $z$ of generation $k$.}
\begin{figure}[h]
\begin{center} 
{\scalebox{1.2} {\input{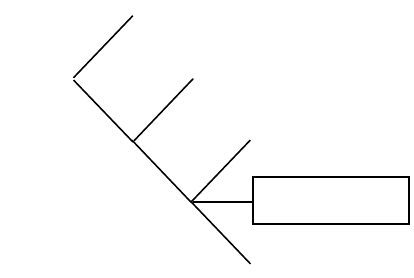_t} }}
\caption{Recurrence criteria for $(X_n,n)$} \label{fig2}
\end{center}
\end{figure}


In this paper we focus on the boundary case for the environment (in the sense of Biggins-Kyprianou \cite{BigKyp}), that is :
\begin{equation}\label{hyp0}
\E\left[N\right]>1,\quad {\psi(1)}= \log \E\Big[\sum_{|z|=1}e^{-V(z)}\Big]=0,\quad \psi'(1)=\E\Big[\sum_{|z|=1}V(z)e^{-V(z)}\Big]=0.
\end{equation}
{Notice that the first hypothesis $\E\left[N\right]>1$ implies that we work on a supercritical Galton-Watson tree. In particular $(X_n; n\geq0)$ can not be reduced to the one-dimensional random walk in random environment}. Also we need additional hypothesis given below : there exists $\theta>0$ such that
\begin{align}
\E\Big[\sum_{|z|=1}e^{-(1+\theta)V(z)}\Big]+\E\Big[\sum_{|z|=1}e^{\theta V(z)}\Big]<&\infty\label{hyp0+}\\
 \E\Big[\Big(\sum_{|z|=1}(1+|V(u)|)e^{-V(u)}\Big)^2\Big]<&\infty.\label{hyp1}
\end{align}
The hypothesis \eqref{hyp1} will be required in Lemma \ref{mvar}. But the hypothesis \eqref{hyp0+} is more elementary which gives finite exponential moments.

It is proved in \cite{Faraud}, see also Figure \ref{fig2}, that the random walk $X$ is null recurrent under \eqref{hyp0}. Moreover in this case $X$ is very slow, indeed Y. Hu and Z. Shi \cite{HuShi10a} (see also \cite{HuShi10b} with G. Faraud ) proved that the largest generation visited up to time $n$, $X_n^*:= \max_{k \leq n}|X_k|$ behaves in $(\log n)^3$. In fact it is the slowest null recurrent random walk in random environment on the tree, the other cases that is when $\psi'(1)<0$ being diffusive or sub-diffusive but without logarithmic behavior (see \cite{HuShi10}, \cite{Faraud}, \cite{AidDer}). One of the questions raised by the authors at this time was : is $(\log n)^3$ the typical fluctuation of  this walk, that is of $|X_n|$ for example ? If we now look at the largest generation entirely visited $M_n:= \max\{k \geq 1: \{|z|=k\}\subset\{X_i; 0\leq i\leq n\}\}$, then it is of order $\log n$ as shown in P. Andreoletti and P. Debs \cite{AndDeb1}, and we could also ask here the same question. It turns out that neither of the two is the good answer. A first result in that direction is obtained in the work of \cite{AndDeb2}. For any $z\in \T$, define
\begin{equation}
T_z=T^1_z:=\inf\{m\geq1: X_m=z\}\textrm{ and } T_z^k:=\inf\{m\geq T_z^{k-1} : X_m=z\}, \forall k\geq2.
\end{equation}
\noindent Then for any generation $\ell\geq1$, the number of sites visited at this generation up to time $n$ is given by
\[
N_n(\ell):=\sum_{|z|=\ell}\1_{T_z<n}.
\]
We also introduce the same variable stopped at the $n$-th return to the root: 
\[
K_n(\ell):=N_{T_\phi^n}(\ell).
\]

It is proved in \cite{AndDeb2} that the typical generations which maximise the number of distinct visited sites are of the order $(\log n)^2$ :
\begin{equation}\label{AnD}
\lim_{n \rightarrow + \infty} \frac{\e\left(K_n( (\log n)^{2} )\right)}{\e\left(K_n( (\log n)^{1+ \zeta} )\right)}=\infty,\ \forall \zeta \neq 1 \textrm{ and } \e\left(K_n( (\log n)^{2} \right)) \asymp n/ \log n  \ \footnote{In \cite{AndDeb2} the lower bound obtained is actually a little smaller than $n/ \log n$.}. 
\end{equation}

They also notice that only the sites such that the branching potentiel $V(\cdot)$ is high enough  (typically
larger than $\log n$) are of importance. That is to say produce the main contribution for $\e\left(K_n( (\log n)^{2} )\right)$, conversely
the sites with low potential are mostly visited but there are very few of them (typically of order
$n/(\log n)^2$ compared to $n/(\log n)$). More recently, in \cite{HuShi15}, it is proved that $(\log n)^2$ is actually the right
normalisation for the generation of $X$ at the instant $n$, this unexpected behavior makes us think to the one dimensional case of Sinai's walk \cite{Sinai}. However the walk on the tree has its own particularities, for example, contrarily to the one-dimensional case which remains in the site of low potential, it can reach height of potential of order $(\log n)^2$ (see \cite{HuShi15b}). 

Another motivation, as working on the tree, is to understand more precisely the way the walk spread on the tree so we turn back to the number of distinct visited sites. The main lack in the paper \cite{AndDeb2} is first that nothing precise is said on the behavior in probability of $N_n$ (neither for $K_n$), and that their annealed results say few things on the typical behavior of the potentials leading to this critical $(\log n)^2$-th generation. Our results here bring answers to these points.

We have split our results into two parts, the first subsection below deals with the normalization for the number of
distinct visited sites per critical generation as well as for the total number of distinct visited sites up to time $n$. The second subsection is devoted to a quenched results making a
link between the range of X and the behavior of the environment. In a the third subsection we present the key ideas of proofs.

\subsection{Annealed results}
\label{thms}

Our first theorem shows that the behavior in probability of the number of distinct visited sites  at critical generations is of order ${n}/(\log n)^3$.

\begin{theo}\label{upton}For any integers $\ell=\ell(n)$ such that $\lim_{n \rightarrow + \infty} \frac{\ell}{ (\log n)^2}= \gamma>0$, there exists a positive constant $\lambda(\gamma)>0$ such that as $n\rightarrow\infty$,
\begin{equation}
\frac{(\log n)^3}{n}N_n(\ell)\xrightarrow{in\ \p^*} \frac{\lambda(\gamma)\sigma^2}{4},
\end{equation}
where $\sigma^2:=\E\left[\sum_{\vert x\vert=1} V^2(x)e^{-V(x)}\right]\in(0,\infty)$ by \eqref{hyp0+}.
\end{theo}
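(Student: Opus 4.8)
The plan is to reduce the study of $N_n(\ell)$ to that of $K_n(\ell)$ via a soft comparison, and then to analyze $K_n(\ell)$ through the excursion structure of the walk away from the root. Recall that $K_n(\ell) = N_{T_\phi^n}(\ell)$ counts the sites of generation $\ell$ visited during the first $n$ excursions from $\pa{\phi}$. Since the number of returns to the root up to time $n$, call it $\mathcal N_n := \#\{m\le n : X_m = \phi\}$, concentrates (the walk being null recurrent with excursions of typical length of order $(\log n)^2$ when it reaches the critical generation, so that $\mathcal N_n$ is of order $n/(\log n)^2$), we should have $N_n(\ell)$ sandwiched between $K_{\mathcal N_n^-}(\ell)$ and $K_{\mathcal N_n^+}(\ell)$ for suitable $\mathcal N_n^\pm = (1\pm\epsilon)\e[\mathcal N_n]$ with high probability. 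So the first step is a law-of-large-numbers / concentration statement for $K_n(\ell)$ itself: show that $\frac{1}{n}K_n(\ell) \to c(\gamma)\,(\log n)/(\log n)^2 \cdot(\dots)$, i.e. that $K_n(\ell)/\e[K_n(\ell)] \to 1$ in $\p^*$-probability, and combine with the known first-moment asymptotic $\e[K_n((\log n)^2)] \asymp n/\log n$ from \eqref{AnD}, sharpened to an exact equivalent with the constant $\lambda(\gamma)\sigma^2/4$.

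For the concentration of $K_n(\ell)$, the natural device is to write $K_n(\ell) = \sum_{|z|=\ell} \1_{T_z < T_\phi^n}$ and to exploit that, across the $n$ i.i.d. excursions from the root, $\1_{T_z < T_\phi^n} = \1_{\{z \text{ visited in one of the first } n \text{ excursions}\}}$. Conditionally on the environment, the events over distinct excursions are independent, so $\p^\en(T_z \ge T_\phi^n) = (1 - q_\en(z))^n$ where $q_\en(z) = \p^\en(T_z < T_\phi)$ is the one-excursion hitting probability of $z$. Hence $\e^\en[K_n(\ell)] = \sum_{|z|=\ell}\big(1-(1-q_\en(z))^n\big)$, and the second moment can be controlled by the usual pair computation: the covariance between $\1_{T_z<T_\phi^n}$ and $\1_{T_{z'}<T_\phi^n}$ involves the joint one-excursion probabilities, which factor nicely because two vertices $z,z'$ of the same generation share only an ancestral segment. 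The main input here is a good quenched estimate for $q_\en(z)$: by the standard electrical-network / Lyons formula, $q_\en(z)^{-1} - 1$ is a sum of $e^{V(y)-V(\pa y)}$-type terms along the ray $[\![\phi,z]\!]$, so $q_\en(z) \approx e^{-V(z)}$ up to a conductance factor that is essentially $\sum_{\phi < y \le z} e^{V(y)}$-controlled; the dominant contribution to $\e[K_n(\ell)]$ comes, as noted after \eqref{AnD}, from vertices with $V(z)$ of order $\log n$ lying at the critical generation, and these are counted by the additive martingale / many-to-one formula together with the ballot-type (Tanaka, local-limit) estimates for the branching random walk in the boundary case. This is where the constant $\lambda(\gamma)$ — a renewal-type constant depending on the renormalized environment seen along a spine conditioned to stay positive — and the factor $\sigma^2/4$ enter, the latter through the second-order term in the local central limit theorem for the killed BRW (the normalization $4$ coming from the variance of the Brownian meander or the $t^{-3/2}$ one-sided estimate).

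I expect the main obstacle to be the quenched second-moment control leading to concentration, rather than the first-moment asymptotic (which is essentially \eqref{AnD} refined). The difficulty is twofold: first, one must show that the random variable $\e^\en[K_n(\ell)]$, which is an explicit functional of the environment, concentrates around its $\P^*$-mean — this requires understanding the fluctuations of sums like $\sum_{|z|=\ell}(1-(1-q_\en(z))^n)$ over the Galton-Watson tree, and the heavy-tailed nature of $\sum_{|z|=\ell} e^{-V(z)}$ in the boundary case (the additive martingale converges to $0$, and the relevant sums are governed by the derivative martingale and Seneta-Heyde scaling) makes naive $L^2$ bounds fail; one likely needs a truncation on the value of $V(z)$ (discarding vertices with $V(z)$ too small, which are few, and too large, which are rarely hit) as already anticipated in the discussion of \cite{AndDeb2}, plus a spine decomposition to handle the truncated sum. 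Second, conditionally on a typical environment, one must upgrade $\e^\en[K_n(\ell)]$ to $K_n(\ell)$ itself, i.e. control $\Var^\en(K_n(\ell)) = \sum_{z,z'}\mathrm{Cov}^\en$; the covariances are positive and not obviously negligible, so the argument must quantify how rarely two independent-looking deep vertices are both hit in the same family of $n$ excursions — this is where the ancestral-overlap structure and the estimate $q_\en(z\wedge z') \ll q_\en(z)$ for generic pairs must be pushed through carefully.
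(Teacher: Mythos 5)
Your high-level instinct (pass from $N_n(\ell)$ to $K_n(\ell)=N_{T_\phi^n}(\ell)$ through a time change, then analyze $K_n$ via the quenched hitting probabilities $a_z=\p^\en(T_z<T_\phi)$ of the $n$ i.i.d.\ excursions) is indeed the paper's route, and your remarks about electrical-network formulas for $a_z$, truncation on $\overline V(z)$, spinal decomposition and ballot-type estimates correctly anticipate the tools used for $K_n$. However two of the pivots in your outline are actually false, and they break the argument. First, you propose to show $K_n(\ell)/\e[K_n(\ell)]\to 1$ in $\p^*$-probability, together with a sharpening of $\e[K_n((\log n)^2)]\asymp n/\log n$. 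This is not true: the paper explicitly points out (just after Theorem \ref{uptoTn}) that the in-probability order of $K_n(\ell)$ is $n/(\log n)^2$, a full factor $\log n$ smaller than the annealed mean $\asymp n/\log n$ from \eqref{AnD}, because the mean is inflated by rare environments with atypically low potential (cf.\ \eqref{below}). So $K_n(\ell)/\e[K_n(\ell)]\to 0$, not $1$. The correct target (Theorem \ref{uptoTn}) is $\frac{(\log n)^2}{n}K_n(\ell)\to\lambda(\gamma)\,p^\en(\phi,\pa\phi)\,D_\infty$, a \emph{random} limit involving the derivative-martingale limit $D_\infty$; concentration is around the (truncated) \emph{quenched} mean $\widetilde{\mathcal K}_n^{B^\delta\cap U}(\ell)$, not the annealed one.

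Second, the sandwich via $\mathcal N_n$ is also misstated: by Proposition \ref{propHuShi}, $T_\phi^n/(n\log n)\to 4D_\infty p^\en(\phi,\pa\phi)/\sigma^2$, so the number of returns up to time $n$ is of order $n/\log n$ (not $n/(\log n)^2$), and its normalized limit is itself the random variable $\sigma^2/(4D_\infty p^\en(\phi,\pa\phi))$ — it does not concentrate around a deterministic $\e[\mathcal N_n]$. The reason Theorem \ref{upton} has a \emph{deterministic} limit $\lambda(\gamma)\sigma^2/4$ is precisely that the random factor $p^\en(\phi,\pa\phi)D_\infty$ appearing in the $K_n$ asymptotic cancels against its reciprocal coming from the time change $T_\phi^n$: writing $N_n(\ell)\approx K_{\mathcal N_n}(\ell)$ and combining Theorem \ref{uptoTn} with Proposition \ref{propHuShi} gives $\frac{(\log n)^3}{n}N_n(\ell)\to\lambda(\gamma)\cdot\frac{\sigma^2}{4}$. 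That cancellation is the crux of the deduction of Theorem \ref{upton} from Theorem \ref{uptoTn}, and it is absent from your outline, which instead tries to force both $K_n$ and $\mathcal N_n$ to concentrate around deterministic normalizations that are off by random factors and by a power of $\log n$.
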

The function $\lambda(\gamma)$ can be written explicitly (see below \eqref{cvgqK}), it is related to the convergence of variables  depending only on the environment. 
This theorem is the consequence of the behaviors of $K_n$ and of the local time at the root. To be more precise, let us introduce the derivative martingale $(D_m,m)$ given by
\begin{align}
D_m:= \sum_{|z|=m} V(z) e^{-V(z)}, \label{1.7}
\end{align}
and denote its almost sure limit by $D_\infty$ (see \cite{BigKyp} for its existence and \cite{Che15} for its positivity under $\P^*$). The behavior
in probability of $K_n$ is given by
 
\begin{theo}\label{uptoTn}For any $\ell=\ell(n)$ such that $\lim_{n \rightarrow + \infty} \frac{\ell}{ (\log n)^2}= \gamma>0$, 
\begin{equation}
\frac{(\log n)^2}{n}K_n(\ell)\xrightarrow{in\ \p^*}\lambda(\gamma)p^\en(\phi,\overleftarrow{\phi})D_\infty.
\end{equation}
\end{theo}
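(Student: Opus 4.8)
The natural strategy is to realize $K_n(\ell)$ through the excursions of $X$ away from the root. Under $\p^\en$, the successive returns to $\pa\phi$ split the trajectory $(X_m, m\le T_\phi^n)$ into $n$ i.i.d.\ excursions (each excursion starting at $\phi$, going into the tree, and coming back). For the $j$-th excursion, let $Y_j(\ell)\in\{0,1\}$ record whether the set $\{|z|=\ell\}$ is entirely swept by that single excursion, and more usefully let $\mathcal V_j(\ell)$ be the set of vertices at generation $\ell$ visited during excursion $j$. Then $K_n(\ell)=\#\bigcup_{j=1}^n \mathcal V_j(\ell)$. The point is that for generations $\ell\sim\gamma(\log n)^2$, the probability (quenched) that a given vertex $z$ at generation $\ell$ is reached in one excursion is extremely small — of order $e^{-\text{(something like }V(z))}$ divided by a sum over the spine — so the events "$z$ visited in excursion $j$" are, across $j=1,\dots,n$ and across the relevant $z$'s, sparse enough that $K_n(\ell)$ behaves like a Poisson-type count. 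Concretely I would write
\[
K_n(\ell)=\sum_{|z|=\ell}\1_{\{z\in \mathcal V_j(\ell)\text{ for some }j\le n\}},
\qquad
\p^\en\big(z\notin \bigcup_j\mathcal V_j\big)=\big(1-q^\en(z)\big)^n,
\]
where $q^\en(z)=\p^\en(T_z<T_\phi^1)$ is the quenched probability of hitting $z$ before returning to the root. Using the electrical-network / $h$-transform formula for biased walks on trees, $q^\en(z)$ has an explicit expression in terms of $\sum_{\phi<y\le z} e^{V(y)}$ along the ray to $z$ and the conductances hanging off it; in the boundary case this ray-sum is typically of order $e^{\max_{y\le z}V(y)}$, which for the contributing $z$ is polynomially large in $n$, so $nq^\en(z)$ is $O(1)$ exactly for the vertices whose running maximum potential is $\approx \log n$.

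Next I would pass from the indicator sum to its $\p^\en$-expectation plus a variance control. The conditional expectation is
\[
\e^\en\big[K_n(\ell)\big]=\sum_{|z|=\ell}\Big(1-(1-q^\en(z))^n\Big)
= p^\en(\phi,\pa\phi)\sum_{|z|=\ell}\Big(1-(1-q^\en(z))^n\Big),
\]
wait—more precisely one first conditions on the number of excursions that actually leave $\phi$ (a Binomial$(n,p^\en(\phi,\pa\phi))$-type count, which concentrates), which is where the factor $p^\en(\phi,\pa\phi)$ will enter. Then one shows $1-(1-q^\en(z))^n\approx 1-e^{-nq^\en(z)}$ and that $\sum_{|z|=\ell}(1-e^{-nq^\en(z)})$, after the scaling by $(\log n)^2/n$, converges $\P^*$-a.s.\ (or in $\P^*$-probability) to $\lambda(\gamma)D_\infty$. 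This last convergence is the heart of the matter: it is a statement about the branching random walk alone, linking a functional of the potentials at generation $\gamma(\log n)^2$ — those with high running maximum — to the derivative martingale limit $D_\infty$. I expect to prove it via a spine decomposition / change of measure (the size-biased tree with the spine walk $V$ along the spine being a centered random walk conditioned to stay positive-ish), together with a renewal/ballot estimate: the number of vertices at generation $\ell$ with running maximum $\approx\log n$ and the excess over the maximum behaving nicely is, after normalization, governed by $D_\infty$ and by a deterministic constant $\lambda(\gamma)$ coming from a local limit theorem for the spine random walk on the scale $\ell\asymp(\log n)^2$, $\max\asymp\log n=\sqrt{\ell/\gamma}$ — i.e.\ a Brownian scaling, which is exactly why the normalizing constant depends on $\gamma$ only.

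The remaining steps are: (i) a second-moment / correlation bound showing $\mathrm{Var}^\en(K_n(\ell))=o((n/(\log n)^2)^2)$, so that $K_n(\ell)$ concentrates around $\e^\en[K_n(\ell)]$ under $\p^\en$ for typical environments — here one needs that two distinct vertices $z,z'$ at generation $\ell$ are visited in a correlated way only through their common ancestor, and the contribution of pairs with a deep common ancestor is negligible because hitting probabilities decay fast; (ii) controlling the "bad" environments (where the ray-sums are atypically small, or the tree is atypically bushy) by the exponential-moment hypotheses \eqref{hyp0+} and the $L^2$ hypothesis \eqref{hyp1} (this is presumably where Lemma \ref{mvar} is used); and (iii) assembling: $\frac{(\log n)^2}{n}K_n(\ell) = \frac{(\log n)^2}{n}\e^\en[K_n(\ell)] + o_{\p^*}(1) \to \lambda(\gamma)\,p^\en(\phi,\pa\phi)\,D_\infty$.

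The main obstacle, I expect, is step (ii)/(iii) in the form of the a.s.\ (or in-probability) convergence $\frac{(\log n)^2}{n}\sum_{|z|=\ell}(1-e^{-nq^\en(z)}) \to \lambda(\gamma)D_\infty$: one must identify which vertices contribute (those with $\max_{y\le z}V(y)=\log n + O(1)$ and with the final potential $V(z)$ not too far below), quantify their number via a ballot-type estimate with the right polynomial correction, and show the sum over them is asymptotically a constant multiple of the derivative martingale limit — uniformly enough that the residual fluctuations vanish after dividing by $n/(\log n)^2$. This couples a delicate branching-random-walk estimate (spine + conditioned random walk + local CLT at scale $(\log n)^2$) with the electrical-network formula for $q^\en(z)$, and getting the constant $\lambda(\gamma)$ to come out cleanly as a function of $\gamma$ alone is the crux.
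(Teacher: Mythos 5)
Your outline follows the paper's strategy in its broad strokes: expand $\p^\en(T_z<T_\phi^n)=1-(1-a_z)^n$ with the hitting formula \eqref{az}, concentrate $K_n(\ell)$ around its quenched mean, and then prove the quenched mean converges in $\P^*$-probability via a spine change of measure and one-dimensional random-walk estimates at the Brownian scale $\ell\asymp(\log n)^2$, $\overline V\asymp\log n=\sqrt{\ell/\gamma}$ --- the content of Propositions \ref{lemCentredK} and \ref{cvgProp} and Lemma \ref{mvar}. One small slip: $p^\en(\phi,\pa\phi)$ is already inside $a_z$ via $a_z=p^\en(\phi,\pa\phi)/\sum_{\phi<y\le z}e^{V(y)}$; it is not a separate conditioning on which excursions leave $\phi$, and your displayed equality with an extra $p^\en(\phi,\pa\phi)$ in front of the sum is off by that factor.

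The genuine gap is at the variance step, and it sits exactly where the paper locates the difficulty. You propose to bound $\Ve(K_n(\ell))$ for typical environments and then separately discard bad environments, but the paper stresses at the start of Section \ref{cvgofquenched} that the quenched variance of the \emph{untruncated} $K_n(\ell)$ cannot be controlled under $\P^*$: even in a typical tree, a handful of vertices at generation $\ell$ with unusually favourable potential (small $\max_{y\le z}(\overline V(y)-V(y))$, hence large $a_z$) dominate the second moment. The correct order of operations is to first cut $K_n(\ell)$ down \emph{inside the sum} to $K_n^{B^\delta\cap U}(\ell)$ --- retaining only $z$ with $\overline V(z)\ge\log n+\log\log n$ (so $na_z=O(1/\log n)$, which also justifies the linearisation $1-(1-a_z)^n\sim na_z$ rather than the Poissonisation $1-e^{-na_z}$) and with $\max_{\phi<y\le z}(\overline V(y)-V(y))\le\log n-(1+\delta)\log\log n$ (the reflecting-barrier restriction) --- and only then apply Chebyshev: it is $\E[\Ve(K_n^{B^\delta\cap U}(\ell))]=O(n^2(\log n)^{-\delta+1/2})$ that is provable (Lemma \ref{mvar}). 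The companion step your plan does not anticipate is to show, in expectation, that the discarded sites contribute $o(n/(\log n)^2)$ (Lemmata \ref{addU} and \ref{usesn}). Without this truncate-then-bound structure, your step (i) would fail, and the identification of the quenched mean with the martingale-like functional $W_\ell(F_{a,b})$ feeding Proposition \ref{cvgProp} would not even be set up.
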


If we compare this results with the behavior in mean (see \eqref{AnD}),
a multiplicative $(\log n)$ appears. It comes from the behavior of the branching potential which typically remains positive in probability (see \ref{below}) reducing the number of possible visited sites.

\noindent Also the main difference between $N_n(\ell)$ and $K_n(\ell)$ comes essentially from the normalisation. The additional $\log n$ which appears above for $K_n(\ell)$ comes from the local time of $X$ at the root of the tree, it is indeed proved in \cite{HuShi15}: 

\begin{prop}[\cite{HuShi15}] \label{propHuShi}  
\begin{equation}
\frac{T_\phi^n}{n\log n}\xrightarrow{in\ \p^*}4D_\infty p^\en(\phi,\overleftarrow{\phi})/\sigma^2.
\end{equation}
\end{prop}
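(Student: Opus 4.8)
The plan is to work conditionally on the environment and exploit the renewal structure of $X$ at the root. Under $\p^{\en}$, the strong Markov property at the successive visits to $\phi$ shows that the increments $\tau_i:=T_\phi^i-T_\phi^{i-1}$ ($i\geq1$, with $T_\phi^0:=0$) are i.i.d.\ copies of the first return time $\tau:=T_\phi$, so $T_\phi^n=\sum_{i=1}^n\tau_i$ is a sum of i.i.d.\ variables; and since $X$ is null recurrent under \eqref{hyp0}, $\e^{\en}[\tau]=+\infty$. Indeed, for the reversible chain \eqref{jump}, whose reversing measure is $\pi(z)=e^{-V(z)}+\sum_{v:\,\overleftarrow{v}=z}e^{-V(v)}$ (so that $\pi(\phi)=1/p^{\en}(\phi,\overleftarrow{\phi})$ and $\sum_z\pi(z)=2\sum_{k\geq0}\sum_{|z|=k}e^{-V(z)}$), the occupation-measure identity $\e^{\en}[\tau]=\pi(\phi)^{-1}\sum_z\pi(z)$ gives $\e^{\en}[\tau]=2\,p^{\en}(\phi,\overleftarrow{\phi})\sum_{k\geq0}\sum_{|z|=k}e^{-V(z)}=+\infty$, the series diverging because the critical additive martingale $\sum_{|z|=k}e^{-V(z)}$ decays only like $k^{-1/2}$ (consistently with the null recurrence of $X$). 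So everything reduces to the tail of $\tau$.

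The key step is to prove that, for $\P^*$-a.e.\ environment,
\begin{equation}\label{tailtau}
t\,\p^{\en}(\tau>t)\xrightarrow[\,t\to+\infty\,]{}W(\en):=\frac{4\,D_\infty\,p^{\en}(\phi,\overleftarrow{\phi})}{\sigma^2}.
\end{equation}
Granting \eqref{tailtau}, the proposition follows from the classical weak law for i.i.d.\ sums with a tail $\sim W/t$. Fix a large constant $B$ and truncate each $\tau_i$ at level $Bn$: from the large-$s$ behaviour of the tail, $\e^{\en}[\tau\wedge Bn]=\int_0^{Bn}\p^{\en}(\tau>s)\,ds\sim W(\en)\log n$ and $\e^{\en}[(\tau\wedge Bn)^2]=\int_0^{Bn}2s\,\p^{\en}(\tau>s)\,ds=O(n)$, so $\sum_{i\leq n}(\tau_i\wedge Bn)$ has mean $\sim W(\en)\,n\log n$ and variance $O(n^2)$, hence equals $(1+o_{\p^{\en}}(1))\,W(\en)\,n\log n$; meanwhile $\p^{\en}(\exists\,i\leq n:\tau_i>Bn)\leq n\,\p^{\en}(\tau>Bn)\sim W(\en)/B$. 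Letting $n\to\infty$ and then $B\to\infty$ yields $T_\phi^n/(n\log n)\to W(\en)$ in $\p^{\en}$-probability, hence, by dominated convergence of the quenched probabilities, in $\p^*$-probability.

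It remains to establish \eqref{tailtau}, which is the heart of the matter. One analyses which excursions are long. With probability $p^{\en}(\phi,\overleftarrow{\phi})$ the excursion is the trivial one ($X_1=\overleftarrow{\phi}$, $X_2=\phi$, $\tau=2$), so for $t>2$ a long excursion must enter the subtree and perform a deep detour. The tools are the electrical-network interpretation of \eqref{jump} (edge conductances $e^{-V(z)}$ between $z$ and $\overleftarrow{z}$), the ``barrier'' optimization underlying the $(\log n)^3$-behaviour of $X_n^*=\max_{k\leq n}|X_k|$ (\cite{HuShi10a}, \cite{Faraud}), and the fine asymptotics of the boundary branching random walk (existence and positivity of $D_\infty$, \cite{BigKyp}, \cite{Che15}; the $k^{-1/2}$-decay of $\sum_{|z|=k}e^{-V(z)}$, for whose second moment \eqref{hyp1} is used). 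With these one estimates sharply both the quenched probability that an excursion reaches a given generation $h$ --- of exponential order $e^{-(1+o(1))\kappa h^{1/3}}$, the power $h^{1/3}$ matching the $(\log n)^3$-law --- and the time such an excursion then takes, of the same exponential order; a length-$t$ excursion therefore typically reaches generation of order $(\log t)^3$, and keeping track of the prefactors and of the constant $\kappa$ (in terms of $\sigma$) at this scale converts $\p^{\en}(\tau>t)$ into exactly $W(\en)/t$: the factor $p^{\en}(\phi,\overleftarrow{\phi})$ comes from the local behaviour at the root, $D_\infty$ from the global martingale limit, and $\sigma^{-2}$ from the Gaussian scaling of the potential. (Alternatively, one may read \eqref{tailtau} --- and also the joint convergence behind Theorems~\ref{upton}--\ref{uptoTn} --- off a Ray--Knight description of the local times of $X$ indexed by $\T$ as a cascade of branching processes.)

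The main obstacle is exactly \eqref{tailtau}: one must show that the tail of $\tau$ is \emph{genuinely} regularly varying of index $-1$, with no spurious logarithmic correction, \emph{and} identify the constant. This requires sharp two-sided quenched estimates on hitting probabilities and hitting times of far generations for the biased walk in the boundary regime, which themselves rest on delicate second-moment control of the boundary branching random walk --- the precise constants in $\sqrt{k}\sum_{|z|=k}e^{-V(z)}\to\mathrm{const}\cdot D_\infty$ and in $D_m\to D_\infty$ --- together with enough uniformity in $\en$ to upgrade ``$\p^{\en}$-convergence for $\P^*$-a.e.\ $\en$'' into convergence in $\p^*$-probability. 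Once \eqref{tailtau} is available, the rest is routine.
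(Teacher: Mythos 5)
This proposition is quoted from Hu--Shi \cite{HuShi15}; the present paper cites it and does not prove it, so there is no in-paper argument to compare your proposal against. Evaluating the proposal on its own terms: the outer scaffolding is sound. Under $\p^{\en}$ the $\tau_i:=T_\phi^i-T_\phi^{i-1}$ are indeed i.i.d.\ copies of the first return time, the identity $\e^{\en}[\tau]=\pi(\phi)^{-1}\sum_z\pi(z)=\infty$ is a correct consequence of reversibility and null recurrence, the truncation-and-Chebyshev argument deducing $T_\phi^n/(n\log n)\to W(\en)$ in $\p^{\en}$-probability from $t\,\p^{\en}(\tau>t)\to W(\en)$ is a standard and correct weak law for index-$(-1)$ tails, and passing from quenched to $\p^*$-convergence by dominated convergence is fine.

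The genuine gap is that the asserted tail asymptotic $t\,\p^{\en}(\tau>t)\to 4D_\infty p^{\en}(\phi,\overleftarrow{\phi})/\sigma^2$, for $\P^*$-a.e.\ $\en$, is not proved; it is only motivated by a rough heuristic. That asymptotic \emph{is} the theorem: everything after it is, as you say yourself, routine. The heuristic you give (reaching generation $h$ costs $e^{-(1+o(1))\kappa h^{1/3}}$, such an excursion lasts $e^{(1+o(1))\kappa h^{1/3}}$, so a length-$t$ excursion reaches $h\sim(\log t)^3$ and the tail is $\sim 1/t$) only determines the exponent up to $t^{-1+o(1)}$. It does not exclude a slowly varying correction (precisely the possibility you flag), and it cannot produce the explicit prefactor $4D_\infty p^{\en}(\phi,\overleftarrow{\phi})/\sigma^2$: the constant $\kappa$ in the barrier estimate and the ``prefactors'' you invoke would require sharp two-sided quenched estimates on $\p^{\en}(T_z<T_\phi)$ and on the conditional duration of deep excursions, uniformly enough in $z$ and in the environment to sum correctly, together with the Seneta--Heyde-type normalization $\sqrt{k}\sum_{|z|=k}e^{-V(z)}\to\mathrm{const}\cdot D_\infty$ with identified constant. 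None of this is carried out, and merely citing the $(\log n)^3$ result, the electrical-network picture, and the existence of $D_\infty$ does not close the argument. In short: the reduction of the proposition to \eqref{tailtau} is valid, but \eqref{tailtau} itself --- the entire content of \cite{HuShi15} here --- is left unproved, so the proposal does not constitute a proof.
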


Instead of one critical generation, we now turn to consider the total number of visited sites, in other words, the range of the random walk:
\[
R_n:=\sum_{z\in\T}\un_{ \{T_z\leq n\} }.
\]

Following \eqref{AnD} and Theorem \ref{upton} we can ask wether or not critical generations contribute
mainly to $R_n$ ? The answer is yes : Proposition \ref{noncritical} below states that for non-critical generations, the total number of visited sites contributes to something negligible compared to $n/\log n$, while the range $R_n$ is of order $n/\log n$ in probability, as stated in Theorem \ref{upton2}.

\begin{prop}\label{noncritical}
For any $\delta>0$,
\[
\lim_{\varepsilon\rightarrow0}\limsup_{n\rightarrow\infty}\p\Big[\Big(\sum_{|z|\leq \varepsilon (\log n)^2}\un_{ \{T_z\leq n\} }+\sum_{|z|\geq (\log n)^2/\varepsilon}\un_{\{T_z\leq n \} }\Big)\geq \delta n/\log n\Big]=0.
\]
\end{prop}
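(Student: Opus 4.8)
The plan is to bound separately the number of visited sites at low generations (those with $|z|\le \varepsilon(\log n)^2$) and at high generations (those with $|z|\ge (\log n)^2/\varepsilon$), and to show each is $o(n/\log n)$ with probability tending to $1$ after letting $n\to\infty$ and then $\varepsilon\to 0$. For the low-generation part, the crude observation is that every vertex visited at generation $\ell$ is an endpoint of some excursion of $X$ away from the root, and the number of such vertices at generation $\ell$ is at most the number $N_n(\ell)$ of distinct visited sites; summing over $\ell\le \varepsilon(\log n)^2$ and using $\e\bigl(K_n(\ell)\bigr)$-type estimates (in the spirit of \eqref{AnD}, which gives $\e(K_n((\log n)^2))\asymp n/\log n$ and a decay in $\ell$ away from the critical window) together with Proposition \ref{propHuShi} to pass from $K_n$ (stopped at $T_\phi^n$) to $N_n$ (stopped at time $n$), one expects $\sum_{\ell\le \varepsilon(\log n)^2}\e(K_n(\ell)) = o_\varepsilon(1)\cdot n/\log n$. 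A Markov inequality then handles this range. Alternatively, and perhaps more robustly, one can use the first-moment bound $\p\bigl(T_z\le n\bigr)\le \p\bigl(T_z<\infty\bigr)$ combined with the many-to-one lemma for the branching random walk: writing $\p^\en(T_z<\infty)$ in terms of the potential $V$ along the ray to $z$ and the conductances, one gets $\e\bigl(\sum_{|z|=\ell}\un_{T_z<\infty}\bigr)$ controlled by $\e\bigl(\sum_{|z|=\ell} e^{-V(z)}\cdot(\text{ladder factor})\bigr)$, which via \eqref{hyp0} (the boundary condition $\psi(1)=\psi'(1)=0$) and the renewal/ballot estimates for centered branching random walks decays like a negative power of $\ell$, hence is summable enough over the low range to beat $n/\log n$.

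For the high-generation part the mechanism is different: to reach generation $k=(\log n)^2/\varepsilon$ at all before time $n$ is already very unlikely once $\varepsilon$ is small, because the time needed to hit a deep vertex is super-polynomial in $k$. Quantitatively, by the results of \cite{HuShi10a} (recalled in the introduction) the maximal visited generation $X_n^*$ is of order $(\log n)^3$, but more relevantly the hitting time $T_z$ of a vertex at generation $k$ is, with overwhelming probability, at least $e^{c\sqrt{k}}$ or so for the relevant vertices — precise estimates follow from the quenched transition probabilities \eqref{jump} and the fact that along the path the potential $V$ makes excursions of height of order $(\log k)^2$ at scale $k$. Thus $\p\bigl(\exists z, |z|\ge (\log n)^2/\varepsilon,\ T_z\le n\bigr)$ can be bounded by a union bound over $k\ge (\log n)^2/\varepsilon$ of $\e\bigl(N_n(k)\bigr)$, and for such large $k$ the first-moment $\e\bigl(\sum_{|z|=k}\un_{T_z\le n}\bigr)$ is $o(1)$ — indeed exponentially small in $(\log n)^2/\varepsilon$ — because a site at generation $k$ with a visitable potential profile is atypical and, when it exists, is visited only after a time far exceeding $n$. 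Summing the geometric-type tail over $k$ gives a bound that goes to $0$ as $n\to\infty$ for each fixed $\varepsilon>0$; here one does not even need the outer $\varepsilon\to 0$ limit.

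The main obstacle is the low-generation estimate: one must show that the contribution of generations $\ell=o((\log n)^2)$ to $R_n$ is genuinely $o(n/\log n)$ and not merely $O(n/\log n)$. The naive bound $N_n(\ell)\le (\text{number of excursions})\le T_\phi^{(\cdot)}$-type counts only gives $O(n/\log n)$ per generation, which is too weak once summed. The resolution is to exploit the decay in $\ell$: by the annealed estimates underlying \eqref{AnD}, $\e(K_n(\ell))/\e(K_n((\log n)^2))$ is small when $\ell\ll(\log n)^2$, uniformly enough that $\sum_{\ell\le\varepsilon(\log n)^2}\e(K_n(\ell)) \le o_\varepsilon(1)\,\e(K_n((\log n)^2)) \asymp o_\varepsilon(1)\, n/\log n$. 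Making this uniform over the whole range $\ell\le \varepsilon(\log n)^2$ — rather than at a single $\ell$ — is the delicate quantitative point, and it is where the branching-random-walk ballot estimates (and hypothesis \eqref{hyp1}, which controls the relevant second moments) do the real work. Once that summability is in hand, Markov's inequality and Proposition \ref{propHuShi} (to replace $T_\phi^n$ by $n$ with negligible error) conclude the proof.
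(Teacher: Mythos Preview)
Your high-generation argument has a genuine conceptual error. You write that ``to reach generation $k=(\log n)^2/\varepsilon$ at all before time $n$ is already very unlikely once $\varepsilon$ is small,'' and that the first moment $\e\bigl(\sum_{|z|=k}\un_{T_z\le n}\bigr)$ is ``$o(1)$ --- indeed exponentially small.'' This is false: recall from the introduction that $X_n^*:=\max_{k\le n}|X_k|$ is of order $(\log n)^3$, so the walk \emph{does} reach generations well beyond $(\log n)^2/\varepsilon$ with probability bounded away from zero. What is true (and what must be shown) is that the \emph{number of distinct sites} visited at those generations is small, not that no sites are visited. Consequently your claim that ``one does not even need the outer $\varepsilon\to 0$ limit'' is also wrong: the paper's bound for $\sum_{m\ge(\log n)^2/\varepsilon}\e[K_n^B(m)]$ is of order $\varepsilon n$ (plus lower-order terms), and the $\varepsilon\to0$ is essential.

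The paper's actual route is a first-moment argument, but on the truncated variables $K_n^B(m)$ (with $B=B_1\cap B_2$: potential bounded below by $-\alpha$, and walk confined before the reflecting barrier $\mathcal L_n$). One passes to $K_n^B$ at the cost of $\P(\overline{\cB_1})+\p(\overline{\cB_2})$ via \eqref{below} and \eqref{theoHuShi}, and then controls $\sum_{m}\e[K_n^B(m)]$ over the non-critical range. The key device is to split according to whether $\overline V(z)$ exceeds $\log n$ or not: when $\overline V(z)\ge\log n$ one bounds $\p^\en(T_z<T_\phi^n)\le na_z\le ne^{-\overline V(z)}$, and when $\overline V(z)<\log n$ one uses the trivial bound $1$. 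Many-to-one then reduces both pieces to random-walk expectations of the form $\E[e^{S_\ell-\MS_\ell};\ldots]$ or $\E[e^{S_\ell};\MS_\ell\le\cdot,\mS_\ell\ge-\alpha]$, handled by the estimates \eqref{eSMSmSbd}--\eqref{eSMSMSmSbd}. For large $m$ the reflecting-barrier restriction $B_2$ (which forces $\max_k(\MS_k-S_k)\le\log n$) is what produces the factor $e^{-c'm/(\log n)^2}$ that makes the tail sum small as $\varepsilon\to0$. Finally Proposition~\ref{propHuShi} converts $T_\phi^n$ to $n$. Your low-generation sketch is closer in spirit to this, but you should abandon the hitting-time heuristic for the high range and instead run the same first-moment-plus-truncation machinery there.
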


So as the main contribution comes from generations of order $(\log n)^2$, we have that with high probability, $R_n\approx \sum_{\varepsilon(\log n)^2\leq \ell\leq (\log n)^2/\varepsilon}N_n(\ell)$ with $\varepsilon\downarrow 0$. As a consequence we obtain the following result for the range of $X$ :
\begin{theo}\label{upton2} We have
\begin{equation}
\frac{\log n}{n}R_n \xrightarrow{in\ \p^*} \frac{\sigma^2 }{4}\Lambda,
\end{equation}
where $\Lambda:=\int_0^\infty \lambda(\gamma)d\gamma\in(0,\infty)$.
\end{theo}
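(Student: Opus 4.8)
The plan is to combine the decomposition of $R_n$ over generations with the per-generation limit in Theorem \ref{upton} and the negligibility of non-critical generations in Proposition \ref{noncritical}, using a dominated-convergence argument in the generation parameter $\gamma$. Write $R_n=\sum_{\ell\geq 1}N_n(\ell)$ and split according to whether $\ell$ lies in the critical window. First I would fix $\varepsilon>0$ and write
\begin{equation}
\frac{\log n}{n}R_n=\frac{\log n}{n}\sum_{\varepsilon(\log n)^2\leq \ell\leq (\log n)^2/\varepsilon}N_n(\ell)+\frac{\log n}{n}\Big(\sum_{\ell<\varepsilon(\log n)^2}N_n(\ell)+\sum_{\ell>(\log n)^2/\varepsilon}N_n(\ell)\Big).
\end{equation}
By Proposition \ref{noncritical} the second term is negligible in $\p^*$-probability once $\varepsilon$ is small (after $n\to\infty$), so it suffices to show that the first (truncated) sum converges in probability to $\tfrac{\sigma^2}{4}\int_\varepsilon^{1/\varepsilon}\lambda(\gamma)\,d\gamma$, and then let $\varepsilon\downarrow0$, using $\Lambda=\int_0^\infty\lambda(\gamma)\,d\gamma<\infty$.

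For the truncated sum, the idea is to view it as a Riemann sum. Writing $\ell=\lfloor \gamma(\log n)^2\rfloor$, each term $\tfrac{(\log n)^3}{n}N_n(\ell)$ converges in $\p^*$-probability to $\tfrac{\lambda(\gamma)\sigma^2}{4}$ by Theorem \ref{upton}, and there are about $(\log n)^2\,d\gamma$ consecutive values of $\ell$ in an interval of length $d\gamma$; multiplying by the extra factor $(\log n)^{-2}$ built into $\tfrac{\log n}{n}=\tfrac{(\log n)^3}{n}\cdot(\log n)^{-2}$ produces exactly $\int_\varepsilon^{1/\varepsilon}\tfrac{\lambda(\gamma)\sigma^2}{4}\,d\gamma$ in the limit. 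To turn this heuristic into a proof I would: (i) establish first-moment control, showing $\e^*[N_n(\ell)]\leq C\,n/(\log n)^3$ uniformly for $\ell$ in the critical window, which by \eqref{AnD} and the arguments behind Theorem \ref{upton} should hold (this gives a dominating bound for the sum, legitimizing the interchange of limit and summation/integration); (ii) show $\lambda$ is (piecewise) continuous and integrable on $(0,\infty)$ from its explicit form referenced at \eqref{cvgqK}, so the Riemann sums of $\lambda$ over the critical window converge to $\int_\varepsilon^{1/\varepsilon}\lambda$; (iii) control the fluctuations of the sum around its mean, either by a second-moment estimate on $\sum_\ell N_n(\ell)$ over the window or, more cheaply, by noting that convergence in probability of each normalized term plus the uniform first-moment bound allows a direct $\varepsilon$-$\delta$ argument on finitely many sub-blocks of the window and then letting the block-mesh tend to $0$.

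The main obstacle I expect is step (iii): controlling the joint fluctuations of $N_n(\ell)$ across the whole range $\varepsilon(\log n)^2\leq\ell\leq(\log n)^2/\varepsilon$ simultaneously, since Theorem \ref{upton} only gives marginal convergence for each fixed $\gamma$ and there are $\asymp(\log n)^2$ correlated generations to add up. The natural route is a truncated second-moment computation: decompose each $N_n(\ell)$ (or rather its stopped analogue $K_n(\ell)$, transferred via Proposition \ref{propHuShi}) into a "main" part coming from high-potential sites with $V(\cdot)\geq\log n$, show the covariances $\mathrm{Cov}(N_n(\ell),N_n(\ell'))$ decay suitably in $|\ell-\ell'|$ using the spine decomposition and the Markov property of $X$ at the root, and conclude that $\mathrm{Var}\big(\sum_\ell N_n(\ell)\big)=o\big((n/\log n)^2\big)$. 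Here the hypothesis \eqref{hyp1} feeding Lemma \ref{mvar} should be exactly what is needed to bound the relevant second moments. If a clean covariance bound is hard to obtain, a fallback is to partition the window $[\varepsilon,1/\varepsilon]$ into $K$ equal blocks, apply Theorem \ref{upton} at the $K$ endpoints, use monotonicity/first-moment bounds to sandwich each block's contribution, and then send $K\to\infty$ after $n\to\infty$; this avoids any delicate covariance estimate at the cost of a slightly more involved limiting argument. Once the truncated sum is controlled, letting $\varepsilon\downarrow0$ and invoking Proposition \ref{noncritical} together with $\Lambda<\infty$ finishes the proof.
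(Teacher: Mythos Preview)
Your overall plan---remove non-critical generations via Proposition \ref{noncritical}, then handle the critical sum---matches the paper. The gap is in the execution of the critical part. Your primary route (feed the per-generation limit of Theorem \ref{upton} into a Riemann sum) does not close: Theorem \ref{upton} gives only marginal convergence in probability for each fixed $\gamma$, and with $\asymp(\log n)^2$ correlated terms neither a uniform first-moment bound nor the blocking fallback suffices---there is no monotonicity of $N_n(\ell)$ in $\ell$ to sandwich blocks, and pointwise convergence in probability plus $L^1$ domination does not force convergence of a sum with a growing number of summands. The paper in fact remarks (just after Corollary \ref{sumW}) that uniform-in-$\ell$ convergence of $N_n(\ell)$ is not expected to hold, so this route cannot be made direct.

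The paper follows essentially your second-moment alternative, but with two ingredients you do not spell out. First, it works throughout with $R_{T_\phi^n}=\sum_m K_n(m)$ (transferring back to $R_n$ via Proposition \ref{propHuShi} only at the end) and replaces each $K_n(m)$ by the truncated $K_n^{B^\delta\cap U}(m)$; the restrictions $U=\{\overline V\geq\log n+\log\log n\}$ and the reflecting-barrier set $B^\delta$ are precisely what make the quenched-variance bound of Lemma \ref{mvar} usable, and a Cauchy--Schwarz across the $\asymp(\log n)^2$ generations then controls $\sum_m\big|K_n^{B^\delta\cap U}(m)-\mathcal K_n^{B^\delta\cap U}(m)\big|$. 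Second, the convergence of the sum of quenched means $\sum_m\widetilde{\mathcal K}_n^{B^\delta\cap U}(m)$ is \emph{not} obtained from Theorem \ref{uptoTn} generation by generation, but from Corollary \ref{sumW}, an integrated version of Proposition \ref{cvgProp}: the environment variables $W_m(F_{a\sqrt m,b\sqrt m})$ are monotone in $(a,b)$, which yields the uniform convergence \eqref{unifcvg} and hence legitimizes the Riemann-sum passage---but at the level of the quenched means, not of $K_n(\ell)$ itself. So your instinct (variance bound plus Riemann sum) is right, but both steps must be carried out on the truncated/quenched quantities, and the detour through $R_{T_\phi^n}$ is not optional.
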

\begin{rem}\label{remR}
In fact, once again by Proposition \ref{propHuShi}, this theorem follows from the following convergence:
\[
\frac{R_{T_\phi^n}}{n}\xrightarrow{\textrm{ in }\p^*} \Lambda p^{\en}(\phi,\pa{\phi})D_\infty.
\]
Also, the integrability of  $\lambda$ is stated in Lemma \ref{A} of the  Appendix
\end{rem}
\noindent 
These first results give a quantitative description of the number of visited sites and of the generations involved, but no description of the underlying environment is given. In the following section we discuss what we have learnt about the typical behavior of the potential that leads to the above behavior of $R_n$.

\subsection{A quenched point of view}
\label{rough}

Like we said in the first part of the introduction, Andreoletti-Debs \cite{AndDeb2} observe that the sites where the potential remains small (always lower than $\log n$) have a negligible contribution for the number of visited sites. One of the reasons for this is the fact that the number of such sites is actually negligible on the tree (see their Proposition 1.3). Intuitively these sites are easily accessible as the potential remains low, but the set of these sites still has a low conductance.

Here we give some more details of the sites that the random walk is inclined to visit, i.e. the sites that contribute importantly to the range. 

For sites $y, \ z\in\T$, recall that $y\leq z$ means that $y$ belongs to the shortest path from the root $\phi$ to $z$. Let $\overline{V}(z):=\max_{\phi<y\leq z}V(y)$. Define for any $a_0>1$,
\[A_1:= \Big\{z\in\T:\  \frac{\log n}{a_0} \leq \max_{\phi<y\leq z} \left(\overline{V}(y)-V(y) \right) \leq \log n + g(n)  \Big\}, \]
where $\{g(n),n \}$ is a positive increasing function such that $\lim_{n \rightarrow + \infty} (g(n)- \log \log n) = + \infty$. Moreover, for any $a_1>0$, let  
\[A_2:= \left\{z\in\T:\  \log n +\log\log n  \leq \overline{V}(z) \leq a_1 \log n \sqrt{ \log \log n}   \right\}, \]
and 
\[A_3:= \Big\{z\in\T:\ \overline{V}(z)> \max_{\substack{ y\leq z;  |y|\leq |z|-|z|^{1/3}}} V(y) \Big \}.
 \]
Let us introduce a notation for truncated versions of $K_n$, $R_n$ and their quenched mean : if $A$ is an event depending only on the environment $\mathcal{E}$, then for any $\ell\geq1$,
\begin{align}
& K_n^A(\ell):=\sum_{|z|= \ell} \un_{ \{T_z< T^n_{\phi} \} } \un_{\{ z \in A\} },\ R_{T^n_\phi}^A:= \sum_{m \geq 0}K_n^A(m), \label{truncV}  \\
& \mathcal{K}_n^A(\ell):=\mathbb{E}^{\mathcal{E}}\left(K_n^A(\ell) \right),\ \mathscr{R}_{T^n_\phi}^A:= \mathbb{E}^{\mathcal{E}}\left(R_{T^n_\phi}^A \right). \label{kA}
\end{align}

Notice that the above means  are easily computable (see section \ref{sec2}), but we are not interested in their expressions for now. 
The following result proves tightness of the range up to $T_\phi^n$ minus the truncated quenched mean of $R_{T_\phi^n}$: $\mathscr{R}_n^{A_1 \cap A_2 \cap A_3}$, this makes appear  favorite environments described by potential $V$. 
\begin{prop} \label{Prop1.6} For any $\eta >0$, there exists $a_1>0$ such that
\[ \lim_{a_0 \rightarrow + \infty} \limsup_{n \rightarrow + \infty} \p^*\left( \frac{ 1}{n} \left| R_{T_{\phi}^n}- \mathscr{R}_{T_{\phi}^n}^{A_1 \cap A_2 \cap A_3} \right| \geq \eta \right)=0. \]
\end{prop}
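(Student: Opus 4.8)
The plan is to show that $R_{T_\phi^n}$ concentrates around its truncated quenched mean in two stages: first, pass from the full range $R_{T_\phi^n}$ to the truncated range $R_{T_\phi^n}^{A_1\cap A_2\cap A_3}$, controlling the discarded contribution; then show $R_{T_\phi^n}^{A_1\cap A_2\cap A_3} - \mathscr{R}_{T_\phi^n}^{A_1\cap A_2\cap A_3}$ is $o(n)$ in $\p^*$-probability by a second-moment / conditional variance estimate. For the first stage I would argue that each of the complementary events $A_1^c, A_2^c, A_3^c$ contributes $o(n)$ visited sites with the relevant probability. The set of $z$ with $\max_{\phi<y\le z}(\overline V(y)-V(y))<\log n/a_0$ is handled exactly as in \cite{AndDeb2}: these are the ``low barrier'' sites, of which there are too few (or which are hit with too small a return contribution) to matter once $a_0$ is large — this is where the $\lim_{a_0\to\infty}$ is used. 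The set with $\max_{\phi<y\le z}(\overline V(y)-V(y))>\log n+g(n)$ is excluded because the quenched probability of ever reaching such a $z$ before returning to $\phi$ is roughly $e^{-(\log n+g(n))}=o(1/n)$ per site, and the expected number of such sites on the tree is polynomial; the choice $g(n)-\log\log n\to\infty$ makes the total expected number of visits $o(n/\log n)$, hence $o(n)$. For $A_2$: the lower bound $\overline V(z)\ge \log n+\log\log n$ removes sites of small maximal potential, which by the same reasoning as in Theorem~\ref{upton}/\ref{uptoTn} (only sites with potential $\gtrsim\log n$ produce the $n/\log n$ contribution) are negligible; the upper bound $\overline V(z)\le a_1\log n\sqrt{\log\log n}$ removes sites of atypically high barrier, controlled using \eqref{hyp0+} and a many-to-one / spinal decomposition estimate on $\p(\overline V \text{ reaches level } u \text{ before } T_\phi)$, with $a_1$ chosen large depending on $\eta$. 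For $A_3$: the event that the record of $V$ along $[\phi,z]$ is not attained in the last $|z|^{1/3}$ generations is a branching-random-walk ballot/ordered-record estimate — at a critical generation $\ell\asymp(\log n)^2$ one has $\ell^{1/3}\asymp(\log n)^{2/3}$, and the probability that the maximum of a centered BRW path of length $\ell$ over the last $\ell^{1/3}$ steps fails to beat the earlier maximum decays, so these sites are again negligible in expectation.

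The second stage is the concentration step. Work under $\p^\en$ conditionally on the environment $\en$. The key structural fact is the standard one for biased walks on trees stopped at $T_\phi^n$: the visits to distinct vertices across the $n$ excursions from $\phi$ decompose, via the theory of multitype branching processes / the Ray–Knight-type description of the local time field (used already in \cite{HuShi15}, \cite{AndDeb2}), so that $R_{T_\phi^n}^{A}$ is a sum over the $n$ excursions of i.i.d.\ (given $\en$) contributions, or more precisely admits a martingale decomposition along generations. I would compute $\mathrm{Var}^\en(R_{T_\phi^n}^{A_1\cap A_2\cap A_3})$ and show it is $o(n^2)$ with $\P^*$-probability tending to $1$. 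The truncation to $A_1\cap A_2\cap A_3$ is precisely what makes this variance bound hold: on $A_1$ the quantity $\overline V(y)-V(y)$ is bounded by $\log n+g(n)$, so the conductances/return probabilities are controlled from both sides, and on $A_2\cap A_3$ the relevant sites lie in a ``thin'' band of generations near $(\log n)^2$ with potential near $\log n$, so the number of them and the size of each visit-count are both controlled, giving $\mathrm{Var}^\en \lesssim (\mathrm{const}\cdot n)\cdot(\text{typical contribution})$ which is $o(n^2)$. Chebyshev under $\p^\en$ then gives the result, after integrating over the (high-probability) good environments.

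The main obstacle I expect is the second-stage variance estimate: obtaining a sharp enough bound on $\mathrm{Var}^\en(R_{T_\phi^n}^{A_1\cap A_2\cap A_3})$ uniformly over a set of environments of $\P^*$-probability close to $1$. This requires a careful quenched second-moment computation that tracks correlations between visited sites sharing a long common ancestral path — two vertices $z,z'$ in the truncated set with deep common ancestor $w$ are positively correlated through the excursion that reaches $w$, and one must show that the $A_1$-constraint $\overline V - V\le \log n + g(n)$ together with the $A_3$-constraint (record attained near the tip) keeps this correlation sum at order $n\cdot\mathscr R_n^{A}/n = o(n^2)$ rather than $\mathscr R_n^{A}{}^2 \asymp n^2$. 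This is where the precise shape of the truncation events $A_1, A_2, A_3$ — rather than some cruder truncation — is essential, and where the hypothesis \eqref{hyp1} (invoked for Lemma~\ref{mvar}) enters to control second moments of the one-step reproduction weighted by $e^{-V}$. A secondary technical point is making all the $o(\cdot)$ statements in the first stage uniform enough that the iterated limits $\lim_{a_0\to\infty}\limsup_n$ genuinely vanish; this is bookkeeping but must be done in the right order (first fix $\eta$, choose $a_1$; then let $n\to\infty$; then let $a_0\to\infty$).
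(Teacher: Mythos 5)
Your two-stage structure captures the right overall idea --- truncate, estimate the discarded contribution in first moment, then concentrate --- and your Stage~1 heuristics broadly match the paper's estimates \eqref{4.21}, \eqref{4.22} (many-to-one plus random walk bounds for the extra constraints in $A_1$ and $A_2$), and the reuse of \eqref{h} to discard $A_3^{c}$. But Stage~2 as you propose it --- a fresh quenched-variance computation for $R_{T_\phi^n}^{A_1\cap A_2\cap A_3}$ --- has a genuine gap that you only half-see, and it is not the gap the paper actually has to close.

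The variance bound of Lemma~\ref{mvar} relies crucially on the $B_2^\delta$ truncation $\Sigma_1^u\le s_n=n/(\log n)^{1+\delta}$ with $\delta$ large (the paper takes $\delta=5$): the dominant term there scales like $s_n(\log_+ s_n)^2$, and only a $\delta$ this large beats both the Chebyshev prefactor $(\log n)^4/n^2$ and the $\asymp(\log n)^2$-fold sum over critical generations. The event $A_1$ only forces $\max_{\phi<y\le z}(\overline V(y)-V(y))\le\log n+g(n)$, whence $\Sigma_1^u\lesssim |u|\,n\,e^{g(n)}$; since $g$ need only satisfy $g(n)-\log\log n\to\infty$, the factor $e^{g(n)}$ may be any power of $\log n$, and the same second-moment computation then produces a bound for the variance that is \emph{larger} than $n^2$ by a power of $\log n$, not smaller. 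The cap $\overline V(z)\le a_1\log n\sqrt{\log\log n}$ coming from $A_2$ does not repair this, because $\Sigma_1^u$ is governed by $\overline V-V$, not by $\overline V$. So a direct Chebyshev argument for $R_{T_\phi^n}^{A_1\cap A_2\cap A_3}$ along the lines you sketch would not close. What the paper does instead is reuse the concentration already established in the proof of Theorem~\ref{upton2} for the much tighter truncation $B^\delta\cap U$ (via Proposition~\ref{lemCentredK} and Lemma~\ref{mvar}), and then transfer from $\mathscr R^{B^\delta\cap U}_{T_\phi^n}$ to $\mathscr R^{A_1\cap A_2\cap A_3}_{T_\phi^n}$ purely through first-moment estimates on the quenched means of the symmetric difference --- this is exactly what \eqref{4.21}, \eqref{4.22}, the $A_3$ estimate, and Lemma~\ref{othergenerations} accomplish. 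The observation that the concentration is proved relative to a \emph{different, tighter} truncation than the one in the statement, and is carried over only at the level of expectations, is the key structural step missing from your proposal, and it is precisely what sidesteps the obstacle you flag.
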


From this result together with the well known fact in \cite{Aid13} about the potential : $\P(\inf_{z\in\T}V(z) \geq -\alpha) \geq 1-e^{-\alpha}$, we are able to draw a typical trajectory of potential that maximises the number of visited sites (see Figure \ref{fig1}). 
\begin{figure}[h]
\begin{center}
\scalebox{0.4}{\input{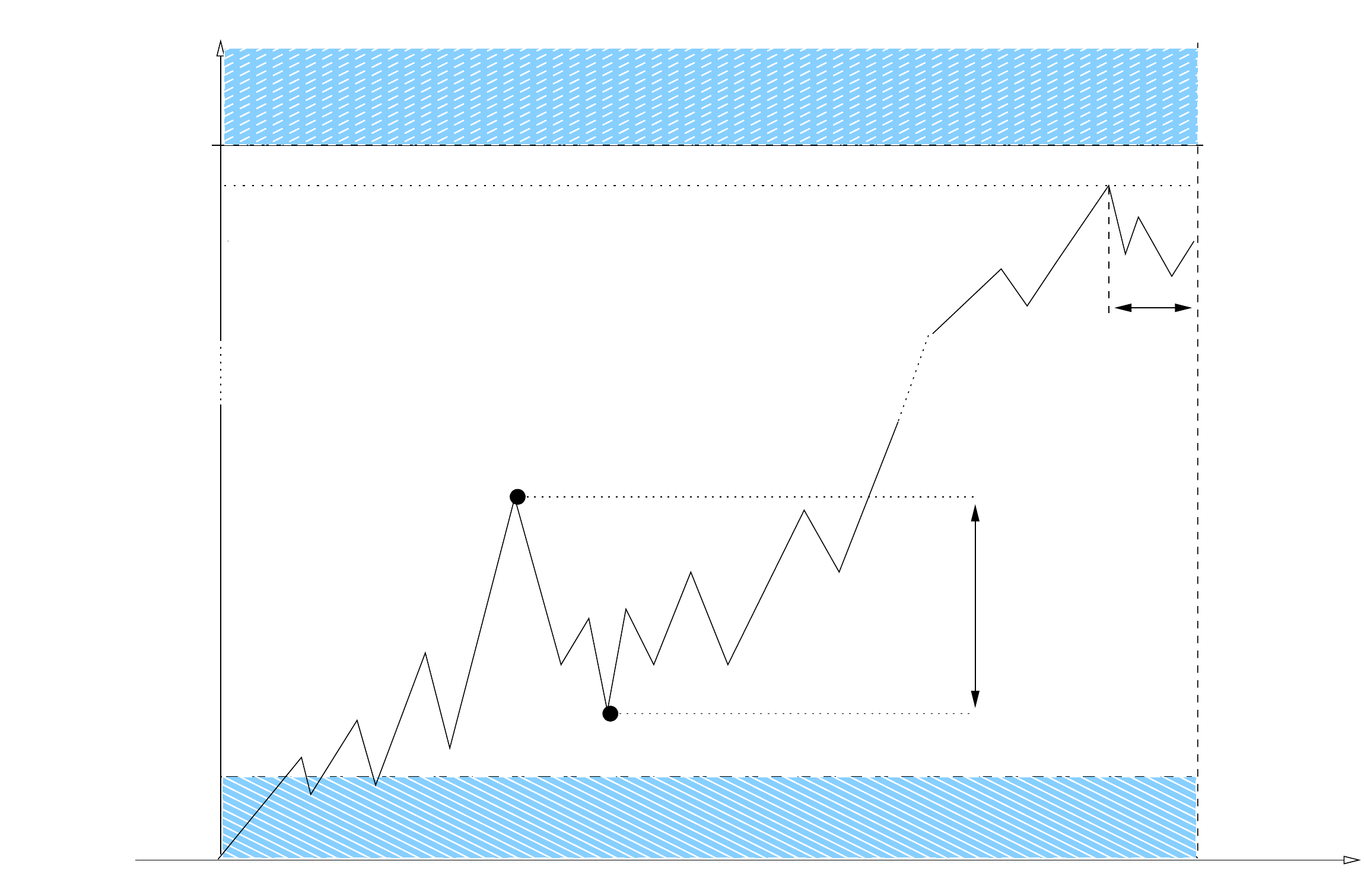_t}}
\caption{Typical accessible environments within time $n$}\label{fig1}
\end{center}
\end{figure}

\subsection{Sketch of proofs and organization of the paper}
\label{cvgofquenched}
As we have already seen, Theorem \ref{upton}, comes from Proposition \ref{propHuShi} together with Remark \ref{remR}, so only the remark as to be proved. Also thanks to Proposition \ref{noncritical} (which is a consequence of Lemma 2.4 for which the proof is posponed in Section \ref{Sec4.3.4}) together with  Proposition \ref{propHuShi}, only the critical generations of order $(\log n)^2$ have to be considered. For that we first study individually each of these generations which is the purpose of Theorem \ref{uptoTn} : \\
{\bf Skech of proof of Theorem \ref{uptoTn}} : 
The first step for the study of $K_n(\ell)$ for $\ell\sim \gamma(\log n)^2$ is to compare it with its quenched expectation $\mathcal{K}_n(\ell):=\e^{\en}[K_n(\ell)]$. The main idea here is simple : we would like to apply  Tchebychev's inequality to the quenched probability $\p^{\en}\left(|K_n(\ell)-\mathcal{K}_n(\ell)|\geq \varepsilon \mathcal{K}_n(\ell)\right)$. Unfortunately this gives nothing usable if we do it directly. Indeed it turns out that the quenched variance $\Ve(K_n(\ell))$ which appears when applying this inequality can not be controlled properly with respect to measure $\P^*$.  In order to overcome this, we add restrictions to the environment, the first one comes from the reflecting barrier introduced by \cite{HuShi15} : let $\delta>0$ introduce
\(
 L_\delta:=\{z\in\T: \max_{\phi<y\leq z}(\overline{V}(y)-V(y))\leq \log n-(1+\delta)\log \log n\}.
\)
In other words, we consider the restriction of $K_n$ to the sites of $ L_\delta$, that is to say $K_n^{L_\delta}(\ell)=\sum_{|z|=\ell}\un_{ \{T_z<T_\phi^n \}}\un_{\{z\in L_\delta \}}$ and its quenched mean 
\begin{align}
 \mathcal{K}^{L_\delta}_n(\ell) & :=\sum_{|z|=\ell}\p^{\en}(T_z<T_\phi^n)\un_{ \{ z\in L_\delta\} }=\sum_{|z|=\ell} (1-(1-a_z)^n)\un_{ \{ z\in L_\delta\}}, \textrm{ where} \nonumber \\
  a_z & :  =\p^{\mathcal{E}}_\phi(T_z<T_\phi)=p^{\en}(\phi,z_1)\p^\en_{z_1}(T_z<T_\phi)=\frac{p^\en(\phi,\pa{\phi})}{\sum_{\phi<y\leq z}e^{V(y)}}, \label{az}
\end{align}
obtained by the strong Markov property, also the last equality in \ref{az} comes from Lemma C.1 in \cite{AndDeb2}. Then, following the ideas of \cite{AndDeb2}, we add a second restriction by defining the set $U:=\{z\in\T: \overline{V}(z)\geq \log n+\log\log n\}$. This restriction, which comes from the fact that only sites with a high level of potential count, contributes to a simplification of the expression of the quenched mean defined above: for any $z\in U$,  $a_z\leq e^{-\overline{V}(z)}\leq \frac{1}{n\log n}$, thus
\begin{equation}\label{naz}
0\leq na_z-[ 1-(1-a_z)^n] \leq n^2 a_z^2 \leq \frac{1}{\log n} na_z, 
\end{equation}
so in particular $1-(1-a_z)^n=(1+o_n(1))na_z$, and for any event $D \subseteq \{z\in\T\ :\  \overline{V}(z) \geq \log n+ \log \log n\}$ depending only on the environment 
\begin{align} 
 {\mathcal{K}}_n^{D}(\ell)= \mathbb{E}^{\mathcal{E}}\left[ \sum_{|z|= \ell} \un_{ \{T_z< T^n_{\phi}\} } \un_{\{ z \in D \} } \right]  \sim n    \sum_{|z|= \ell} a_z  \un_{\{ z \in D \} }=:\widetilde{\mathcal{K}}^D_n(\ell). \label{mathscK}
\end{align}
We prove rigorously, in Subsection \ref{Qexp}, that the cost of these restrictions $L_\delta\cap U$ is negligible for the number of distinct  visited sites before $n$ return to the origin (see Lemmata \ref{addU} and \ref{usesn}). So we are left to study the restriction ${K}_n^{U\cap L_\delta}(\ell)$. For that we apply Tchebychev's inequality (see Section \ref{sec2.2}) and, thanks to the restriction, the expectation with respect to measure $\P^*$ of the quenched variance (Section \ref{QuVar}) $\Ve({K}_n^{U\cap L_\delta}(\ell))$ is well controlled. Finally we obtain that in probability $K_n(\ell)$ can be approximated for large $n$ by $\widetilde{\mathcal{K}}_n^{U\cap L_\delta}(\ell)$ :
\begin{align} K_n(\ell) \overset{\p}{ \sim  } 
\widetilde{\mathcal{K}}_n^{U\cap L_\delta}(\ell)=\sum_{|z|=\ell}na_z\un_{ \{ z\in L_\delta\cap U \}}=np^{\en}(\phi,\overleftarrow{\phi})\times\sum_{|z|=\ell}e^{-V(z)}\frac{e^{V(z)}}{\sum_{\phi<y\leq z}e^{V(y)}}\un_{\{ z\in U\cap L_\delta \}}. \label{cvprobaK}
\end{align}
The second step is to obtain the convergence of ${(\log n)^2}\widetilde{\mathcal{K}}_n^{U\cap L_\delta}(\ell)/n$ to some non trivial limit under $\P^*$. 
For that we introduce the following martingale-like variable, for  any $m\geq1$ 
 and $a,b\geq0$,
\begin{align} 
W_m(F_{a,b}):=&\sum_{|z|=m}e^{-V(z)}F_{a,b}(z), \textrm{ where } \nonumber\\
F_{a,b}(z):=&\sqrt{m}\frac{e^{V(z)}}{\sum_{\phi<y\leq z}e^{V(y)}}\un_{ \{ \overline{V}(z)\geq b \} }\un_{\{ \max_{\phi<y\leq z  }(\overline{V}(y)-V(y))\leq a \}}. \label{defF}
\end{align}
With this notation $\widetilde{\mathcal{K}}_n^{U\cap L_\delta}(\ell)$ can be re-write, 
\begin{equation}\label{KtoW}
\widetilde{\mathcal{K}}_n^{U\cap L_\delta}(\ell)=\frac{np^{\en}(\phi,\overleftarrow{\phi})}{\sqrt{\ell}}W_\ell(F_{\log n-(1+\delta)\log\log n, \log n+\log\log n}).
\end{equation}

Notice that if $F_{a,b}(z)=1$ for any of its arguments, then $W_m(F_{a,b})$ is exactly the well-known additive martingale $W_m:=\sum_{|z|=m}e^{-V(z)}$. Aid\'ekon and Shi \cite{AidekonShi09} showed that $\sqrt{m}W_m$ converge in $\P^*$-probability   to the positive martingale $D_\infty=:\lim_{m\rightarrow\infty}\sum_{|z|=m}V(z)e^{-V(z)}$ . More recently Madaule \cite{Madaule} proved that if one chooses one site $z$ at the $m$-th generation, according to the measure ${e^{-V(z)}}/{W_m}$, the corresponding rescaled trajectory $({V(y)\un_{\{|y|=\lfloor mt\rfloor, y\leq z\}}}/{\sqrt{m}})_{ 0\leq t\leq 1}$ is asymptotically a Brownian meander. \\
Unfortunately in our case $F_{a_m,b_m}(z)$ is not simply a functional of this rescaled trajectory, so their results cannot be applied directly. However, our proof of Proposition \ref{cvgProp} below (see Section \ref{sec3.2}) is mainly inspired by their arguments. \bl{We are going to take $a=O(\sqrt{m})$, and the factor $\sqrt{m}$ is used to ``balance'' $\frac{e^{V(z)}}{\sum_{\phi<y\leq z}e^{V(y)}}\un_{\overline{V}(z)\geq b}$.}
\begin{prop} \label{cvgProp}
If $(a_m; m\geq0)$ and $(b_m; m\geq0)$ are positive sequences such that $\lim_{m\rightarrow\infty}\frac{a_m}{\sqrt{m}}=a\in\r_+^*$ and $\lim_{m\rightarrow\infty}\frac{b_m}{\sqrt{m}}=b\in\r_+$, then as $m\rightarrow\infty$, there exists $\mathscr{C}_{a,b}\in(0,\infty)$ such that
\begin{equation}\label{cvgP}
\sqrt{m} W_m(F_{a_m,b_m})\xrightarrow[m\rightarrow\infty]{in \ \P^*} \mathscr{C}_{a,b}D_\infty.
\end{equation}
see \eqref{1.7} for definition of $D_\infty$. $\mathscr{C}_{a,b}$, which definition is given in Section \ref{sec3.2}, is continuous,  increasing in $a$ and decreasing in $b$, and we state that $\mathscr{C}_{0,b}=0$.
\end{prop}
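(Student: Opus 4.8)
The plan is to follow the spinal-decomposition/change-of-measure strategy of Aidékon--Shi and Madaule, adapting it to the extra functional $F_{a_m,b_m}$. First I would use the many-to-one lemma to write
\[
\e^*\!\big[\sqrt{m}\,W_m(F_{a_m,b_m})\big]=\e\Big[\sqrt{m}\cdot\sqrt{m}\,\frac{e^{S_m}}{\sum_{j\le m}e^{S_j}}\,\un_{\{\overline{S}_m\ge b_m\}}\un_{\{\max_{j\le m}(\overline{S}_j-S_j)\le a_m\}}\Big],
\]
where $(S_j)_{j\le m}$ is the associated centered random walk with finite variance $\sigma^2$ and $\overline{S}_j=\max_{i\le j}S_i$. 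Under the scaling $a_m\sim a\sqrt m$, $b_m\sim b\sqrt m$, Donsker's invariance principle and standard local-limit/overshoot estimates (as in \cite{AidekonShi09}, \cite{Madaule}) should give convergence of this expectation to a constant $\mathscr{C}_{a,b}:=\lim \e[\,m\,e^{S_m}(\sum_{j\le m}e^{S_j})^{-1}\un_{\{\overline{S}_m\ge b_m\}}\un_{\{\max_{j}(\overline{S}_j-S_j)\le a_m\}}]$, the point being that on the event where $\overline{S}_m$ is of order $\sqrt m$ the denominator $\sum_{j\le m}e^{S_j}$ is dominated by $e^{\overline{S}_m}$ up to a tight multiplicative fluctuation, so the integrand is a bounded functional (after the $\sqrt m/\sqrt m$ cancellation) of the rescaled path $(S_{\lfloor mt\rfloor}/\sqrt m)_{t\in[0,1]}$, and Madaule's Brownian-meander limit applies. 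Positivity $\mathscr{C}_{a,b}>0$ for $a>0$ comes from the fact that the limiting meander spends positive time/attains positive level with positive probability, while $\mathscr{C}_{0,b}=0$ because the constraint $\max_{j\le m}(\overline{S}_j-S_j)\le a_m=o(\sqrt m)$ (or genuinely $0$ in the limit) forces $\overline{S}_j-S_j$ to stay near $0$, which for a genuinely fluctuating walk has vanishing probability; continuity and the monotonicity in $a$ (larger set of paths) and $b$ (smaller set) are immediate from the definition of $\mathscr{C}_{a,b}$ as a limit of probabilities/integrals, once the limits exist.

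Next, to upgrade from convergence of the mean to convergence in $\P^*$-probability to $\mathscr{C}_{a,b}D_\infty$, I would follow the truncation scheme of Aidékon--Shi: introduce a barrier, decompose $\sqrt m\,W_m(F_{a_m,b_m})$ according to the genealogy relative to generation $\lfloor m/2\rfloor$ or relative to the first generation where the spine drops too low, and show (i) the contribution of ``bad'' lines of descent is negligible in $L^1(\P^*)$, and (ii) the ``good'' part, conditionally on the first $k$ generations, has a quenched mean converging to $\mathscr{C}_{a,b}\sum_{|z|=k}V(z)e^{-V(z)}$ and a quenched variance that is $o(1)$, so that by the branching property and a second-moment argument it concentrates. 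Letting $k\to\infty$ after $m\to\infty$ and using $D_k\to D_\infty$ $\P^*$-a.s. then yields \eqref{cvgP}. The subadditive structure $F_{a,b}(z)=\sqrt m\,e^{V(z)}(\sum_{\phi<y\le z}e^{V(y)})^{-1}\un_{\{\cdots\}}$ does not split multiplicatively across generations, so this step requires care: the key point is that $\sum_{\phi<y\le z}e^{V(y)}$ restricted to descendants of a fixed $x$ with $|x|=k$ is, up to the fixed finite prefix $\sum_{\phi<y\le x}e^{V(y)}$ which is dwarfed once $\overline V(z)$ is large, the same sum built on the shifted subtree, so the functional is asymptotically a product of an $\calF_k$-measurable factor $e^{-V(x)}\cdot\mathrm{(error)}$ and a shifted copy of the same object.

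The main obstacle I expect is precisely this lack of a clean multiplicative/martingale structure for $W_m(F_{a_m,b_m})$: unlike $\sqrt m\,W_m$, it is not a martingale and $F_{a_m,b_m}$ mixes a global-max constraint $\{\overline V(z)\ge b_m\}$, a global ``descent'' constraint $\{\max_{\phi<y\le z}(\overline V(y)-V(y))\le a_m\}$, and the non-local denominator $\sum_{\phi<y\le z}e^{V(y)}$. Controlling the quenched variance — i.e. second-moment estimates over pairs $z,z'$ branching at generation $u\le m$ — while keeping track of all three features uniformly in $m$ is the technical heart; here I would lean on the finite-exponential-moment hypotheses \eqref{hyp0+} and the $L^2$-type hypothesis \eqref{hyp1} to bound the one-big-jump/two-spine contributions, and on the observation that on the relevant events $\overline V$ is of order $\sqrt m$, which makes $e^{V(z)}/\sum_{\phi<y\le z}e^{V(y)}$ a bounded quantity with good tail control. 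The rest (invariance principle, meander convergence, renewal/overshoot estimates for $\overline S_m$) is, modulo bookkeeping, standard and borrowed from \cite{AidekonShi09} and \cite{Madaule}.
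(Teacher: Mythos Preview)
Your overall architecture --- many-to-one for the first moment, then a spinal/second-moment upgrade in the spirit of Aid\'ekon--Shi --- is the right one and is what the paper does. But there is a genuine gap in your first-moment step, and your variance step, while plausible, is organised differently from the paper.

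\textbf{The first-moment gap.} You write that after a ``$\sqrt m/\sqrt m$ cancellation'' the integrand $m\,\frac{e^{S_m}}{\sum_{j\le m}e^{S_j}}\un_{\{\overline S_m\ge b_m\}}\un_{\{\max_j(\overline S_j-S_j)\le a_m\}}$ is a bounded functional of the rescaled path $(S_{\lfloor mt\rfloor}/\sqrt m)_{t\in[0,1]}$, so that Madaule's meander limit applies. This is not correct: the ratio $e^{S_m}/\sum_{j}e^{S_j}$ depends on the \emph{unscaled} fluctuations of $S$ in a neighbourhood of its running maximum, and these are invisible in the Donsker scaling. The paper resolves this by restricting to $\{\Upsilon_S>m-m^{1/3}\}$ (the maximum is attained in the last $m^{1/3}$ steps) and showing the complement contributes $o(1/\sqrt m)$; then, applying the Markov property at $\Upsilon_S=k$, the limit splits as a product of a ``global'' meander factor $\mathcal C_{a,b}$ and a ``local'' discrete factor $\sum_{j\ge0}\E[\Gl_j(\Hinf_\infty)]$, where $\Hinf_\infty=\sum_{j\ge0}e^{-\zeta_j}$ comes from the walk conditioned to stay positive. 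So the limiting constant is $\mathscr C_{a,b}=\cb_0\,\mathcal C_{a,b}\sum_{j\ge0}\E[\Gl_j(\Hinf_\infty)]$, and the second factor is precisely what your invariance-principle argument misses.

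\textbf{The upgrade to convergence in probability.} Your plan (condition on the first $k$ generations, branching property, quenched variance $o(1)$, then $k\to\infty$) is reasonable but, as you note yourself, $F_{a_m,b_m}$ does not factor across a fixed generation, and turning ``the prefix is dwarfed'' into uniform estimates is real work. The paper sidesteps this entirely: it passes to the size-biased measure $\Q^{(\alpha)}$ (with Radon--Nikodym derivative $D_n^{(\alpha)}/\Ren(\alpha)$) and proves directly that $\sqrt n\,W_n^{(\alpha)}(H)/D_n^{(\alpha)}\to\Cb_{a,b}$ in $L^2(\Q^{(\alpha)})$, where $H=G\un_{E_n}$ carries the Aid\'ekon--Shi good events $E_{n,1}\cap E_{n,2}\cap E_{n,3}$. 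The second moment under $\Q^{(\alpha)}$ is handled by splitting the spine at generation $k_n$ with $(\log n)^6\ll k_n\ll\sqrt n$; on $E_{n,3}$ the post-$k_n$ contribution to $D_n^{(\alpha)}$ is $\le n^{-2}$, which decouples the two factors in the product. Absolute continuity of $\Q^{(\alpha)}$ with respect to $\P$ then transfers the convergence back, with $\mathscr C_{a,b}=\cb_0\Cb_{a,b}$ since $D_\infty^{(\alpha)}=\cb_0 D_\infty$ on $\{\inf V\ge-\alpha\}$. This route avoids any branching decomposition of $F_{a_m,b_m}$ and is what you should aim for.
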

\noindent With this result we obtain the convergence of the quenched random variable $\widetilde{\mathcal{K}}_n^{U\cap L_\delta}(\ell)$ : for $\ell\sim\gamma(\log n)^2$ and any $\delta>0$, by \eqref{KtoW} and \eqref{cvgP}, as $n\rightarrow\infty$,
\begin{equation}\label{cvgqK}
\ell\frac{\widetilde{\mathcal{K}}_n^{U\cap L_\delta}(\ell)}{np^{\en}(\phi,\overleftarrow{\phi})}\xrightarrow{in \ \P^*} \mathscr{C}_{\gamma^{-1/2},\gamma^{-1/2}}D_\infty .
\end{equation}
Then \eqref{cvprobaK} and \eqref{cvgqK} yield  Theorem \ref{uptoTn}, with $\lambda(\gamma):=\gamma ^{-1}{\mathscr{C}_{\gamma^{-1/2},\gamma^{-1/2}}}$. More details about the properties of $\lambda$ are given in Lemma \ref{A}.


\noindent {\bf Final ideas for the proof of Remark \ref{remR}}
As $\mathscr{C}_{a,b}$ is continuous and monotone on $(a,b)\in\r_+^2$ and that $W_m(F_{a\sqrt{m},b\sqrt{m}})$ is also monotone on $(a,b)\in\r_+^2$. It follows that \eqref{cvgP} holds uniformly for $W_m(F_{a\sqrt{m},b\sqrt{m}})$ in $(a,b)\in\r_+^2$ in the following sense: for any $\varepsilon>0$,
\begin{equation}\label{unifcvg}
\lim_{m\rightarrow\infty}\P^*\left(\sup_{a\geq0,b\geq0}\Big\vert \sqrt{m}W_m(F_{a\sqrt{m},b\sqrt{m}})- \mathscr{C}_{a,b}D_\infty\Big\vert\geq \varepsilon\right)=0.
\end{equation}
This induces the following corollary which proof can be found Section \ref{sec3.3}.

\begin{cor}\label{sumW}
\begin{equation}
\lim_{\beta\rightarrow\infty}\sum_{m=1}^\infty \frac{W_m(F_{\beta,\beta})}{\sqrt{m}}=\Lambda D_\infty,\textrm{ in } \P^*\textrm{-probability with}\ \Lambda=\int_0^\infty \mathscr{C}_{\frac{1}{\sqrt{x}},\frac{1}{\sqrt{x}}}\frac{dx}{x}.
\end{equation}
\end{cor}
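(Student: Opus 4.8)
The plan is to deduce the convergence of the series $\sum_{m\ge1} W_m(F_{\beta,\beta})/\sqrt m$ as $\beta\to\infty$ from the uniform convergence \eqref{unifcvg}, by splitting the sum at a scale $m\asymp\beta^2$. For a site $z$ with $|z|=m$ to contribute to $W_m(F_{\beta,\beta})$ one needs $\overline V(z)\ge\beta$ and $\max_{\phi<y\le z}(\overline V(y)-V(y))\le\beta$; since $\overline V(z)\le \beta^2$ is typical only when $m\gg\beta^2$, the small-$m$ terms ($m\le A\beta^2$ for a large constant $A$) and the large-$m$ terms ($m\ge A^{-1}\beta^2$... careful, rather $m\ge A\beta^2$) must be treated separately. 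Concretely I would write, for fixed $A>1$,
\[
\sum_{m=1}^\infty \frac{W_m(F_{\beta,\beta})}{\sqrt m}
=\sum_{m\le \beta^2/A}\frac{W_m(F_{\beta,\beta})}{\sqrt m}
+\sum_{\beta^2/A< m\le A\beta^2}\frac{W_m(F_{\beta,\beta})}{\sqrt m}
+\sum_{m> A\beta^2}\frac{W_m(F_{\beta,\beta})}{\sqrt m},
\]
and handle the three pieces by, respectively, an upper bound showing the first sum is negligible, the uniform limit \eqref{unifcvg} for the middle sum, and a tail estimate for the third.

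For the middle range, substitute $m=\lfloor x\beta^2\rfloor$ with $x\in[1/A,A]$: then $\beta=\frac{1}{\sqrt x}\sqrt m(1+o(1))$, so $W_m(F_{\beta,\beta})=W_m(F_{\alpha_m\sqrt m,\alpha_m\sqrt m})$ with $\alpha_m\to 1/\sqrt x$, and by \eqref{unifcvg} (applied with $a=b=\alpha_m$, using the continuity of $\mathscr C_{a,b}$ to absorb the $o(1)$) we get $\sqrt m\,W_m(F_{\beta,\beta})\to \mathscr C_{1/\sqrt x,1/\sqrt x}D_\infty$ in $\P^*$-probability, uniformly. Hence
\[
\sum_{\beta^2/A<m\le A\beta^2}\frac{W_m(F_{\beta,\beta})}{\sqrt m}
=\sum_{\beta^2/A<m\le A\beta^2}\frac{1}{m}\bigl(\sqrt m\,W_m(F_{\beta,\beta})\bigr)
\xrightarrow[\beta\to\infty]{}\Bigl(\int_{1/A}^{A}\mathscr C_{\frac1{\sqrt x},\frac1{\sqrt x}}\frac{dx}{x}\Bigr)D_\infty
\]
in $\P^*$-probability, the Riemann-sum convergence being justified because $\frac1m\sim\frac{1}{x\beta^2}$ and the number of integers in a dyadic-type block is controlled; letting $A\to\infty$ afterwards gives the claimed constant $\Lambda=\int_0^\infty \mathscr C_{\frac1{\sqrt x},\frac1{\sqrt x}}\frac{dx}{x}$, which is finite by Lemma \ref{A}.

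For the two tail sums I would bound $W_m(F_{\beta,\beta})$ deterministically and then in mean. In the first sum ($m$ small), the indicator $\un_{\overline V(z)\ge\beta}$ forces $\overline V(z)\ge\beta\gg\sqrt m$, an atypically high barrier, so $\E[W_m(F_{\beta,\beta})]$ (computable by the many-to-one lemma, cf. Section \ref{sec2}) is exponentially small in $\beta^2/m$; summing $m^{-1/2}$ times such bounds over $m\le\beta^2/A$ yields something $\to0$ as $A\to\infty$ uniformly in $\beta$. In the third sum ($m$ large), I would instead use the factor $\sqrt m$ in $F_{\beta,\beta}$ together with $\frac{e^{V(z)}}{\sum_{\phi<y\le z}e^{V(y)}}\le e^{V(z)-\overline V(z)}\le e^{-\beta}$... more precisely $\le e^{\min_{y\le z}(V(z)-V(y))}$, combined with the barrier constraint $\overline V(y)-V(y)\le\beta$; the resulting bound, after taking $\P^*$-expectation via the change of measure/spinal decomposition, gives $\E^*\bigl[\sum_{m>A\beta^2}W_m(F_{\beta,\beta})/\sqrt m\bigr]\le C\int_A^\infty h(x)\frac{dx}{x}$ for an integrable $h$, hence $\to0$ as $A\to\infty$. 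The main obstacle is making the middle-range Riemann-sum argument rigorous in probability uniformly in $\beta$ — i.e. upgrading the pointwise-in-$x$ uniform convergence \eqref{unifcvg} to convergence of the weighted sum — and controlling the third (large-$m$) tail, where the slowly-decaying $\mathscr C_{1/\sqrt x,1/\sqrt x}/x$ must be shown to be integrable at infinity; this is exactly the content invoked through Lemma \ref{A}, and I would lean on the monotonicity of $\mathscr C_{a,b}$ and of $W_m(F_{a\sqrt m,b\sqrt m})$ in $(a,b)$ to sandwich the sums between nearby step functions and pass to the limit.
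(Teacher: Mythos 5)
Your overall decomposition — split the sum at the two scales $m\le\beta^2/A$ and $m>A\beta^2$, apply the uniform convergence \eqref{unifcvg} together with a Riemann-sum argument and Lemma \ref{A} in the middle range, and bound the tails in mean — is exactly the paper's route (write $\varepsilon=1/A$ to recover its notation), and the middle-range and large-$m$ parts are sound in spirit. The gap is your small-$m$ tail bound.

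You assert that $\E[W_m(F_{\beta,\beta})]$ is exponentially small in $\beta^2/m$ and that summing $m^{-1/2}$ times such a bound over $m\le\beta^2/A$ vanishes as $A\to\infty$ uniformly in $\beta$. It does not. By many-to-one,
$\E[W_m(F_{\beta,\beta})]=\sqrt{m}\,\E\big[\tfrac{e^{S_m}}{\sum_{j\le m}e^{S_j}};\,\MS_m\ge\beta,\ \max_{j\le m}(\MS_j-S_j)\le\beta\big]\le\sqrt{m}\,\E[e^{S_m-\MS_m};\MS_m\ge\beta]$,
and a large-deviation bound on $\P(\MS_m\ge\beta)$ gives at best $\E[W_m(F_{\beta,\beta})/\sqrt{m}]\le c\,e^{-c'\beta^2/m}$. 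Then, by the change of variables $m=\beta^2/v$, $\sum_{m\le\beta^2/A}\E[W_m(F_{\beta,\beta})/\sqrt{m}]$ is of order $\beta^2\int_A^\infty e^{-c'v}v^{-2}\,dv$, which diverges as $\beta\to\infty$ for every fixed $A$. The exponential estimate is therefore both imprecise and not strong enough. What the paper actually uses (see the bound on $R'_I$ in the proof of \eqref{smallgenerations}, invoked for \eqref{smallsum}) is the gambler's-ruin/ladder-height estimate \eqref{eSMSMSmSbd}: after first restricting to $\{\inf_{z\in\T}V(z)\ge-\alpha\}$ via \eqref{below}, one gets $\E[e^{S_m-\MS_m};\MS_m\ge\beta,\ \mS_m\ge-\alpha]\le c(1+\alpha)/(\sqrt{m}\,\beta)$. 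The essential mechanism is the $1/\beta$ factor coming from Markov's inequality applied to the walk's value at the running maximum, not exponential smallness of the barrier-hitting probability; it yields $\sum_{m\le\beta^2/A}\E[W_m(F_{\beta,\beta})/\sqrt{m}]\le c(1+\alpha)\beta^{-1}\sum_{m\le\beta^2/A}m^{-1/2}\le c'(1+\alpha)A^{-1/2}$, which does go to zero uniformly in $\beta$ as $A\to\infty$. With this substitution your proof closes and matches the paper's.
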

This corollary still holds if we replace $F_{\beta,\beta}$ by $F_{\beta\pm O(\log \beta),\beta}$ in the sum. This result brings out Remark \ref{remR} and therefore Theorem \ref{upton2}.

\begin{rem}
\eqref{unifcvg} suggests that uniformity may also occur in probability for $K_n(\ell)$, meaning that the "for any $\ell$" in Theorem \ref{upton} could actually be placed  inside the probability. Unfortunately, this uniformity can not be obtained by the way of our proofs and we believe in fact that this is not true and that the right normalisation for $\max_{ \ell }N_n( \ell)$ could be different from $n/ (\log n)^3$.
\end{rem}

\vspace{0.5cm}
\noindent  The rest of the paper is organized as follows :

 \textit{In Section \ref{sec2}}  we use results of Sections  \ref{sec3} and \ref{sec4} to give the main steps of the proofs of  theorems and propositions stated in Section \ref{thms}. \textit{In Section \ref{sec3}} we focus on the environment and show Proposition \ref{cvgProp} and Corollary \ref{sumW}. This section is independent of the other sections and uses only the Appendix. \textit{In Section \ref{sec4}} we compute the annealed mean of $K_n$ and give an upper bound for the mean of the quenched variance. Also we prove lemmata used in section \ref{sec2} and finish with the proof of Proposition \ref{Prop1.6}. In the \textit{Appendix} we collect and prove many estimations for centered random walk with i.i.d increments.\\[1pt]

{In this paper, we use $c$ or $c'$ for constants which may change from line to line. We write $c(x)$ when that constant depends on some parameter $x$.}
\section{Proof of the theorems}
\label{sec2}

This section is devoted to proving Theorems \ref{uptoTn} and \ref{upton2}, i.e. the convergence in probability of $K_n(\ell)$. Theorem \ref{upton} follows immediately from Theorem \ref{uptoTn} and Proposition \ref{propHuShi}, so we feel free to omit its proof. Recall that for convenience, we fixe some $\gamma\in(0,\infty)$ and always write $\ell$ for the integer sequence $\{\ell(n); n\geq1\}$ such that 
\[
\lim_{n \rightarrow + \infty} {\ell(n)}/{ (\log n)^2}= \gamma.
\]

 Our arguments are based on the study of truncated versions of $K_n$. This decomposition of $K_n$ appears naturally when computing the  mean of $K_n$ as well as the mean of its quenched variance. We therefore start with this decomposition.

\subsection{Quenched expectation and truncated versions of $K_n(\ell)$ \label{Qexp}}
For any measurable event $C$ obtained from the environment, the number of visited sites at generation $\ell$ up to the $n$-th return to $\phi$ can be written as
\[
K_n(\ell)=K^C_n(\ell)+K^{C^c}_n(\ell)=\sum_{|z|=\ell} \un_{ \{ T_z<T_\phi^n \} }\un_{\{ z\in C\}}+\sum_{|z|=\ell} \un_{\{T_z<T_\phi^n \}}\un_{\{z\in C^c \}}.
\]
To exclude the sites in $C$ that make {few} contribution to $K_n$, we add restrictions for the potentials on the above sum.  First  (see \cite{Aid13}) for any $\varepsilon>0$, we can choose $\alpha>0$ such that
\begin{equation}\label{below}
\P\left(\inf_{u\in \T}V(u)<-\alpha \right)\leq e^{-\alpha}\leq \varepsilon.
\end{equation}
Let $\underline{V}(z):=\min_{\phi<y\leq z}V(y)$, it is then natural to consider the set 
\begin{equation*}
B_1:=\{z\in \T: \underline{V}(z)\geq-\alpha\}.
\end{equation*}
{Secondly}, in \cite{HuShi15}, a reflecting barrier is introduced by
\begin{equation*}
\mathcal{L}_r:=\Big\{z\in\T:\sum_{\phi<u\leq z}e^{V(u)-V(z)}>r, \max_{\phi <y< z}\sum_{\phi <u\leq y}e^{V(u)-V(y)}\leq r\Big\}\textrm{ with } r>0.
\end{equation*}
This reflecting barrier allows to reduce the number of interesting sites for the walk in the following sense : let $f$ be a positive increasing function such that $\lim_{n \rightarrow + \infty} f(n)= + \infty$, then
\begin{align} \lim_{n \rightarrow + \infty} \p\left( \exists k \leq T_{\phi}^n, X_k  \in \mathcal{L}_{ \frac{n f(n)}{\log n}} \right)=0. \label{theoHuShi} 
\end{align}
The above result is a direct consequence of Theorem 2.8 (in \cite{HuShi15}) together with Proposition 1.3. Following this idea, we introduce the following sets 
 \[ B_2:= \left\{z\in\T: \max_{\phi <y \leq z} \sum_{\phi<u\leq y}e^{V(u)-V(y)} \leq n \right \}=:\{z\in\T: z<\mathcal{L}_{n}\}, \]
 then according to \eqref{theoHuShi} 
\begin{align*} \lim_{n \rightarrow + \infty} \P\left( \forall k \leq T_{\phi}^n, X_k \in B_2 \right)=1. 
\end{align*}
Also, for any $\delta>0$ let $s_n:=n (\log n)^{-1-\delta} $ and
\[ B_2^\delta:= \left\{z\in\T: \max_{\phi <y \leq z} \sum_{\phi<u\leq y}e^{V(u)-V(y)} \leq s_n\right \}=\{z\in\T: z<\mathcal{L}_{s_n}\}.\]
We will see that for our specific problem, we can restrict the set $B_2$ to $B_2^{\delta}$ for well chosen $\delta$. For convenience, denote $B:=B_1 \cap B_2$ and $B^{\delta}:=B_1 \cap B_2^{\delta}$. Because of \eqref{below} and \eqref{theoHuShi}, one sees that with high probability, $K_n(\ell)\sim K_n^B(\ell)=\sum_{|z|=\ell}\un_{ \{T_z<T_\phi^n\} }\un_{\{z\in B\} }$. Moreover, if $z\in B_2^\delta$, we have $z\in L_\delta$ (recall the definition of  $L_\delta$ just above \ref{az}), and conversely, if $z\in L_{\delta+2}$, then $z\in B_2^{\delta}$.

Also we add  the last restriction over the values of $\overline{V}$: $U= \{z\in\T:\ \overline{V}(z)\geq \log n+\log\log n\}$. The following lemma shows that the cost of this restriction is negligible. 
\begin{lem}\label{addU}
\begin{equation}
\e\Big[K_n^{B\setminus U}(\ell)\Big]=o\left(\e[K^{B\cap U}_n(\ell)]\right)=o\left(\frac{n}{(\log n)^2}\right). \label{sansB3} 
\end{equation}
\end{lem}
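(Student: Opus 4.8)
The statement has two parts: an upper bound $\e[K_n^{B\setminus U}(\ell)]=o(n/(\log n)^2)$, and a matching lower bound $\e[K_n^{B\cap U}(\ell)]\gtrsim n/(\log n)^2$ (so that the first is genuinely $o$ of the second). Both rely on the ``many-to-one'' formula, which converts expectations of sums over generation $\ell$ of the tree into an expectation for a single centered random walk $(S_k)_{k\le \ell}$ with i.i.d.\ increments (the spine), weighted by $e^{V(z)}$; concretely $\e[\sum_{|z|=\ell}e^{-V(z)}\Phi(V(y),\,y\le z)]=\e[\Phi(S_1,\dots,S_\ell)]$. Since $\e[K_n^C(\ell)]=\e[\sum_{|z|=\ell}(1-(1-a_z)^n)\un_{z\in C}]$ with $a_z=p^\en(\phi,\pa\phi)(\sum_{\phi<y\le z}e^{V(y)})^{-1}$, and since on $B$ we have (using \eqref{naz} on $B\cap U$ and $1-(1-a_z)^n\le na_z$ always) $\e[K_n^C(\ell)]\asymp \e[\sum_{|z|=\ell}\min(na_z,1)\un_{z\in C}]$, the plan is: first write $\e[K_n^{B\setminus U}(\ell)]$ via many-to-one as
\[
\e\Big[\frac{e^{S_\ell}}{\sum_{j\le \ell}e^{S_j}}\,\big(1\wedge n p^\en(\phi,\pa\phi)(\textstyle\sum_{j\le \ell}e^{S_j})^{-1}\big)^{-1}\cdots\Big]
\]
— more carefully, $\e[\sum_{|z|=\ell}(1\wedge na_z)\un_{z\in B\setminus U}]=\e[e^{S_\ell}(1\wedge na_z(S))\un_{\cdots}]$ where $\un_{z\in B\setminus U}$ forces $\underline S_\ell\ge -\alpha$, $\overline S_\ell<\log n+\log\log n$, and the barrier constraint $\max_{y\le z}\sum e^{S_j-S_y}\le n$.

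The key point for the upper bound is that on $\{z\notin U\}$, i.e.\ $\overline S_\ell<\log n+\log\log n$, the spine stays too low for $a_z$ to be large: roughly $\sum_{j\le\ell}e^{S_j}\ge e^{\overline S_\ell}$ is only moderately large, so $1\wedge na_z$ does not gain the extra $\log n$ factor. I would split according to the value of $\overline S_\ell=\max_{j\le\ell}S_j=:M$. Using standard ballot-type / Brownian-meander estimates from the Appendix — the probability that a centered walk of length $\ell\sim\gamma(\log n)^2$ stays above $-\alpha$ and has maximum $\approx h$ is of order $\ell^{-3/2}$ times something like $h e^{-h^2/(2\sigma^2\ell)}\,(\cdots)$, with the last endpoint near the max — one gets $\e[e^{S_\ell}(1\wedge na_z)\un_{z\in B\setminus U}]$ controlled by a sum/integral over $h\le \log n+\log\log n$ of $n e^{-h}$ (coming from $na_z\le n e^{-\overline V(z)}$ when it is $\le1$) against the $\ell^{-3/2}$ fluctuation cost, and the endpoint weight $e^{S_\ell}\le e^{h}$. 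The exponentials $e^h$ and $ne^{-h}$ cancel, leaving $n\cdot\ell^{-3/2}\cdot(\text{sub-polynomial})$. Since $\ell\sim\gamma(\log n)^2$, $\ell^{-3/2}\sim(\log n)^{-3}$ up to constants, giving $\e[K_n^{B\setminus U}(\ell)]=O(n(\log n)^{-3+o(1)})=o(n/(\log n)^2)$. The barrier constraint $B_2$ and the lower constraint $B_1$ only help (they cost extra, or are automatically implied on the relevant range). For the lower bound $\e[K_n^{B\cap U}(\ell)]\gtrsim n/(\log n)^2$, one does the analogous many-to-one computation but now with $\overline S_\ell\ge\log n+\log\log n$ so that $na_z$ is (typically) $\ge1$ and $1\wedge na_z$ contributes a full $1$; restricting further to the ``typical'' range $\overline S_\ell\approx\log n$, $\underline S_\ell\ge-\alpha$, barrier satisfied, endpoint near the max, one finds a positive-proportion contribution of order $e^{S_\ell}\cdot\p(\text{these events})\asymp (\log n)\cdot(\log n)^{-3}=(\log n)^{-2}$, hence after summing and recalling the $e^{S_\ell}$ weight, $\e[K_n^{B\cap U}(\ell)]\asymp n/(\log n)^2$. (Alternatively this last estimate is already essentially \eqref{AnD} of \cite{AndDeb2}, up to the $B$-truncation, which I would invoke.)

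The main obstacle is the careful bookkeeping of the random-walk estimates in the regime $\ell\sim\gamma(\log n)^2$ with a moving barrier and a maximum constrained to be of order $\log n=\sqrt{\ell/\gamma}$, i.e.\ of order $\sqrt\ell$ — precisely the meander scaling. One must ensure the many-to-one weight $e^{S_\ell}$ against the meander tail $e^{-\overline S_\ell}$ (from $na_z\le n e^{-\overline S_\ell}\le 1$ on $B\setminus U$) combine to a net bounded quantity, uniformly over the truncation range, and that the barrier set $B_2$ (equivalently $z<\mathcal L_n$) does not secretly remove too much — here I would use the comparison $z\in L_{\delta+2}\Rightarrow z\in B_2^\delta$ noted just before the lemma, together with the reflecting-barrier estimates of the Appendix, to show the barrier constraint changes these expectations only by constants. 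Assembling the meander/ballot lemmas from the Appendix is routine once set up, but getting the exponents exactly right (the $-3/2$ in $\ell^{-3/2}$, and the sub-polynomial corrections staying $o((\log n))$) is the technical heart.
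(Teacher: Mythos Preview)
Your plan is essentially the paper's approach --- many-to-one plus a split on the value of $\overline V(z)$ --- but you are overcomplicating several steps that the paper handles much more cheaply.

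First, for the upper bound you do not need the barrier constraint $B_2$ at all: since you want an upper bound on $\e[K_n^{B\setminus U}(\ell)]$, simply drop $B_2$ and keep only $\underline V(z)\ge -\alpha$ and $\overline V(z)<\log n+\log\log n$. The paper does this immediately, so your worries about ``ensuring the barrier set $B_2$ does not secretly remove too much'' are unnecessary here.

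Second, the paper's split is sharper and avoids your integral over all $h$ with meander-type estimates. It cuts at a single threshold $\log n-3\log\log n$: on $\{\overline V(z)\le \log n-3\log\log n\}$ one uses the trivial bound $1\wedge na_z\le 1$, so many-to-one gives just
\[
\E\big[e^{S_\ell};\ \overline S_\ell\le \log n-3\log\log n\big]\le e^{\log n-3\log\log n}=\frac{n}{(\log n)^3},
\]
with no random-walk fluctuation estimate needed at all. Only on the narrow band $\log n-3\log\log n\le \overline V(z)\le \log n+\log\log n$ does one use $na_z\le ne^{-\overline V(z)}$ and then decompose by the first hitting time of $\overline S_\ell$, apply Markov, and invoke the Appendix estimates \eqref{mSMSSbd} and \eqref{eSMSbd} to get $O(n\log\log n/(\log n)^3)$. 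Your ``exponentials cancel'' heuristic is exactly right on this band, but the low-max part is handled by the one-line bound above rather than by meander asymptotics.

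Third, for the lower bound $\e[K_n^{B\cap U}(\ell)]\asymp n/(\log n)^2$, the paper does not redo a random-walk computation but refers to the first-moment estimate already established via the martingale-like quantity: $\e[K_n^{B\cap U}(\ell)]\asymp \E[\widetilde{\mathcal K}_n^{B\cap U}(\ell)]$ and then \eqref{expWF} (equivalently \eqref{expKBU}) gives $\Theta(n/\ell)=\Theta(n/(\log n)^2)$. Invoking \eqref{AnD} as you suggest would also work, but the cleanest route is this internal reference.
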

Our arguments will show indeed that $\e[K^{B\cap U}_n(\ell)]=\Theta(\frac{n}{(\log n)^2})$, so that the sites in $B\cap U$ mainly contribute. We postpone the proof of this lemma to Section \ref{Sec4.2}.

Here is our strategy to obtain Theorem \ref{upton}. We first show that for suitable $\delta>0$, with high probability, $K_n(\ell)\approx K_n^{B\cap U}(\ell)\approx K_n^{B^\delta\cap U}(\ell)$, while the last quantity can be approached by its quenched mean by bounding its quenched variance. This observation combined with the fact that the quenched mean $\mathcal{K}^{B^\delta\cap U}_n(\ell)$ converges in probability because of Proposition \ref{cvgProp}, imply our theorem.

We stress on the fact that replacement of $B$ by $B^\delta$ helps to correctly bound the quenched variance, it appears that the price of this replacement is negligible, as {shown} in the following Lemma: 

\begin{lem}\label{usesn}
For any $\delta>0$, we have
\begin{equation}
\e[K_n^{B\cap U}(\ell)-K_n^{B^\delta\cap U}(\ell)]=o \left(\frac{n}{(\log n)^2}\right). \label{C1}
\end{equation}
\end{lem}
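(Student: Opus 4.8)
\textbf{Proof proposal for Lemma \ref{usesn}.}

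The plan is to bound the annealed mean of the difference $K_n^{B\cap U}(\ell)-K_n^{B^\delta\cap U}(\ell)$ directly. Since $B^\delta\subseteq B$ (recall $B_2^\delta\subseteq B_2$ because $s_n\leq n$ for large $n$), the difference is nonnegative and equals $\sum_{|z|=\ell}\un_{\{T_z<T_\phi^n\}}\un_{\{z\in B\cap U\}}\un_{\{z\notin B_2^\delta\}}$. Taking the quenched expectation and using $\p^\en(T_z<T_\phi^n)\leq n\,a_z$ together with the simplification \eqref{naz}--\eqref{mathscK} valid on $U$, we get
\begin{align}
\e[K_n^{B\cap U}(\ell)-K_n^{B^\delta\cap U}(\ell)]\leq n\,\E\Big[\sum_{|z|=\ell}a_z\,\un_{\{z\in B_1\cap B_2\cap U,\ z\notin B_2^\delta\}}\Big].
\end{align}
The event $\{z\in B_2,\ z\notin B_2^\delta\}$ says that along the path to $z$ the running sum $\max_{\phi<y\leq z}\sum_{\phi<u\leq y}e^{V(u)-V(y)}$ lies in the window $(s_n,n]=(n(\log n)^{-1-\delta},n]$. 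First I would use $a_z=p^\en(\phi,\pa\phi)\big/\sum_{\phi<y\leq z}e^{V(y)}\leq e^{-\overline V(z)}$, and then the Many-to-One lemma (with the change of measure governed by $\psi(1)=\psi'(1)=0$) to rewrite the sum over generation $\ell$ as an expectation over a centered random walk $(S_k)_{k\leq\ell}$ with i.i.d.\ increments: schematically,
\begin{align}
\E\Big[\sum_{|z|=\ell}e^{-\overline V(z)}\un_{\{z\in B_1\cap U\}}\un_{\{z\in B_2\setminus B_2^\delta\}}\Big]=\E\Big[e^{-\overline S_\ell}\,\un_{\{\underline S_\ell\geq-\alpha,\ \overline S_\ell\geq\log n+\log\log n\}}\,\un_{\{\text{running sum}\in(s_n,n]\}}\Big],
\end{align}
where $\overline S_\ell,\underline S_\ell$ denote the running max and min of $S$.

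The key point is that the extra constraint ``running sum $>s_n$'' forces the relevant trajectories to have $\overline S_\ell - S_\ell$ (or an even more precise functional, namely $\log\sum_{u\leq y}e^{S_u-S_y}$) close to $\log n$ minus only about $(1+\delta)\log\log n$ on some prefix, i.e.\ at least $\log n-(1+\delta)\log\log n-O(1)$; this is exactly the statement that $z\notin L_{\delta+2}$ would be needed for $z$ to be in $B_2^\delta$, as remarked just before the lemma. So heuristically I would replace the running-sum window by the cleaner event that the ``depth'' $\max_{\phi<y\leq z}(\overline V(y)-V(y))$ lies in an interval of the form $[\log n-(1+\delta+2)\log\log n,\ \log n+O(1)]$ — a window of width $O(\log\log n)$ around $\log n$, rather than the full $[0,\log n+O(1)]$. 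This should be done using the two-sided comparison between $\sum_{u\leq y}e^{V(u)-V(y)}$ and $e^{\overline V(y)-V(y)}$ (valid up to a factor controlled by the number of terms, hence up to an additive $O(\log\ell)=O(\log\log n)$ in the exponent), which is why the bookkeeping absorbs $\log\log n$ factors so freely. Then I would invoke the Appendix estimates for centered random walks — precisely the kind of bound that controls $\E[e^{-\overline S_\ell}\un_{\overline S_\ell\in[x,x+h]}\un_{\cdots}]$ when $x\sim\log n$ and $\ell\sim\gamma(\log n)^2$ — to show that restricting $\overline S_\ell$-related quantities to a window of width $O(\log\log n)$ (instead of the full allowed range) costs a factor $o(1)$ relative to the full $\e[K_n^{B\cap U}(\ell)]=\Theta(n/(\log n)^2)$ computed in Lemma \ref{addU}. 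Concretely, the unrestricted sum is of order $n/(\log n)^2$ and carrying the window constraint produces an extra small factor, something like $\log\log n/\log n$ or a power thereof, giving the claimed $o(n/(\log n)^2)$.

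The main obstacle I anticipate is the first reduction: translating the analytically awkward event $\{z\in B_2\setminus B_2^\delta\}$, phrased in terms of the partial-sum functional $\sum_{\phi<u\leq y}e^{V(u)-V(y)}$, into a clean event about $\overline V(y)-V(y)$ to which the random-walk estimates of the Appendix apply. The two-sided bound $e^{\overline V(y)-V(y)}\leq\sum_{\phi<u\leq y}e^{V(u)-V(y)}\leq (|y|+1)e^{\overline V(y)-V(y)}$ does the job but one must check that on $B_1$ (where $\underline V\geq-\alpha$) and on the relevant generations the multiplicative error $(|y|+1)\leq c(\log n)^2$ only shifts thresholds by $O(\log\log n)$, which is harmless because $\delta$ can be taken as small as we like and we already allow windows of width $\Theta(\log\log n)$. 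Once that reduction is in place, the remaining work is a routine application of the random-walk lemmas collected in the Appendix, entirely parallel to the proof of Lemma \ref{addU}. \qed
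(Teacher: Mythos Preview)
Your reduction is right and matches the paper: bound the quenched mean by $n\sum_{|z|=\ell}a_z\un_{\{z\in(B\setminus B^\delta)\cap U\}}$, then use the two–sided comparison $e^{\overline V(y)-V(y)}\le \sum_{\phi<u\le y}e^{V(u)-V(y)}\le |y|\,e^{\overline V(y)-V(y)}$ to replace the awkward event $\{z\in B_2\setminus B_2^\delta\}$ by a constraint on the drawdown $\max_{y\le z}(\overline V(y)-V(y))$ lying in a window of width $O(\log\log n)$ near $\log n$. The paper does exactly this (obtaining $L_{\delta+2}\subset B_2^\delta\subset L_\delta$) and keeps the full $a_z$ rather than the cruder $e^{-\overline V(z)}$, but that is inessential.

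Where the paper diverges from your proposal is in the last step. Instead of appealing to a raw random–walk estimate ``of the kind that controls $\E[e^{-\overline S_\ell}\un_{\overline S_\ell\in[x,x+h]}\cdots]$'', the paper recognises the bound as a difference of two of the martingale–like objects $W_\ell^{(\alpha)}(F_{a,b})$:
\[
\widetilde{\mathcal K}_n^{(B\setminus B^\delta)\cap U}(\ell)\ \le\ \frac{n}{\sqrt\ell}\Big(W_\ell^{(\alpha)}(F_{\log n,\,\log n+\log\log n})-W_\ell^{(\alpha)}(F_{\log s_n-\log\ell,\,\log n+\log\log n})\Big),
\]
takes expectation via the change of measure \eqref{changeprobab}, and then invokes the first–moment convergence \eqref{F}: since both $\log n/\sqrt\ell$ and $(\log s_n-\log\ell)/\sqrt\ell$ tend to the \emph{same} limit $\gamma^{-1/2}$, both terms converge to $\Cb_{\gamma^{-1/2},\gamma^{-1/2}}$ and their difference is $o_n(1)$. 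Multiplying by $n/\ell=O(n/(\log n)^2)$ gives the result. In other words, the paper exploits the continuity in $a$ of the limit $\Cb_{a,b}$ already established in Lemma~\ref{mom}, rather than proving a new window estimate from scratch.

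Your direct route is viable in principle, but be aware that the constraint you must handle is on the \emph{drawdown} $\max_k(\overline S_k-S_k)$ lying in a narrow window, not on $\overline S_\ell$ itself; there is no off–the–shelf Appendix inequality for that, and carrying it through would essentially amount to reproving the continuity in $a$ of the first moment of $W_\ell(F_{a,b})$ --- i.e.\ redoing part of \eqref{F}. The paper's route is therefore both shorter and better integrated with the rest of the argument.
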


The next step is to approach $K_n^{B^\delta\cap U}(\ell)$ by its quenched mean $\mathcal{K}_n^{B^\delta\cap U}(\ell)$, or more conveniently by $\widetilde{\mathcal{K}}_n^{B^\delta\cap U}(\ell)=\sum_{|z|=\ell}na_z\un_{\{ z\in B^\delta\cap U\} }$. Notice indeed that, in view of \eqref{naz}, we have
\begin{equation}\label{nazK}
0\leq \widetilde{\mathcal{K}}_n^{B^\delta\cap U}(\ell)-\mathcal{K}_n^{B^\delta\cap U}(\ell)\leq \frac{1}{\log n} \widetilde{\mathcal{K}}_n^{B^\delta\cap U}(\ell).
\end{equation}
\begin{prop} \label{lemCentredK}
For any $\eta>0$ and $\delta>3$,
\begin{align}
\lim_{n \rightarrow +\infty} \p\left(|K_n^{B^\delta\cap U}(\ell)-\widetilde{\mathcal{K}}_n^{B^\delta\cap U}(\ell)|\geq \eta \frac{n}{(\log n)^2}\right) = 0. \label{CentredK}
\end{align}
\end{prop}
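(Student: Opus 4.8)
The natural route is a second‑moment (quenched Chebyshev) argument applied to the truncated variable $K_n^{B^\delta\cap U}(\ell)$, conditionally on the environment $\en$. Write $a_z=\p^\en_\phi(T_z<T_\phi)$ as in \eqref{az}, so that $\p^\en(T_z<T_\phi^n)=1-(1-a_z)^n$, and recall that on $U$ we have $na_z\le 1/\log n$, whence by \eqref{naz} the quenched mean $\mathcal K_n^{B^\delta\cap U}(\ell)$ and $\widetilde{\mathcal K}_n^{B^\delta\cap U}(\ell)=\sum_{|z|=\ell}na_z\un_{\{z\in B^\delta\cap U\}}$ differ only by a factor $1+O(1/\log n)$; combined with \eqref{nazK} this reduces \eqref{CentredK} to showing that $K_n^{B^\delta\cap U}(\ell)$ is close to its own quenched mean $\mathcal K_n^{B^\delta\cap U}(\ell)$. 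The first step is therefore to split
\[
\p\!\left(|K_n^{B^\delta\cap U}(\ell)-\widetilde{\mathcal K}_n^{B^\delta\cap U}(\ell)|\ge \eta\tfrac{n}{(\log n)^2}\right)
\le \p\!\left(\mathcal K_n^{B^\delta\cap U}(\ell)\ge \tfrac{\eta}{3}\tfrac{n}{(\log n)^2}\ \text{but Chebyshev fails}\right)+\p\!\left(\widetilde{\mathcal K}_n^{B^\delta\cap U}(\ell)< \tfrac{\eta}{2}\tfrac{n}{(\log n)^2}\right)+o(1),
\]
and handle the two pieces separately: on the event that the quenched mean is not too small, use Chebyshev; the complementary event will be controlled using the convergence in probability of the quenched mean coming from Proposition \ref{cvgProp} (via \eqref{KtoW}–\eqref{cvgqK}), which identifies $\ell\,\widetilde{\mathcal K}_n^{U\cap L_\delta}(\ell)/(np^\en(\phi,\pa\phi))$ with $\mathscr C_{\gamma^{-1/2},\gamma^{-1/2}}D_\infty>0$ under $\P^*$, so the quenched mean is of exact order $n/(\log n)^2$ with high probability.

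\textbf{The quenched variance.} The heart of the matter is to bound $\Ve\big(K_n^{B^\delta\cap U}(\ell)\big):=\e^\en\big[(K_n^{B^\delta\cap U}(\ell))^2\big]-(\mathcal K_n^{B^\delta\cap U}(\ell))^2$. Expanding the square one gets a diagonal term $\sum_{|z|=\ell}\p^\en(T_z<T_\phi^n)\un_{\{z\in B^\delta\cap U\}}=\mathcal K_n^{B^\delta\cap U}(\ell)$ and off‑diagonal terms $\sum_{z\neq z'}\big[\p^\en(T_z<T_\phi^n,\,T_{z'}<T_\phi^n)-\p^\en(T_z<T_\phi^n)\p^\en(T_{z'}<T_\phi^n)\big]$. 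For two vertices $z,z'$ at generation $\ell$ with most recent common ancestor $z\wedge z'$ at generation $j$, a standard excursion decomposition at the root and at $z\wedge z'$ (cf. the computations in \cite{AndDeb2}, \cite{HuShi15}) expresses the joint hitting probability through the conductances $a_z,a_{z'}$ and the one‑excursion probability of reaching $z$ (resp. $z'$) starting from $z\wedge z'$ before returning to $\phi$; the covariance then factorizes roughly as $C\,n\,a_{z\wedge z'}\cdot (\text{branch contributions})$, and summing over $z,z'$ with a fixed common ancestor and then over $j$ one obtains a bound of the form
\[
\Ve\big(K_n^{B^\delta\cap U}(\ell)\big)\ \le\ c\,\mathcal K_n^{B^\delta\cap U}(\ell)\ +\ c\,n\sum_{j<\ell}\ \sum_{|x|=j}a_x\Big(\sum_{|z|=\ell,\,z>x}\tfrac{e^{V(x)}}{\sum_{x<y\le z}e^{V(y)-V(x)}}\un_{\{z\in B^\delta\cap U\}}\Big)^2,
\]
the key point being that the restriction to $B^\delta$ (i.e. $z<\mathcal L_{s_n}$ with $s_n=n(\log n)^{-1-\delta}$) forces $\sum_{x<y\le z}e^{V(y)-V(x)}$ to be comparable to, not much larger than, the analogous sum from $\phi$, so that the conductances along the two branches below $x$ are genuinely small and the inner square is tamed.

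\textbf{Taking $\P^*$‑expectation.} Having an a.s. (quenched) bound on the variance is not enough; the crucial step — and the main obstacle — is to show that $\E^*\big[\Ve(K_n^{B^\delta\cap U}(\ell))\big]=o\big(n^2/(\log n)^4\big)$, so that $\E^*\big[\Ve(\cdot)/(\mathcal K_n(\cdot))^2\big]\to 0$ and Chebyshev closes the argument on the good event. This is exactly where the hypotheses come in: one uses the many‑to‑one lemma to turn $\E^*$ of sums over the tree into expectations for a centered random walk $S$ with i.i.d.\ increments (the Appendix estimates), the condition $\delta>3$ to make the reflecting‑barrier truncation $s_n=n(\log n)^{-1-\delta}$ win enough logarithmic factors, and the moment hypotheses \eqref{hyp0+}, \eqref{hyp1} (the latter being precisely what is needed to control second moments of $\sum_{|z|=1}(1+|V(u)|)e^{-V(u)}$ inside the branching‑property iteration, cf. Lemma \ref{mvar}). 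Concretely I would: (i) reduce by many‑to‑one to estimating, for a centered walk $S$, quantities like $\E\big[\sum_{j<\ell} e^{-S_j}(\cdots)\un_{\{\text{barrier}\}}\big]$; (ii) use ballot‑type / renewal estimates from the Appendix for the walk conditioned to stay below a moving barrier at height $\sim\log n$ over $\ell\sim\gamma(\log n)^2$ steps, which yields the extra $1/\log n$ gain relative to the diagonal term; (iii) collect powers of $\log n$ and check they beat $(\log n)^4$ given $\delta>3$. The delicate bookkeeping is entirely in step (ii): the barrier is at height $\log n$ but the walk runs for $(\log n)^2$ steps, so the relevant scaling is Brownian‑meander‑like and one must be careful that the $B^\delta$‑restriction (rather than the weaker $B_2$) is what converts a would‑be $O(n^2/(\log n)^3)$ bound into $o(n^2/(\log n)^4)$. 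Once $\E^*[\Ve]=o(n^2/(\log n)^4)$ is established, \eqref{CentredK} follows by combining Chebyshev on the good event with the lower bound $\mathcal K_n^{B^\delta\cap U}(\ell)\gtrsim n/(\log n)^2$ in probability from \eqref{cvgqK} and the positivity $\mathscr C_{\gamma^{-1/2},\gamma^{-1/2}}>0$.
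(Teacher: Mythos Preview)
Your approach is essentially the paper's: split off $|\mathcal K_n^{B^\delta\cap U}(\ell)-\widetilde{\mathcal K}_n^{B^\delta\cap U}(\ell)|$ via \eqref{nazK}, then apply quenched Chebyshev to $|K_n^{B^\delta\cap U}(\ell)-\mathcal K_n^{B^\delta\cap U}(\ell)|$ and control the annealed expectation of the quenched variance via Lemma \ref{mvar}.

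There is, however, an unnecessary complication in your plan. You do not need to condition on whether the quenched mean is large, nor to invoke Proposition \ref{cvgProp} or the positivity of $D_\infty$. The threshold in \eqref{CentredK} is the \emph{deterministic} quantity $\eta n/(\log n)^2$, so Chebyshev under $\p^\en$ gives directly
\[
\p^\en\Big(|K_n^{B^\delta\cap U}(\ell)-\mathcal K_n^{B^\delta\cap U}(\ell)|\ge \tfrac{\eta n}{2(\log n)^2}\Big)\le \frac{4(\log n)^4}{\eta^2 n^2}\,\Ve\big(K_n^{B^\delta\cap U}(\ell)\big),
\]
and taking $\E$ on both sides, Lemma \ref{mvar} finishes. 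For the remaining piece, Markov's inequality together with \eqref{nazK} and the first-moment bound $\E[\widetilde{\mathcal K}_n^{B^\delta\cap U}(\ell)]=O(n/(\log n)^2)$ (from \eqref{expWF}/\eqref{expKBU}) is enough. Your route through the ``good event'' $\{\mathcal K_n\gtrsim n/(\log n)^2\}$ would also work under $\p^*$, but the displayed inequality you wrote is not a valid set-theoretic decomposition as stated, and the detour is avoidable.

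One bookkeeping point: Lemma \ref{mvar} gives $\E[\Ve]\le c\,n^2(\log n)^{-\delta+1/2}$, so after multiplying by $(\log n)^4/n^2$ you need $\delta>9/2$, not merely $\delta>3$; the paper in fact takes $\delta=5$ at this step. Your claim that $\delta>3$ suffices to ``beat $(\log n)^4$'' would not go through with the variance bound as stated.
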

\noindent The proof of  this proposition can be found in Section \ref{Sec4.2}, now with these restrictions introduced, we are ready to prove the theorems.

\subsection{Convergence of $K_n(\ell)$ and $R_n$ \label{sec2.2}: proofs of Theorems \ref{uptoTn} and \ref{upton2}}
We are now ready to prove Theorem \ref{uptoTn}: it suffices to show that for any $\eta>0$,
\begin{equation}\label{cvgupton}
\lim_{n \rightarrow +\infty} \p\Big(\Big\vert \frac{(\log n)^2}{n}K_n(\ell)-\lambda(\gamma)p^\en(\phi,\overleftarrow{\phi})D_\infty\Big\vert\geq \eta\Big)= 0.
\end{equation}

\begin{proof}[Proof of \eqref{cvgupton}]Let ${\bf p}_n:=\p\Big(\Big\vert \frac{(\log n)^2}{n}K_n(\ell)-\lambda(\gamma)p^\en(\phi,\overleftarrow{\phi})D_\infty\Big\vert\geq \eta\Big)$. We first add the restrictions $B_1$ and $B_2$ ( recalling that $B=B_1\cap B_2$). For that let us introduce the events $\cB_1:=\{ \inf_{u\in \T}V(u) \geq -\alpha \}$, $\cB_2:=\{ \bigcap_{i=1}^n\{X_i <\mathcal{L}_{n}\} \}$ and for any $x>0$ and random variable $H$, $\cB_3(H,x):=\Big\{ \Big\vert \frac{(\log n)^2}{n}H -\lambda(\gamma)p^\en(\phi,\overleftarrow{\phi})D_\infty\Big\vert  \geq x \Big\}$. We have $
{\bf p}_n\leq \P\left( \overline{\cB}_1\right)  +\p\left(\overline{\cB}_2 \right)+\p(\cB_3(K_n^B(\ell),\eta ))$. That is to say using \eqref{theoHuShi}, $\limsup_{n\rightarrow\infty}{\bf p}_n \leq\P\left(\overline{\cB}_1 \right)  +\limsup_n\p(\cB_3(K_n^B(\ell),\eta ))$.
For the second term on the right hand side of the previous inequality, we involve the restrictions $B_2^\delta$ and $U$, it then follows from  \eqref{sansB3} and \eqref{C1}  that  $\limsup_{n\rightarrow\infty}{\bf p}_n \leq \P\left(\overline{\cB}_1  \right)  +\limsup_n\p(\cB_3(K_n^{B^\delta\cap U}(\ell),\eta/2)$. Then by Proposition \ref{lemCentredK}, we can use $\widetilde{\mathcal{K}}_n^{B^\delta\cap U}(\ell)$ to approach $K_n^{B^\delta\cap U}(\ell)$ and obtain that  $\limsup_{n\rightarrow\infty}{\bf p}_n \leq \P\left(\overline{\cB}_1\right)  +\limsup_n\P(\cB_3(\widetilde{\mathcal{K}}_n^{B^\delta\cap U}(\ell),\eta/4 ))$.
By releasing the restriction $B_1$, one gets that $\limsup_{n\rightarrow\infty}{\bf p}_n \leq 2\P\left(\overline{\cB}_1\right)  +\limsup_n\P(\cB_3(\widetilde{\mathcal{K}}_n^{B_2^\delta\cap U}(\ell),\eta/4 ) $.
Recall that by definition (see above \eqref{az}, and below \eqref{theoHuShi}) $L_{\delta+2}\subset B_2^\delta\subset L_{\delta}$ so clearly $\widetilde{\mathcal{K}}_n^{L_{\delta+2}\cap U}(\ell)\leq \widetilde{\mathcal{K}}_n^{B_2^\delta\cap U}(\ell)\leq \widetilde{\mathcal{K}}_n^{L_\delta\cap U}(\ell)$. So by \eqref{cvgqK} $\limsup_{n\rightarrow\infty}{\bf p}_n \leq 2\P( \overline{\cB}_1)$. Letting $\alpha\uparrow \infty$, we deduce \eqref{cvgupton} from \eqref{below}. 
\end{proof}
It remains to show the convergence of the range $R_n$, that is Theorem \ref{upton2}. As mentioned in Remark \ref{remR}, by Proposition \ref{propHuShi}, we only need to prove that 
\begin{equation}\label{uptoTn2}
\frac{R_{T_\phi^n}}{n}\xrightarrow{\textrm{ in }\p^*} \Lambda p^{\en}(\phi,\pa{\phi})D_\infty, 
\end{equation}
with $R_{T_\phi^n}=\sum_{m=0}^\infty K_n(m)$. First, we claim that only the critical generations really count in this sum, and that the truncated version of $(K_n(m),m)$ make the main contribution :
{
\begin{lem}\label{othergenerations}
We have
\begin{equation}
\lim_{\epsilon\downarrow 0}\limsup_{n\rightarrow\infty}\frac{1}{n}\Big\{\e\Big[\sum_{m\leq \epsilon(\log n)^2}K_n^B(m)\Big]+\e\Big[\sum_{m\geq (\log n)^2/\epsilon}K_n^B(m)\Big]\Big\}=0, \label{outside}
\end{equation}
and for any $\epsilon>0$,
\begin{align}
\lim_{n\rightarrow\infty}\frac{1}{n}\Big\{ \e \Big[\sum_{m=\varepsilon(\log n)^2}^{(\log n)^2/\varepsilon}K_n^{B\setminus U}(m)\Big]+\e\Big[\sum_{m=\varepsilon(\log n)^2}^{(\log n)^2/\varepsilon} K_n^{(B\cap U)\setminus (B^\delta\cap U)}(m)\Big] \Big\}=0. \label{inside}
\end{align}
\end{lem}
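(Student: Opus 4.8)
The plan is to compute the annealed mean $\e[K_n^C(m)]$ for the relevant events $C$ and then sum over the generations $m$ in each specified range, showing the total is $o(n)$. Recall from \eqref{mathscK}--\eqref{cvprobaK} and the discussion of \eqref{az} that for an environment-event $C$ contained in $U$ one has $\e[K_n^C(m)] = \e\big[\sum_{|z|=m}(1-(1-a_z)^n)\un_{\{z\in C\}}\big]$ with $a_z = p^\en(\phi,\pa\phi)/\sum_{\phi<y\le z}e^{V(y)}\le e^{-\overline V(z)}$; in general $1-(1-a_z)^n\le na_z\wedge 1$. The first move is therefore to reduce everything to controlling $\e\big[\sum_{|z|=m} (na_z\wedge 1)\un_{\{z\in C\}}\big]$, and then to pass to the size-biased measure: by the usual many-to-one lemma for branching random walks in the boundary case, $\e\big[\sum_{|z|=m}e^{-V(z)}G(V(y),\,y\le z)\big]=\E[G(S_1,\dots,S_m)]$ where $(S_k)$ is a mean-zero random walk with increments of finite variance $\sigma^2$ (using \eqref{hyp0}--\eqref{hyp1}). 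Writing $na_z\wedge 1 = e^{-V(z)}\cdot(na_z\wedge 1)e^{V(z)}$ and absorbing indicators, each sum becomes an expectation over the centred walk $S$, to be estimated with the ballot-type / renewal estimates collected in the Appendix.

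For \eqref{outside}: on $B\subset B_2$, every $z$ satisfies $\sum_{\phi<u\le y}e^{V(u)-V(y)}\le n$ for all $y\le z$, which forces $\overline V(y)-V(y)\le \log n$ along the path; together with $\underline V(z)\ge-\alpha$ from $B_1$ this gives $\sum_{\phi<y\le z}e^{V(y)}\le m\,e^{\overline V(z)}$ and $\overline V(z)\ge \underline V(z)\ge -\alpha$, hence $na_z\wedge 1\le (c\, n e^{-\overline V(z)}/m)\wedge 1$. One then estimates $\e[K_n^B(m)]\le c\,\E\big[(n e^{-\MS_m}/m)\wedge e^{\MS_m - S_m}; \,\MS_m\ge -\alpha\big]$ type quantities (with $\MS_m=\max_{k\le m}S_k$), and the key point, provided by the Appendix estimates, is that for $m$ very small ($\le\epsilon(\log n)^2$) or very large ($\ge(\log n)^2/\epsilon$) compared to $(\log n)^2$ the typical value of $\MS_m$ is of order $\sqrt m\ll\log n$ resp.\ $\gg\log n$, so the truncation at $1$ resp.\ the exponential decay makes $\e[K_n^B(m)]=o(n/m)$ uniformly, with a summable-in-$m$ error; summing the geometric-like tails over the two ranges and then letting $\epsilon\downarrow0$ gives \eqref{outside}. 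This is essentially the content already asserted in \eqref{AnD} of Andreoletti--Debs, reproved here in the form needed.

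For \eqref{inside}: the first sum $\sum_m K_n^{B\setminus U}(m)$ restricts to $\overline V(z)<\log n+\log\log n$, which by the bound above gives $na_z\wedge 1\ge c\, n e^{-\overline V(z)}/m\ge c/(m\log n)$ on the relevant scale but more usefully forces $e^{\overline V(z)}/\sum_{\phi<y\le z}e^{V(y)}\le 1$ and $\overline V(z)\le\log n+\log\log n$, so after the many-to-one change of measure one is estimating $\E[\,\MS_m < \log n + \log\log n;\ \ldots\,]$ against a window that is {\it too low}; the ballot estimate shows the probability of a meander-type path staying that low is $O(\log\log n/\log n)$ relative to the full $n/m$, so each generation contributes $o(n/(m\log n))$ and summing over $m\in[\epsilon(\log n)^2,(\log n)^2/\epsilon]$ (about $(\log n)^2/\epsilon$ terms, each $O(1/m)$) yields a total $o(n)$ — this is exactly Lemma \ref{addU} summed. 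The second sum $\sum_m K_n^{(B\cap U)\setminus(B^\delta\cap U)}(m)$ restricts to sites with $s_n<\max_{y\le z}\sum_{\phi<u\le y}e^{V(u)-V(y)}\le n$, i.e.\ $\max_{y\le z}(\overline V(y)-V(y))\in(\log n-(1+\delta)\log\log n,\ \log n+O(1))$ roughly; the many-to-one expectation then involves the probability that the centred walk $S$ makes a ``deep dip'' of depth between $\log n-(1+\delta)\log\log n$ and $\log n$ below its running max, which again is $O((\log\log n)/\log n)$ smaller than the unrestricted event, so the same summation over the $O_\epsilon((\log n)^2)$ critical generations gives $o(n)$ — this is Lemma \ref{usesn} summed. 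The main obstacle is the uniformity in $m$ across the whole critical band: one must have the ballot/renewal estimates of the Appendix with explicit enough constants (polynomial in $m/(\log n)^2$) that the per-generation bound $o(n/m)$ can be summed over $m$ up to $(\log n)^2/\epsilon$ without the accumulated logarithmic factors destroying the $o(n)$; handling the two ``tails'' $m\le\epsilon(\log n)^2$ and $m\ge(\log n)^2/\epsilon$ in \eqref{outside} requires a genuinely different estimate (small-time Gaussian behaviour vs.\ large-deviation/exponential decay of $e^{-\MS_m}$) and that dichotomy is where the care is needed.
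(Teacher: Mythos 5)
Your overall strategy (express each $\e[K_n^C(m)]$ via the annealed bound $na_z\wedge 1$, transfer to the size-biased walk by many-to-one, then use ballot-type estimates and sum over $m$) is the same as the paper's. However, there are two concrete problems.

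First, the inequality you use to control $a_z$ is reversed. You write that on $B$ one has $\sum_{\phi<y\le z}e^{V(y)}\le m\,e^{\overline V(z)}$ and conclude $na_z\wedge 1\le (c\,n e^{-\overline V(z)}/m)\wedge 1$. An upper bound on the sum gives a \emph{lower} bound on $a_z$, not an upper bound. The usable inequality is $\sum_{\phi<y\le z}e^{V(y)}\ge e^{\overline V(z)}$, i.e.\ $a_z\le e^{-\overline V(z)}$, with no $1/m$ improvement. The extra decay in $m$ that makes the sums over generations small does not come from this algebraic bound; it comes from the random-walk tail estimates (the $m^{-3/2}$ of \eqref{eSMSmSbd}, the $m^{-7/6}+\frac{1}{m}e^{-c'm/(\log n)^2}$ of \eqref{eSMSMMSmSbd}, and the $m^{-1/2}$ of \eqref{eSMSMSmSbd}), which is exactly what the paper applies after splitting $R_I/R_{II}$ and $R'_I/R'_{II}$ according to whether $\overline V(z)$ is above or below a threshold of order $\log n$. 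Your sketch gestures at this dichotomy correctly, but the $1/m$ factor you inserted is spurious and the estimate would not close as stated.

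Second, for the second sum in \eqref{inside} you propose to ``sum Lemma \ref{usesn} over $m$'', attributing a per-generation gain of $O(\log\log n/\log n)$. Lemma \ref{usesn} provides only $\e[K_n^{(B\cap U)\setminus(B^\delta\cap U)}(\ell)]=o(n/(\log n)^2)$ without a rate, so it cannot be summed over $\Theta_\epsilon((\log n)^2)$ generations directly. The paper handles this by a different argument: it rewrites $\E[\widetilde{\mathcal K}_n^{(B\cap U)\setminus(B^\delta\cap U)}(m)]$ via \eqref{KlessF} as a \emph{difference} of two first-moment quantities $\frac{n}{\sqrt m}\Ren(\alpha)\bigl(\E_{\Q^{(\alpha)}}[\cdot F_{\beta,\beta}\cdot]-\E_{\Q^{(\alpha)}}[\cdot F_{\beta-c_1\log\beta,\beta}\cdot]\bigr)$, then uses the uniform-in-$(a,b)$ form of the convergence \eqref{F} (the same mechanism as Corollary \ref{sumW}) to show that both partial sums over $m\in[\epsilon\beta^2/2,\beta^2/\epsilon]$ are Riemann sums converging to the \emph{same} integral $\int_{\epsilon/2}^{1/\epsilon}\Cb_{\gamma^{-1/2},\gamma^{-1/2}}\frac{d\gamma}{\gamma}$, so the difference is $o_\beta(1)$. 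Your ``deep dip has probability $O(\log\log n/\log n)$'' heuristic is plausible, but it is not established anywhere in the paper with the needed uniformity, and without such a quantitative rate the summation step is a genuine gap.
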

}
The proof of this Lemma is postponed in Section \ref{Sec4.3.4}.  Notice here that Proposition \ref{noncritical} follows from \eqref{outside} and Proposition \ref{propHuShi}. As non-critical generations are negligible, we can borrow the previous arguments for $K_n(\ell)$ to show the convergence for $R_{T^n_\phi}$.

\begin{proof}[Proof of Theorem \ref{upton2} (i.e. \eqref{uptoTn2})]
For any $\eta>0$, let us consider $\p\Big(|R_{T_\phi^n}-\Lambda p^\en(\phi,\overleftarrow{\phi})D_\infty n|\geq \eta n\Big)$. Considering restrictions $B_1$ and $B_2$, one sees that for any $\alpha>0$,
\begin{align*}
\p\Big(|R_{T_\phi^n}-\Lambda p^\en(\phi,\overleftarrow{\phi})D_\infty n|\geq \eta n\Big)\leq \p(\overline \cB_1 )+\p(\overline \cB_2 )+\p\Big(\Big\vert \sum_{m=0}^\infty K_n^B(m)-\Lambda p^\en(\phi,\overleftarrow{\phi})D_\infty n\Big\vert\geq \eta n \Big).
\end{align*}
By \eqref{below} and \eqref{theoHuShi},
\begin{multline}\label{addB}
\limsup_{n\rightarrow\infty}\p\Big(|R_{T_\phi^n}-\Lambda p^\en(\phi,\overleftarrow{\phi})D_\infty n|\geq \eta n\Big)\leq e^{-\alpha}+\\
\limsup_{n\rightarrow\infty}\p\left(\Big\vert \sum_{m=0}^\infty K_n^B(m)-\Lambda p^\en(\phi,\overleftarrow{\phi})D_\infty n\Big\vert\geq \eta n \right).
\end{multline}
For the $K_n(m)$, we only need to consider the generations $m$ of order $(\log n)^2$. For any $\varepsilon>0$, define for any $x>0$ and random variables $(H(m),m \geq 0)$, $\cB_4(H,x):=\Big\{ \vert \sum_{m=\varepsilon(\log n)^2}^{(\log n)^2/\varepsilon} H(m)-\Lambda p^\en(\phi,\overleftarrow{\phi})D_\infty n\Big\vert\geq  x n \Big\}$, we have
\begin{align*}
&\p\left(\Big\vert \sum_{m=1}^\infty K_n^B(m)-\Lambda p^\en(\phi,\overleftarrow{\phi})D_\infty n\Big\vert\geq \eta n \right) 
\leq  \p\Big(\sum_{\substack{m\geq (\log n)^2/\varepsilon,\\\textrm{ or }m\leq \varepsilon(\log n)^2}}K_n^B(m)\geq \eta n/2\Big)+\p(\cB_4( K_n^B, \eta/2) ),
\end{align*}
{where the first probability on the right hand side is negligible because of \eqref{outside}}. For the second probability, we consider only the sites $z\in B^\delta\cap U$ and obtain that
\begin{align*}
\p  (\cB_4( K_n^B(m), \eta/2) ) 
& \leq \p\Big(\Big\vert\sum_{m=\varepsilon(\log n)^2}^{(\log n)^2/\varepsilon}K_n^{B\setminus U}(m)\Big\vert\geq \eta n/6\Big)+\p\Big(\Big\vert\sum_{m=\varepsilon(\log n)^2}^{(\log n)^2/\varepsilon} K_n^{(B\cap U)\setminus (B^\delta\cap U)}(m)\Big\vert\geq \eta n/6\Big)\\
& +\p\left(\cB_4( K_n^{B^\delta\cap U}, \eta/6) \right).
\end{align*}
In view of \eqref{inside} together with \eqref{outside}, we obtain that
\begin{multline}\label{sumell}
\limsup_{n\rightarrow\infty}\p\left(\Big\vert \sum_{m=0}^\infty K_n^B(m)-\Lambda p^\en(\phi,\overleftarrow{\phi})D_\infty n\Big\vert\geq \eta n \right)\leq o_\epsilon(1)+\limsup_{n\rightarrow\infty}\p(\cB_4( K_n^{B^\delta\cap U}, \eta/6) ).
\end{multline}
It remains to bound the second term on the right hand side. Recall that the quenched expectation of $K_n^{B^\delta\cap U}(m)$ is denoted $\mathcal{K}_n^{B^\delta\cap U}(m)$, introducing the variable $\Delta_n(H,G):=\sum_{m=\varepsilon(\log n)^2}^{(\log n)^2/\varepsilon}  \Big\vert H(m)-G(m)\Big\vert$ for any sequences $(H(m),G(m), m\geq 0)$, we can write 
\begin{align}\label{rhsR}
\p(\cB_4( K_n^{B^\delta\cap U}, \eta/6)) 
\leq \p(\Delta_n( K_n^{B^\delta\cap U},\mathcal{K}_n^{B^\delta\cap U}) \vert\geq {\eta n}/{12} )
+\P(\cB_4( \mathcal{K}_n^{B^\delta\cap U}, {\eta}/{12} ) ). 
\end{align}
First, by Markov inequality, $\p\left(\Delta_n( K_n^{B^\delta\cap U},\mathcal{K}_n^{B^\delta\cap U}) \vert\geq {\eta n}/{12}   \right)\leq {144}({\eta^2n^2})^{-1}\e\left[\Big(\Delta_n( K_n^{B^\delta\cap U},\mathcal{K}_n^{B^\delta\cap U})\Big)^2\right]$, which by Cauchy-Schwartz inequality is bounded by {
\[
\frac{144}{\eta^2 n^2} \sum_{m=\varepsilon(\log n)^2}^{(\log n)^2/\varepsilon} 1\sum_{m=\varepsilon(\log n)^2}^{(\log n)^2/\varepsilon}\E\left(\V^\mathcal{E}(K_n^{B^\delta\cap U}(m))\right).
\] }
Applying Lemma \ref{mvar} with $\delta>5$ to this term implies that
\begin{equation}\label{rhsR1}
\limsup_{n\rightarrow\infty}\p\left(\Delta_n( K_n^{B^\delta\cap U},\mathcal{K}_n^{B^\delta\cap U}) \geq \frac{\eta n}{12}\right)=0.
\end{equation}
Second, by {replacing} $\mathcal{K}_n^{B^\delta\cap U}$ by $\widetilde{\mathcal{K}}_n^{B^\delta\cap U}$ {(recall the definition of $\widetilde{\mathcal{K}}$  in \eqref{mathscK})}, one sees that
\begin{align*}
\P(\cB_4( \mathcal{K}_n^{B^\delta\cap U}, {\eta}/{12} ) ) 
\leq &\P\left( \Delta_n( \mathcal{K}_n^{B^\delta\cap U}(m),\widetilde{\mathcal{K}}_n^{B^\delta\cap U}) \geq {\eta n}/{24}\right)
+\P(\cB_4( \widetilde{\mathcal{K}}_n^{B^\delta\cap U}, {\eta}/{24} ) ) \\
\leq& \P\left( \sum_{m=\varepsilon(\log n)^2}^{(\log n)^2/\varepsilon}\frac{1}{\log n}\widetilde{\mathcal{K}}_n^{B^\delta\cap U}(m)\geq \frac{\eta n}{24}\right)
+\P(\cB_4( \widetilde{\mathcal{K}}_n^{B^\delta\cap U}, {\eta}/{24} ) ), \\
 =: & RH_1
+\P(\cB_4( \widetilde{\mathcal{K}}_n^{B^\delta\cap U}, {\eta}/{24} ) )
\end{align*}
where the last inequality follows from \eqref{nazK}. Plugging this inequality and \eqref{rhsR1} into \eqref{rhsR} yields 
\begin{align*}
\p(\cB_4( K_n^{B^\delta\cap U}(m), \eta/6)) \leq
o_n(1)+RH_1+\P\left(\cB_4( \widetilde{\mathcal{K}}_n^{B^\delta\cap U}, {\eta}/{24})\right).
\end{align*}
Now, observe that
\begin{align*}
RH_1 \leq \P\left(\cB_4( \widetilde{\mathcal{K}}_n^{B^\delta\cap U}, {\eta}/{24}) \right)+\P\left(\Lambda p^\en(\phi,\overleftarrow{\phi})D_\infty n \geq \eta n (\log n-1)/24 \right),
\end{align*}
where the second probability on the right hand side vanishes as $n\rightarrow\infty$ because $p^\en(\phi,\overleftarrow{\phi})D_\infty$ is finite $\P$-a.s. So {moving back} to \eqref{sumell}, we deduce that
\begin{multline}\label{sumellB}
\limsup_{n\rightarrow\infty}\p\left(\Big\vert \sum_{m=1}^\infty K_n^B(m)-\Lambda p^\en(\phi,\overleftarrow{\phi})D_\infty n\Big\vert\geq \eta n \right)\leq o_\epsilon(1)+2\P\left(\cB_4( \widetilde{\mathcal{K}}_n^{B^\delta\cap U}, {\eta}/{24})\right). \\
\end{multline}
So in view of \eqref{addB} and \eqref{sumellB}, we have
\begin{align}\label{addB+}
\limsup_{n\rightarrow\infty}\p\Big(|R_{T_\phi^n}-\Lambda p^\en(\phi,\overleftarrow{\phi})D_\infty n|\geq \eta n\Big)
\leq e^{-\alpha}+o_\varepsilon(1)+2\P\left(\cB_4( \widetilde{\mathcal{K}}_n^{B^\delta\cap U}, {\eta}/{24})\right).
\end{align}
We claim here that
\begin{equation}\label{cvgsumK}
\limsup_{\epsilon\downarrow0}\limsup_{n\rightarrow\infty}2\P\left(\cB_4( \widetilde{\mathcal{K}}_n^{B^\delta\cap U}, {\eta}/{24})\right) \leq e^{-\alpha}, 
\end{equation}
which, together with \eqref{addB+} concludes the convergence in probability of $R_{T_\phi^n}/n$ by letting $\alpha\rightarrow\infty$ and $\epsilon \rightarrow 0$. It remains to show \eqref{cvgsumK}. We observe that on $\{\inf_{u\in\T}V(u)\geq-\alpha\}$, $\widetilde{\mathcal{K}}_n^{B^\delta\cap U}(m)=\widetilde{\mathcal{K}}_n^{ B_2^\delta\cap U}(m)$ for any $m\geq0$, hence
 $\P\left(\cB_4( \widetilde{\mathcal{K}}_n^{B^\delta\cap U}, {\eta}/{24})\right)  \leq\P(\overline \cB_1 )+\P\left(\cB_4( \widetilde{\mathcal{K}}_n^{B_2^\delta\cap U}, {\eta}/{48})\right)$ 
where the first term on the right hand side is bounded by $e^{-\alpha}$. So again by Lemma \ref{othergenerations}, we have
\begin{multline*}
\P\left(\cB_4( \widetilde{\mathcal{K}}_n^{B^\delta\cap U}(m), {\eta}/{24})\right) 
\leq e^{-\alpha}+o_{n,\varepsilon}(1)+
\P\left(\Big\vert\sum_{m=1}^\infty \widetilde{\mathcal{K}}_n^{B_2^\delta\cap U}(m)-\Lambda p^\en(\phi,\pa{\phi})D_\infty n\Big\vert\geq \eta n/96\right).
\end{multline*}
 Recall that $L_{\delta+2}\subset B_2^\delta\subset L_\delta$, by \eqref{KtoW}, we have
 \begin{equation}\label{KtoW+}
\frac{W_m(F_{\log n-(3+\delta)\log\log n, \log n+\log\log n})}{\sqrt{m}}\leq  \frac{\widetilde{\mathcal{K}}_n^{B_2^\delta\cap U}(m)}{np^\en(\phi,\pa{\phi})}\leq \frac{W_m(F_{\log n-(1+\delta)\log\log n, \log n+\log\log n})}{\sqrt{m}}.
 \end{equation}
 Finally, \eqref{cvgsumK} follows immediately from Corollary \ref{sumW} .
\end{proof} 

\section{Convergence of martingale-like variables $(W_m(F_{a_m, b_m}),\ m \geq 1)$}
\label{sec3}
This section is devoted to proving Proposition \ref{cvgProp} and Corollary \ref{sumW} which only concern the environment. The main idea is borrowed from \cite{AidekonShi09}, on the Seneta-Heyde norm of the additive martingale $W_m$ in the boundary case \eqref{hyp0}. To do so, we need to introduce a change of measure and the corresponding spinal decomposition. 

\subsection{Lyons' change of measures and spinal decomposition}

We begin with  the following Biggins-Kyprianou \cite{BigginsKyprianou} identity usually called many-to-one Lemma :
\begin{lem}
In the boundary case \eqref{hyp0}, there exists a sequence of i.i.d. real-valued random variables $(S_i-S_{i-1}, i\geq0)$ with $S_0=0$ such that for any $n\geq1$ and any Borel function $g: \r^n\rightarrow\r_+$,
\begin{equation}
\E\left[\sum_{|x|=n}g\Big(V(x_i), 1\leq i\leq m\Big)\right]=\E\Big[e^{S_n}g(S_i; 1\leq i\leq n)\Big]. \label{M2O}
\end{equation}
\end{lem}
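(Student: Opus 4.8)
The plan is to construct $S$ by an exponential change of measure and then verify \eqref{M2O} by induction on $n$. The boundary hypothesis \eqref{hyp0} gives $\psi(1)=0$, i.e. $\E\big[\sum_{|z|=1}e^{-V(z)}\big]=1$, so
\[
\P\big(S_1\in dx\big):=\E\Big[\sum_{|z|=1}e^{-V(z)}\un_{\{V(z)\in dx\}}\Big]
\]
is a bona fide probability measure on $\r$; let $(S_i-S_{i-1})_{i\ge 1}$ be i.i.d. with this common law and $S_0=0$. (Observe $\psi'(1)=0$ forces $\E[S_1]=0$, i.e. $S$ is a centered random walk — essential for the Brownian rescaling used later but not for the present identity; and $\E[\sum_{|z|=1}e^{-V(z)}]=1$ forces $\sum_{|z|=1}e^{-V(z)}<\infty$ a.s., so the possibility $N=\infty$ causes no difficulty and no truncation is needed.) The case $n=1$ is then immediate: for every Borel $h\ge 0$,
\[
\E\big[e^{S_1}h(S_1)\big]=\int_\r e^{x}h(x)\,\P(S_1\in dx)=\E\Big[\sum_{|z|=1}e^{-V(z)}e^{V(z)}h(V(z))\Big]=\E\Big[\sum_{|z|=1}h(V(z))\Big].
\]

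For the inductive step, assume \eqref{M2O} holds at level $n$ for every Borel $g\ge 0$, and fix $g:\r^{n+1}\to\r_+$. First I would split the sum over $|x|=n+1$ according to the generation-$1$ ancestor $x_1=u$: writing $x=uy$ with $y$ the image of $x$ at generation $n$ of the subtree rooted at $u$, one has $V(x_1)=V(u)$ and $V(x_{i+1})=V(u)+V^{(u)}(y_i)$ for $1\le i\le n$, where, conditionally on the first generation $\{V(u):|u|=1\}$, the marked subtrees $V^{(u)}$ are independent copies of $V$. By the branching property and Tonelli (all integrands are nonnegative),
\[
\E\Big[\sum_{|x|=n+1}g\big(V(x_i),1\le i\le n+1\big)\Big]=\E\Big[\sum_{|u|=1}G_n\big(V(u)\big)\Big],\qquad G_n(a):=\E\Big[\sum_{|y|=n}g\big(a,\,a+V(y_i),\,1\le i\le n\big)\Big].
\]
Applying the already-proved $n=1$ identity to the function $G_n$ turns the right-hand side into $\E[e^{S_1}G_n(S_1)]$, while applying the induction hypothesis at level $n$ to the Borel function $(t_1,\dots,t_n)\mapsto g(a,a+t_1,\dots,a+t_n)$ gives $G_n(a)=\E\big[e^{S_n}g(a,a+S_i,1\le i\le n)\big]$ for each fixed $a$. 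Combining these, with $(S_i')_{i\ge 0}$ an independent copy of the walk,
\[
\E\Big[\sum_{|x|=n+1}g\Big]=\E\Big[e^{S_1}\,e^{S_n'}\,g\big(S_1,\,S_1+S_i',\,1\le i\le n\big)\Big]=\E\big[e^{S_{n+1}}g(S_i;\,1\le i\le n+1)\big],
\]
the last equality because $(0,S_1,S_1+S_1',\dots,S_1+S_n')$ is a random walk with precisely the prescribed i.i.d. increments, so jointly $(S_1,S_1+S_1',\dots,S_1+S_n')\overset{d}{=}(S_1,\dots,S_{n+1})$ and $e^{S_1}e^{S_n'}$ becomes $e^{S_{n+1}}$ under this coupling. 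This closes the induction and proves \eqref{M2O}.

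The only genuinely delicate point I anticipate is the bookkeeping in the branching decomposition: making rigorous that, conditionally on the first generation, the shifted subtree environments are i.i.d. copies of $V$, and that the resulting iterated expectation is exactly $\E[e^{S_1}G_n(S_1)]$ via the one-step identity applied to $G_n$. Everything else — the interchange of sum and expectation, and the reconstitution of the random walk $S$ from its increments after the shift — is routine once nonnegativity of $g$ is used to justify Tonelli, and no integrability beyond $\psi(1)=0$ enters.
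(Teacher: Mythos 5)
Your proof is correct, and it is the standard derivation of the many-to-one formula: define the step law by the size-biased tilting $\P(S_1\in dx)=\E\big[\sum_{|z|=1}e^{-V(z)}\un_{\{V(z)\in dx\}}\big]$ (a probability measure precisely because $\psi(1)=0$), verify the $n=1$ case by direct computation, then induct via the branching property over the first generation. The paper itself does not reprove this lemma; it states it as the Biggins--Kyprianou identity with a citation, so there is no in-paper argument to compare against, but the inductive route you take is exactly the canonical proof found in the cited literature, and the observations you make in passing ($\psi'(1)=0\Rightarrow\E[S_1]=0$, and $\psi(1)=0\Rightarrow\sum_{|z|=1}e^{-V(z)}<\infty$ a.s. so $N=\infty$ is harmless) are accurate. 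One small point worth spelling out if you write this up carefully: you should note that $a\mapsto G_n(a)$ is Borel measurable, which follows from Tonelli once $g\ge0$ and the induction hypothesis give $G_n(a)=\E\big[e^{S_n}g(a,a+S_1,\dots,a+S_n)\big]$; otherwise the application of the one-step identity to $G_n$ would not be justified.
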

It immediately follows from \eqref{hyp0} and \eqref{hyp0+} that the sequence $(S_n, n\geq0)$ is a centered random walk of finite variance $\sigma^2:=\E[\sum_{|z|=1}V(z)^2e^{-V(z)}]$. For notational simplicity, let 
\[
\mS_n:=\min_{1\leq i\leq n}S_i,\ \MS_n:=\max_{1\leq i\leq n}S_i.
\]
Also let $\Ren(\cdot)$ be the renewal function associated with the strict descending ladder heights of $(S_i,i)$, it can be expressed as
\begin{equation}\label{R}
\Ren(u)=\sum_{k=0}^\infty\P(S_k<\mS_{k-1}, S_k\geq-u), \ \forall u\geq0.
\end{equation}
Obviously, $\Ren(u)\geq\Ren(0)=1$. The renewal theorem implies the existence of $\bf{c}_0 \in (0,+ \infty)$ such that  
\begin{align}\label{c0}
\cb_0:= \lim_{u \rightarrow +\infty} \frac{\Ren(u)}{u}.
\end{align}
Moreover, there exist two constants $0<C_-<C_+<\infty$ such that for any $u\geq0$,
\begin{equation}\label{Ren}
C_-(1+u)\leq \Ren(u)\leq C_+(1+u).
\end{equation}

For $\alpha>0$, define the truncated variables adapted to $\{\mathcal{F}_n:=\sigma((z, V(z)); |z|\leq n); n\geq0\}$, the natural filtration of the branching random walk, for any $n\geq0$ :
\begin{align*}
W_n^{(\alpha)}(F_{a_n,b_n}):=\sum_{|z|=n}e^{-V(z)}F_{a_n, b_n}(z)\un_{\{ \underline{V}(z)\geq-\alpha \}},\quad 
D_n^{(\alpha)}:=\sum_{|z|=n}\Ren(\alpha+V(z))e^{-V(z)}\un_{\{\underline{V}(z)\geq-\alpha\} }.
\end{align*}
See \eqref{defF} for the definition of $F_{a_n, b_n}(z)$. For any $a\in\r$, let $\P_a$ be the probability measure such that $\P_a(\{V(z), z\in\T\}\in\cdot)=\P(\{a+V(z),z\in\T\}\in\cdot)$. For $a\geq-\alpha$, we introduce the change of measure as follows:
\begin{equation}\label{changeprobab}
\Q^{(\alpha)}_a\vert_{\mathcal{F}_n}:=\frac{D_n^{(\alpha)}}{\Ren(\alpha+a)e^{-a}} \P_a\vert_{\mathcal{F}_n}.
\end{equation}
 The fact that $D_n^{(\alpha)}$ is a non-negative martingale which converges a.s. to some limit $D_\infty^{(\alpha)}$ has been proved by Biggins and Kyprianou \cite[Th 5.1]{BigKyp}.  So $\Q^{(\alpha)}_a$ is well define. Following their ideas, we present a spinal decomposition of the branching random walk under $\Q^{(\alpha)}_a$: we start with one individual $w_0$ (i.e., the root $\phi$ ), located at $V(w_0)=a$. Then for any $n\geq0$, 
 \begin{enumerate}
\item in the $n$-th generation, each individual $u$ except $w_n$, gives birth independently of all others to its children of the $n+1$-th generation whose positions constitute a point process distributed as $(V(z), |z|=1)$ under $\P_{V(u)}$;
\item $w_n$ produces, independently, its children in the $n+1$-th generation, whose positions are given by a point process distributed as $(V(z), |z|=1)$ under $\Q^{(\alpha)}_{V(w_n)}$;
\item Among the children of $w_n$, $w_{n+1}$ is chosen to be $z$ with probability proportional to \\$\Ren(\alpha+V(z))e^{-V(z)}\un_{\{\underline{V}(z)\geq-\alpha\}}$.
\end{enumerate}
In this description, the infinite ray $(w_n, n\geq0)$ is called the spine under $\Q^{(\alpha)}_a$. For simplicity, we write $\Q^{(\alpha)}$ for $\Q^{(\alpha)}_0$. The following fact makes explicit the distribution of $\omega_n$ and $(V(w_k), 1\leq k\leq n)$ under $\Q^{(\alpha)}$.

 \begin{fact}[\cite{BigKyp}]
Assume \eqref{hyp0}. Let $\alpha\geq0$,  
for any $n\geq1$ and $|z|=n$,
\begin{equation}\label{probable}
\Q^{(\alpha)}(w_n=z\vert\mathcal{F}_n)=\frac{\Ren(\alpha+V(z))e^{-V(z)}\un_{\{\underline{V}(z)\geq-\alpha \}}}{D_n^{(\alpha)}}.
\end{equation}
 The spine process $(V(w_n), m\geq0)$ under $\Q^{(\alpha)}$ is distributed as the random walk $(S_n, n\geq0)$ under $\P$ conditioned to stay above $-\alpha$. In other words, for any $n\geq1$ and any measurable function $g:\r^n\rightarrow\r_+$,
\begin{equation}\label{rw+}
\E_{\Q^{(\alpha)}}\Big[g(V(w_k), 1\leq k\leq n)\Big]=\frac{1}{\Ren(\alpha)}\E\Big[g(S_k, 1\leq k\leq n)\Ren(\alpha+S_n); \mS_n\geq-\alpha\Big].
\end{equation}
\end{fact}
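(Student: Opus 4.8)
The plan is to verify the two displayed formulas \eqref{probable} and \eqref{rw+} directly from the definition \eqref{changeprobab} of $\Q^{(\alpha)}_a$, using only the martingale property of $D_n^{(\alpha)}$ and the many-to-one Lemma \eqref{M2O}. For \eqref{probable}: by construction in items (1)--(3) of the spinal decomposition, conditionally on $\mathcal{F}_n$ the spine vertex $w_n$ is built generation by generation by always choosing the child $z$ of $w_{k}$ with probability proportional to $\Ren(\alpha+V(z))e^{-V(z)}\un_{\{\underline V(z)\geq-\alpha\}}$. Telescoping these conditional choice probabilities along the path $\phi=w_0<w_1<\dots<w_n=z$, the numerator collapses to $\Ren(\alpha+V(z))e^{-V(z)}\un_{\{\underline V(z)\geq-\alpha\}}$ and the product of the successive normalizing sums telescopes to $D_n^{(\alpha)}$ (this is exactly the standard size-biasing computation of Biggins--Kyprianou, where the martingale property of $D_k^{(\alpha)}$ makes the intermediate factors cancel). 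Equivalently, one checks that the right-hand side of \eqref{probable} defines a probability kernel summing to $1$ over $|z|=n$ and that it is consistent with the branching rule, which pins down the law of $w_n$ uniquely.

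For \eqref{rw+}: I would compute $\E_{\Q^{(\alpha)}}[g(V(w_k),1\leq k\leq n)]$ by first conditioning on $\mathcal{F}_n$. Writing $g(V(w_k),1\le k\le n)=\sum_{|z|=n} g(V(z_k),1\le k\le n)\un_{\{w_n=z\}}$ and taking $\Q^{(\alpha)}$-expectation, \eqref{probable} gives
\[
\E_{\Q^{(\alpha)}}\big[g(V(w_k))\big]=\E_{\Q^{(\alpha)}}\Big[\frac{1}{D_n^{(\alpha)}}\sum_{|z|=n} g(V(z_k),1\le k\le n)\,\Ren(\alpha+V(z))e^{-V(z)}\un_{\{\underline V(z)\ge-\alpha\}}\Big].
\]
Now undo the change of measure \eqref{changeprobab}: since $\Q^{(\alpha)}|_{\mathcal{F}_n}=\frac{D_n^{(\alpha)}}{\Ren(\alpha)}\P|_{\mathcal{F}_n}$ (taking $a=0$), the factor $D_n^{(\alpha)}$ cancels and we are left with $\frac{1}{\Ren(\alpha)}\E_\P\big[\sum_{|z|=n} g(V(z_k))\Ren(\alpha+V(z))e^{-V(z)}\un_{\{\underline V(z)\ge-\alpha\}}\big]$. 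Applying the many-to-one Lemma \eqref{M2O} with the Borel function $h(v_1,\dots,v_n):=g(v_1,\dots,v_n)\Ren(\alpha+v_n)\un_{\{\min_i v_i\ge-\alpha\}}e^{-v_n}\cdot e^{v_n}$ — more precisely, absorbing the $e^{-V(z)}$ against the $e^{S_n}$ weight in \eqref{M2O} — yields exactly $\frac{1}{\Ren(\alpha)}\E\big[g(S_k,1\le k\le n)\Ren(\alpha+S_n);\mS_n\ge-\alpha\big]$, which is \eqref{rw+}. The identification of this law with that of $(S_n)$ conditioned to stay above $-\alpha$ is then the classical Doob $h$-transform statement: $\Ren$ is harmonic for the walk killed below $-\alpha$, so the right-hand side is precisely the expectation under the $\Ren$-transform, i.e. the conditioned walk.

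I do not expect a genuine obstacle here: this Fact is quoted from \cite{BigKyp} and every step is a routine size-biasing/many-to-one manipulation. The only point requiring a little care is bookkeeping of the indicator $\un_{\{\underline V(z)\ge-\alpha\}}$ and its interaction with the $\Ren(\alpha+\cdot)$ weight — one must make sure that $\Ren(\alpha+S_n)\un_{\{\mS_n\ge-\alpha\}}$ really is the correct $\mathcal{F}_n$-martingale density (equivalently that $u\mapsto\Ren(\alpha+u)$ is the renewal/harmonic function for the walk killed on exiting $[-\alpha,\infty)$), which is guaranteed by the definition \eqref{R} of $\Ren$ via the descending ladder structure. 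Everything else is forced.
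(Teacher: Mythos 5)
The paper does not prove this Fact: it simply cites \cite{BigKyp}, and the surrounding text makes clear that the spinal decomposition and formula \eqref{probable} are being imported, not re-derived. Your proposal reconstructs precisely the standard Biggins--Kyprianou argument: the projection formula \eqref{probable} from the size-biased construction, then \eqref{rw+} by conditioning on $\mathcal{F}_n$, undoing the change of measure $\Q^{(\alpha)}\vert_{\mathcal{F}_n}=\frac{D_n^{(\alpha)}}{\Ren(\alpha)}\P\vert_{\mathcal{F}_n}$ so the $D_n^{(\alpha)}$ cancels, and applying the many-to-one lemma \eqref{M2O} with $h(v_1,\dots,v_n)=g(v_1,\dots,v_n)\Ren(\alpha+v_n)e^{-v_n}\un_{\{\min_i v_i\ge-\alpha\}}$ so the $e^{-V(z)}$ is absorbed by the $e^{S_n}$ weight. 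That computation is correct and complete. One small logical remark: in the standard presentation, \eqref{probable} is taken as the \emph{definition} of the spine on the enlarged space of marked trees, and items (1)--(3) are then the theorem; your telescoping argument runs in the opposite direction, but you note the alternative ``check it is a probability kernel consistent with the branching rule'' route, which resolves the apparent circularity. Your identification of $u\mapsto\Ren(\alpha+u)\un_{\{u\ge-\alpha\}}$ as the harmonic function for the walk killed below $-\alpha$, hence of \eqref{rw+} as a Doob $h$-transform, is also the right closing observation. Nothing is missing.
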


\subsection{Convergence in probability of ${W_n^{(\alpha)}(F)}/{D_n^{(\alpha)}}$ under $\Q^{(\alpha)}$ \label{sec3.2}}

In this section we prove that if $a_n=a\sqrt{n}+o(\sqrt{n})$ and $b_n=b\sqrt{n}+o(\sqrt{n})$ for some $a,b>0$, then there exists some constant $\Cb_{a,b}\in(0,\infty)$ such that under $\Q^{(\alpha)}$,
\begin{equation}\label{cvgQ}
\sqrt{n}\frac{W_n^{(\alpha)}(F_{a_n, b_n})}{D_n^{(\alpha)}}\longrightarrow \Cb_{a,b},\ \textrm{in  probability}.
\end{equation}
This convergence also holds for $b=0$. When $a=0$, $\Cb_{a,b}$ is trivially zero by first moment estimation. 

It is known that $\lim_{n\rightarrow\infty}\min_{|z|=n}V(z)=\infty$, $\P$-a.s. As a consequence of \eqref{c0}, $D_\infty^{(\alpha)}=\cb_0D_\infty$ on $\{\inf_{z\in T}V(z)\geq-\alpha\}$. As it is shown in \cite[Th 5.1]{BigKyp} and  \cite{Che15}, $D_n^{(\alpha)}$ converges $\P$-a.s and in $L^1$ to $D_\infty^{(\alpha)}$ which is positive under $\P^*$. So $\Q^{(\alpha)}$ is absolutely continuous with respect to $\P$. We thus deduce Proposition \ref{cvgProp} from \eqref{cvgQ} with $\mathscr{C}_{a,b}=\cb_0\Cb_{a,b}$ (one can refer to \cite[Section 5]{AidekonShi09} for more details).

The proof of \eqref{cvgQ} is based on the computations of the  first and second moments of $\frac{W_n^{(\alpha)}(F_{a_n, b_n})}{D_n^{(\alpha)}}$. By \eqref{probable}, for any measurable function $F:\r^n\rightarrow \r_+$ of the trajectory of $V$ that is that $F(z)=F(V(y); \phi<y\leq z)$, we have
\begin{equation}\label{condQ}
\E_{\Q^{(\alpha)}}\Big[\frac{F(w_n)}{\Ren(\alpha+V(w_n))}\Big\vert\mathcal{F}_n\Big]=\sum_{|z|=n}\frac{e^{-V(z)}F(z)\un_{ \{ \underline{V}(z)\geq-\alpha \} }}{D_n^{(\alpha)}}=:\frac{W_n^{(\alpha)}(F)}{D_n^{(\alpha)}}.
\end{equation}
Taking expectation under $\Q^{(\alpha)}$ then applying \eqref{rw+} yields that
\begin{equation}\label{meanQ}
\E_{\Q^{(\alpha)}}\Big[\frac{W_n^{(\alpha)}(F)}{D_n^{(\alpha)}}\Big]=\E_{\Q^{(\alpha)}}\Big[\frac{F(w_n)}{\Ren(\alpha+V(w_n))}\Big]=\frac{1}{\Ren(\alpha)}\E\Big[F(S_k; 1\leq k\leq n); \mS_n\geq -\alpha\Big]
\end{equation}

\noindent Recall that for $|z|=n$ (see \eqref{defF})
\[
F_{a_n,b_n}(z)=\sqrt{n}\frac{e^{V(z)}}{\sum_{\phi<y\leq z}e^{V(y)}}\un_{\{ \overline{V}(z)\geq b_n\}}\un_{\{\max_{\phi<y\leq z}(\overline{V}(y)-V(y))\leq a_n\}}.
\]
In order to deal with the factor $\frac{e^{V(z)}}{\sum_{\phi<y\leq z}e^{V(y)}}$, we have to add some restrictions to the sites. Observe that if $V(z)\ll\overline{V}(z)$, then 
\[
\frac{e^{V(z)}}{\sum_{\phi<y\leq z}e^{V(y)}}\leq e^{V(z)-\overline{V}(z)}\ll1.
\]
So it is reasonable to count only the sites $|z|=n$ such that $V(z)\approx\MV(z)$. \bl{And this choice gives an extra factor $\frac{1}{\sqrt{n}}$. That is why we multiply $\sqrt{n}$ in the definition of $F_{a_n,b_n}(z)$.} Let us introduce the following notations. For any $|z|=n$ and $0\leq m\leq n$, let $z_m$ be the ancestor of $z$ in the $m$-th generation and define
\begin{align*}
\Upsilon_z:=&\inf\{k: V(z_k)=\overline{V}(z)=\max_{0\leq m\leq n }V(z_m)\}.
\end{align*}
Similarly, we also define $\Upsilon_S=\inf\{k: S_k=\MS(k):=\max_{0\leq m\leq n }S_m\}$ and $\mS_{[m,n]}:=\min_{m \leq k \leq n} S_k$ 
for one-dimensional random walk $(S_k,k)$. Instead of $F_{a_n,b_n}(z)$, it is more convenient to consider 
\begin{align}
G(z):=&\sqrt{n}\frac{e^{V(z)}}{\sum_{\phi<y\leq z}e^{V(y)}}\un_{\{\overline{V}(z)\geq b_n,\ \max_{y\leq z}(\overline{V}(y)-V(y))\leq a_n\}}\un_{\{\Upsilon_z>n_0\}}\label{defG},
\end{align}
with  $n_0:=\lfloor n-n^{1/3}\rfloor$. 

Moreover, following \cite{AidekonShi09}, let us introduce the events   $E^z_n$ for $|z|=n$ as follows. Let $\Omega(y):=\{u\in\T: u\neq y, \pa{u}=\pa{y}\}$ be the collection of brothers of $y$. If $(k_n,n)$ is a positive sequence such that $k_n=o(n^{1/2})$ and $(\log n)^6=o(k_n)$, let
\[
E^z_n := E^z_{n,1}\cap E^z_{n,2}\cap E^z_{n,3},
\]
where
\begin{align}
E^z_{n,1}&=\{k_n^{1/3}\leq V(z_{k_n})\leq k_n\}\cap\bigcap_{i=k_n}^n\{V(z_i)\geq k_n^{1/6}\}; \nonumber\\
E^z_{n,2}&=\bigcap_{i=k_n}^n\{\sum_{y\in\Omega(z_{i+1})}[1+(V(y)-V(z_i))_+]e^{-[V(y)-V(z_i)]}\leq e^{V(z_i)/2}\}; \nonumber \\
E^z_{n,3}&=\{\sum_{i=k_n}^n\sum_{y\in\Omega(z_{i+1})}\sum_{|u|=n, u\geq y}\Ren(\alpha+V(u))e^{-V(u)}\un_{\{ \underline{V}(u)\geq-\alpha\}}\leq \frac{1}{n^2}\}, \label{defE3}
\end{align}
with $x_+:=\max(x,0)$. In particular, for $w_n$, write $E_n$ (resp. $E_{n,i}$) instead of $E_n^{w_n}$ (resp. $E^{w_n}_{n,i}$). Let $H(z):=G(z)\un_{E^z_n}$. Here we choose $k_n=o(n^{1/2})$ so that $E_{n,1}$ happens with high probability and $(\frac{V(w_i)}{\sqrt{n}}; k_n\leq i\leq n)$ is still asymptotically Brownian meander. At the same time, we take $(\log n)^6=o(k_n)$ to make sure that the probability in \eqref{E3c} is $o_n(1)$. Moreover, it is proved in Lemma 4.7 of \cite{AidekonShi09} that for $(k_n, n)$ chosen as stated above,
\begin{align}
\lim_{n\rightarrow\infty}\Q^{(\alpha)}(E_n)=&1, \nonumber \\
\lim_{n\rightarrow\infty}\inf_{u\in [k_n^{1/3}, k_n]}\Q^{(\alpha)}(E_n\vert V(w_{k_n})=u)=&1\label{Qu}.
\end{align}
One will see later that involving $E_n$ helps us to control the second moment of $\frac{W_n^{(\alpha)}(F)}{D_n^{(\alpha)}}$ without influencing its first moment. Let us now state the main lemma of this section.

\begin{lem}\label{mom}
 Let $\alpha\geq0$, we have
\begin{align}
\lim_n\sqrt{n}\E_{\Q^{(\alpha)}}\Big[\frac{W_n^{(\alpha)}(H)}{D_n^{(\alpha)}}\Big]&=\lim_n\sqrt{n}\E_{\Q^{(\alpha)}}\Big[\frac{W_n^{(\alpha)}(F_{a_n,b_n})}{D_n^{(\alpha)}}\Big]=\Cb_{a,b}\label{F},\\
\lim_n \E_{\Q^{(\alpha)}}\Big[\Big(\sqrt{n}\frac{W_n^{(\alpha)}(H)}{D_n^{(\alpha)}}\Big)^2\Big]&=\Cb_{a,b}^2.\label{H2}
\end{align}
\end{lem}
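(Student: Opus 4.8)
The strategy is to compute the two moments by means of the spinal decomposition, reducing everything to quantities involving the conditioned one-dimensional walk $(V(w_k))$, which under $\Q^{(\alpha)}$ is the centered walk $(S_k)$ conditioned to stay above $-\alpha$. For the first moment, the identity \eqref{meanQ} gives directly
\[
\sqrt{n}\,\E_{\Q^{(\alpha)}}\Big[\tfrac{W_n^{(\alpha)}(F_{a_n,b_n})}{D_n^{(\alpha)}}\Big]=\frac{\sqrt n}{\Ren(\alpha)}\,\E\Big[F_{a_n,b_n}(S_k;1\le k\le n);\ \mS_n\ge-\alpha\Big].
\]
Plugging in the definition of $F_{a_n,b_n}$ (which carries a factor $\sqrt n$), the expression becomes $\frac{n}{\Ren(\alpha)}\E\big[\,\tfrac{e^{S_n}}{\sum_{k\le n}e^{S_k}}\,\un_{\{\MS_n\ge b_n\}}\un_{\{\max_{k}(\MS_k-S_k)\le a_n\}};\mS_n\ge-\alpha\big]$. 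The first step is to show the ratio $\tfrac{e^{S_n}}{\sum_{k\le n}e^{S_k}}$ is, up to negligible error, supported on the event $\{\Upsilon_S>n_0\}$ (i.e.\ the walk attains its max late), which is why one may replace $F_{a_n,b_n}$ by $G$ and then by $H=G\un_{E^z_n}$ — the cut-offs $E^z_n$ being $\Q^{(\alpha)}$-typical by \eqref{Qu}, they change neither moment asymptotically. On $\{\Upsilon_S>n_0\}$ one decomposes the trajectory at the time of the maximum: before the max, $S$ looks (after time-reversal) like a meander-type path, and the sum $\sum_k e^{S_k}$ localizes near the max, so $\tfrac{e^{S_n}}{\sum_k e^{S_k}}\approx \tfrac{e^{S_n-\MS_n}}{\sum_{j\ge0}e^{-(\text{ladder heights})}}$, producing the extra $1/\sqrt n$ that the prefactor $\sqrt n$ compensates. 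Passing to the Brownian limit via Donsker/Madaule's meander convergence for $(S_{\lfloor nt\rfloor}/\sqrt n)$, the indicator constraints $\{\MS_n\ge b_n\}$, $\{\max_k(\MS_k-S_k)\le a_n\}$ with $a_n\sim a\sqrt n$, $b_n\sim b\sqrt n$ converge to constraints on a Brownian meander, and the renewal-theoretic factors ($\Ren$, the ladder sum) contribute multiplicative constants; this identifies the limit $\Cb_{a,b}\in(0,\infty)$ as an explicit functional of the meander. The finiteness and positivity of $\Cb_{a,b}$ (and $\Cb_{0,b}=0$, by a crude first-moment bound since the constraint $a_n\to0$ forces an empty constraint) follow from this representation.

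For the second moment \eqref{H2}, the plan is the standard second-moment method for spinal decompositions à la Aïdékon-Shi. Writing $\big(\sqrt n\,\tfrac{W_n^{(\alpha)}(H)}{D_n^{(\alpha)}}\big)^2$ and using \eqref{condQ} to express one factor $W_n^{(\alpha)}(H)/D_n^{(\alpha)}=\E_{\Q^{(\alpha)}}[\tfrac{H(w_n)}{\Ren(\alpha+V(w_n))}\mid\mathcal F_n]$, one gets
\[
\E_{\Q^{(\alpha)}}\Big[\Big(\sqrt n\tfrac{W_n^{(\alpha)}(H)}{D_n^{(\alpha)}}\Big)^2\Big]=n\,\E_{\Q^{(\alpha)}}\Big[\frac{H(w_n)}{\Ren(\alpha+V(w_n))}\cdot\frac{W_n^{(\alpha)}(H)}{D_n^{(\alpha)}}\Big].
\]
Now decompose $W_n^{(\alpha)}(H)=\sum_{|z|=n}e^{-V(z)}H(z)\un_{\{\underline V(z)\ge-\alpha\}}$ along the spine: split the sum according to the generation $j=|z\wedge w_n|$ at which $z$ branches off the spine. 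The $j=n$ term (i.e.\ $z=w_n$) reproduces $n\,\E_{\Q^{(\alpha)}}[\tfrac{H(w_n)^2}{\Ren(\alpha+V(w_n))e^{-V(w_n)}\,D_n^{(\alpha)}}]$-type contributions; because $H$ already carries the factor $\sqrt n\,\tfrac{e^{V}}{\sum e^{V}}$ which is $O(1)$ and is non-negligible only when $V(w_n)\approx\MV(w_n)$, the diagonal term is $o(1)$ after multiplying by $n$ — here the event $E_n$ is crucial, forcing $V(w_n)$ large so that $e^{-V(w_n)}$ in the denominator is not too small and yet $\tfrac{e^{V(w_n)}}{\sum e^{V}}$ stays bounded. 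For $j<n$, one uses the spinal branching rule: conditionally on the spine, the subtree rooted at the brother $y$ of $w_{j+1}$ is an independent branching random walk, so the inner sum over $z$ descending from $y$ has $\Q^{(\alpha)}$-conditional expectation computable by the many-to-one formula \eqref{M2O} (with a shifted starting point $V(y)$), giving $\sum_{|u|=n,u\ge y}e^{-V(u)}H(u)\un_{\{\cdots\}}$ whose mean is controlled on $E_{n,3}$ by the explicit bound $\le n^{-2}$ in \eqref{defE3}. Summing the contributions over $j$ and over brothers, the bad (off-spine) terms are shown to be $o(1)$ thanks to $E_{n,2}$–$E_{n,3}$, while the main term factorizes asymptotically into the square of the first moment $\Cb_{a,b}$: heuristically, the two particles contributing to the second moment share a long common ancestor near the root (of length $O(k_n)=o(n)$) and then evolve essentially independently as two conditioned walks, each giving a factor $\Cb_{a,b}$.

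Once \eqref{F} and \eqref{H2} are established, \eqref{cvgQ} follows from the $L^2$-convergence: $\E_{\Q^{(\alpha)}}[(\sqrt n\,W_n^{(\alpha)}(H)/D_n^{(\alpha)}-\Cb_{a,b})^2]=\E_{\Q^{(\alpha)}}[(\sqrt n\,W_n^{(\alpha)}(H)/D_n^{(\alpha)})^2]-2\Cb_{a,b}\E_{\Q^{(\alpha)}}[\sqrt n\,W_n^{(\alpha)}(H)/D_n^{(\alpha)}]+\Cb_{a,b}^2\to0$, hence convergence in probability; replacing $H$ by $F_{a_n,b_n}$ is free since their first moments agree in the limit and $W_n^{(\alpha)}(F_{a_n,b_n})\ge W_n^{(\alpha)}(H)\ge0$ forces the difference to vanish in $L^1$. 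Then Proposition \ref{cvgProp} is deduced by transferring from $\Q^{(\alpha)}$ back to $\P^*$ using absolute continuity and $D_n^{(\alpha)}\to D_\infty^{(\alpha)}=\cb_0 D_\infty$ on $\{\inf_{z}V(z)\ge-\alpha\}$, letting $\alpha\to\infty$, exactly as in \cite[Section 5]{AidekonShi09}, with $\mathscr C_{a,b}=\cb_0\Cb_{a,b}$.

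\textbf{Main obstacle.} The delicate point is the precise asymptotics of the first moment: handling the nonlinear functional $\tfrac{e^{S_n}}{\sum_{k\le n}e^{S_k}}$ — which is \emph{not} a continuous functional of the rescaled path $(S_{\lfloor nt\rfloor}/\sqrt n)$ because it is sensitive to the $O(1)$ fluctuations of $S$ near its maximum — requires separating the ``macroscopic'' Brownian-meander scale (governing the indicators $\{\MS_n\ge b_n\}$ and $\{\max_k(\MS_k-S_k)\le a_n\}$) from the ``local'' renewal-theoretic scale (governing $\sum_k e^{S_k}\approx e^{\MS_n}\sum_{j}e^{-H_j}$ with $H_j$ the descending ladder epochs from the max), and then invoking a local/Markov-renewal argument at the maximum together with the meander convergence of the macroscopic part. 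Making this decomposition rigorous, and checking the needed uniform integrability so that the indicator truncations $E^z_n$ and $\{\mS_n\ge-\alpha\}$ do not disturb the limit, is the technical heart, and it is exactly where the careful choice of $k_n$ with $(\log n)^6=o(k_n)=o(\sqrt n)$ and $n_0=\lfloor n-n^{1/3}\rfloor$ pays off.
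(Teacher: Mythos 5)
Your plan reproduces the paper's proof: for \eqref{F}, you reduce to the one-dimensional conditioned walk via \eqref{meanQ}, then decompose at the time $\Upsilon_S$ of the maximum (which justifies replacing $F_{a_n,b_n}$ by $G$ and then $H$), separate the macroscopic meander scale from the local renewal scale near the maximum, and pass to the Brownian limit; for \eqref{H2}, you write the square as $n\,\E_{\Q^{(\alpha)}}\big[\tfrac{H(w_n)}{\Ren(\alpha+V(w_n))}\cdot\tfrac{W_n^{(\alpha)}(H)}{D_n^{(\alpha)}}\big]$, split the sum at generation $k_n$, use $E_{n,2}$--$E_{n,3}$ to kill the late-branching contribution, and factorize via the Markov property at $k_n$ — all exactly as the paper does. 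The ``two particles sharing a long common ancestor'' heuristic you use for the factorization is slightly different in phrasing from the paper's spine-plus-early-generations split conditioned on $\mathcal{F}_{k_n}$, but it is the same mechanism, and the rest of your sketch (including the $L^1$ removal of the cutoff $H\to F_{a_n,b_n}$ at the end and the identification of the main obstacle in the nonlinearity of $e^{S_n}/\sum_k e^{S_k}$) matches the paper's argument.
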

This lemma shows immediately that under $\Q^{(\alpha)}$, $\sqrt{n}\frac{W_n^{(\alpha)}(H)}{D_n^{(\alpha)}}$ converges in probability towards $\Cb_{a,b}$ while $\sqrt{n}\frac{W_n^{(\alpha)}(F_{a_n,b_n}-H)}{D_n^{(\alpha)}}=o_n(1)$ in probability. We hence conclude the convergence \eqref{cvgQ}.

Moreover, by the change of measures \eqref{changeprobab}, this means that 
\begin{equation}\label{expWF}
\sqrt{n}\E[W_n^{(\alpha)}(F_{a_n,b_n})]\rightarrow \Cb_{a,b}R(\alpha).
\end{equation}

Before starting the proof of Lemma \ref{mom}, let us state a useful result on the random walk $\{S_k; k\geq0\}$ and the definition of constants $\Cb_{a,b}$, $\mathscr{C}_{a,b}$ and $\lambda(\cdot)$.

It is proved in \cite{Afa93} that the following joint convergence in law holds
\begin{equation}\label{mSjointcvg}
\left\{\left( \frac{S_{\lfloor nt\rfloor}}{\sqrt{\sigma^2 n}}, t\in[0,1]\right); \sum_{i=0}^ne^{-S_i} \Big \vert \underline{S}_n>0 \right\}\Longrightarrow \{(\mB_t,t\in[0,1]), \Hinf_\infty\},
\end{equation}
where $(\mB_t,t\in[0,1])$ is a Brownian meander independent of $\Hinf_\infty\in[1,\infty)$. In fact, in the sense of \cite{BertoinDoney}, the associated random walk conditioned to stay positive, denoted $(\zeta_n, n\geq0)$, is a Markov chain with probabilities of transition $p(x,dy):=\frac{\Ren(y)}{\Ren(x)}\un_{ \{y>0\} }\P_x(S_1\in dy)$, with $\P(\zeta_0=0)=1$. Consequently $ \Hinf_\infty$ can be defined as 
\[
\Hinf_\infty:=\sum_{j=0}^\infty e^{-\zeta_j}.
\]
Also we denote 
\begin{align}\label{c+}
\cb_1^+:= \lim_{n\rightarrow\infty}\sqrt{n}\P(\mS_n\geq0),\ \cb_2^+:=\lim_{n\rightarrow\infty}\sqrt{n}\P(\mS_n>0),
\end{align}
where the existence and positivity of $\cb_1^+$ and $\cb_2^+$ have been proved in \cite[Th.1 in XII. 7 \& Th.1 in XVIII.5]{Feller}.
We also introduce two functions which appears in the definition of  $\lambda(\cdot)$.  The first one involves the discrete random walk $(S_j,j)$. For any $j \geq 1$ and $x\geq 1$, define  
\begin{align}
\Gl_j(x):= \E\left[\frac{e^{S_j}}{x+\sum_{1\leq i\leq j}e^{S_i}}; \MS_j\leq0\right], \textrm{ with } \Gl_0(x):= \frac{1}{x}. \label{mG}
\end{align}
The second function depends on Brownian meander $(\mB_s, 0\leq s\leq1)$. Let $\overline{\mB}_s:=\sup_{0\leq t\leq s}\mB_t$ and $\underline{\mB}_{[s,1]}:=\inf_{s\leq t\leq 1}\mB_t$ for any $s\in[0,1]$.  Take $a>0$ and $b\geq0$, for any $(x,h) \in \R_+^2$, let
\begin{align*} 
\Psi^{a,b}(x,h):= 
\cb_2^+\P\left(\sigma \mB_1 > (\sqrt{2}b-x)\vee h, \sigma(\overline{\mB}_1- \mB_1)\leq (\sqrt{2} a-h)_+\wedge x, \max_{0\leq s\leq 1}\sigma(\mB_s-\underline{\mB}_{[s,1]})\leq \sqrt{2}a\right).
\end{align*}
Finally, let
\begin{align}
\mathcal{C}_{a,b}&:=2\cb_1^+\cb_2^+\E\left(\Psi^{a,b}(\sigma \mB_1, \sigma( \overline{\mB}_1- \mB_1)); \max_{0\leq s\leq 1}\sigma(\overline{\mB}_s-\mB_s)\leq \sqrt{2} a\right), \textrm{ and} \label{cab}\\
\Cb_{a,b}&:=\mathcal{C}_{a,b} \sum_{j=0}^{+\infty} \E\left[\Gl_j(\Hinf_\infty)\right].\label{Cab}
\end{align}
$\mathcal{C}_{a,b}$ is well defined positive and finite [see Lemma \ref{A} and its proof in Appendix \ref{A1}], also we set $\Cb_{0,b}=0$. Note also that $\Gl_j(x)\leq \Gl_j:=\E[e^{S_j}\1_{\MS_j\leq0}]$ for any $j\geq1$ and $x\geq1$ so $\Cb_{a,b}$ is finite [see \eqref{eSMSbd}]. This implies that for any $\gamma>0$,
\[ \lambda(\gamma):= \frac{\mathscr{C}_{\gamma^{-1/2},\gamma^{-1/2}}}{\gamma}=\cb_0  \frac{\Cb_{\gamma^{-1/2},\gamma^{-1/2}}}{\gamma}\in (0,\infty). \]
The integrability of  $\lambda$ is stated in Lemma \ref{A} of Appendix, so $\Lambda$ in Theorem \eqref{upton2} is well defined, i.e.
\begin{equation}\label{Lambda}
\Lambda= \int_{0}^{+\infty} \lambda(x)dx\in(0,\infty). 
\end{equation}

\subsubsection{First moment estimate: proof of \eqref{F}}
Let us turn to the proof of Lemma \ref{mom}. First of all, note that $0\leq H\leq G\leq F_{a_n, b_n}$. \eqref{F} follows from the following lemma.
\begin{lem}If $\lim_{n\rightarrow\infty}\frac{a_n}{\sqrt{n}}=a\in(0,\infty)$ and $\lim_{n\rightarrow\infty}\frac{b_n}{\sqrt{n}}=b\in(0,\infty)$, then
\begin{align}
\lim_n\sqrt{n}\E_{\Q^{(\alpha)}}\Big[\frac{W_n^{(\alpha)}(G)}{D_n^{(\alpha)}}\Big]=&\Cb_{a,b}\label{G},\\
\lim_n\sqrt{n}\E_{\Q^{(\alpha)}}\Big[\frac{W_n^{(\alpha)}(F_{a_n,b_n}-G)}{D_n^{(\alpha)}}\Big]=&0,\label{FG}\\
\lim_n\sqrt{n}\E_{\Q^{(\alpha)}}\Big[\frac{W_n^{(\alpha)}(G-H)}{D_n^{(\alpha)}}\Big]=&0.\label{GH}
\end{align}
\end{lem}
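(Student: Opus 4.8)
The three limits decompose the first-moment estimate into a main term \eqref{G} and two error terms \eqref{FG}, \eqref{GH}. By \eqref{meanQ}, each expectation under $\Q^{(\alpha)}$ translates, via the many-to-one formula and the conditioning \eqref{rw+}, into an expectation for the one-dimensional centered walk $(S_k)_{k\le n}$ conditioned to stay above $-\alpha$. So the first step is to rewrite
\[
\sqrt{n}\,\E_{\Q^{(\alpha)}}\Big[\frac{W_n^{(\alpha)}(G)}{D_n^{(\alpha)}}\Big]=\frac{\sqrt n}{\Ren(\alpha)}\E\Big[G(S_k;1\le k\le n);\ \mS_n\ge-\alpha\Big],
\]
with $G$ as in \eqref{defG}: the integrand carries the factor $n\,e^{S_n}/\sum_{1\le i\le n}e^{S_i}$ (the extra $\sqrt n$ from $G$ combines with the $\sqrt n$ prefactor to give $n$), the indicators $\{\MS_n\ge b_n\}$, $\{\max_{k\le n}(\MS_k-S_k)\le a_n\}$, and $\{\Upsilon_S>n_0\}$.

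**Main term \eqref{G}.** The idea, following Aïdékon--Shi, is to split the trajectory at the time $\Upsilon_S$ where the running maximum is attained. On $\{\Upsilon_S>n_0=\lfloor n-n^{1/3}\rfloor\}$ the maximum is attained in the last $n^{1/3}$ steps, so the factor $\sum_{1\le i\le n}e^{S_i}$ is governed by the part of the path up to roughly $\Upsilon_S$, while $S_n-S_{\Upsilon_S}$ is a short (length $\le n^{1/3}$) excursion below the maximum, and $n\,e^{S_n-\MS_n}/\big(e^{-\MS_n}\sum e^{S_i}\big)$ can be analyzed by reversing this last excursion. Writing $j:=n-\Upsilon_S$, reversing time on $[\Upsilon_S,n]$ produces a walk that stays $\le 0$; the reversed increments being i.i.d., the sum over $i\in[\Upsilon_S,n]$ of $e^{S_i-\MS_n}$ becomes $\sum_{1\le i\le j}e^{\widehat S_i}$ with $\MS_j\le 0$, which is exactly the structure in $\Gl_j$ of \eqref{mG}; the remaining mass $\sum_{i\le \Upsilon_S}e^{S_i-\MS_n}$ contributes the ``$x$'' argument, and summing over $j$ yields the series $\sum_{j\ge0}\E[\Gl_j(\Hinf_\infty)]$ in \eqref{Cab}. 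The part of the path up to $\Upsilon_S$, after Brownian rescaling by $\sqrt{\sigma^2 n}$ and using the convergence of the conditioned walk to the Brownian meander together with \eqref{mSjointcvg}, produces the meander functional $\mathcal{C}_{a,b}$ of \eqref{cab}: here the indicators $\{\MS_n\ge b_n\}$ and $\{\max(\MS_k-S_k)\le a_n\}$ with $a_n\sim a\sqrt n$, $b_n\sim b\sqrt n$ converge to the events $\{\sigma\mB_1\ge b\sqrt 2\,\cdots\}$ etc. appearing in $\Psi^{a,b}$ and $\mathcal{C}_{a,b}$ (the $\sqrt2$'s and the splitting into $\mB_1$ versus $\overline{\mB}_1-\mB_1$ come from decomposing the meander at its maximum, using the Vervaat/Imhof-type correspondence with the normalized excursion and the constants $\cb_1^+,\cb_2^+$ of \eqref{c+}). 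Putting the two halves together gives $\Cb_{a,b}=\mathcal{C}_{a,b}\sum_{j\ge0}\E[\Gl_j(\Hinf_\infty)]$. The finiteness of everything is guaranteed by Lemma~\ref{A} and the bound $\Gl_j(x)\le\Gl_j=\E[e^{S_j}\1_{\MS_j\le0}]$.

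**Error terms and the main obstacle.** For \eqref{GH}, $G-H=G\,\1_{(E^z_n)^c}$, so one bounds $\sqrt n\,\E_{\Q^{(\alpha)}}[W_n^{(\alpha)}(G-H)/D_n^{(\alpha)}]$ by a first-moment computation: on $(E^z_n)^c$ one uses the crude bound $e^{S_n}/\sum e^{S_i}\le e^{S_n-\MS_n}\le1$ (so $G\le\sqrt n$), and then \eqref{Qu}, i.e. $\Q^{(\alpha)}(E_n\mid V(w_{k_n})=u)\to1$ uniformly for $u\in[k_n^{1/3},k_n]$, together with the fact (from \eqref{rw+}) that $V(w_{k_n})$ typically lies in that window, forces the contribution to be $o(1)$; the delicate part $E^z_{n,3}$ is handled exactly as in \cite[Lemma 4.7]{AidekonShi09}, its defining probability being $o_n(1)$ because $(\log n)^6=o(k_n)$. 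For \eqref{FG}, $F_{a_n,b_n}-G=F_{a_n,b_n}\,\1_{\{\Upsilon_z\le n_0\}}$; here the point is that when the maximum is attained before generation $n_0$, the path must drop from $\MV(z)$ and come back, over at least $n^{1/3}$ steps, while keeping $\MV(y)-V(y)\le a_n=O(\sqrt n)$ — one shows the expected number of such $z$ weighted by $e^{V(z)}/\sum e^{V(y)}\le e^{V(z)-\MV(z)}$ is negligible, again by translating to $(S_k)$ and estimating $\E[e^{S_n-\MS_n};\Upsilon_S\le n_0,\ \MS_n\ge b_n]\sqrt n\to0$, using that on $\{\Upsilon_S\le n_0\}$ the term $e^{S_n-\MS_n}$ is, in expectation against the meander, of order $o(1/\sqrt n)$ by a renewal/ballot-type estimate of the Appendix. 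The genuine difficulty is the proof of \eqref{G}: one must simultaneously (i) justify the time-reversal decomposition at $\Upsilon_S$ and the resulting independence (asymptotically) of the ``meander part'' and the ``excursion part,'' (ii) pass to the limit in the rescaled indicators in a way compatible with the Brownian-meander convergence — which requires the joint convergence \eqref{mSjointcvg} and continuity of the limiting functionals, i.e. that the boundary events have zero probability under the meander law — and (iii) control the tail in $j$ of the series $\sum_j\E[\Gl_j(\Hinf_\infty)]$ uniformly in $n$ to interchange limit and sum. Steps (i)--(iii) are where the bulk of the work lies; the rest is bookkeeping with the constants $\cb_0,\cb_1^+,\cb_2^+,\Ren(\alpha)$.
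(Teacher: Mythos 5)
Your outlines of \eqref{G} and \eqref{FG} follow the paper's approach: after translating to the one-dimensional walk via \eqref{meanQ} and \eqref{rw+}, split the trajectory at $\Upsilon_S$, use independence of the two pieces, identify the short excursion (of length $j=n-\Upsilon_S\le n^{1/3}$) with the kernel $\Gl_j$ of \eqref{mG}, and send the rescaled front part to the Brownian-meander functional $\mathcal{C}_{a,b}$, then sum over $j$. The paper works directly with the shifted increments $T_i=S_{i+\Upsilon_S}-S_{\Upsilon_S}$ rather than reversing time, but the two are interchangeable. You also flag, correctly, that the nontrivial work in \eqref{G} is the interchange of limit and sum (uniform tail control in $j$), the boundary-continuity needed to pass to the meander limit, and the approximation $\sigma_k\approx\sigma_k'$; the paper handles these through \eqref{sumcvg}, \eqref{mSMSbd}, \eqref{LDP}, and the bound $\sup_{x\ge1}\Gl_i(x)\le c\, i^{-3/2}$.

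There is, however, a genuine gap in your treatment of \eqref{GH}. You propose to bound $G(w_n)\le\sqrt{n}$ (using $e^{S_n}/\sum_i e^{S_i}\le 1$), then rely on $\Q^{(\alpha)}(E_n^c)=o_n(1)$ from \eqref{Qu}. After the $\sqrt{n}$ prefactor this yields
\[
\sqrt{n}\,\E_{\Q^{(\alpha)}}\Big[\frac{W_n^{(\alpha)}(G-H)}{D_n^{(\alpha)}}\Big]
\;\le\; \sqrt{n}\cdot\sqrt{n}\cdot\Q^{(\alpha)}(E_n^c)\;=\;n\cdot o_n(1),
\]
which does not vanish. The crude bound $G\le\sqrt{n}$ discards precisely the factors $e^{V(w_n)-\overline{V}(w_n)}$ and $\un_{\{\Upsilon_{w_n}>n_0\}}$ that are responsible for the $O(1/n)$ baseline already present in the proof of \eqref{G}. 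The paper keeps them: $\sqrt{n}\,G(w_n)\le n\, e^{V(w_n)-\overline{V}(w_n)}\un_{\{\Upsilon_{w_n}>n_0\}}$, and the denominator $\Ren(\alpha+V(w_n))$ is also retained. It then splits $E_n^c$ into $E_{n,1}^c$, $E_{n,1}\cap E_{n,2}^c$ and $E_{n,1}\cap E_{n,2}\cap E_{n,3}^c$. For $E_{n,1}^c$ it partitions over $\Upsilon_S$, applies the Markov property, and upgrades the $O(1/n)$ baseline to $o_n(1)/n$ via \eqref{mSMSkbd} and \eqref{kmSMSbd}; for $E_{n,2}^c$ it uses $V(w_i)\ge k_n^{1/6}$ on $E_{n,1}$ to gain a factor $e^{-k_n^{1/6}/2}$; only for $E_{n,3}^c$ does the crude $\frac{e^{V(w_n)-\overline{V}(w_n)}}{\Ren(\alpha+V(w_n))}\le 1$ with \eqref{E3c} suffice, since the conditional probability there decays super-polynomially in $n$ because $(\log n)^6=o(k_n)$. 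In short, on the parts $E_{n,1}^c$ and $E_{n,2}^c$ the argument needs the refined pointwise bound and the constraint $\Upsilon_{w_n}>n_0$; a uniform bound $G\le\sqrt{n}$ combined with \eqref{Qu} is off by a factor of $\sqrt{n}$.
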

\begin{proof} 

\noindent \\ \textbf{Proof of \eqref{FG}}: For any $|z|=n$, comparing \eqref{defF} and \eqref{defG}, we define
\begin{align*}
r(z)&:=\sqrt{n}\frac{e^{V(z)}}{\sum_{\phi<y\leq z}e^{V(y)}}\un_{\{\overline{V}(z)\geq b_n,\ \max_{y\leq z}(\overline{V}(y)-V(y))\leq a_n\}}\un_{\{\Upsilon_z\leq n_0\}},
\end{align*}
with recall $n_0=\lfloor n-n^{1/3}\rfloor$, also it is clear that $0\leq F_{a_n,b_n}-G\leq r$. 
So to obtain \eqref{FG}, it suffices to show that 
\begin{equation}\label{h}
\E_{\Q^{(\alpha)}}\Big[\frac{W_n^{(\alpha)}(r)}{D_n^{(\alpha)}}\Big]=\frac{o_n(1)}{\sqrt{n}}.
\end{equation}
Applying \eqref{meanQ} for $r$ yields that
\begin{align*}
\E_{\Q^{(\alpha)}}\Big[\frac{W_n^{(\alpha)}(r)}{D_n^{(\alpha)}}\Big]=\frac{1}{\Ren(\alpha)}\E\Big[\frac{\sqrt{n}e^{S_n}}{\sum_{1\leq j\leq n}e^{S_j}};  \MS_n\geq b_n, \max_{j\leq n}(\MS_j-S_j)\leq a_n, \Upsilon_S\leq n_0,\mS_n\geq-\alpha\Big].
\end{align*}
Partitioning on the values of $\Upsilon_S$ gives that
\begin{align*}
\E_{\Q^{(\alpha)}}\Big[\frac{W_n^{(\alpha)}(r)}{D_n^{(\alpha)}}\Big]&=\sum_{k=0}^{n_0}\frac{1}{\Ren(\alpha)}\E\Big[\frac{\sqrt{n}e^{S_n}}{\sum_{1\leq j\leq n}e^{S_j}}; \Upsilon_S=k, \MS_n\geq b_n, \max_{j\leq n}(\MS_j-S_j)\leq a_n,\mS_n\geq-\alpha\Big]\\
&\leq \sum_{k=0}^{n_0}\frac{\sqrt{n}}{\Ren(\alpha)}\E\Big[e^{S_n-S_k}\un_{ \{ \Upsilon_S=k, \mS_k\geq-\alpha \} }\Big].
\end{align*}
Notice that $\{\Upsilon_S=k\}=\{S_k=\MS_k>\MS_{k-1}\}\cap\{\max_{k<j\leq n}S_j-S_k\leq 0\}$. By Markov property at time $k$,
\begin{align*}
\E_{\Q^{(\alpha)}}\Big[\frac{W_n^{(\alpha)}(r)}{D_n^{(\alpha)}}\Big]&\leq \sum_{k=0}^{n_0}\frac{\sqrt{n}}{\Ren(\alpha)}\P\Big(\mS_k\geq-\alpha, S_k=\MS_k\Big)\E\Big[e^{S_{n-k}}\un_{\{ \MS_{n-k}\leq0 \}}\Big],
\end{align*}
which by \eqref{Ren}, \eqref{mSMSbd} and \eqref{eSMSbd} implies that
\begin{align*}
\E_{\Q^{(\alpha)}}\Big[\frac{W_n^{(\alpha)}(r)}{D_n^{(\alpha)}}\Big]\leq &\sum_{k=0}^{n_0}\frac{\sqrt{n}}{\Ren(\alpha)}\frac{c (1+\alpha)}{(k+1)(n-k)^{3/2}}  \leq  c'\sqrt{n}\sum_{k=0}^{n-n^{1/3}} \frac{1}{(k+1)(n-k)^{3/2}}=O(\frac{1}{n^{2/3}}).
\end{align*}
 Observe that $\sum_{0\leq k\leq n/2}\frac{1}{(k+1)(n-k)^{3/2}}=\frac{O(1)}{n^{3/2}}\sum_{0\leq k\leq n/2}\frac{1}{k+1}=O(\frac{\log n}{n^{3/2}})$. And observe also that $\sum_{n/2\leq k\leq n-n^{1/3}}\frac{1}{(k+1)(n-k)^{3/2}}=\frac{O(1)}{n}\sum_{n/2\leq k\leq n-n^{1/3}}\frac{1}{(n-k)^{3/2}}=O(\frac{(n^{1/3})^{-1/2}}{n})$. Thus \eqref{h} holds.  \\

\noindent \textbf{Proof of \eqref{G}}:
 It follows from \eqref{meanQ} that 
 \[
 \E_{\Q^{(\alpha)}}\Big[\frac{W_n^{(\alpha)}(G)}{D_n^{(\alpha)}}\Big]=\frac{\sqrt{n}}{\Ren(\alpha)}\E\Big[\frac{e^{S_n}}{\sum_{1\leq j\leq n}e^{S_j}}; \Upsilon_S> n_0, \MS_n\geq b_n, \max_{j\leq n}(\MS_j-S_j)\leq a_n,\mS_n\geq-\alpha\Big].
 \]
Partioning over the values of $\Upsilon_S$ implies that $\E_{\Q^{(\alpha)}}\Big[\frac{W_n^{(\alpha)}(G)}{D_n^{(\alpha)}}\Big]=\frac{\sqrt{n}}{\Ren(\alpha)}\sum_{k=n_0+1}^n\sigma_k$ where
 \[
 \sigma_k:=\E\Big[\frac{e^{S_n}}{\sum_{1\leq j\leq n}e^{S_j}}; \Upsilon_S=k, \MS_n\geq b_n, \max_{j\leq n}(\MS_j-S_j)\leq a_n,\mS_n\geq-\alpha\Big].
 \]
Let $T_{i}=S_{i+k}-S_k$, and notice that $\{S_j; 0\leq j\leq k\}$ and $\{T_i; 0\leq i\leq n-k\}$ are independent, we have
\begin{multline*}
\sigma_k=\E\Big[\frac{e^{T_{n-k}}}{\sum_{1\leq j\leq k}e^{S_j-S_k}+\sum_{1\leq j\leq n-k}e^{T_{j}}};\overline{S}_{k-1}<S_k, S_k\geq b_n,\underline{S}_k\geq -\alpha, \max_{i\leq k}(\overline{S}_i-S_i)\leq a_n,\\
\underline{T}_{n-k}\geq (-\alpha-S_k)\vee (-a_n), \overline{T}_{n-k}\leq 0 \Big].
\end{multline*}
Note that $\{(-\alpha-S_k)\vee (-a_n)=-O(\sqrt{n})\}$ as $S_k\geq b_n$, while with high probability, $\underline{T}_{n-k}=O(n^{1/6})$ for $n-k\leq n^{1/3}$. The next step is to approximate $\sigma_k$ by $\sigma_k'$ which is defined as follows
\begin{multline*}
\sigma_k':=\E\Big[\frac{e^{T_{n-k}}}{\sum_{1\leq j\leq k}e^{S_j-S_k}+\sum_{1\leq j\leq n-k}e^{T_{j}}}; \overline{S}_{k-1}<S_k, S_k\geq b_n,\underline{S}_k\geq -\alpha, \max_{i\leq k}(\overline{S}_i-S_i)\leq a_n,\overline{T}_{n-k}\leq 0 \Big].
\end{multline*}
Observe that $0\leq\sigma_k'-\sigma_k\leq\P( \overline{S}_{k-1}<S_k, \underline{S}_k\geq -\alpha, \underline{T}_{n-k}\leq (-a_n)\vee(-\alpha-b_n))$.  By independence of $S$ and $T$, then using \eqref{mSMSbd} and \eqref{LDP}, one sees that
\begin{align*}
\sigma_k'-\sigma_k\leq  &\P\left(\underline{S}_k\geq -\alpha, S_k=\overline{S}_k\right)\P\Big(\underline{T}_{n-k}\leq (-a_n)\vee(-\alpha-b_n)\Big)\\
\leq & {c(1+\alpha)}k^{-1}e^{-c'\sqrt{n}}.
\end{align*}
 Hence, $\frac{\sqrt{n}}{\Ren(\alpha)}\sum_{k=n_0+1}^{n-1}(\sigma_k'-\sigma_k)=\frac{o_n(1)}{\sqrt{n}}$.
This implies that
\begin{equation}\label{sum}
\sqrt{n} \E_{\Q^{(\alpha)}}\Big[\frac{W_n^{(\alpha)}(G)}{D_n^{(\alpha)}}\Big]= \frac{1}{\Ren(\alpha)}\sum_{k=n_0+1}^{n}n\sigma_k'+o_n(1).
\end{equation}
We now turn to consider $\sigma_k'$. By independence of $S$ and $T$ again,
\begin{align*}
\sigma_k'&=\E\Big[\Gl_{n-k}(\sum_{1\leq j\leq k}e^{S_j-S_k});  \overline{S}_{k-1}<S_k, S_k\geq b_n,\underline{S}_k\geq -\alpha,\max_{i\leq k}(\overline{S}_i-S_i)\leq a_n\Big]
\end{align*}
where $\Gl_.(x)$ is defined in \eqref{mG}. Observe that for $k=n-i$ with $i\in\N$ fixed,
\[
\sigma_k'=\sigma_{n-i}'=\E\Big[\Gl_{i}(\sum_{1\leq j\leq n-i}e^{S_j-S_{n-i}});  \overline{S}_{n-i-1}<S_{n-i}, S_{n-i}\geq b_n,\underline{S}_{n-i}\geq -\alpha,\max_{i\leq n-i}(\overline{S}_i-S_i)\leq a_n\Big],
\]
which by \eqref{sumcvg}, is asymptotically, $\frac{\mathcal{C}_{a,b}\Ren(\alpha)\E[\Gl_{i}(\Hinf_\infty)]}{n}+\frac{o_n(1)}{n}$. Moreover, as $\sup_{x\geq1}\Gl_i(x)\leq \E[e^{S_i}; \MS_i\leq 0]\leq {c}{i^{-3/2}}$,
by \eqref{mSMSbd}, one sees that for $n_0\leq n-i\leq n$, 
\[
\sigma_{n-i}'\leq \frac{c}{i^{3/2}}\P( \overline{S}_{n-i-1}<S_{n-i}, S_{n-i}\geq b_n,\underline{S}_{n-i}\geq -\alpha)\leq \frac{c(1+\alpha)}{i^{3/2} (n-i)}.
\]
As a result, for any integer $K\geq1$ fixed, 
\[
\sum_{k=n_0+1}^{n}n\sigma_k'=\sum_{i=0}^Kn\sigma_{n-i}'+\sum_{K< i\leq n^{1/3}}n\sigma_{n-i}'=\Cb_{a,b}\Ren(\alpha)\sum_{i=0}^K \E[\Gl_i(\Hinf_\infty)]+o_n(1)+o_{K}(1),
\]
where $\sum_{k=0}^{K}\E[\Gl_{i}(\Hinf_\infty)]=\sum_{i=0}^\infty \E[\Gl_i(\Hinf_\infty)]+o_K(1)$.

Plugging this into \eqref{sum}, letting $n\rightarrow\infty$ then $K\rightarrow\infty$ implies that, 
{\[
\lim_{n\rightarrow\infty} \sqrt{n}\E_{\Q^{(\alpha)}}\Big[\frac{W_n^{(\alpha)}(G)}{D_n^{(\alpha)}}\Big]=\mathcal{C}_{a,b}\sum_{j=0}^\infty \E[\Gl_j(\Hinf_\infty)]=\Cb_{a,b},
\]}
 which ends the proof of \eqref{G}.

\noindent \textbf{Proof of \eqref{GH}}:
by \eqref{meanQ}, we only need to prove that
\begin{equation}\label{estimate1}
\sqrt{n}\E_{\Q^{(\alpha)}}\Big[\frac{W_n^{(\alpha)}(G-H)}{D_n^{(\alpha)}}\Big]=\E_{\Q^{(\alpha)}}\Big[\frac{\sqrt{n}G(w_n)}{\Ren(\alpha+V(w_n))}\un_{E_n^c}\Big]=:LHS=o_n(1).
\end{equation}
First, we have
\begin{align*}
LHS\leq& n\E_{\Q^{(\alpha)}}\Big[\frac{e^{V(w_n)-\overline{V}(w_n)}\un_{E_n^c}}{\Ren(\alpha+V(w_n))};  \Upsilon_{w_n} >n_0
\Big] \leq  LHS_1+LHS_2+LHS_3
\end{align*}
where
\begin{align*}
LHS_1:=&n\E_{\Q^{(\alpha)}}\Big[\frac{e^{V(w_n)-\overline{V}(w_n)}\un_{E_{n,1}^c}}{\Ren(\alpha+V(w_n))}; \Upsilon_{w_n}>n_0\Big],
LHS_2:=&n\E_{\Q^{(\alpha)}}\Big[\frac{e^{V(w_n)-\overline{V}(w_n)}\un_{E_{n,1}\cap E_{n,2}^c}}{\Ren(\alpha+V(w_n))}; \Upsilon_{w_n} >n_0\Big],\\
LHS_3:=&n\E_{\Q^{(\alpha)}}\Big[\frac{e^{V(w_n)-\overline{V}(w_n)}\un_{E_{n,1}\cap E_{n,2}\cap E_{n,3}^c}}{\Ren(\alpha+V(w_n))}\Big].
\end{align*} 
Each term $LHS_i,\ i=1,2, 3,$ are treated separately. \\ 
\textbf{For $LHS_1$}, by \eqref{rw+}, we have
\begin{align*}
LHS_1\leq& \frac{n}{\Ren(\alpha)}\E[e^{S_n-\overline{S}_n}; S_{k_n}\in[k_n^{1/3},k_n], \mS_{[k_n,\Upsilon_S]}\leq k_n^{1/6}, \Upsilon_S>n_0, \underline{S}_n\geq-\alpha]\\
&+ \frac{n}{\Ren(\alpha)}\E[e^{S_n-\overline{S}_n}; S_{k_n}\notin[k_n^{1/3},k_n], \Upsilon_S>n_0, \underline{S}_n\geq-\alpha]=:\xi_1+\xi_1'.
\end{align*}
Arguing over the values of $\Upsilon_S$ then using Markov property at $\Upsilon_S=k$,
\begin{align}
\xi_1\leq & \frac{n}{\Ren(\alpha)}\sum_{k=n_0+1}^n \E\Big[e^{S_{n-k}}\un_{\{\MS_{n-k}\leq0\} }\Big]\P\Big( S_{k_n}\in[k_n^{1/3},k_n],\mS_{[k_n,k]}\leq k_n^{1/6}, S_k>\overline{S}_{k-1}, \underline{S}_k\geq-\alpha\Big)\nonumber\\
\leq & \frac{n}{\Ren(\alpha)}\sum_{k=n_0+1}^n\frac{c}{(n-k+1)^{3/2}}\P\Big( S_{k_n}\in[k_n^{1/3},k_n],\mS_{[k_n,k]}\leq k_n^{1/6}, S_k>\overline{S}_{k-1}, \underline{S}_k\geq-\alpha\Big)\label{xi1},
\end{align}
where the second inequality holds because of \eqref{eSMSbd}. Moreover, by \eqref{mSMSkbd}, uniformly on $k\in [n_0,n]\cap\z$,
\[
\P\Big( S_{k_n}\in[k_n^{1/3},k_n],\mS_{[k_n,k]}\leq k_n^{1/6}, S_k>\overline{S}_{k-1}, \underline{S}_k\geq-\alpha\Big)=\frac{o_n(1)}{n}.
\]
We hence deduce that $\xi_1=o_n(1)$ since $\sum_{k=n_0+1}^n\frac{c}{(n-k+1)^{3/2}}$ is finite.

For $\xi_1'$, similarly, applying Markov property at time $\Upsilon_S=k$ then \eqref{eSMSbd}, we have
\begin{align*}
\xi_1'&\leq \frac{n}{\Ren(\alpha)}\sum_{k=n_0+1}^n \E[e^{S_{n-k}}; \MS_{n-k}\leq0]\P\Big(S_{k_n}\notin[k_n^{1/3},k_n], \mS_k\geq-\alpha, S_k=\MS_k\Big)\\
&\leq \frac{cn}{\Ren(\alpha)}\sum_{k=n_0+1}^n \frac{1}{(n-k+1)^{3/2}}\P\Big(S_{k_n}\notin[k_n^{1/3},k_n], \mS_k\geq-\alpha, S_k=\MS_k\Big)
\end{align*}
which by \eqref{kmSMSbd} yields that
\[
\xi_1\leq \frac{c'n}{\Ren(\alpha)}\sum_{k=n_0+1}^n \frac{1}{(n-k+1)^{3/2}nk_n^{1/2}}=o_n(1).
\]

\textbf{For $LHS_3$}, let $\mathscr{G}_\infty$ be the sigma-field generated by the spine and all siblings of the spine. 
We know from (\cite{AidekonShi09} eq. (4.9)) that
\begin{equation}\label{E3c}
\Q^{(\alpha)}\Big(E_{n,1}\cap E_{n,2}\cap E_{n,3}^c\Big\vert \mathscr{G}_\infty\Big)\leq O(n^3e^{-k_n^{1/6}/3}),
\end{equation}
wich implies that
\begin{align*}
LHS_3\leq &n\E_{\Q^{(\alpha)}}\Big[\frac{e^{V(w_n)-\overline{V}(w_n)}}{\Ren(\alpha+V(w_n))}\times\Q^{(\alpha)}\Big(E_{n,1}\cap E_{n,2}\cap E_{n,3}^c\Big\vert \mathscr{G}_\infty\Big)\Big] 
\leq  O(n^4e^{-k_n^{1/6}/3})=o_n(1).
\end{align*}

\textbf{For $LHS_2$}, we follow the same lines as in (\cite{AidekonShi09} page 18, below (4.8)) using the same notations. 
For any $1\leq i\leq n$,
\[
\Q^{(\alpha)}(E_{n,i}^c\vert V(w_k); 0\leq k\leq n)\leq c' h(V(w_i)),
\]
where for any $u\geq -\alpha$,
\(
h(u):=\E[X\un_{ \{X+\widetilde{X}>e^{u/2}\}}+\frac{\widetilde{X}\un_{X+\widetilde{X}>e^{u/2}}}{u+\alpha+1}],
\)
with $X:=\sum_{|z|=1}e^{-V(z)}$ and $\widetilde{X}:=\sum_{|z|=1}V_+(z)e^{-V(z)}$. Note that $\E[(X+\widetilde{X})^2]<\infty$ because of \eqref{hyp1}. 
Markov inequality gives that $h(u)\leq e^{-u/2}$ . Recall that $V(w_i)\geq k_n^{1/6}$ on $E_{n,i}$. Therefore,
\begin{align*}
LHS_2\leq &c'\sum_{i=k_n}^n n\E_{\Q^{(\alpha)}}\Big[\frac{e^{V(w_n)-\overline{V}(w_n)}}{\Ren(\alpha+V(w_n))}h(V(w_i))\un_{E_{n,1}}; \Upsilon_{w_n}>n_0\Big]\\
\leq & c' n(n-k_n)e^{-k_n^{1/6}/2}\E_{\Q^{(\alpha)}}\Big[\frac{e^{V(w_n)-\overline{V}(w_n)}}{\Ren(\alpha+V(w_n))}; \Upsilon_{w_n}>n_0\Big]
\end{align*}
 Applying \eqref{rw+} then partitioning on the values of $\Upsilon_S$ yields 
\begin{align*}
LHS_2\leq& c' n(n-k_n)e^{-k_n^{1/6}/2} \sum_{k=n_0+1}^n\frac{1}{\Ren(\alpha)}\E\left[e^{S_n-S_k}; \Upsilon_S=k,  \underline{S}_n\geq-\alpha\right] \\
\leq &c' n^2e^{-k_n^{1/6}/2} \sum_{k=n_0+1}^n\frac{1}{\Ren(\alpha)} \E\left[e^{S_{n-k}}\un_{\MS_{n-k}\leq 0}\right]\P\left(\mS_k\geq-\alpha, S_k=\MS_k\right),
\end{align*}
by Markov property. By \eqref{Ren}, \eqref{eSMSbd} and \eqref{mSMSbd}, 
\[
LHS_2\leq c n^2e^{-k_n^{1/6}/2}  \sum_{k=n_0+1}^n \frac{1}{k(n-k+1)^{3/2}}=o_n(1),
\]
since $(\log n)^6=o(k_n)$.

Collecting all the estimations for the $LHS_i$, this ends the proof of \eqref{GH}. 
\end{proof}

\subsubsection{Second moment estimate: proof of \eqref{H2}}
Recall the definitions of $G$ in \eqref{defG} and $H$ below \eqref{defE3}. In view of \eqref{F}, it suffices to show that
\begin{equation}\label{limH2}
\limsup_{n\rightarrow\infty}\E_{\Q^{(\alpha)}}\bigg[\Big(\frac{\sqrt{n}W_n^{(\alpha)}(H)}{D_n^{(\alpha)}}\Big)^2\bigg]\leq \Cb_{a,b}^2.
\end{equation}
By \eqref{probable},
\begin{align}
LHS_\eqref{limH2}:=&\E_{\Q^{(\alpha)}}\bigg[\Big(\frac{\sqrt{n}W_n^{(\alpha)}(H)}{D_n^{(\alpha)}}\Big)^2\bigg]=\E_{\Q^{(\alpha)}}\bigg[\frac{\sqrt{n}W_n^{(\alpha)}(H)}{D_n^{(\alpha)}}\times \frac{\sqrt{n}H(w_n)}{\Ren(\alpha+V(w_n))}\bigg]\nonumber\\
\leq & \E_{\Q^{(\alpha)}}\bigg[\frac{\sqrt{n}W_n^{(\alpha)}(G)}{D_n^{(\alpha)}}\times \frac{\sqrt{n}G(w_n)\un_{E_n}}{\Ren(\alpha+V(w_n))}\bigg].\label{LHSlimH2}
\end{align}
For convenience, let
\begin{align*}
 W_n^{(\alpha),[k_n,n]}(G):&=e^{-V(w_n)}G(w_n)\un_{\{ \underline{V}(w_n)\geq-\alpha \}}+\sum_{i=k_n}^{n-1}\sum_{y\in\Omega(w_{i+1})}\sum_{|z|=n, z\geq y}e^{-V(z)}G(z)\un_{\{ \underline{V}(z)\geq-\alpha\}},\\
  W_n^{(\alpha),[0,k_n)}(G):&=\sum_{i=0}^{k_n-1}\sum_{y\in\Omega(w_{i+1})}\sum_{|z|=n, z\geq y}e^{-V(z)}G(z)\un_{ \{\underline{V}(z)\geq-\alpha\}}, 
\end{align*}
with $\Omega(\omega_{i+1})=\{|x|=i+1 : \pa{x}= \omega_{i}, \ x \neq  \omega_{i+1}\}$. In the similar way, we define $D_n^{(\alpha),[0,k_n)}$ and $D_n^{(\alpha),[k_n,n]}$. Recall \eqref{defE3}, the event $E_{n,3}$ means that $D_n^{(\alpha),[k_n,n]}\leq {n^{-2}}$.
So under $\Q^{(\alpha)}$, the descendants of the $(\omega_i; {k_n}\leq i\leq n)$ make little contribution to $D_n^{(\alpha)}$. The same thing happens to $W_n^{(\alpha)}$. We thus approximate $\frac{\sqrt{n}W_n^{(\alpha)}(G)}{D_n^{(\alpha)}}$ by $\frac{\sqrt{n}W_n^{(\alpha)[0,k_n)}(G)}{D_n^{(\alpha),[0,k_n)}}$  on the right hand side of \eqref{LHSlimH2}. Then Markov property at $k_n$ makes it possible to deal with these two terms in the product separately. Clearly 
\begin{align*}
LHS_\eqref{limH2}\leq  \E_{\Q^{(\alpha)}}\left[{\sqrt{n}W_n^{(\alpha),[k_n,n]}(G)}{(D_n^{(\alpha)})^{-1}}\times \widetilde{G}_n \right]+\E_{\Q^{(\alpha)}}\left[\widetilde W_n \times \widetilde{G}_n \right],
\end{align*}
with $\widetilde W_n:={\sqrt{n}W_n^{(\alpha),[0,k_n)}(G)}/{D_n^{(\alpha),[0,k_n)}}$ and $ \widetilde{G}_n:= {\sqrt{n}G(w_n)\un_{E_n}}/{\Ren(\alpha+V(w_n))}$. For the first expectation above, as  $G\leq \sqrt{n}\un_{\{V(w_n)\geq b_n/2\} }$, it is clear that given $E_n$,
\[
W_n^{(\alpha),[k_n,n]}(G)\leq \sqrt{n} W_n^{(\alpha),[k_n,n]}\leq \sqrt{n} D_n^{(\alpha),[k_n,n]}\leq n^{-3/2}.
\]
In view of \eqref{Ren}, it follows that
\begin{align*}
 \E_{\Q^{(\alpha)}}\left[{\sqrt{n}W_n^{(\alpha),[k_n,n]}(G)}{(D_n^{(\alpha)})^{-1}}\times \widetilde{G}_n \right] & \leq  \E_{\Q^{(\alpha)}}\bigg[\frac{n^{-1}}{D_n^{(\alpha)}}\times \frac{n}{\Ren(\alpha+b_n/2)}{ \bf 1}_{E_n}\bigg] 
& \leq  \frac{c}{1+\alpha+b_n/2}\E_{\Q^{(\alpha)}}\Big[\frac{1}{D_n^{(\alpha)}}\Big] \\
& \leq {c'} n^{-1/2},
\end{align*}
since $\E_{\Q^{(\alpha)}}[(D_{n}^{(\alpha)})^{-1}]={\Ren(\alpha)}^{-1}\leq1$. As a consequence,
\begin{align*}
LHS_{\eqref{limH2}}\leq\frac{c'}{\sqrt{n}}+\E_{\Q^{(\alpha)}}\bigg[\widetilde W_n \widetilde{G}_n \bigg] 
\leq \frac{c'}{\sqrt{n}}+ {\E_{\Q^{(\alpha)}}\bigg[\widetilde{W}_n\times \un_{\{V(w_{k_n})\in[k_n^{1/3},k_n]\}}\bigg]} \times\sup_{u\in[k_n^{1/3},k_n]}{\E_{\Q^{(\alpha)}}\bigg[ \widetilde{G}_n \Big\vert V(w_{k_n})=u\bigg]},
\end{align*}
where the second inequality follows from Markov property at $k_n$. Let 
\begin{align*}
RHS_1:={\E_{\Q^{(\alpha)}}\bigg[\widetilde{W}_n \times \un_{\{V(w_{k_n})\in[k_n^{1/3},k_n]\}}\bigg]}, \quad  
RHS_2(u):=\E_{\Q^{(\alpha)}}\bigg[\widetilde{G}_n \Big\vert V(w_{k_n})=u\bigg].
\end{align*}
 Next we are going to show that
\begin{align}
&\limsup_{n\rightarrow\infty}RHS_1\leq \Cb_{a,b}, \textrm{ and}\label{rhs1}\\
&\limsup_{n\rightarrow\infty}\sup_{u\in[k_n^{1/3},k_n]}RHS_2(u)\leq \Cb_{a,b}.\label{rhs2}
\end{align}
For $RHS_1$, note that by Markov property  
\[
RHS_1\times{ \inf_{u\in[k_n^{1/3},k_n]}\Q^{(\alpha)}(E_n\vert V(w_{k_n})=u)} \leq \E_{\Q^{(\alpha)}}\left[\widetilde{W}_n \times \un_{E_n}\right].
\]
By \eqref{Qu}, $\inf_{u \in [k_n^{1/3},k_n]}\Q^{(\alpha)}(E_n\vert V(w_{k_n})=u) =1+o_n(1)$, therefore,
\begin{align*}
RHS_1\leq& (1+o_n(1)) \E_{\Q^{(\alpha)}}\left[\widetilde{W}_n \times \un_{E_n}\right]\\
\leq &(1+o_n(1)) \E_{\Q^{(\alpha)}}\left[\widetilde{G}_n \times \un_{E_n}\un_{ \{ D_n^{(\alpha)}\geq n^{-3/2}\}}\right]+2n\Q^{(\alpha)}\left((D_{n}^{(\alpha)})^{-1}>n^{3/2}\right),
\end{align*}
since $W_n^{(\alpha),[0,k_n)}(G)\leq \sqrt{n} D_n^{(\alpha),[0,k_n)}$. Again by Markov inequality with $\E_{\Q^{(\alpha)}}[(D_{n}^{(\alpha)})^{-1}]={\Ren(\alpha)}^{-1}\leq1$, 
\[
2n\Q^{(\alpha)}\left((D_{n}^{(\alpha)})^{-1}>n^{3/2}\right)\leq {2} n^{-1/2}.
\]
On the other hand, given $E_n\cap\{D_n^{(\alpha)}\geq n^{-3/2}\}$, $D_n^{(\alpha),[k_n, n]}\leq n^{-2}\leq D_n^{(\alpha)}/\sqrt{n}$. So, 
\[
D_n^{(\alpha),[0,k_n)} = D_n^{(\alpha)}- D_n^{(\alpha),[k_n, n]}\geq (1-1/\sqrt{n})D_n^{(\alpha)}.
\] Consequently,
\[ RHS_1\leq  (1+o_n(1)) \E_{\Q^{(\alpha)}}\left[\widetilde{W}_n \times\un_{E_n}\un_{\{D_n^{(\alpha)}\geq n^{-3/2}\}}\right]+\frac{2}{\sqrt{n}}
\leq (1+o_n(1)) \E_{\Q^{(\alpha)}}\Big[\frac{\sqrt{n}W_n^{(\alpha)}(F_{a_n,b_n})}{D_n^{(\alpha)}} \Big]+\frac{2}{\sqrt{n}}. \]
So \eqref{rhs1} follows from \eqref{F}.\\
 It remains to prove \eqref{rhs2}.  Let $m:=n-k_n$ and $m_0:=n_0-k_n$, for any $u\in[k_n^{1/3},k_n]$, $RHS_2(u)$ is bounded by
 \[
 \E_{\Q^{(\alpha)}}\left[ \frac{n}{\Ren(\alpha+V(w_m))}\frac{e^{V(w_m)}}{\sum_{0<j\leq m}e^{V(w_m)}}\un_{\{ \Upsilon_{w_n}>m_0, \overline{V}(w_m)\geq b_n,\max_{k\leq n}(\overline{V}(w_k)-V(w_k))\leq a_n\}}\Big\vert V(w_{0})=u\right]
 \]
 which by Markov property and \eqref{rw+} is less than
 \[
 \frac{n}{\Ren(\alpha+u)}\E\left[\frac{e^{S_m}}{\sum_{1\leq j\leq m}e^{S_m}}; \max_{i\leq m}(\overline{S}_i-S_i)\leq a_n, \Upsilon_S>m_0, \overline{S}_m\geq b_n-u, \underline{S}_m\geq -\alpha-u\right].
 \]
Following the same arguments used for \eqref{G}, one obtains that for all $u\in[k_n^{1/3},k_n]$, $RHS_2(u)\leq \Cb_{a,b}+o_n(1)$,
which completes the proof of \eqref{rhs2}  and conclude \eqref{limH2}. \hfill $\square$

 \subsection{Proof of Corollary \ref{sumW} \label{sec3.3}}
 In this subsection, we show that as $\beta\rightarrow\infty$,
 \begin{equation}\label{sumWo}
\sum_{m=1}^\infty \sum_{|z|=m}\frac{1}{\sum_{\phi<y\leq z}e^{V(y)}}\un_{\{ \max_{\phi<y\leq z(\overline{V}(y)-V(y))\leq \beta, \overline{V}(z)\geq \beta\pm O(\log \beta)}\}}\xrightarrow{in\ \P^* probability} \Lambda D_\infty.
 \end{equation}
 \begin{proof}
Denote 
\[
W^*_m(\beta):=\sum_{|z|=m}\frac{1}{\sum_{\phi<y\leq z}e^{V(y)}}\un_{\{\max_{\phi<y\leq z(\overline{V}(y)-V(y))\leq \beta, \overline{V}(z)\geq \beta}\}}=W_m(F_{\beta,\beta})/\sqrt{m}.
\]
 In fact, only those $m$ that are comparable to $\beta^2$ really contribute to the sum. First, for $m\leq \varepsilon\beta^2$ and $m\geq \beta^2/\varepsilon$ with $\varepsilon\downarrow0$, we claim that for any $\eta>0$,
\begin{align}
\lim_{\varepsilon\rightarrow 0}\limsup_{\beta\rightarrow\infty}\P\left(\sum_{m\leq \varepsilon\beta^2}W_m^*(\beta)\geq \eta\right)&=0\label{smallsum}\\
\lim_{\varepsilon\rightarrow 0}\limsup_{\beta\rightarrow\infty}\P\left(\sum_{m\geq \beta^2/\varepsilon}W_m^*(\beta)\geq \eta\right)&=0\label{largesum}.
\end{align}
We postpone the proof of the above facts to Subsection \ref{Sec4.3.4} as the arguments are similar to the proof of \eqref{outside}.

For any $\varepsilon>0$ fixed, by \eqref{unifcvg}, as $\beta\rightarrow\infty$, \bl{$mW^*_m(a\sqrt{m})$} converges in probability to $D_\infty\mathscr{C}_{a,a}$ uniformly on $a\in\r_+$. Moreover, $\sum_{\varepsilon\beta^2\leq m\leq \beta^2/\varepsilon} \frac{1}{m}<\infty$ for any $\varepsilon>0$.  { Therefore,
\begin{align*}
\sum_{\varepsilon\beta^2\leq m\leq \beta^2/\varepsilon} W^*_m(\beta)= &D_\infty \sum_{\varepsilon\beta^2\leq m\leq \beta^2/\varepsilon} \frac{1}{m}\left( \mathscr{C}_{\beta/\sqrt{m},\beta/\sqrt{m}}+o_{\P^*}(1)\right)\\
=&D_\infty \sum_{\varepsilon\beta^2\leq m\leq \beta^2/\varepsilon} \frac{1}{m} \mathscr{C}_{\beta/\sqrt{m},\beta/\sqrt{m}}+o_{\P^*}(1).
\end{align*} 
where $o_{\P^*}(1)$ denotes a term such that $\lim_{\beta\rightarrow\infty}o_{\P^*}(1)=0$ in $\P^*$-probability.} On the other hand, by change of variables $m=\gamma \beta^2$, 
\[
\int_{\varepsilon\beta^2}^{\beta^2/\varepsilon} \mathscr{C}_{\beta/\sqrt{m},\beta/\sqrt{m}}\frac{dm}{m}=\int_\varepsilon^{1/\varepsilon} \mathscr{C}_{\gamma^{-1/2},\gamma^{-1/2}}\frac{d\gamma}{\gamma}.
\]
As $\mathscr{C}_{a,b}$ is continuous and monotone, we get that
\[
\sum_{\varepsilon\beta^2\leq m\leq \beta^2/\varepsilon} \frac{1}{m} \mathscr{C}_{\beta/\sqrt{m},\beta/\sqrt{m}}=\int_\varepsilon^{1/\varepsilon} \mathscr{C}_{\gamma^{-1/2},\gamma^{-1/2}}\frac{d\gamma}{\gamma}+o_\beta(1).
\]
When $\varepsilon\rightarrow 0$, $\int_\varepsilon^{1/\varepsilon} \mathscr{C}_{\gamma^{-1/2},\gamma^{-1/2}}\frac{d\gamma}{\gamma}\rightarrow \Lambda$ because of Lemma \ref{A}. In view of \eqref{smallsum} and \eqref{largesum}, we conclude that in $\P^*$-probability,
\[
\lim_{\beta\rightarrow\infty}\sum_{m=1}^\infty W^*_m(\beta)=\Lambda D_\infty.
\]
Note that if we replace $W_m(F_{\beta,\beta})$ by $W_m(F_{\beta,(1\pm\epsilon)\beta})$ with $\beta\in(0,1)$, these arguments still work. By monotonicity of $F$, we have
\begin{multline*}
\sum_{m=1}^\infty W_m(F_{\beta,(1-\epsilon)\beta})\leq\sum_{m=1}^\infty\sum_{|z|=m}\frac{1}{\sum_{\phi<y\leq z}e^{V(y)}}\un_{ \{ \max_{\phi<y\leq z}(\overline{V}(y)-V(y))\leq \beta, \overline{V}(z)\geq \beta\pm O(\log \beta) \} }\leq \sum_{m=1}^\infty W_m(F_{\beta,(1+\epsilon)\beta}).
\end{multline*}
By integrability and continuity of $\mathscr{C}$, as $\beta\rightarrow\infty$, $\sum_{m=1}^\infty W_m(F_{\beta,(1-\epsilon)\beta})=(\Lambda+o_\epsilon(1)) D_\infty+o_{\P^*}(1)$.
Consequently, the convergence \eqref{sumWo} holds.
\end{proof}
 
 \section{Variance of $K_n$ and secondary results}
 \label{sec4}
 
 In this section, we {complete} the proof of the main theorems by proving {Lemmata} \ref{addU}, \ref{usesn}, \ref{othergenerations} and Proposition \ref{lemCentredK}.

 \subsection{Variance of $K^{B^\delta\cap U}_n(\ell)$ and Proof of Proposition \ref{lemCentredK} \label{QuVar}}

Recall the definition of $B^{\delta}\cap U$ in Section \ref{Qexp}, in this section we focus on the mean of the quenched variance of $K_n^{B^{\delta}\cap U}(\ell)$ which is a key step in the proof of Proposition \ref{lemCentredK}.

\subsubsection{Quenched expression for the variance}

\begin{lem} \label{varq} Recall that $a_z=\p^{\mathcal{E}}(T_z<T_\phi)$ and let $a_{v,z}:=\p^{\mathcal{E}}(T_v\wedge T_z<T_\phi)$. For every event $A$ measurable with respect to $\mathcal{E}$, denote
{ the quenched variance of $K_n^A(\ell)$ as follows:
\[
\Ve(K_n^A(\ell)):=\e^\en\left[ \left(K_n^A(\ell)-\mathcal{K}_n^A(\ell) \right)^2\right],
\]}
then 
\begin{multline*}
\Ve(K_n^A(\ell)) = \sum_{|z|= \ell}\left[(1-a_z)^n-(1-a_z)^{2n}\right]\un_{\{z \in A\} }\\
+  \sum_{|z|=\ell,|v|=\ell, z \neq v} \left[(1-a_{v,z})^n-(1-a_z)^n(1-a_v)^n\right]\un_{\{z \in A\}}\un_{\{v \in A\}}  .
\end{multline*}
\end{lem}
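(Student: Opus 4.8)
The plan is to use the regenerative decomposition of the walk at the successive returns to the root. Fix $\ell\ge1$, so that every $z$ with $|z|=\ell$ is distinct from $\phi$. Conditionally on the environment $\en$, the strong Markov property applied at the $\p^\en$-a.s.\ finite return times $T_\phi^1,\dots,T_\phi^{n-1}$ shows that $(X_m)_{0\le m\le T_\phi^n}$ is the concatenation of $n$ i.i.d.\ excursions away from $\phi$. Since $X_{T_\phi^n}=\phi$, for $|z|=\ell$ the event $\{T_z<T_\phi^n\}$ coincides with the event that $z$ is visited during at least one of these $n$ excursions; and, by definition, a single excursion visits $z$ with quenched probability $a_z=\p^\en_\phi(T_z<T_\phi)$, visits $v$ or $z$ before returning to $\phi$ with probability $a_{v,z}=\p^\en_\phi(T_v\wedge T_z<T_\phi)$, and therefore avoids both $z$ and $v$ throughout with probability $1-a_{v,z}$. (The degenerate excursions $\phi\to\pa\phi\to\phi$ produced by the added parent visit no vertex of generation $\ell\ge1$, so they are harmlessly counted among the excursions avoiding $z$, respectively both $z$ and $v$.)

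By independence of the $n$ excursions, this immediately gives
\[
\p^\en\big(T_z\ge T_\phi^n\big)=(1-a_z)^n,\qquad
\p^\en\big(T_z\ge T_\phi^n,\ T_v\ge T_\phi^n\big)=(1-a_{v,z})^n,
\]
and hence, by inclusion--exclusion on the complementary events,
\[
\p^\en\big(T_z<T_\phi^n,\ T_v<T_\phi^n\big)=1-(1-a_z)^n-(1-a_v)^n+(1-a_{v,z})^n\qquad(z\neq v),
\]
the case $z=v$ recovering $\p^\en(T_z<T_\phi^n)=1-(1-a_z)^n$, equivalently $\mathcal{K}_n^A(\ell)=\e^\en[K_n^A(\ell)]=\sum_{|z|=\ell}(1-(1-a_z)^n)\un_{\{z\in A\}}$.

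Next I would expand the quenched second moment. Writing
\[
K_n^A(\ell)^2=\sum_{|z|=|v|=\ell}\un_{\{T_z<T_\phi^n\}}\un_{\{T_v<T_\phi^n\}}\un_{\{z\in A\}}\un_{\{v\in A\}}
\]
and splitting off the diagonal $z=v$ (where $\un_{\{T_z<T_\phi^n\}}^2=\un_{\{T_z<T_\phi^n\}}$), the identities above give
\begin{multline*}
\e^\en\big[K_n^A(\ell)^2\big]=\sum_{|z|=\ell}\big(1-(1-a_z)^n\big)\un_{\{z\in A\}}\\
+\sum_{\substack{|z|=|v|=\ell\\ z\neq v}}\big(1-(1-a_z)^n-(1-a_v)^n+(1-a_{v,z})^n\big)\un_{\{z\in A\}}\un_{\{v\in A\}}.
\end{multline*}
Subtracting $\mathcal{K}_n^A(\ell)^2=\sum_{|z|=|v|=\ell}\big(1-(1-a_z)^n\big)\big(1-(1-a_v)^n\big)\un_{\{z\in A\}}\un_{\{v\in A\}}$ and again separating the diagonal from the off-diagonal part, the diagonal contributes $\big(1-(1-a_z)^n\big)-\big(1-(1-a_z)^n\big)^2=(1-a_z)^n-(1-a_z)^{2n}$, while expanding the product $\big(1-(1-a_z)^n\big)\big(1-(1-a_v)^n\big)$ shows that the off-diagonal terms collapse to $(1-a_{v,z})^n-(1-a_z)^n(1-a_v)^n$. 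This is precisely the asserted expression for $\Ve(K_n^A(\ell))$.

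Everything comes down to bookkeeping once the excursion picture is in place, so there is no genuine obstacle; the two points deserving a line of justification are the i.i.d.\ decomposition itself (the strong Markov property at the return times $T_\phi^i$, together with recurrence of the walk to ensure these are a.s.\ finite), and the identification of $\{T_z\ge T_\phi^n\}$ with ``none of the first $n$ excursions visits $z$'', which rests on $X_{T_\phi^n}=\phi\neq z$ for $|z|=\ell\ge1$.
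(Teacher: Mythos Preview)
Your proof is correct and follows essentially the same approach as the paper: both rely on the excursion decomposition at successive returns to $\phi$ to identify $\p^\en(T_z\ge T_\phi^n)=(1-a_z)^n$ and $\p^\en(T_z\ge T_\phi^n,\,T_v\ge T_\phi^n)=(1-a_{v,z})^n$, after which the variance formula is immediate bookkeeping. The paper's version is simply terser, writing $K_n^A(\ell)-\mathcal{K}_n^A(\ell)=\sum_{|z|=\ell}\big((1-a_z)^n-\un_{\{T_z\ge T_\phi^n\}}\big)\un_{\{z\in A\}}$ and squaring directly, whereas you compute the second moment and subtract the square of the mean; the content is the same.
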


\begin{proof}
{Note that 
\begin{align*}
K_n^A(\ell)-\mathcal{K}_n^A(\ell)&=\sum_{|z|= \ell} \left(\un_{T_z<T_\phi^n}-(1-(1-a_z)^n)\right) \un_{\{z \in A\} } =\sum_{|z|= \ell} \left((1-a_z)^n -\un_{T_z\geq T_{\phi}^n} \right) \un_{\{z \in A\}}.
\end{align*} So the lemma  comes directly.}
\end{proof}

\noindent A corollary of this Lemma is the following result, which gives a simple upper bound of the quenched variance when $A=B^\delta\cap U$ :  

\begin{lem} Recall the definition of $B^{\delta}\cap U$ in Section \ref{Qexp}, we have :
\begin{multline}\label{veKbd}
\Ve\left({K}_n^{B^\delta\cap U}(\ell) \right) \leq \sum_{\substack{|z|=|v|= \ell, \\ z\neq v}}\left[n a_z \p_{v\wedge z}^{\mathcal{E}}(T_v<T_\phi)\un_{\{z, v\in B^\delta\cap U\}}+n a_v \p_{v\wedge z}^{\mathcal{E}}(T_z<T_\phi)\un_{\{z,v\in B^\delta\cap U\}}\right]\\
+\sum_{|z|=\ell}na_z\un_{\{z\in B^\delta\cap U\}},
\end{multline}
where $v\wedge z$ is the latest common ancestor of $v$ and $z$ in the tree $\T$, and $\p^{\en}_y$ is the quenched probability of the random walk started from $y$.
\end{lem}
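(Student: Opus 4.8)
The plan is to start from the exact quenched variance expression obtained in Lemma~\ref{varq} with $A=B^\delta\cap U$ and simply bound each of the two sums from above. For the diagonal sum $\sum_{|z|=\ell}\bigl[(1-a_z)^n-(1-a_z)^{2n}\bigr]\un_{\{z\in B^\delta\cap U\}}$, I would drop the nonnegative subtracted term $(1-a_z)^{2n}$ and then use the elementary inequality $1-(1-a_z)^n\leq na_z$ (valid since $a_z\in[0,1]$) to replace $(1-a_z)^n\leq 1$ by the bound $(1-a_z)^n\le \min(1,\ldots)$; more directly, $(1-a_z)^n-(1-a_z)^{2n}=(1-a_z)^n\bigl(1-(1-a_z)^n\bigr)\leq 1\cdot na_z=na_z$. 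This yields the last term $\sum_{|z|=\ell}na_z\un_{\{z\in B^\delta\cap U\}}$ of \eqref{veKbd}.

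For the off-diagonal sum I would handle each pair $z\neq v$ at generation $\ell$ by bounding the bracket $(1-a_{v,z})^n-(1-a_z)^n(1-a_v)^n$. The key observation is that $a_{v,z}=\p^\en_\phi(T_v\wedge T_z<T_\phi)\geq \max(a_z,a_v)$ but more usefully one has the sub/super-additivity relations coming from the tree structure: writing $w=v\wedge z$ for the most recent common ancestor, a first-passage decomposition through $w$ gives $a_{v,z}=a_w\,\p^\en_w(T_v\wedge T_z<T_\phi)$ and $a_z=a_w\,\p^\en_w(T_z<T_\phi)$, $a_v=a_w\,\p^\en_w(T_v<T_\phi)$ (this is exactly the type of identity recorded in \eqref{az} via Lemma~C.1 of \cite{AndDeb2}). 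Then $(1-a_{v,z})^n-(1-a_z)^n(1-a_v)^n = \bigl[1-(1-a_z)^n(1-a_v)^n\bigr]-\bigl[1-(1-a_{v,z})^n\bigr]$. Using $1-(1-a_z)^n(1-a_v)^n\leq \bigl[1-(1-a_z)^n\bigr]+\bigl[1-(1-a_v)^n\bigr]\leq na_z+na_v$ and then refining: the precise cancellation is that $1-(1-a_{v,z})^n\geq$ (roughly) $n\,a_w\max(\ldots)$ while the product term contributes the "overlap" $n a_w$, so after cancellation of the common ancestral contribution one is left with $n a_w\bigl(\p^\en_w(T_z<T_\phi)+\p^\en_w(T_v<T_\phi)-1\cdot(\text{something})\bigr)$. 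Matching the target, one wants to end with $na_z\,\p^\en_{v\wedge z}(T_v<T_\phi)+na_v\,\p^\en_{v\wedge z}(T_z<T_\phi)$, i.e.\ $n a_w\,\p^\en_w(T_z<T_\phi)\,\p^\en_w(T_v<T_\phi)\cdot 2$ after substituting $a_z=a_w\p^\en_w(T_z<T_\phi)$ — so the claim is that the bracket is at most $a_{v,z}\bigl[(1-a_z)^n+(1-a_v)^n\bigr]\cdot n/ \ldots$; concretely I would prove $(1-a_{v,z})^n-(1-a_z)^n(1-a_v)^n \leq n a_z a_v/a_w\cdot 2 = 2n\,a_w\,\p^\en_w(T_z<T_\phi)\p^\en_w(T_v<T_\phi)= na_z\p^\en_w(T_v<T_\phi)+na_v\p^\en_w(T_z<T_\phi)$, using $a_{v,z}\geq a_z+a_v-a_z a_v/a_w$ (inclusion–exclusion for first passage, which holds because once the walk from $w$ hits one of $v,z$ it must return to $w$ before possibly hitting the other) together with the convexity bound $(1-x)^n-(1-x-y)^n\le n y$ applied with $x=1-(1-a_z)^n(1-a_v)^n$-type manipulations.

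More cleanly, I would argue as follows. Condition on reaching $w=v\wedge z$: the excursions from $w$ that reach $z$, that reach $v$, that reach $\phi$ are the relevant events. By the strong Markov property at $w$ one reduces to a statement about independent geometric-type trials, and the bound $(1-p-q+pq\cdot r)^n - (1-p)^n(1-q)^n \le n(p q' + q p')$ for the appropriate success probabilities $p=a_z$, $q=a_v$, $q'=\p^\en_w(T_v<T_\phi)$, $p'=\p^\en_w(T_z<T_\phi)$ follows from a telescoping / mean value estimate: $(1-a_{v,z})^n-(1-a_z)^n(1-a_v)^n$, since $a_{v,z}\geq a_z$ and $a_{v,z}\geq a_v$, the difference of $n$-th powers is controlled by $n$ times the difference $(1-a_z)(1-a_v)-(1-a_{v,z})=a_{v,z}-a_z-a_v+a_za_v$, and one checks $a_{v,z}-a_z-a_v+a_za_v\le a_z(1-\p^\en_w(T_v<T_\phi))^{?}$... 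The correct identity is $a_z+a_v-a_{v,z}=a_w\p^\en_w(T_z<T_\phi)\p^\en_w(T_v<T_\phi)$ (probability the walk from $w$ hits both $z$ and $v$ before $\phi$ is at least the product, by a FKG/independence-of-excursions argument, but here one wants an upper bound on $a_z+a_v-a_{v,z}$, i.e.\ $\p_w(\text{hit }z)+\p_w(\text{hit }v)-\p_w(\text{hit }z\text{ or }v)=\p_w(\text{hit both})\le \min(\ldots)$), giving $(1-a_z)(1-a_v)-(1-a_{v,z})\le a_z\p^\en_w(T_v<T_\phi)+a_v\p^\en_w(T_z<T_\phi)$, and then $(1-a_{v,z})^n-(1-a_z)^n(1-a_v)^n\le n\bigl[(1-a_z)(1-a_v)-(1-a_{v,z})\bigr]$ by the standard bound $x^n-y^n\le n(x-y)$ for $0\le y\le x\le 1$ (with $x=(1-a_z)(1-a_v)$... careful: we need $x\le 1$ and the inequality direction, which holds). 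Summing over all pairs gives the off-diagonal part of \eqref{veKbd}.

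The main obstacle I anticipate is getting the hitting-probability inequality for $a_{v,z}$ with the right constant and direction: one needs $a_z+a_v-a_{v,z}\le a_z\,\p^\en_{v\wedge z}(T_v<T_\phi)+a_v\,\p^\en_{v\wedge z}(T_z<T_\phi)$ (equivalently, the probability of hitting both $v$ and $z$ before returning to $\phi$, starting from $w$, is at most $\p^\en_w(T_z<T_\phi)\p^\en_w(T_v<T_\phi)+$ a cross term — this is where a union-bound over ``which one is hit first'' plus the Markov property at the first of $v,z$ to be hit, combined with the fact that to then hit the other the walk must pass through $w$ again, does the job). All the exponent manipulations ($x^n-y^n\le n(x-y)$, dropping $(1-a_z)^{2n}\geq0$) are routine. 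I would also remark that the restriction to $B^\delta\cap U$ plays no role in this particular lemma beyond inserting the indicators $\un_{\{z,v\in B^\delta\cap U\}}$, which are simply carried along; its real use is in the subsequent step where one takes $\E_{\P^*}$ of this upper bound.
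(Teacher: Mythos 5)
Your overall strategy matches the paper's: bound the exact expression from Lemma~\ref{varq} termwise, drop $(1-a_z)^{2n}\geq 0$ in the diagonal and use $1-(1-a_z)^n\leq na_z$, then reduce the off-diagonal bracket to $n\,\p^\en(T_v\vee T_z<T_\phi)$ and split that hitting probability by which of $v,z$ is reached first and a Markov step at $v\wedge z$. The diagonal part you give is exactly the paper's. But the off-diagonal write-up contains two genuine errors. First, the claimed ``identity'' $a_z+a_v-a_{v,z}=a_w\,\p^\en_w(T_z<T_\phi)\p^\en_w(T_v<T_\phi)$ is false; the correct statement, which is just inclusion–exclusion, is $a_z+a_v-a_{v,z}=\p^\en(T_v\vee T_z<T_\phi)$, and this need not equal (nor be bounded by) the product on the right. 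Second, your application of $x^n-y^n\leq n(x-y)$ has the sign backwards: if you take $x=1-a_{v,z}$ and $y=(1-a_z)(1-a_v)$ then the bound reads $(1-a_{v,z})^n-(1-a_z)^n(1-a_v)^n\leq n\bigl[(1-a_{v,z})-(1-a_z)(1-a_v)\bigr]$, not $n\bigl[(1-a_z)(1-a_v)-(1-a_{v,z})\bigr]$ as you wrote. Moreover that route silently requires $1-a_{v,z}\geq (1-a_z)(1-a_v)$, i.e.\ a positive-correlation fact that you neither state nor prove.

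The paper sidesteps both issues cleanly. It first writes $a_{v,z}=d_{v,z}+d_{z,v}$ with $d_{z,v}:=\p^\en(T_z<T_v\wedge T_\phi)\leq a_z$, then uses the elementary $(1-a_z)^n(1-a_v)^n\geq (1-a_z-a_v)^n$ so that one can apply $x^n-y^n\leq n(x-y)$ with $x=1-a_{v,z}\geq y=1-a_z-a_v$ (the ordering here is just the union bound $a_{v,z}\leq a_z+a_v$, no correlation input needed), arriving at $n\bigl(a_z-d_{z,v}+a_v-d_{v,z}\bigr)=n\,\p^\en(T_v\vee T_z<T_\phi)$. The final union-bound-plus-Markov step $\p^\en(T_v\vee T_z<T_\phi)\leq a_z\,\p^\en_{v\wedge z}(T_v<T_\phi)+a_v\,\p^\en_{v\wedge z}(T_z<T_\phi)$ you do describe correctly in words. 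If you replace your comparison with $(1-a_z)(1-a_v)$ by a comparison with $1-a_z-a_v$, and drop the false product identity, your argument becomes the paper's.
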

\begin{proof}
This upper bound is actually true for every truncated version of $K_n(\ell)$, however it is optimized here for events included in $U$, in particular for $B^\delta\cap U$. {For $a_{v,z}$ one sees that}
\begin{align*}
a_{v,z}=\p^{\mathcal{E}}(T_v < T_z \wedge T_\phi)+\p^{\mathcal{E}}(T_z < T_v \wedge T_\phi)=:d_{v,z}+d_{z,v}.
\end{align*} 
We have, 
\begin{align*}
(1-a_{v,z})^n-(1-a_z)^n(1-a_v)^n & \leq (1-d_{v,z}-d_{z,v})^n-(1-a_z-a_v)^n
 \leq n (a_z-d_{z,v}+a_v-d_{v,z}) .
\end{align*}
Observe that 
\begin{align*}
a_z-d_{z,v}+a_v-d_{v,z}=&\p^{\mathcal{E}}(T_v \vee T_z < T_\phi)
 \leq \p^{\mathcal{E}}(T_z < T_\phi)\p^{\mathcal{E}}_{z \wedge v}(T_v < T_\phi)+\p^{\mathcal{E}}(T_v< T_\phi)\p^{\mathcal{E}}_{z \wedge v}(T_z < T_\phi)\\
 =& a_z\p^{\mathcal{E}}_{z \wedge v}(T_v < T_\phi)+a_v\p^{\mathcal{E}}_{z \wedge v}(T_z < T_\phi).
\end{align*}
 This together with Lemma \ref{varq} yields that
 \begin{multline*}
   \sum_{\substack{|z|=\ell,|v|=\ell,\\ z \neq v}} \left[(1-a_{v,z})^n-(1-a_z)^n(1-a_v)^n\right]\un_{\{z \in B^\delta\cap U\}}\un_{\{v \in B^\delta\cap U\}}\\
\leq    \sum_{\substack{|z|=|v|= \ell,\\ z\neq v}}n a_z \p_{v\wedge z}^{\mathcal{E}}(T_v<T_\phi)\un_{\{z\in B^\delta\cap U,v\in B^\delta\cap U\}}+ \sum_{\substack{|z|=|v|=\ell,\\ z\neq v}}n a_v \p_{v\wedge z}^{\mathcal{E}}(T_z<T_\phi)\un_{\{z\in B^\delta\cap U,v\in B^\delta\cap U\}}
 \end{multline*}
 Moreover, we have $(1-a_z)^n-(1-a_z)^{2n}\leq na_z$. This leads to \eqref{veKbd}.
\end{proof}

\subsubsection{Upper bound for the mean of the quenched variance}

In this section we obtain an upper bound of the mean $\E\left(\Ve\left({K}^{B^\delta\cap U}_n(\ell) \right)\right) $.

\begin{lem} \label{mvar} For $\ell\sim \gamma(\log n)^2$, every $\delta>0$ and $n$ large enough,
\[\E\left(\Ve\left({K}^{B^\delta\cap U}_n(\ell) \right)\right) \leq  c {n^2}{(\log n)^{-\delta+1/2}}.
 \]
\end{lem}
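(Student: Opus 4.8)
## Proof plan for Lemma \ref{mvar}

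\textbf{Overall strategy.} The plan is to bound $\E[\Ve(K_n^{B^\delta\cap U}(\ell))]$ by taking expectations in the upper bound \eqref{veKbd}. The diagonal term $\sum_{|z|=\ell}na_z\un_{\{z\in B^\delta\cap U\}}$ is easy: since $z\in U$ forces $a_z\leq \frac1{n\log n}$, its expectation is at most $\frac{1}{\log n}\E[\sum_{|z|=\ell}\un_{\{z\in B^\delta\cap U\}}]$, and the many-to-one Lemma \eqref{M2O} together with the fact that $z\in B^\delta$ constrains $\max_{y\le z}(\MS-S)$ gives something of order $n^2/(\log n)^{?}$ — in any case far smaller than the off-diagonal contribution, so this term can be disposed of quickly. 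The real work is the double sum. For a pair $z,v$ with common ancestor $w=v\wedge z$ at generation $j=|w|$, I would write $a_z\,\p^\en_w(T_v<T_\phi)$ using the explicit formula \eqref{az}: $a_z = p^\en(\phi,\pa\phi)/\sum_{\phi<y\le z}e^{V(y)}$, and similarly $\p^\en_w(T_v<T_\phi)$ is (by the same Lemma C.1 of \cite{AndDeb2}) of the form $e^{V(w)}/\sum_{w<y\le v}e^{V(y)}$ up to bounded factors. So the summand is essentially $n\,e^{V(w)}\big(\sum_{\phi<y\le z}e^{V(y)}\big)^{-1}\big(\sum_{w<y\le v}e^{V(y)}\big)^{-1}\un_{\{z,v\in B^\delta\cap U\}}$, and by symmetry in $z,v$ it suffices to handle this one shape.

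\textbf{Decomposition over the branch point and the spine change of measure.} I would split $\sum_{|z|=|v|=\ell,\,z\neq v}$ according to $j=|v\wedge z|\in\{0,\dots,\ell-1\}$, and for each $j$ apply a two-fold many-to-one formula: first sum over $v\ge w$ and $z\ge w$ given the path to $w$, then sum over the path to $w$ itself. Along the shared segment $[\phi,w]$ we get a single walk $(S_i)_{i\le j}$ weighted by $e^{S_j}$; along each of the two disjoint segments $[w,v]$ and $[w,z]$ (of length $\ell-j$) we get independent walks, each contributing its own $e^{(\text{endpoint})}$ weight from many-to-one. The constraints from $B^\delta = B_1\cap B_2^\delta$ translate into: $\mS\ge-\alpha$-type events are absent (we are under $\P$, not $\Q$), but $z\in B_2^\delta$ means $\max_{y\le z}\sum_{\phi<u\le y}e^{V(u)-V(y)}\le s_n = n(\log n)^{-1-\delta}$, and $z\in U$ means $\overline V(z)\ge \log n+\log\log n$. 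The key quantitative inputs are: (i) the factor $\big(\sum_{\phi<y\le z}e^{V(y)}\big)^{-1}$ combined with the $B_2^\delta$-restriction yields, after the substitution $e^{V(z)}/\sum e^{V(y)} = (\sum e^{V(y)-V(z)})^{-1}$, an extra smallness of order $s_n^{-1}$ exactly when $\overline V(z)$ is close to $V(z)$, i.e. $(\log n)^{1+\delta}/n$; (ii) the localization estimates for centered random walks from the Appendix (the $\mS,\MS$ bounds of the type $\P(\mS_k\ge -\alpha, S_k=\MS_k)\asymp k^{-1}$ and $\E[e^{S_m}\un_{\MS_m\le0}]\asymp m^{-3/2}$), which control the two disjoint branches; (iii) $\overline V(z)\ge \log n+\log\log n$ on $U$ produces the main $n$-dependence. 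Multiplying: $n$ (prefactor) $\times$ $s_n^{-1}$-type gain $\times$ sums over $j$ of the random-walk kernels, and tracking the powers of $\log n$, the target $(\log n)^{-\delta+1/2}$ should emerge, the $(\log n)^{1/2}$ coming from the $\sqrt{m}$-type normalization / the probability $\sqrt{\ell}\,\P(\overline V\ge \log n)\sim (\log n)^{-1}\cdot(\log n)$ bookkeeping around generation $\ell\sim\gamma(\log n)^2$.

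\textbf{The main obstacle.} The delicate point is correctly handling the interaction between the barrier $B_2^\delta$ and the factor $\big(\sum_{\phi<y\le z}e^{V(y)}\big)^{-1}$ simultaneously on \emph{both} $z$ and $v$, because the two paths share the segment up to $w$: the denominator $\sum_{\phi<y\le z}e^{V(y)}$ includes the shared part, so one cannot simply factorize. I would resolve this by bounding $\sum_{\phi<y\le z}e^{V(y)}\ge \sum_{w<y\le z}e^{V(y)}$ (dropping the shared part from the denominator of one factor) at the cost of keeping $e^{V(w)}$ in the numerator from $\p^\en_w(T_v<T_\phi)$ — this is precisely why \eqref{veKbd} is ``optimized for events in $U$'': on $U$ the maximum $\overline V$ is large, so $e^{V(w)}$ is harmless relative to the $e^{-\overline V(\cdot)}\le \frac1{n\log n}$ gains. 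After this decoupling, the shared segment only carries the $e^{S_j}$ weight and a ladder-type event, contributing $\sum_j \P(S_j = \MS_j,\dots)\asymp \sum_j j^{-1}=O(\log n)$, which is where one must be careful not to lose an extra $\log n$ — this is the crux of matching the exponent $-\delta+1/2$ rather than $-\delta+3/2$. Concretely I expect to need the Appendix estimates \eqref{mSMSbd}, \eqref{eSMSbd} and a ballot-type bound controlling $\sum_{w<u\le y}e^{V(u)-V(y)}\le s_n$ jointly with $\overline V\ge \log n+\log\log n$ along the branch $[w,z]$, and the entire computation is a (somewhat lengthy) summation over $j$ and over the endpoint heights; I would organize it so that the dominant contribution visibly comes from $j = O(1)$ or $j=\ell-O(1)$ and all middle ranges are smaller.
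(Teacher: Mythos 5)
Your skeleton is correct and matches the paper's: start from \eqref{veKbd}, dispose of the diagonal term via $a_z\leq (n\log n)^{-1}$ on $U$ (the paper actually identifies it with $\widetilde{\mathcal{K}}_n^{B^\delta\cap U}(\ell)=\frac{n}{\sqrt{\ell}}p^\en(\phi,\pa\phi)W_\ell^{(\alpha)}(\cdot)$ and invokes \eqref{expWF}, giving $\Theta(n/(\log n)^2)$), then decompose the off-diagonal sum over the common ancestor $w=z\wedge v$, apply the many-to-one lemma on each of the three segments, and use the Appendix ladder/tail estimates. So far so good.

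The gap is in your treatment of the ``main obstacle.'' Write, as the paper does, $\Sigma_1:=\sum_{\phi<s\leq w}e^{V(s)-V(w)}$, $\Sigma_2:=\sum_{w<s\leq z}e^{V(s)-V(w)}$, $\Sigma_3:=\sum_{w<s\leq v}e^{V(s)-V(w)}$, so that $a_z=\frac{p^\en(\phi,\pa\phi)e^{-V(w)}}{\Sigma_1+\Sigma_2}$ and $\p^\en_w(T_v<T_\phi)=\frac{\Sigma_1}{\Sigma_1+\Sigma_3}$. Your approximation ``$\p^\en_w(T_v<T_\phi)$ is of the form $e^{V(w)}/\sum_{w<y\le v}e^{V(y)}$ up to bounded factors,'' i.e.\ $\approx 1/\Sigma_3$, is not correct: the true quantity is $\Sigma_1/(\Sigma_1+\Sigma_3)$, and $\Sigma_1$ can be as large as $s_n$ (that is the whole point of the reflecting barrier $B_2^\delta$), so the discrepancy is unbounded. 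Your subsequent ``resolution'' (drop the shared part from the denominator of $a_z$, bound the escape probability by $1$, absorb $e^{V(w)}$) produces at best $e^{-V(w)}/\Sigma_2$, but this bound is only effective when $\Sigma_2\geq \Sigma_1$, because the $U$-constraint $\Sigma_1+\Sigma_2\geq n\log n\, e^{-V(w)}$ then forces $\Sigma_2$ to be at least of order $n\log n\,e^{-V(w)}$. When instead $\Sigma_1$ dominates — i.e.\ the large potential is accumulated on the \emph{shared} segment — your bound gives nothing, yet this is the case that produces the dominant contribution.

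What the paper does, and what your plan is missing, is a four-way case split according to which of $\Sigma_1,\Sigma_2,\Sigma_3$ is largest, giving $t_n\leq t_n^1+t_n^2+t_n^3+t_n^4$ with respective summands $\tfrac{e^{-V(w)}}{\Sigma_1}$, $\tfrac{e^{-V(w)}}{\Sigma_3}$, $\tfrac{e^{-V(w)}}{\Sigma_2}$, and $e^{-V(w)}\tfrac{\Sigma_1}{\Sigma_2\Sigma_3}$. The extra observation you need in the $\Sigma_1$-dominant case is that $\Sigma_1\gtrsim n\log n\,e^{-V(w)}$ forces $\overline V(w)\geq \log\frac{n\log n}{2\ell}$, so the shared path's maximum is itself near $\log n$; combined with $\Sigma_1\leq s_n$ (from $B_2^\delta$) and the localization estimates \eqref{eSMSbd+}, \eqref{mSbd}, this yields $\E(t_n^1)\leq c(1+\alpha)n(\log n)^{1/2-\delta}$, which is what fixes the exponent $-\delta+1/2$. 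Without this case split and this observation, your argument does not close; it is precisely the ``hard case'' you flag as the crux that is unhandled.
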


\begin{proof} 
Because of \eqref{veKbd}, we only have to bound the means of 
\[
t_n:= \sum_{|z|=|v|= \ell,\ z\neq v} a_z \p_{v\wedge z}^{\mathcal{E}}(T_v<T_\phi)\un_{\{z,v\in B^{\delta}\cap U\}}, \widetilde{\mathcal{K}}_n^{B^\delta\cap U}(\ell)=\sum_{|z|=\ell}na_z\un_{\{ z\in B^\delta\cap U\} }
\]
since the second term on the RHS of \eqref{veKbd} is its symmetric. We begin with $\widetilde{\mathcal{K}}_n^{B^\delta\cap U}(\ell)$. As $B_2^\delta\subset L_\delta$, recalling \eqref{az} and \eqref{KtoW}, one sees that $\widetilde{\mathcal{K}}_n^{B^\delta\cap U}(\ell)= \frac{n}{\sqrt{\ell}}p^\en(\phi,\pa{\phi})W_\ell^{(\alpha)}(F_{\log s_n, \log n+\log\log n})$ since the truncated martingale-like variable is obtained by adding the restriction $B_1$. By \eqref{expWF}, one gets that
\begin{equation}\label{expKBU}
\E[\widetilde{\mathcal{K}}_n^{B^\delta\cap U}(\ell)]=\Theta(\frac{n}{\ell})=\Theta(\frac{n}{(\log n)^2}).
\end{equation}

The main idea of the rest proof, is to decompose the double sum $\sum_{|z|=|v|=\ell}$ according to the latest common ancestor $z\wedge v$. 

Define $\sum_1 := \sum_{\phi<s\leq v \wedge z}e^{V(s)-V(z \wedge v)}$,  
$ \sum_2 := \sum_{v \wedge z<s\leq z}e^{V(s)-V(z \wedge v)}$ and 
$ \sum_3 := \sum_{v \wedge z<s\leq v}e^{V(s)-V(z \wedge v)}.$
We then have \[a_z= \frac{p^\en(\phi,\overleftarrow{\phi})e^{-V(v\wedge z)}}{\sum_1+ \sum_2}  \textrm{ and }   \p^{\mathcal{E}}_{z \wedge v}(T_v < T_\phi)=\frac{\sum_1}{\sum_1+ \sum_3}.\]
By comparing $\Sigma_1,\ \Sigma_2,\ \Sigma_3$, we get $ t_n  \leq  t_n^1+t_n^2+t_n^3+t_n^4$, with 
\begin{align*}
t_n^1 & :=\sum_{\substack{|z|=|v|= \ell\\ z\neq v}}  \frac{e^{-V(z \wedge v)}}{ \sum_1} \un_{\{z\in B^\delta\cap U,v\in B^\delta\cap U, \sum_1 \geq  \sum_2 \vee \sum_3 \}}, \\ 
t_n^2 &  := \sum_{\substack{|z|=|v|= \ell\\ z\neq v}}  \frac{e^{-V(z \wedge v)}}{ \sum_3}  1_{\{z\in B^\delta\cap U,v\in B^\delta\cap U, \sum_2  \leq \sum_1 \leq  \sum_3 \}}, t_n^3:=\sum_{\substack{|z|=|v|= \ell\\ z\neq v}}  \frac{e^{-V(z \wedge v)}}{ \sum_2} 1_{\{z\in B^\delta\cap U,v\in B^\delta\cap U, \sum_3  \leq \sum_1 \leq  \sum_2 \}}, \\
t_n^4 & := \sum_{\substack{|z|=|v|= \ell\\ z\neq v}} e^{-V(z \wedge v)}  \frac{ \sum_1}{ \sum_2 * \sum_3 }  \un_{\{z\in B^\delta\cap U,v\in B^\delta\cap U, \sum_1 \leq  \sum_2  \wedge  \sum_3 \}}. \end{align*}
We treat each term separately. Notice that by symmetry $\E(t_n^2)=\E(t_n^3)$, so we  only estimate $\E(t_n^1)$, $\E(t_n^2)$ and $\E(t_n^4)$. 

Recall that for every $z \in U$, $\overline{V}(z)\geq \log n+\log\log n$ and $a_z \leq (n \log n)^{-1}$. Clearly, $\{\Sigma_1+\Sigma_2\geq e^{\MV(z)-V(z\wedge v)}\geq n\log ne^{-V(z\wedge v)}\}$. In addition, if $\{\Sigma_1\geq \Sigma_2\}$, we have
\[
\MV(z\wedge v) \geq \log \frac{\Sigma_1e^{V(z\wedge v)}}{|z\wedge v|} \geq \log \left(\frac{n\log n}{2\ell}\right).
\]
\emph{$*$ Upper bound for $\E(t_n^1)$}, as $\Sigma_1$ is the largest term here, using the above remark  we have $\{z,v\in U, \Sigma_1 \geq \Sigma_2 \vee \Sigma_3 \} \subset \{ \overline{V}(z \wedge v) > \log n+ \log \log n-\log 2\ell=:m_n \} $, also as $z \in  B_2^{\delta} $,  $\Sigma_1 \leq s_n = n / (\log n)^{1+ \delta} $, so 
\begin{align*}
 t_n^1  & \leq 
\sum_{
\substack{|z|=|v|=\ell\\ z\neq v}} \frac{e^{-V(z \wedge v)}}{ \sum_1} \un_{\{ \overline{V}(z \wedge v) > m_n , \Sigma_1 \leq s_n ,\sum_1 \geq  \sum_2 \vee \sum_3, \ \underline{V}(z) \wedge  \underline{V}(v) \geq - \alpha  \}}\\
&\leq \sum_{j=0}^{\ell-1}\sum_{|u|=j}\frac{e^{-V(u)}}{\sum_1^u}\un_{\{ \overline{V}(u) > m_n , \Sigma_1^u \leq s_n, \underline{V}(u)\geq-\alpha \} }\sum_{\substack{\overleftarrow{x}=u=\overleftarrow{y} \\ x\neq y}}\sum_{|z|=\ell, z\geq x}\un_{\{\sum_2^{x,z}e^{V(x)-V(u)}\leq \sum_1^u\}}\sum_{|v|=\ell, v\geq y}\un_{\{\sum_2^{y,v}e^{V(y)-V(u)}\leq \sum_1^u\}},
\end{align*} 
where $\Sigma_1^u:=\sum_{\phi<s\leq u}e^{V(s)-V(u)}$ and $\Sigma_2^{x,z}:=\sum_{x\leq s\leq z}e^{V(s)-V(x)}$ and recall that $\pa{x}$ is the parent of $x$.

Applying Markov property at time $|u|+1$ and then Many-to-one equation \eqref{M2O} yields
\begin{align*}
\E[t_n^1]\leq & \sum_{j=0}^{\ell-1}\E\Big[\sum_{|u|=j}\frac{e^{-V(u)}}{\sum_1^u}\un_{ \{ \overline{V}(u) > m_n , \Sigma_1^u \leq s_n, \underline{V}(u)\geq-\alpha \} }\sum_{\substack{\overleftarrow{x}=u=\overleftarrow{y}\\ x\neq y}}f_{j,\ell}(\Sigma_1^u e^{V(u)-V(x)})f_{j,\ell}(\Sigma_1^u e^{V(u)-V(y)})\Big],
\end{align*}
where $f_{j,\ell}(t):=\E\big[e^{S_{\ell-1-j}}; \sum_{i=0}^{\ell-1-j}e^{S_i}\leq t\big]$.
By \eqref{eSMSbd+}, $f_{j,\ell}(t)\leq \E(e^{S_{\ell-1-j}}; \MS_{\ell-1-j}\leq \log_+t)\leq c  (\log_+t +1) t / (\ell-j)^{3/2}$. Plugging this into the previous inequality yields 
\begin{align*}
&\E[t_n^1]\leq  \sum_{j=0}^{\ell-1}\E\Big[\sum_{|u|=j}e^{-V(u)}\Sigma_1^u(1+\log \Sigma_1^u)^2\un_{ \{ \overline{V}(u) > m_n , \Sigma_1^u \leq s_n, \underline{V}(u)\geq-\alpha \}} \\
&\qquad \times\frac{c}{(\ell-j)^{3}} \sum_{\substack{\overleftarrow{x}=u=\overleftarrow{y}\\ x\neq y}}[1+(V(u)-V(x))_+]e^{V(x)-V(u)}[1+(V(u)-V(y))_+]e^{V(y)-V(u)}\Big]\\
&\leq \sum_{j=0}^{\ell-1}\frac{c}{(\ell-j)^{3}}\E\Big[\sum_{|u|=j}e^{-V(u)}\Sigma_1^u(1+\log \Sigma_1^u)^2\un_{ \{ \overline{V}(u) > m_n , \Sigma_1^u \leq s_n, \underline{V}(u)\geq-\alpha \} }\Big]\E\Big[\Big(\sum_{|x|=1}[1+(-V(x))_+]e^{-V(x)}\Big)^2\Big].
\end{align*}
By  \eqref{M2O} and hypothesis \eqref{hyp1}, we get  $\E(t_n^1) \leq   \sum_{j=0}^{\ell-1} \frac{c}{(\ell-j)^{3}} s_n(1+\log_+ s_n)^2\P\left[\overline{S}_j > m_n,\  \Sigma_1^{S} \leq s_n,\  \underline{S}_j \geq - \alpha\right]$, 
with  $ \sum_1^S:= \sum_{i=1}^je^{S_i-S_j}$. Also by \eqref{mSbd}, $\P\left[  \underline{S}_j \geq - \alpha  \right]\leq c(1+ \alpha)j^{-1/2}$, so
\begin{align*}
\E(t_n^1) \leq & \sum_{j=0}^{\ell-1} \frac{c}{(\ell-j)^{3}} s_n(1+\log_+ s_n)^2 \frac{(1+\alpha)}{(j+1)^{1/2}}\leq \frac{c(1+\alpha) n}{(\log n)^{\delta-1/2}}.
\end{align*} 
\\[2pt]
\emph{$*$ Upper bound for $\E(t_n^2)$}, with the same ideas as for the upper bound of $t_n^1$, we have 
\begin{align*}
t_n^2 \leq &\sum_{\substack{|z|=|v|= \ell\\ z\neq v}} \frac{e^{-V(z \wedge v)}}{ \sum_3}  \un_{\{ \overline{V}(z \wedge v) > m_n , \Sigma_1 \leq s_n ,\Sigma_2 \leq  \Sigma_1, \underline{V}(z \wedge v)  \geq - \alpha\} }\\
\leq&\sum_{j=0}^{\ell-1}\sum_{|u|=j}e^{-V(u)}\un_{ \{ \overline{V}(u)\geq m_n, \Sigma_1^u\leq s_n,\underline{V}(u)\geq-\alpha \}}\sum_{\substack{\overleftarrow{x}=u=\overleftarrow{y}\\ x\neq y}}\sum_{|z|=\ell, z\geq x}\un_{\{ \Sigma_2^{x,z}e^{V(x)-V(u)} \leq s_n \} }\sum_{|v|=\ell, v\geq y}\frac{1}{\Sigma_2^{y,v}e^{V(y)-V(u)}}. 
\end{align*}
By Markov property then by \eqref{M2O}, it follows that
\begin{equation*}
 \E(t_n^2) \leq  \sum_{j=0 }^{\ell-1} \E\Big[\sum_{|u|=j}e^{-V(u)}\un_{\{ \underline{V}(u)\geq-\alpha \}}\sum_{\substack{\overleftarrow{x}=u=\overleftarrow{y}\\ x\neq y}}e^{V(u)-V(y)}\E\left[\frac{e^{S_{\ell-j-1}}}{\sum_{i=0}^{\ell-j-1}e^{S_i}}\right]f_{j,\ell}\Big( s_ne^{V(u)-V(x)}\Big) \Big],
\end{equation*}
which by \eqref{eSMSbd+} and \eqref{eSbd}, is less than
\begin{align*}
&\sum_{j=0 }^{\ell-1} \frac{c s_n(1+\log_+s_n)}{(\ell-j)^2}\E\Big[\sum_{|u|=j}e^{-V(u)}\un_{\{ \underline{V}(u)\geq-\alpha \} }\sum_{\substack{\overleftarrow{x}=u=\overleftarrow{y}\\ x\neq y}}e^{V(u)-V(y)}[1+(V(u)-V(x))_+]e^{V(u)-V(x)}\Big]\\
\leq& \sum_{j=0 }^{\ell-1} \frac{c s_n(1+\log_+s_n)}{(\ell-j)^2}\E\Big[\sum_{|u|=j}e^{-V(u)}\un_{\{ \underline{V}(u)\geq-\alpha \} }\Big]\E\Big[\Big(\sum_{|x|=1}[1+(-V(x))_+]e^{-V(x)}\Big)^2\Big].
\end{align*}
Applying again \eqref{M2O}, \eqref{hyp1}, and then \eqref{mSbd} we have,
\begin{align*}
\E(t_n^2)\leq & \sum_{j=0 }^{\ell-1} \frac{c s_n(1+\log_+s_n)}{(\ell-j)^2}\P\left(\mS_j\geq-\alpha\right) \leq \frac{c(1+\alpha)n}{(\log n)^{1+	\delta}}.
\end{align*}
\emph{$*$ Upper bound for $\E(t_n^4)$}, we have :
\begin{align*}
t_n^4 & \leq \sum_{\substack{|z|=|v|= \ell\\ z\neq v}}e^{-V(z \wedge v)}  \frac{ \sum_1}{ \sum_2 * \sum_3 }  \un_{\{\underline{V}(z \wedge v)  \geq - \alpha,  \sum_1 \leq s_n\}}\\
&\leq \sum_{j=0}^{\ell-1}\sum_{|u|=j}e^{-V(u)}s_n\un_{\{ \underline{V}(u)\geq-\alpha \}}\sum_{\substack{\overleftarrow{x}=u=\overleftarrow{y}\\ x\neq y}}\sum_{|z|=\ell, z\geq x}\frac{1}{\Sigma_2^{x,z}e^{V(x)-V(u)}}\sum_{|v|=\ell, v\geq y}\frac{1}{\Sigma_2^{y,v}e^{V(y)-V(u)}}.
\end{align*}
 With the same arguments as above, one sees that
\begin{align*}
\E(t_n^4)\leq& \sum_{j=0}^{\ell-1}\E\Big[\sum_{|u|=j}e^{-V(u)}s_n\un_{\{ \underline{V}(u)\geq-\alpha \}}\sum_{\substack{\overleftarrow{x}=u=\overleftarrow{y}\\ x\neq y}}e^{2V(u)-V(x)-V(y)}\E\Big[\frac{e^{S_{\ell-j-1}}}{\sum_{i=0}^{\ell-j-1}e^{S_i}}\Big]^2\Big],
\end{align*}
which by \eqref{eSbd} and \eqref{hyp1} is less than $\sum_{j=0}^{\ell-1}\frac{cs_n}{(\ell-j)}\E\Big[\sum_{|u|=j}e^{-V(u)}\un_{\{ \underline{V}(u)\geq-\alpha\}}\Big]$. Once again by \eqref{M2O} and \eqref{mSbd}, we end up with
\[
\E(t_n^4)\leq \sum_{j=0}^{\ell-1}\frac{cs_n}{(\ell-j)}\P(\mS_j\geq-\alpha)\leq \sum_{j=0}^{\ell-1}\frac{cs_n(1+\alpha)}{(\ell-j)(j+1)^{1/2}}\leq \frac{c(1+\alpha)n(\log\log n)}{(\log n)^{2+\delta}}.
\]
Consequently, we have  $t_n\leq c{n}/{(\log n)^\delta}$, which concludes the proof.
\end{proof}

\subsection{Complementary arguments: Proofs of Lemmas \ref{addU}, \ref{usesn}, \ref{othergenerations} and Proposition \ref{lemCentredK} \label{Sec4.2}}

\subsubsection{Proof of Lemma \ref{addU}}

In fact, as in \eqref{nazK}, $\e[K_n^{B\cap U}(\ell)]=\Theta(\e[\widetilde{\mathcal{K}}_n^{B\cap U}(\ell)])$. Then similar to \eqref{expKBU}, $\e[K_n^{B\cap U}(\ell)]=\Theta(n/(\log n)^2)$. Let us show that $\e[K_n^{B\setminus U}(\ell)]=o(n/(\log n)^2)$.

Note that $\p^\en(T_z<T_\phi^n)\leq na_z\wedge 1$. We have
\begin{align*}
\e[K_n^{B\setminus U}(\ell)] & \leq \E\Big( \sum_{|z|= \ell} (na_z\wedge 1)\un_{\{ \overline{V}(z) \leq \log n+ \log \log n, \underline{V}(z) \geq - \alpha \} }\Big)\\
&\leq \E\Big( \sum_{|z|= \ell} na_z \un_{\{ \log n-3\log \log n\leq \overline{V}(z) \leq \log n+ \log \log n, \underline{V}(z) \geq - \alpha \}}\Big)+\E\Big( \sum_{|z|= \ell} \un_{\{ \overline{V}(z) \leq \log n-3 \log \log n \} }\Big).
\end{align*}
It follows immediately from \eqref{M2O} that
\[
\E\Big( \sum_{|z|= \ell} \un_{\{ \overline{V}(z) \leq \log n-3 \log \log n\} }\Big)=\E\left(e^{S_{\ell}}\un_{ \{ \overline{S}_\ell \leq  \log n-3\log\log n \} }  \right)\leq e^{\log n-3\log\log n}=\frac{n}{(\log n)^3}=o(\frac{n}{(\log n)^2}).
\]
On the other hand, for the second term, as $a_z\leq e^{-\overline{V}(z)}$,
\begin{align*}
&\E\Big[\sum_{|z|=\ell}na_z\un_{\{ \log n-3\log\log n\leq \overline{V}(z)\leq \log n+\log\log n, \underline{V}(z)\geq-\alpha \} }\Big]\\
\leq &n\E\Big[\sum_{|z|=\ell}e^{-V(z)}e^{V(z)-\overline{V}(z)}\un_{\{ \log n-3\log\log n\leq \overline{V}(z)\leq \log n+\log\log n,\underline{V}(z)\geq-\alpha\} }\Big],
\end{align*}
which by \eqref{M2O} is equal to
\[
n\E\Big[e^{S_\ell-\MS_\ell}, \log n-3\log\log n\leq \MS_\ell\leq \log n+\log\log n,\mS_\ell\geq-\alpha\Big].
\]
By Markov property at the {first hitting time}  $\MS_\ell$, one sees that
\begin{align*}
&\E\Big[\sum_{|z|=\ell}na_z\un_{\{ \log n-3\log\log n\leq \overline{V}(z)\leq \log n+\log\log n,\mS_\ell\geq-\alpha\} }\Big]\\
\leq&  n\sum_{j=1}^\ell\E\Big[e^{S_\ell-S_j}; \MS_{j-1}<S_j, S_j=\MS_\ell\in[\log n-3\log\log n, \log n+\log \log n], \mS_j\geq-\alpha\Big]\\
\leq& n\sum_{j=1}^\ell\P\Big(\mS_j\geq-\alpha, \MS_j=S_j\in[\log n-3\log\log n, \log n+\log \log n]\Big)\E\Big[e^{S_{\ell-j}};\MS_{\ell-j}\leq0\Big].
\end{align*}
By \eqref{mSMSSbd} and \eqref{eSMSbd}, one obtains that
\begin{align*}
\E\Big( \sum_{|z|= \ell} na_z \un_{\{ \log n-3\log \log n\leq \overline{V}(z) \leq \log n+ \log \log n, \underline{V}(z) \geq - \alpha \} }\Big) \leq c \frac{n \log \log n}{ (\log n)^3}=o(\frac{n}{(\log n)^2}),
\end{align*}
which completes the proof. \hfill $\square$
\subsubsection{proof of Lemma \ref{usesn}}
The quenched mean $\mathcal{K}_n^{(B\cap U)\setminus (B^\delta\cap U)}(\ell)$ of $K_n^{B\cap U}(\ell)-K_n^{B^\delta\cap U}(\ell)$ satisfies that
\begin{align*}
0\leq\mathcal{K}_n^{(B\cap U)\setminus (B^\delta\cap U)}(\ell)\leq & \widetilde{\mathcal{K}}_n^{(B\cap U)\setminus (B^\delta\cap U)}(\ell)=\sum_{|z|=\ell}na_z\un_{\{ z\in (B\setminus B^\delta)\cap U\}}.
\end{align*}
As $\{z\in B\setminus B^\delta\}$ implies that $\log s_n-\log \ell< \max_{\phi<y\leq z}(\overline{V}(y)-V(y))\leq \log n$, similarly to \eqref{KtoW}, we have
\begin{align*}
\widetilde{\mathcal{K}}_n^{(B\cap U)\setminus (B^\delta\cap U)}(\ell)\leq 
\frac{n}{\sqrt{\ell}}\Big(W_\ell^{(\alpha)}(F_{\log n, \log n+\log\log n})-W_\ell^{(\alpha)}(F_{\log s_n-\log \ell, \log n+\log\log n})\Big),
\end{align*}
with $s_n=n/(\log n)^{1+\delta}$. Taking expectation and using change of measures \eqref{changeprobab} yields that
\begin{align}\label{KlessF}
\E\left[\widetilde{\mathcal{K}}_n^{(B\setminus B^\delta)\cap U}(\ell)\right]\leq& 
\frac{n}{\sqrt{\ell}} \Ren(\alpha)\left( \E_{\Q^{(\alpha)}}\Big[\frac{W_\ell^{(\alpha)}(F_{\log n,\log n+\log\log n})}{D_\ell^{(\alpha)}}\Big]- \E_{\Q^{(\alpha)}}\Big[\frac{W_\ell^{(\alpha)}(F_{\log s_n-\log \ell, \log n+\log\log n})}{D_\ell^{(\alpha)}}\Big]\right)
\end{align}
In view of \eqref{F}, as $\ell\sim \gamma(\log n)^2$, we have
\[
\E_{\Q^{(\alpha)}}\Big[\frac{\sqrt{\ell}W_\ell^{(\alpha)}(F_{\log n,\log n+\log\log n})}{D_\ell^{(\alpha)}}\Big]- \E_{\Q^{(\alpha)}}\Big[\frac{\sqrt{\ell}W_\ell^{(\alpha)}(F_{\log s_n-\log \ell, \log n+\log\log n})}{D_\ell^{(\alpha)}}\Big]=o_n(1),
\]
and $\frac{n}{\ell}=O(\frac{n}{(\log n)^2})$. This implies that
\[
\e[K_n^{B\cap U}(\ell)-K_n^{B^\delta\cap U}(\ell)]\leq \E\left[\widetilde{\mathcal{K}}_n^{(B\cap U)\setminus (B^\delta\cap U)}(\ell)\right]=o\Big(\frac{n}{(\log n)^2}\Big).
\]
This ends the proof of Lemma	\ref{usesn}. 

\subsubsection{Proof of Proposition \ref{lemCentredK}} 
Observe that
\begin{multline}\label{bdcentredK}
\p\left(|K_n^{B^\delta\cap U}(\ell)-\widetilde{\mathcal{K}}_n^{B^\delta\cap U}(\ell)|\geq \eta \frac{n}{(\log n)^2}\right)\\\leq 
\p\left(|K_n^{B^\delta\cap U}(\ell)-\mathcal{K}_n^{B^\delta\cap U}(\ell)|\geq \eta \frac{n}{2(\log n)^2}\right)+\P\left(|\mathcal{K}_n^{B^\delta\cap U}(\ell)-\widetilde{\mathcal{K}}_n^{B^\delta\cap U}(\ell)|\geq \eta \frac{n}{2(\log n)^2}\right).
\end{multline}
For the second term on the right hand side, by Markov inequality and \eqref{nazK}, we have
\begin{align*}
\P\left(|\mathcal{K}_n^{B^\delta\cap U}(\ell)-\widetilde{\mathcal{K}}_n^{B^\delta\cap U}(\ell)|\geq \eta {n}{(2(\log n)^2)^{-1}}\right)\leq &{2(\log n)^2}{(\eta n)^{-1}}\E\left[\widetilde{\mathcal{K}}_n^{B^\delta\cap U}(\ell)-\mathcal{K}_n^{B^\delta\cap U}(\ell)\right]\\
\leq &{2\log n}{(\eta n)^{-1}}\E\left[\widetilde{\mathcal{K}}_n^{B^\delta\cap U}(\ell)\right].
\end{align*}
In view of \eqref{KtoW}, $\E\left[\widetilde{\mathcal{K}}_n^{B^\delta\cap U}(\ell)\right] \leq  {n}{\ell^{-1/2}} \E\left[W_\ell^{(\alpha)}(F_{\log s_n, \log n+\log\log n})\right] $ which implies
\begin{align*}
\P\left(|\mathcal{K}_n^{B^\delta\cap U}(\ell)-\widetilde{\mathcal{K}}_n^{B^\delta\cap U}(\ell)|\geq \eta {n}{(2(\log n)^2)^{-1}}\right)
\leq & \frac{2c}{\eta } \Ren(\alpha) \E_{\Q^{(\alpha)}}\Big[\frac{W_\ell^{(\alpha)}(F)}{D_\ell^{(\alpha)}}\Big]=O(\frac{1}{\sqrt{\ell}}),
\end{align*}
where the last equalities come from the change of measures \eqref{changeprobab} and \eqref{F}.

For the first term on the right hand side of \eqref{bdcentredK}, using Tchebychev inequality on the quenched probability yields that
\[ \p^{\mathcal{E}} \left(|K_n^{B^\delta\cap U}(\ell)-\mathcal{K}_n^{B^\delta\cap U}(\ell)|\geq \eta \frac{n}{2(\log n)^2}\right) \leq  \frac{4(\log n)^4}{\eta^2 n^2}  \Ve\left(K_n^{B^\delta\cap U}(\ell) \right).\]  
So, 
\[
\p\left(|K_n^{B^\delta\cap U}(\ell)-\mathcal{K}_n^{B^\delta\cap U}(\ell)|\geq \eta \frac{n}{2(\log n)^2}\right)\leq \frac{4(\log n)^4}{\eta^2 n^2} \E\left(\Ve\left(K_n^{B^\delta\cap U}(\ell) \right)\right)
\]
Using Lemma \ref{mvar} with $\delta=5$ gives what we need. \hfill $\square$

\subsubsection{Proofs of Lemma \ref{othergenerations}, \eqref{smallsum} and \eqref{largesum} \label{Sec4.3.4}}

\begin{proof}[Proof of \eqref{outside}] Let us show that
\begin{align}
\lim_{\epsilon\downarrow0}\limsup_{n\rightarrow\infty}\frac{1}{n}\e\Big[\sum_{\ell\geq (\log n)^2/\varepsilon}\sum_{|z|=\ell}K_n^B(\ell)\Big]&=0,\label{largegenerations}\\
\lim_{\epsilon\downarrow0}\limsup_{n\rightarrow\infty}\frac{1}{n}\e\Big[\sum_{\ell\leq \varepsilon(\log n)^2}\sum_{|z|=\ell}K_n^B(\ell)\Big]&=0.\label{smallgenerations}
\end{align}

$\bullet$  \textit{Proof of \eqref{largegenerations} and \eqref{largesum}.} Recall that $a_z\leq e^{-\MV(z)}$. One sees that
\begin{align}
LHS_\eqref{largegenerations}:=&\e\Big[\sum_{\ell\geq (\log n)^2/\varepsilon}\sum_{|z|=\ell}K_n^B(\ell)\Big]\leq\E\Big[\sum_{\ell\geq (\log n)^2/\varepsilon}\sum_{|z|=\ell}(na_z\wedge 1)\un_{z\in B}\Big]\nonumber\\
\leq &\sum_{\ell\geq (\log n)^2/\varepsilon}n\E\left[\sum_{|z|=\ell}e^{-\overline{V}(z)}\un_{ \{ \overline{V}(z)\geq \log n, z\in B\} }\right]+\sum_{\ell\geq (\log n)^2/\varepsilon}\E\left[\sum_{|z|=\ell}\un_{\{ \overline{V}(z)\leq \log n, \underline{V}(z)\geq-\alpha\} }\right]\nonumber\\
=: &R_I+R_{II}\label{RI+RII}.
\end{align}
Applying \eqref{M2O} to $R_{II}$ yields that
\begin{align*}
R_{II}=&\sum_{\ell\geq (\log n)^2/\varepsilon}\E\left[e^{S_\ell}; \MS_\ell\leq \log n, \mS_\ell\geq-\alpha\right],
\end{align*}
which by \eqref{eSMSmSbd} is bounded by
\[
\sum_{\ell\geq (\log n)^2/\varepsilon} \frac{cn(1+\alpha)(1+\log n+\alpha)}{\ell^{3/2}}\leq c\frac{n(1+\alpha)(1+\log n+\alpha)}{\sqrt{(\log n)^2/\varepsilon}}.
\]
For any $\alpha>0$ fixed, letting $\varepsilon\downarrow 0$ implies that $\lim_{\varepsilon\downarrow 0}\limsup_{n\rightarrow\infty}{{n^{-1}} R_{II}}=0$.
Also by \eqref{M2O}, $R_{I}= \sum_{\ell\geq (\log n)^2/\varepsilon}n\E\left[\sum_{|z|=\ell}e^{-\overline{V}(z)}\un_{\{ \overline{V}(z)\geq \log n, z<\mathcal{L}_{n},\underline{V}(z)\geq-\alpha \}}\right]$ equals to
\[
\sum_{\ell\geq (\log n)^2/\varepsilon}n\E\left[e^{S_\ell-\MS_\ell}; {\MS_\ell\geq \log n, \max_{1\leq k\leq \ell}\sum_{i=1}^ke^{S_i-S_k}\leq n,\mS_\ell\geq-\alpha}\right].
\]
Observe that $e^{\MS_k-S_k}\leq \sum_{i=1}^ke^{S_i-S_k}$. It then follows that
\[
R_I\leq\sum_{\ell\geq (\log n)^2/\varepsilon}n\E\left[e^{S_\ell-\MS_\ell}; \max_{1\leq k\leq \ell}(\MS_k-S_k)\leq \log n,\mS_\ell\geq-\alpha\right],
\]
which by \eqref{eSMSMMSmSbd} is less than
\[
cn(1+\alpha)\sum_{\ell\geq (\log n)^2/\varepsilon}\Big( \frac{1}{\ell^{7/6}}+\frac{1}{\ell}e^{-c' \frac{\ell}{(\log n)^2}}\Big).
\]
Clearly, $\sum_{\ell\geq (\log n)^2/\varepsilon} \frac{1}{\ell^{7/6}}\leq c\varepsilon^{1/6}(\log n)^{-1/3}$, so by monotonicity and change of variables, one sees that
\begin{align*}
\sum_{\ell\geq (\log n)^2/\varepsilon} \frac{1}{\ell}e^{-c' \frac{\ell}{(\log n)^2}}\leq &\int_{(\log n)^2/\varepsilon}^\infty e^{-c' \frac{t}{(\log n)^2}}\frac{dt}{t} = \int_{1/\varepsilon}^\infty e^{-c' s}\frac{ds}{s}\leq \varepsilon/c'.
\end{align*}
Consequently, $R_I\leq (1+\alpha)\varepsilon n/c'+c(1+\alpha)\varepsilon^{1/6}(\log n)^{-1/3}n$. We hence deduce that \\ $
\lim_{\varepsilon\rightarrow 0}\limsup_{n\rightarrow \infty}R_I/n=0$.
Collecting estimates for $R_I$ and $R_{II}$ together with \eqref{RI+RII}, \eqref{largegenerations} follows immediately.

Moreover, observe that $W^*_m(\beta)\leq \sum_{|z|=m}e^{-\overline{V}(z)}\un_{\{ \max_{\phi<y\leq z(\overline{V}(y)-V(y))\leq \beta, \overline{V}(z)\geq \beta} \} }$. So, for any $\varepsilon,\ \eta>0$,
\[
\P\left(\sum_{m\geq \beta^2/\varepsilon}W_m^*(\beta)\geq \eta\right)\leq \P(\inf_{z\in\T}V(z)\leq-\alpha)+\P\left(\sum_{m\geq \beta^2/\varepsilon}\sum_{|z|=m}e^{-\overline{V}(z)}\un_{ \{ \max_{\phi<y\leq z(\overline{V}(y)-V(y))\leq \beta, \overline{V}(z)\geq \beta,\underline{V}(z)\geq-\alpha}\} }\geq \eta\right)
\]
where the second probability on the right hand side vanishes as $\beta\rightarrow\infty$ then $\varepsilon\rightarrow0$ because of the convergence of $R_I/n$ by replacing $\log n$ by $\beta$. The first probability on the right hand side is negligible in view of \eqref{below}.

$ \bullet$ \textit{Proof of \eqref{smallgenerations} and \eqref{smallsum}}. Similarly as above,
\begin{align*}
LHS_\eqref{smallgenerations}:=\e\Big[\sum_{\ell\leq \varepsilon(\log n)^2}\sum_{|z|=\ell}K_n^B(\ell)\Big]
\leq  \E\Big[\sum_{\ell\leq \varepsilon(\log n)^2}\sum_{|z|=\ell} (ne^{-\overline{V}(z)}\wedge 1)\un_{ \{ z<\mathcal{L}_{n}, \underline{V}(z)\geq-\alpha \} }\Big]\leq R'_I+R'_{II},
\end{align*}
where
\begin{equation*}
R'_I:= \sum_{\ell\leq \varepsilon(\log n)^2}\E\Big[\sum_{|z|=\ell} ne^{-\overline{V}(z)}\un_{ \{ \overline{V}(z)\geq \log n/2,  \underline{V}(z)\geq-\alpha \} }\Big],\ R'_{II}:=\sum_{\ell\leq \varepsilon(\log n)^2}\E\Big[\sum_{|z|=\ell}\un_{ \{ \overline{V}(z)\leq \log n/2, \underline{V}(z)\geq-\alpha \} }\Big].
\end{equation*}
Again by \eqref{M2O},
\begin{align*}
R'_I=& n \sum_{\ell\leq \varepsilon(\log n)^2}\E\left[e^{S_\ell-\MS_\ell}; \MS_\ell\geq \log n/2, \mS_\ell\geq-\alpha\right]
\end{align*}
which by \eqref{eSMSMSmSbd} is bounded by
\[
n \sum_{\ell\leq \varepsilon(\log n)^2}\frac{c(1+\alpha)}{\ell^{1/2}\log n}\leq c'(1+\alpha)\sqrt{\varepsilon} n.
\]
Therefore, $\lim_{\varepsilon\downarrow0}\limsup_{n\rightarrow \infty} {R'_I}{n^{-1}}=0$ .
It remains to bound $R'_{II}=\sum_{\ell\leq \varepsilon(\log n)^2}\E\left[\sum_{|z|=\ell}\un_{\{ \overline{V}(z)\leq \log n/2, \underline{V}(z)\geq-\alpha\} }\right]$. By \eqref{M2O},
\begin{align*}
R'_{II}=&\sum_{\ell\leq \varepsilon(\log n)^2}\E\left[ e^{S_\ell}; \MS_\ell\leq \log n/2, \mS_\ell\geq-\alpha\right] 
\leq  \sum_{\ell\leq \varepsilon(\log n)^2}e^{\log n/2}\leq \varepsilon (\log n)^2\sqrt{n},
\end{align*}
 so $\lim_{\varepsilon\downarrow0}\limsup_{n\rightarrow \infty} {R'_{II}}{n^{-1}}=0$.
This completes the proof of \eqref{smallgenerations}.

Similarly to the proof of \eqref{largesum}, the convergence \eqref{smallsum} follows from \eqref{below} and the convergence of $R_I'/n$.
\end{proof}

\begin{proof}[Proof of \eqref{inside}]
We now prove that for any $\epsilon>0$,
\begin{align}
&\sum_{m=\varepsilon(\log n)^2}^{(\log n)^2/\varepsilon}\e\left[K_n^{B\setminus U}(m)\right]=o(n),\label{BsansC}\\
&\sum_{m=\varepsilon(\log n)^2}^{(\log n)^2/\varepsilon}\e\left[K_n^{(B\cap U)\setminus (B^\delta\cap U)}(m)\right]=o(n)\label{Csans}.
\end{align}

As shown in the proof of Lemma \ref{addU}, for any $m\geq \epsilon(\log n)^2$, there exists some constant $c(\varepsilon)\in\r_+$ such that
\[
\e\left[K_n^{B\setminus U}(m)\right]\leq c(\epsilon)n \frac{\log\log n}{(\log n)^3},
\]
so \eqref{BsansC} follows. It remains to show \eqref{Csans}.  Observe that
\begin{align*}
\e\left[K_n^{(B\cap U)\setminus (B^\delta\cap U)}(m)\right]\leq & \E\left[\widetilde{\mathcal{K}}_n^{(B\cap U)\setminus (B^\delta\cap U)}(m)\right]=\E\left[\sum_{|z|=m}na_z\un_{\{ z\in (B\setminus B^\delta)\cap U \}}\right].
\end{align*}
Take $\beta=\log n+\log\log n$. Because of \eqref{KlessF}, for any $\epsilon>0$ fixed, there exists $c_1>0$ such that when $n\geq 10$, for any $m\in [\epsilon\beta^2/2, \beta^2/\epsilon]\cap \z$, 
\begin{align*}
\E\left[\widetilde{\mathcal{K}}_n^{(B\cap U)\setminus (B^\delta\cap U)}(m)\right]\leq \frac{n}{\sqrt{m}}\Ren(\alpha)\left( \E_{\Q^{(\alpha)}}\Big[\frac{W_m^{(\alpha)}(F_{\beta,\beta})}{D_m^{(\alpha)}}\Big]- \E_{\Q^{(\alpha)}}\Big[\frac{W_m^{(\alpha)}(F_{\beta-c_1\log\beta, \beta})}{D_m^{(\alpha)}}\Big]\right).
\end{align*}
It follows immediately that
\begin{multline}\label{sumEW}
\sum_{m=\varepsilon(\log n)^2}^{(\log n)^2/\varepsilon}\e\left[K_n^{(B\cap U)\setminus (B^\delta\cap U)}(m)\right]\\
\leq n\Ren(\alpha)
\times\left(\sum_{m=\epsilon\beta^2/2}^{\beta^2/\epsilon} \frac{1}{\sqrt{m}} \E_{\Q^{(\alpha)}}\Big[\frac{W_m^{(\alpha)}(F_{\beta,\beta})}{D_m^{(\alpha)}}\Big]-
\sum_{m=\epsilon\beta^2/2}^{\beta^2/\epsilon} \E_{\Q^{(\alpha)}}\Big[\frac{W_m^{(\alpha)}(F_{\beta-c_1\log\beta, \beta})}{D_m^{(\alpha)}}\Big]\right).
\end{multline}
Similarly to \eqref{unifcvg}, the convergence \eqref{F} holds uniformly. So following the arguments used to prove Corollary \ref{sumW}, we deduce that for any $\varepsilon>0$, as $\beta\rightarrow\infty$,
\[
\sum_{\varepsilon\beta^2/2\leq m\leq \beta^2/\varepsilon}\frac{1}{\sqrt{m}} \E_{\Q^{(\alpha)}}\Big[\frac{W_m^{(\alpha)}(F_{\beta,\beta})}{D_m^{(\alpha)}}\Big]=\int_{\varepsilon/2}^{1/\varepsilon}\Cb_{\gamma^{-1/2},\gamma^{-1/2}}\frac{d\gamma}{\gamma}+o_\beta(1).
\]
Similarly, we also have
\[
\sum_{\varepsilon\beta^2/2\leq m\leq \beta^2/\varepsilon}\E_{\Q^{(\alpha)}}\frac{1}{\sqrt{m}}\Big[\frac{W_m^{(\alpha)}(F_{\beta-c_1\log\beta,\beta})}{D_m^{(\alpha)}}\Big]=\int_{\varepsilon/2}^{1/\varepsilon} \Cb_{\gamma^{-1/2},\gamma^{-1/2}}\frac{d\gamma}{\gamma}+o_\beta(1).
\]
As a consequence, \eqref{sumEW} becomes
\[
\sum_{m=\varepsilon(\log n)^2}^{(\log n)^2/\varepsilon}\e\left[K_n^{(B\cap U)\setminus (B^\delta\cap U)}(m)\right]\leq o_n(1)n\Ren(\alpha),
\]
which ends the proof.
\end{proof}
\subsection{Proof of Proposition \ref{Prop1.6}}

Most of the arguments are already present in the proof  of Theorem \ref{upton2} in section \ref{sec2.2}. Indeed we have stressed on the fact that the main contribution of visited sites comes from the set of individuals of the tree truncated by $B^\delta\cap U$. \\
\noindent {Similarly to the proof of \eqref{inside}, the restriction $A_3:=\{z\in\T: \overline{V}(z)>\max_{ |y|\leq |z|-|z|^{1/3},\  y\leq z}V(y) \}$ follows easily from \eqref{h}. So it remains to consider $D:= \left\{z\in\T:\ \max_{\phi<y\leq z}(\overline{V}(y)-V(y)) \leq \frac{\log n}{a_0}  \right\},$ and $F:=\left\{z\in\T:\  \overline{V}(z) \geq a_1 \log n \sqrt{ \log \log n}   \right\}$. We only need to show that }
\begin{align}
\lim_{a_0 \rightarrow +\infty}\lim_{n \rightarrow+ \infty}  \e\Big[n^{-1} \sum_{m=\varepsilon(\log n)^2}^{(\log n)^2/\varepsilon}K_n^{B\cap D\cap A_3}(m)\Big]=0, \label{4.21}
\end{align}
and 
\begin{align}
\lim_{n \rightarrow+ \infty}\e\Big[n^{-1} \sum_{m=\varepsilon(\log n)^2}^{(\log n)^2/\varepsilon} K_n^{B\cap F}(m)\Big]=0.  \label{4.22}
\end{align}
For \eqref{4.21} we do as usual and get that the expectation is smaller than
\begin{align*}
& \sum_{m=\varepsilon(\log n)^2}^{(\log n)^2/\varepsilon} \E\left[e^{S_m- \overline{S}_m} ; { \max_{1\leq i\leq m}(\MS_i-S_i)\leq  \log n/a_0,\ \underline{S}_m \geq - \alpha, \Upsilon_S>m-m^{1/3}} \right] \\
&  \leq \sum_{m=\varepsilon(\log n)^2}^{(\log n)^2/\varepsilon} \sum_{j=m-m^{1/3}}^m \E\left[e^{S_{m-j}} \un_{ \overline{S}_{m-j} \leq 0} \right]\P\left[\max_{1\leq i\leq j}(\MS_i-S_i)\leq  \log n/a_0,\ \overline{S}_{j-1}<S_j,\ \underline{S}_j \geq - \alpha \right].
\end{align*}
{Similarly to \eqref{bdRI} and the lines that follow, the above sum is bounded by \\
 $\sum_{m=\varepsilon(\log n)^2}^{(\log n)^2/\varepsilon} \sum_{j=m-m^{1/3}}^m \frac{c(1+\alpha)}{(m-j+1)^{3/2} m}e^{-c'ma_0/(\log n)^2}\leq -2(\log \epsilon) e^{-c'\epsilon a_0}$ which goes to zero as $a_0\rightarrow\infty$.}

\noindent Also for the expectation in \eqref{4.22} we have that it is smaller than
\begin{align*}
&\sum_{m=\varepsilon(\log n)^2}^{(\log n)^2/\varepsilon} \E\left[e^{S_m- \overline{S}_m} ; { \overline{S}_m \geq a_1 \log n \sqrt{ \log \log n},\ \underline{S}_m \geq - \alpha} \right] \\
& \leq \sum_{m=\varepsilon(\log n)^2}^{(\log n)^2/\varepsilon} \sum_{j=1}^m \E\left[e^{S_{m-j}} ; { \overline{S}_{m-j} \leq 0} \right]\P\left[\underline{S}_j \geq - \alpha,\  \overline{S}_{j} \geq a_1 \log n \sqrt{ \log \log n} \right]
\end{align*}
which by \eqref{eSMSbd} and \eqref{Last1} is bounded by $c   \sum_{m=\varepsilon(\log n)^2}^{(\log n)^2/\varepsilon} \sum_{j=1}^m (m-j+1)^{-3/2} m^{- c' *a_1^2} =o_n(1) $ by choosing $a_1$ properly. \hfill $\square$


 \appendix
  \renewcommand{\appendixname}{Appendix~\Alph{section}}

\section{Appendix}

\subsection{Finiteness of $\Lambda$ [see \eqref{Lambda}] \label{A1}}

\begin{lem}\label{A}
The function $\lambda: (0,\infty)\rightarrow(0,\infty)$ is well defined and integrable, i.e.,
\begin{equation}
\Lambda=\int_0^\infty \lambda(\gamma)d\gamma=\cb_0\int_0^\infty \frac{\Cb_{\gamma^{-1/2},\gamma^{-1/2}}}{\gamma} d\gamma<\infty.
\end{equation}
Further, for any $a,b>0$, $\Cb_{a\gamma^{-1/2},b\gamma^{-1/2}}/\gamma$ is integrable.
\end{lem}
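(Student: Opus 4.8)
The plan is to first isolate the only part of $\Cb_{a,b}$ that depends on $(a,b)$. By \eqref{Cab}, $\Cb_{a,b}=\mathcal D\,\mathcal C_{a,b}$ with $\mathcal D:=\sum_{j\ge0}\E[\Gl_j(\Hinf_\infty)]$ independent of $(a,b)$; I would check $\mathcal D\in(0,\infty)$ using $\Gl_j(x)\le\Gl_j=\E[e^{S_j}\1_{\MS_j\le0}]\le c\,j^{-3/2}$ (see \eqref{eSMSbd}) together with $\Gl_0(\Hinf_\infty)=\Hinf_\infty^{-1}>0$ a.s. Thus $\lambda(\gamma)=\cb_0\mathcal D\,\mathcal C_{\gamma^{-1/2},\gamma^{-1/2}}/\gamma$ and everything reduces to $\mathcal C$. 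Finiteness of $\mathcal C_{a,b}$ is immediate from \eqref{cab} and $\Psi^{a,b}\le\cb_2^+$, which give $\mathcal C_{a,b}\le2\cb_1^+(\cb_2^+)^2$. For strict positivity when $a,b>0$ (hence $\lambda(\gamma)\in(0,\infty)$, and, through $\mathscr C_{a,b}=\cb_0\Cb_{a,b}$, the positivity asserted in Proposition~\ref{cvgProp}), I would use that the two independent Brownian meanders occurring in \eqref{cab} — the one in the outer expectation and the one hidden in $\Psi^{a,b}$ — have full topological support in the space of continuous nonnegative paths on $[0,1]$ starting at $0$ and positive on $(0,1]$: a roughly nondecreasing path staying close to its running maximum, paired with a roughly nondecreasing path ending very high, satisfies all the constraints of \eqref{cab} with strict inequalities, and a small sup-norm tube around such a pair has positive probability.

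For the integrability statements, the substitution $\gamma=u^{-2}$ gives, by Tonelli (all integrands being nonnegative),
\[
\Lambda=\int_0^\infty\lambda(\gamma)\,d\gamma=2\,\cb_0\mathcal D\int_0^\infty\frac{\mathcal C_{u,u}}{u}\,du,\qquad \int_0^\infty\frac{\Cb_{a\gamma^{-1/2},b\gamma^{-1/2}}}{\gamma}\,d\gamma=2\,\mathcal D\int_0^\infty\frac{\mathcal C_{au,bu}}{u}\,du,
\]
so it suffices to bound $\mathcal C_{au,bu}$ near $u=0$ and near $u=\infty$ for fixed $a,b>0$. Near $u=0$ I would keep only the outer indicator of \eqref{cab},
\[
\mathcal C_{au,bu}\le2\cb_1^+(\cb_2^+)^2\,\P\Big(\max_{0\le s\le1}\sigma(\overline{\mB}_s-\mB_s)\le\sqrt2\,au\Big),
\]
and argue that this probability is $O(u)$ as $u\to0$: conditioning on $\mB_{1/2}$, on this event $\inf_{s\in[1/2,1]}\mB_s\ge\mB_{1/2}-\sqrt2\,au$, and a reflection‑principle estimate (using that the meander on $[1/2,1]$ given $\mB_{1/2}=x$ is a Brownian motion conditioned to stay positive, and that the density of $\mB_{1/2}$ is $\asymp x$ near $0$) yields $O(u)$ — in fact a genuine small‑ball bound gives the much stronger $\exp(-c/(au)^2)$, but $O(u)$ already makes $\mathcal C_{au,bu}/u$ bounded near $0$. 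Near $u=\infty$ I would keep only the first event defining $\Psi$, so that by independence of the two meanders
\[
\mathcal C_{au,bu}\le2\cb_1^+(\cb_2^+)^2\,\P\big(\sigma\mB'_1>\sqrt2\,bu-\sigma\mB_1\big)\le2\cb_1^+(\cb_2^+)^2\,\P\big(\sigma(\mB_1+\mB'_1)>\sqrt2\,bu\big),
\]
and since the meander endpoint has density $x\mapsto xe^{-x^2/2}\1_{x>0}$ we get $\P(\mB_1>t)=e^{-t^2/2}$, hence $\P(\mB_1+\mB'_1>t)\le 2e^{-t^2/8}$ and $\mathcal C_{au,bu}\le C\exp(-c\,b^2u^2)$, which is integrable against $du/u$ on $[1,\infty)$. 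Combined with dominated convergence this also gives $\int_\varepsilon^{1/\varepsilon}\mathscr C_{\gamma^{-1/2},\gamma^{-1/2}}\gamma^{-1}\,d\gamma\to\Lambda$ as $\varepsilon\to0$, the form of the statement used in Corollary~\ref{sumW}.

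The real work is the small‑ball control of $\max_{0\le s\le1}(\overline{\mB}_s-\mB_s)$: the cleanest route reduces it to the Brownian estimate $\P(\inf_{s\le t}W_s\ge-\epsilon)\le C\epsilon$ via comparison of the meander with Brownian motion on a compact subinterval $[\delta,1-\delta]$ (where the two laws are mutually absolutely continuous with bounded density), plus a separate $O(\eta^2)$ bound for the atypical event $\{\mB_{1/2}\le\eta\}$. If one prefers to avoid Brownian‑meander technology, an alternative is to estimate $\mathcal C_{a,b}$ directly at the discrete level through \eqref{expWF} and the many‑to‑one lemma, invoking the Appendix random‑walk estimates — of the type $\E[e^{S_\ell-\MS_\ell};\max_{k\le\ell}(\MS_k-S_k)\le r,\ \mS_\ell\ge-\alpha]\le c(1+\alpha)(\ell^{-7/6}+\ell^{-1}e^{-c'\ell/r^2})$ already used in the proof of \eqref{largegenerations} — which after rescaling $r\sim a\sqrt\ell$ produce exactly the $e^{-c/a^2}$‑type decay needed in the regime $\gamma\to\infty$.
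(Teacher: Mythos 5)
Your overall structure (isolate $\mathcal D=\sum_j\E[\Gl_j(\Hinf_\infty)]$, reduce to $\mathcal C_{a,b}$, bound by dropping constraints, integrate with the substitution $\gamma=u^{-2}$) matches the paper, and your large-$b$ (small-$\gamma$) estimate via $\P(\mB_1+\mB'_1>t)\le2e^{-t^2/8}$ is essentially identical to the paper's $\mathcal C_{a,b}\le c\,e^{-b^2/4\sigma^2}$. Your positivity argument (topological support of the two meanders) is a valid alternative to the paper's route through the Biane--Yor and Imhof descriptions, which the paper uses to get both strict positivity \emph{and} continuity of $\mathcal C_{a,b}$ at once.

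The genuine gap is the small-$a$ (large-$\gamma$) estimate, which you explicitly flag as ``the real work'' and leave as a sketch. Moreover you set yourself up for the harder version of it: you drop $\Psi^{a,b}$ entirely and keep the \emph{outer} indicator $\{\max_{0\le s\le1}\sigma(\overline{\mB}_s-\mB_s)\le\sqrt2\,a\}$, for which I don't know a clean identity, and your proposed conditioning-on-$\mB_{1/2}$/reflection route would have to be carried through carefully (including the atypical contribution near $\mB_{1/2}=0$). The paper instead bounds $\Psi^{a,b}(x,h)\le\cb_2^+\,\P\big(\max_{0\le s\le1}\sigma(\mB'_s-\underline{\mB}'_{[s,1]})\le\sqrt2\,a\big)$ (keeping only the \emph{last} constraint on the second, independent meander $\mB'$, and dropping the outer indicator), and then applies the Biane--Yor identity $(\mB'_s,\underline{\mB}'_{[s,1]})\overset{d}{=}(|b_s|+\lambda_s^0,\lambda_s^0)$, so that $\max_s(\mB'_s-\underline{\mB}'_{[s,1]})=\max_s|b_s|$ for a Brownian bridge $b$. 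The explicit law $\P(\max_{s\le1}b_s\le x)=1-e^{-2x^2}\le 2x^2$ then gives $\mathcal C_{a,b}\le\frac{4c}{\sigma^2}a^2$ in one line — which is not only sufficient (as you note, $O(a)$ would already do for integrability against $da/a$) but comes for free once one picks the right indicator to keep. I'd recommend you replace your small-ball sketch by this choice; it closes the gap and also simplifies your proof.
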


\begin{proof} Recalling \eqref{Cab}, it suffices to show that $\mathcal{C}_{a,b}\in(0,\infty)$ and that ${\mathcal{C}_{\gamma^{-1/2},\gamma^{-1/2}}}/{\gamma}$ is integrable. Recall that for any $a, b>0$, 
\[
\mathcal{C}_{a,b}=2\cb_1^+\cb_2^+\E\left(\Psi^{a,b}(\sigma \mB_1, \sigma( \overline{\mB}_1- \mB_1)); \max_{0\leq s\leq 1}\sigma(\overline{\mB}_s-\mB_s)\leq \sqrt{2} a\right)
\]
with
\[
\Psi^{a,b}(x,h):= 
\cb_2^+\P\left(\sigma \mB_1 > (\sqrt{2}b-x)\vee h, \sigma(\overline{\mB}_1- \mB_1)\leq (\sqrt{2} a-h)_+\wedge x, \max_{0\leq s\leq 1}\sigma(\mB_s-\underline{\mB}_{[s,1]})\leq \sqrt{2}a\right).
\]
Obviously, $\mathcal{C}_{a,b}\leq 2\cb_1^+\cb_2^+\cb_2^+<\infty$. Moreover,
\begin{multline*}
\mathcal{C}_{a,b}\geq 2\cb_1^+\cb_2^+\cb_2^+\P\left(\sigma \mB_1>\sqrt{2} b, \sigma(\overline{\mB}_1-\mB_1)\leq \frac{\sqrt{2}}{2}a, \max_{0\leq s\leq 1}\sigma(\mB_s-\underline{\mB}_{[s,1]})\leq \sqrt{2}a\right)\\
\times \P\left(\sigma \mB_1\geq \frac{\sqrt{2}}{2}a, \max_{0\leq s\leq 1}\sigma(\overline{\mB}_s-\mB_s)\leq  \frac{\sqrt{2}}{2}(a\wedge b)\right).
\end{multline*}
On the one hand, Biane and Yor \cite{BianeYor} showed that
\[
\left((\mB_s, \underline{\mB}_{[s,1]}); 0\leq s\leq 1\right)=_d \left((|b_s|+\lambda_s^0, \lambda_s^0); 0\leq s\leq 1\right),
\]
where $(b_s,s\in[0,1])$ is a Brownian bridge and $(\lambda_s^0, 0\leq s\leq 1)$ its local time at $0$. On the other hand, if $(R_s; 0\leq s\leq 1)$ a Bessel(3) process,  Imhof \cite{Imhof} showed that for any $x>0$,
\[
(\mB_s, s\in[0,1])\textrm{ given } \{\mB_1=x\} =_d (R_s, s\in[0,1])\textrm{ given }\{ R_1=x\},
\]
where $\P(\mB_1\in dx)=xe^{-x^2/2}\un_{ \{ x\geq0 \}}dx$. By the continuity of the distribution of $(b, \lambda^0)$ and $R$, one sees that $\mathcal{C}_{a,b}$ is continuous and strictly positive.

 Let $(\mB_s, s\in[0,1])$ and $(\widetilde{\mB}_s, s\in[0,1])$ be two independent Brownian meanders. Then
\[
\mathcal{C}_{a,b}\leq \begin{cases}
c\P\left(\sigma(\mB_1+\widetilde{\mB}_1)\geq \sqrt{2}b\right)\\
c\P\left(\max_{s\in[0,1]}\sigma(\mB_s-\underline{\mB}_{[s,1]})\leq \sqrt{2}a\right).
\end{cases}
\]
It follows from the first inequality that
\[
\mathcal{C}_{a,b}\leq c\P(\mB_1\geq \frac{\sqrt{2}b}{2\sigma})=c e^{-b^2/4\sigma^2},
\]
On the other hand, according to \cite{BianeYor},
\[
\P\left(\max_{s\in[0,1]}\sigma(\mB_s-\underline{\mB}_{[s,1]})\leq \sqrt{2}a\right)=\P\left(\max_{s\in[0,1]}|b_s|\leq \sqrt{2}a/\sigma\right)\leq \P\left(\max_{s\in[0,1]}b_s\leq \sqrt{2}a/\sigma\right).
\]
This shows that
\begin{align*}
\mathcal{C}_{a,b}\leq& c\P\left(\max_{s\in[0,1]}b_s\leq \sqrt{2}a/\sigma\right)
=c \left(1-\exp\left(-2\Big(\frac{\sqrt{2}a}{\sigma}\Big)^2\right)\right)\leq \frac{4c}{\sigma^2}a^2.
\end{align*}
We are now ready to prove the integrability. Observe that
\begin{align*}
\int_0^\infty\frac{\mathcal{C}_{\gamma^{-1/2},\gamma^{-1/2}}}{\gamma}d\gamma =& \int_0^1\frac{\mathcal{C}_{\gamma^{-1/2},\gamma^{-1/2}}}{\gamma}d\gamma+\int_1^\infty \frac{\mathcal{C}_{\gamma^{-1/2},\gamma^{-1/2}}}{\gamma}d\gamma\\
\leq &\int_0^1 c e^{-\frac{1}{4\sigma^2\gamma}}\frac{d\gamma}{\gamma}+\int_1^\infty  \frac{4c}{\sigma^2}\frac{d\gamma}{\gamma^2}=\int_0^1 c e^{-\frac{1}{4\sigma^2\gamma}}\frac{d\gamma}{\gamma}+ \frac{4c}{\sigma^2}.
\end{align*}
By change of variables $t=1/\gamma$,
\begin{align*}
\int_0^1 c e^{-\frac{1}{4\sigma^2\gamma}}\frac{d\gamma}{\gamma}=&\int_1^\infty c e^{-\frac{t}{4\sigma^2}}\frac{dt}{t}<\infty.
\end{align*}
We hence conclude the integrability of $\frac{\mathcal{C}_{\gamma^{-1/2},\gamma^{-1/2}}}{\gamma}$, as well as $\frac{\mathcal{C}_{a\gamma^{-1/2},b\gamma^{-1/2}}}{\gamma}$  for any $a,b>0$.
\end{proof}
\subsection{Results on one-dimensional random walks}

In this section we state technical inequalities that are used all along the paper. The sequence $(S_k,k)$ which appears here is the one defined in \eqref{M2O}. The proofs are postpone Section \ref{A3}.

We start with two well know inequalities  (see \cite{AidekonShi09} for instance) and some basic Facts. There exists constant $c>0$ such that for any $n\geq1$ and $u\geq0$
\begin{equation}\label{mSbd}
\P(\mS_n\geq-u)\leq \frac{c(1+u)}{\sqrt{n}}\textrm{ and } \P(\MS_n\leq u)\leq \frac{c(1+u)}{\sqrt{n}}.
\end{equation}
By Lemma 2.2 in \cite{AidekonShi09}, there exists some constant $c>0$ such that for any $u\geq0$, $b\geq a\geq -u$ and any $n\geq1$,
\begin{equation}\label{mSSbd}
\P(\mS_n\geq-u, a\leq S_n\leq b)\leq \frac{c(1+u)(1+b+u)(1+b-a)}{n^{3/2}}.
\end{equation}

 
\begin{fact}
\begin{enumerate}[fullwidth]
\item For any $u, \alpha\geq0$ and $\forall n\geq1$,
\begin{equation}\label{mSMSbd}
\P_u(\mS_n\geq-\alpha, S_n=\MS_n)\leq \frac{c(1+\alpha+u)}{n}.
\end{equation}
\item 
\begin{enumerate}[fullwidth]
\item For any $n\geq1$, $B>0$ fixed, there exists $c(B)>0$ such that for any $u\geq0$, $-B\sqrt{n}\leq -\alpha\leq0<a<b\leq B\sqrt{n}$,
\begin{equation}\label{mSMSSbd}
\P_u(\mS_n\geq-\alpha, S_n=\MS_n\in[a,b])\leq \frac{c(B)(1+\alpha+u)(b-a)}{n^{3/2}}.
\end{equation}
\item For any $n\geq 1$, $A>0$,
\begin{equation}\label{mSMSSbd0}
\P(\mS_n\geq-\alpha, S_n=\MS_n\geq A)\leq \frac{c(1+\alpha)}{A n^{1/2}}.
\end{equation}
\end{enumerate}
\item For any $a, A,\alpha>0$ and $\forall n>m\geq1$,
\begin{equation}\label{mSMSSbd+}
\P(\mS_n\geq-\alpha, S_n=\MS_n, \MS_{m}-S_n\geq-A, \MS_m-S_m\leq a)\leq \frac{c (1+A)(1+a+A)(1+\alpha)}{m^{1/2}(n-m)^{3/2}}
\end{equation}
\end{enumerate}
\end{fact}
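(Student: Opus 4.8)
All four inequalities will be deduced from the two building blocks \eqref{mSbd} and \eqref{mSSbd} via the Markov property (to cut a trajectory at a well-chosen intermediate time) and time reversal (duality). The reversal I use repeatedly: since the increments $(S_j-S_{j-1})_{1\le j\le N}$ are i.i.d., for fixed $N$ the reversed path $\widehat S_k:=S_N-S_{N-k}$, $0\le k\le N$, has the law of $(S_k)_{0\le k\le N}$ under $\P_0$; under it $\{S_N=\MS_N\}$ becomes $\{\min_{0\le k\le N}\widehat S_k\ge0\}$, $\{\mS_N\ge-v\}$ becomes $\{\MS_N-\widehat S_N\le v\}$ (with $\MS_N$ read for $\widehat S$), and running maxima of $S$ turn into running minima of $\widehat S$. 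By translation invariance one may always reduce to $\P_0$ at the cost of shifting barriers and windows, and one first discards the degenerate parameter ranges: the asserted right–hand sides are $\gtrsim 1$, hence the bounds trivial, unless $\alpha,a,A$ are $O(\sqrt{m\wedge(n-m)})$, so I work in that regime.

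For \eqref{mSMSbd}, reducing to $\P_0$ replaces $\alpha$ by $\beta:=\alpha+u$, so it suffices to bound $\P(\mS_n\ge-\beta,\,S_n=\MS_n)$. Condition on $\mathcal F_{\lfloor n/2\rfloor}$: the first half contributes $\P(\mS_{\lfloor n/2\rfloor}\ge-\beta)\le c(1+\beta)/\sqrt n$ by \eqref{mSbd}, and the second half requires the walk on $[\lfloor n/2\rfloor,n]$ to stay above $-\beta$ and to end at its own maximum there, which after reversing that sub-interval is $\le\P(\mS_{\lceil n/2\rceil}\ge0)\le c/\sqrt n$, again by \eqref{mSbd}; the product is $c(1+\beta)/n$. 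For \eqref{mSMSSbd} and \eqref{mSMSSbd0}, reverse the whole path: the event becomes $\{\min_{[0,n]}S\ge0,\ \MS_n-S_n\le\beta,\ S_n\in[a_0,b_0]\}$ with $\beta=\alpha+u$, $[a_0,b_0]\subseteq[0,b-u]$ of width $b-a$, all of order $\sqrt n$. Condition at $\lfloor n/2\rfloor$: for the first half use only $\{\min_{[0,\lfloor n/2\rfloor]}S\ge0\}$, probability $\le c/\sqrt n$; for the second half, given its starting value $x\ge0$, reverse the block $[\lfloor n/2\rfloor,n]$ so that its three constraints become ``stay above $-\beta$'' and ``endpoint in a window of width $b-a$'', to which \eqref{mSSbd} applies with barrier $\beta$, giving a bound $\le c(1+\beta)(1+b_0+\beta)(1+b-a)/n^{3/2}$ uniformly in $x$ (and $0$ for $x>b_0+\beta$). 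Since $b_0,\beta=O(\sqrt n)$, the factor $(1+b_0+\beta)$ is $\le c(B)\sqrt n$, so the second half is $\le c(B)(1+\beta)(1+b-a)/n$; multiplying by the first-half estimate yields $c(B)(1+\alpha+u)(b-a)/n^{3/2}$. The bound \eqref{mSMSSbd0} then follows by summing \eqref{mSMSSbd} over dyadic windows $[2^jA,2^{j+1}A)$ (or directly by the same two-piece argument using the one-sided bounds).

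For \eqref{mSMSSbd+}, condition on $\mathcal F_m$. On $\{\mS_m\ge-\alpha,\ \MS_m-S_m\le a\}$ record $(S_m,\MS_m)$ and note $p:=\MS_m-S_m\in[0,a]$; given this, the block $[m,n]$ must stay above $-\alpha$ and finish at its own maximum over $[m,n]$, which by $\{S_n=\MS_n\}$ and $\{\MS_m-S_n\ge-A\}$ lies in the window $[\MS_m,\MS_m+A]$ of width $A$ — exactly the event controlled by the argument proving \eqref{mSMSSbd}, now with horizon $n-m$, barrier $\alpha+S_m$, and window width $A$, so its conditional probability is $\le c(1+\alpha+S_m)(1+p+A+\alpha+S_m)(1+A)/(n-m)^{2}$. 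Hence
\[
\P(\cdots)\le\frac{c(1+A)}{(n-m)^{2}}\,\E\!\left[(1+\alpha+S_m)(1+a+A+\alpha+S_m)\,\un_{\{\mS_m\ge-\alpha,\ \MS_m-S_m\le a\}}\right].
\]
To close it, reverse the first block $[0,m]$: the event $\{\mS_m\ge-\alpha,\ \MS_m-S_m\le a\}$ turns into ``stay above $-a$ and finish within $\alpha$ of the running maximum'', so splitting $[0,m]$ at $m/2$ and applying \eqref{mSbd}/\eqref{mSSbd} to the two halves bounds this expectation by $c(1+a)(1+\alpha)/\sqrt m$ up to the polynomial factors $(1+a+A+\alpha)$ (the square factors being $S_m$-moments of a walk conditioned to stay positive, of order $\sqrt m$). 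Combining, using $(n-m)^{-2}\le m^{-1/2}(n-m)^{-3/2}$ when $m\le n-m$ and treating $m\ge n-m$ symmetrically (and the very short block $n-m=O(1)$ by \eqref{mSMSbd} alone), gives $c(1+A)(1+a+A)(1+\alpha)\,m^{-1/2}(n-m)^{-3/2}$ after absorbing the remaining bounded factors into the constant on the non-degenerate range.

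\textbf{Main obstacle.} No new probabilistic input is needed beyond \eqref{mSbd}, \eqref{mSSbd} and the earlier parts of the Fact; the delicate point is purely bookkeeping — tracking the polynomial prefactors in $\alpha,a,A$ while keeping the sharp exponents $m^{-1/2}(n-m)^{-3/2}$. The naïve recipe ``each half staying positive costs $n^{-1/2}$'' is too lossy: one must use the \emph{windowed} estimate \eqref{mSSbd} (not \eqref{mSbd}) on precisely one half of each trajectory, keep track of which barrier ($-\alpha$, $0$ or $-a$) and which window governs each piece, carry out a case split on the relative sizes of $m$ and $n-m$, and first excise the parameter ranges where some of $\alpha,a,A$ is of order $\sqrt n$ or larger (where the inequalities are trivially true).
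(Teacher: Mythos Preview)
Your treatments of \eqref{mSMSbd} and \eqref{mSMSSbd} are essentially the paper's (split at $n/2$, time-reverse one half), and for \eqref{mSMSSbd0} note that the dyadic summation alone does not produce the $1/A$ factor: the paper instead invokes the first-moment bound $\sup_n\E[|S_n|;\,\mS_n\ge-\alpha]\le c(1+\alpha)$ (Lemma~2.3 of \cite{AidekonShi09}) together with Markov's inequality, which you do not mention.

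The substantive gap is in \eqref{mSMSSbd+}. By conditioning on $\mathcal F_m$ and feeding the block $[m,n]$ into your \eqref{mSMSSbd}-argument with barrier $-\alpha$ (i.e.\ barrier $-(\alpha+S_m)$ for the shifted walk), your conditional bound is
\[
\frac{c(1+A)(1+\alpha+S_m)(1+A+\alpha+\MS_m)}{(n-m)^{2}}.
\]
The best one can say about the expectation $\E\big[(1+\alpha+S_m)(1+A+\alpha+\MS_m);\,\mS_m\ge-\alpha,\,\MS_m-S_m\le a\big]$ is $O\big((1+a)(1+\alpha)\big)$ (the $S_m^2$-contribution already forces this, and even that uses a second-moment analogue of the lemma just cited). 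This leaves you with a bound of order $(1+A)(1+a)(1+\alpha)/(n-m)^2$, which matches the target $m^{-1/2}(n-m)^{-3/2}$ only when $m\lesssim n-m$; for $m\gg n-m$ (take $a,A,\alpha$ bounded and, say, $n-m=n^{1/2}$) you are off by a factor $\sqrt{m/(n-m)}$. There is no symmetry in $m\leftrightarrow n-m$ to appeal to, and ``$n-m=O(1)$ by \eqref{mSMSbd}'' does not cover this intermediate regime.

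The cure is to drop the barrier $-\alpha$ on the second block altogether. From $S_n=\MS_n$ the \emph{reversed} walk $R_k:=S_n-S_{n-k}$, $0\le k\le n-m$, satisfies $\underline R_{n-m}\ge 0$ (barrier $0$, independent of $S_m$), while $\{\MS_m-S_n\ge -A\}$ and $\{S_n\ge\MS_m\}$ give $R_{n-m}\in[\MS_m-S_m,\MS_m-S_m+A]$. One application of \eqref{mSSbd} to $R$ with $u=0$ yields a conditional bound $c(1+A)(1+a+A)/(n-m)^{3/2}$, and then the first block contributes only $\P(\mS_m\ge-\alpha)\le c(1+\alpha)/\sqrt m$. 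This is exactly the paper's argument and closes the estimate uniformly in $1\le m<n$.
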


We now state the following Lemma which is mostly a consequence of the above facts.

\begin{lem}
For any $\alpha\geq0$, $0< a\leq b$ and $n\geq1$, we have 
\begin{align}
\P\Big(S_n=\mS_n\geq-\alpha\Big)&\leq \frac{c(1+\alpha)^2}{n^{3/2}},\label{SmSbd}\\
\P\Big(\sum_{1\leq i\leq n}e^{S_i-S_n}\in[a,b],\underline{S}_n\geq-\alpha\Big)&\leq c\frac{(1+\alpha)(1+\log b)(1+\log b-\log a+\log n)}{n},\label{mSeSsum}\\
\E\Big(e^{S_n}; S_n\in[a,b], \underline{S}_n\geq -\alpha\Big)&\leq \frac{ce^b(b+\alpha+1)(1+b-a)(1+\alpha)}{n^{3/2}}.\label{mSeS}
\end{align}
\end{lem}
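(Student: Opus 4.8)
The bound \eqref{mSeS} is immediate: on $\{a\le S_n\le b\}$ one has $e^{S_n}\le e^b$, so $\E\big(e^{S_n};\,a\le S_n\le b,\,\mS_n\ge-\alpha\big)\le e^b\,\P(\mS_n\ge-\alpha,\,a\le S_n\le b)$, and \eqref{mSSbd} with $u=\alpha$ finishes it. For \eqref{SmSbd} I would reverse time: setting $\widehat S_k:=S_n-S_{n-k}$, the path $(\widehat S_k)_{0\le k\le n}$ has the same law as $(S_k)_{0\le k\le n}$, and on $\{S_n=\mS_n\ge-\alpha\}$ one has $\widehat S_k=S_n-S_{n-k}\le0$ for every $k$ and $\widehat S_n=S_n\ge-\alpha$; this correspondence is a bijection, so $\P(S_n=\mS_n\ge-\alpha)=\P(\MS_n\le0,\,S_n\ge-\alpha)$. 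Since $\MS_n\ge S_0=0$, the event $\{\MS_n\le0\}$ forces $\MS_n=0$ and $-\alpha\le S_n\le0$; applying \eqref{mSSbd} to the reflected walk $-S$ (again centered with variance $\sigma^2$, so the same estimate applies — equivalently one may quote the companion inequality of \cite{AidekonShi09} for the maximum) with $u=0$ and $0\le -S_n\le\alpha$ yields $\P\le c(1+\alpha)^2 n^{-3/2}$.

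The substance is \eqref{mSeSsum}, and the first step is to replace the exponential window by a barrier event. Because the $i=n$ term of $\sum_{1\le i\le n}e^{S_i-S_n}$ equals $1$, the event is vacuous unless $b\ge1$, which I assume. Writing $M:=\max_{1\le i\le n}S_i$, one has $e^{M-S_n}\le\sum_{1\le i\le n}e^{S_i-S_n}\le n\,e^{M-S_n}$, hence
\[
\Big\{\sum_{1\le i\le n}e^{S_i-S_n}\in[a,b],\ \mS_n\ge-\alpha\Big\}\subseteq\big\{\log a-\log n\le M-S_n\le\log b,\ \mS_n\ge-\alpha\big\}.
\]
Set $h_+:=\log b\ge0$ and $h_-:=\log a-\log n$. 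On the degenerate event $\{M<0\}$ (equivalently $\MS_n=0$) the walk stays in $[-\alpha,0]$, so $\P(\mS_n\ge-\alpha,\MS_n\le0)\le\min\big(c(1+\alpha)^2 n^{-3/2},\,c\,n^{-1/2}\big)\le c(1+\alpha)n^{-1}$ (the two bounds coming respectively from \eqref{mSSbd} applied to $-S$ and from \eqref{mSbd}), and this is dominated by the right side of \eqref{mSeSsum} since $(1+\log b)(1+\log b-\log a+\log n)\ge1$. On $\{M\ge0\}$ I would let $\tau:=\min\{k\le n:S_k=M\}$ and decompose over the value of $\tau$.

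On $\{\tau=k\}$ one has $\mS_k\ge-\alpha$ and $S_k=\MS_k\ge0$, while the shifted increments $(S_{k+j}-S_k)_{0\le j\le n-k}$ have non-positive running maximum, running minimum $\ge-\alpha-S_k$, and terminal value $-(M-S_n)\in[-h_+,-h_-]$. The Markov property at time $k$, dropping the lower barrier for the shifted part, gives
\[
\P\big(\tau=k,\,M-S_n\in[h_-,h_+],\,\mS_n\ge-\alpha\big)\le\P\big(S_k=\MS_k,\,\mS_k\ge-\alpha\big)\cdot\P\big(\MS_{n-k}\le0,\,-h_+\le S_{n-k}\le-h_-\big),
\]
the first factor being $\le c(1+\alpha)/(k\vee1)$ by \eqref{mSMSbd} with $u=0$ and the second $\le c(1+h_+)(1+h_+-h_-)/((n-k)^{3/2}\vee1)$ by \eqref{mSSbd} applied to $-S$. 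Summing over $0\le k\le n$ and splitting at $n/2$ (the range $k\le n/2$ contributes $O(n^{-3/2}\log n)$ and the range $k>n/2$ contributes $O(n^{-1})$, both times $(1+\alpha)(1+h_+)(1+h_+-h_-)$) yields $c(1+\alpha)(1+h_+)(1+h_+-h_-)/n$; substituting $h_+=\log b$, $h_+-h_-=\log b-\log a+\log n$ recovers \eqref{mSeSsum}. The only delicate points are making the barrier reduction precise enough to keep the factors in the form $(1+\log b)$ and $(1+\log b-\log a+\log n)$, and checking that the degenerate case $M<0$ is genuinely negligible; the time-reversal and the reflection $S\leftrightarrow-S$ are the only structural ingredients beyond the one-dimensional estimates already recorded.
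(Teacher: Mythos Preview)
Your approach is correct and matches the paper's proofs exactly: time reversal plus \eqref{mSSbd} for \eqref{SmSbd}, the trivial bound $e^{S_n}\le e^b$ plus \eqref{mSSbd} for \eqref{mSeS}, and for \eqref{mSeSsum} the sandwich $e^{\MS_n-S_n}\le\sum_i e^{S_i-S_n}\le n\,e^{\MS_n-S_n}$ followed by a decomposition on the first time the running maximum is attained, then \eqref{mSMSbd} and \eqref{mSSbd}. Your separate treatment of $\{M<0\}$ is unnecessary (the decomposition over $\tau$ works regardless of the sign of $M$, and your parenthetical ``equivalently $\MS_n=0$'' is a slip since $M=\MS_n$ by definition), but it does no harm.
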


Following  Lemma focus on asymptotic results.
\begin{lem}
Let $a,b\geq 0$ fixed and $\lim_n\frac{a_n}{\sqrt{n}}=\lim_n\frac{b_n}{\sqrt{n}}=0$. We have the following convergences.
\begin{enumerate}[fullwidth]
\item Moreover, for any $\alpha\geq0$ fixed, 
\begin{equation}\label{mSMScvg}
\lim_n n\P\left(\mS_n\geq-\alpha, S_n>\MS_{n-1}, \max_{1\leq i\leq n}(\MS_i-S_i)\leq a\sqrt{n}+a_n, S_n\geq b\sqrt{n}+b_n\right)=\mathcal{C}_{a,b}\Ren(\alpha),
\end{equation}
where $\mathcal{C}_{a,b}\in (0,\infty)$ is a constant depending on $a$ and $b$, defined in \eqref{cab}.
\item Let $g:[1,\infty)\rightarrow\r_+$ be a uniformly continuous and bounded function. Then,
\begin{multline}\label{sumcvg}
\lim_{n\rightarrow\infty}n\E\left[g\Big(\sum_{j=1}^n e^{S_j-S_n}\Big); \mS_n\geq-\alpha, S_n>\MS_{n-1}, \max_{1\leq i\leq n}(\MS_i-S_i)\leq a\sqrt{n}+a_n, S_n\geq b\sqrt{n}+b_n\right]\\
=\mathcal{C}_{a,b}\Ren(\alpha)\E[g(\Hinf_\infty)]
\end{multline}
\end{enumerate}

\end{lem}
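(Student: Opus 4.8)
The plan is to prove \eqref{mSMScvg} first and then obtain \eqref{sumcvg} by carrying an extra factor $g$ through the same argument, so that the choice $g\equiv1$ gives back \eqref{mSMScvg}. Write $E_n$ for the event in \eqref{mSMScvg}. Two preliminary reductions: by \eqref{mSMSSbd0}, $n\,\P(E_n\cap\{S_n\ge A\sqrt n\})\le c(1+\alpha)/A$, so up to an error vanishing as $A\to\infty$ one may intersect with $\{S_n\le A\sqrt n\}$; and the estimates \eqref{mSbd}--\eqref{mSMSSbd+} and \eqref{SmSbd}--\eqref{mSeS} will supply, throughout, the domination needed to turn weak convergence of rescaled paths into convergence of $n\,\P(\cdots)$ and $n\,\E[g(\cdots);\cdots]$. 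The heart of the matter is a decomposition at the middle time $m:=\lfloor n/2\rfloor$ --- this is the origin of the factors $\sqrt2$ in \eqref{cab}.

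Condition on $(S_0,\dots,S_m)$ and let $S'_i:=S_{m+i}-S_m$ ($0\le i\le m':=n-m$) be the independent increment walk, with $\Delta_m:=\MS_m-S_m\ge0$. Then $E_n$ becomes: for the first block, $\mS_m\ge-\alpha$ and $\max_{1\le i\le m}(\MS_i-S_i)\le a\sqrt n+a_n$; for the second block, $S'_{m'}=\MS'_{m'}>\MS'_{m'-1}$, together with $\max_{1\le j\le m'}(\max_{l\le j}S'_l-S'_j)\le a\sqrt n+a_n$, $\mS'_{m'}\ge(-\alpha-S_m)\vee(\Delta_m-a\sqrt n-a_n)$, and $S'_{m'}\ge\Delta_m\vee(b\sqrt n+b_n-S_m)$. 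Conditioning the first block on $\mS_m\ge-\alpha$ yields the renewal factor $\Ren(\alpha)$ (the floor at $-\alpha$, through the strict descending ladder heights) and makes $(S_{\lfloor ms\rfloor}/\sqrt{\sigma^2m})_{s\in[0,1]}$ converge to a Brownian meander $\mB$; the drawdown constraint passes to $\sigma\max_{0\le s\le1}(\overline{\mB}_s-\mB_s)\le\sqrt2\,a$, exactly the constraint in \eqref{cab}, and the rescaled midpoint data $(S_m,\Delta_m)$ converge to $(x,h):=(\sigma\mB_1,\sigma(\overline{\mB}_1-\mB_1))$. In the second block, after the time-reversal $\hat S'_k:=S'_{m'}-S'_{m'-k}$ the condition $S'_{m'}>\MS'_{m'-1}$ is ``$\hat S'$ stays strictly positive'', so $(\hat S'_{\lfloor m's\rfloor}/\sqrt{\sigma^2m'})_{s\in[0,1]}$ converges to an independent meander $\cB$, and, with the same $n=2m$ rescaling, the three remaining constraints become precisely the defining events of $\Psi^{a,b}(x,h)$: the drawdown bound is $\sigma\max_{0\le s\le1}(\cB_s-\underline{\cB}_{[s,1]})\le\sqrt2\,a$, the lower bound on $S'_{m'}$ is $\sigma\cB_1>(\sqrt2\,b-x)\vee h$, and the bound on $\mS'_{m'}$ is $\sigma(\overline{\cB}_1-\cB_1)\le(\sqrt2\,a-h)_+\wedge x$. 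Multiplying the first-block probability ($\asymp1/\sqrt m$) by the conditional second-block probability ($\asymp1/\sqrt{m'}$), the factor $n/(\sqrt m\sqrt{m'})\to2$, the normalising constants $\cb_1^+,\cb_2^+$ of the two meander limits together with the further $\cb_2^+$ coming from the local asymptotics of ``$S$ attains its global maximum exactly at time $n$'', and integrating over the limiting law of $(x,h)$, reconstitutes $\mathcal{C}_{a,b}\Ren(\alpha)$ as written in \eqref{cab}; this also shows $\mathcal{C}_{a,b}\in(0,\infty)$ (finiteness from the bounds above, positivity for $a>0$ from non-degeneracy of the meander law).

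For \eqref{sumcvg} the only new ingredient is $\sum_{j=1}^ne^{S_j-S_n}$. On $E_n$, the same reversal gives $\sum_{j=1}^ne^{S_j-S_n}=\sum_{k=0}^{n-1}e^{-\hat S_k}=\sum_{k=0}^{n}e^{-\hat S_k}-e^{-S_n}$ with $\hat S_k=S_n-S_{n-k}$, and $e^{-S_n}\to0$ on $E_n$ (the conditioning forces $S_n$ of order $\sqrt n$). Since $\hat S$ is exactly the walk conditioned to stay positive in \eqref{mSjointcvg}, the joint convergence there gives $\sum_{k\ge0}e^{-\hat S_k}\to\Hinf_\infty$ jointly with, and independently of, the meander $\cB$: the value of the sum is essentially a functional of finitely many initial increments of $\hat S$ (the series $\Hinf_\infty=\sum_je^{-\zeta_j}$ being convergent), whereas the remaining constraints in \eqref{sumcvg} live at the macroscopic $\sqrt n$-scale, so by uniform continuity and boundedness of $g$ one may replace $g(\sum\cdots)$ by such a surrogate and decouple it asymptotically from those constraints. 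Carrying this factor through the split-at-$m$ computation multiplies the limit by $\E[g(\Hinf_\infty)]$, giving $\mathcal{C}_{a,b}\Ren(\alpha)\E[g(\Hinf_\infty)]$, and $g\equiv1$ recovers \eqref{mSMScvg}.

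The main obstacle is the exact constant bookkeeping in the second paragraph: checking that the two meander conditionings together with the local limit for ``the maximum is realised at time $n$'' produce precisely $2\cb_1^+(\cb_2^+)^2\Ren(\alpha)$, that the limiting constraint sets $\{h=\sqrt2\,a\}$, $\{x=0\}$, $\{\sigma\cB_1=(\sqrt2\,b-x)\vee h\}$ and the like are null (so the indicators are a.s.\ continuous at the limiting meander), and that the $o(1)$ and domination steps are uniform in $(x,h)$ over the window $\{x\le A,\ 0\le h\le\sqrt2\,a\}$; this is routine but lengthy and follows the method of \cite[Sections 4--5]{AidekonShi09} using the Appendix estimates. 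A secondary, milder point is the decoupling in \eqref{sumcvg} of the $O(1)$-scale sum from the $\sqrt n$-scale constraints, handled by an absolute-continuity comparison between the walk conditioned to stay positive and the same walk conditioned in addition on the macroscopic events.
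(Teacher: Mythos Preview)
Your approach is essentially the paper's: split at $m=\lfloor n/2\rfloor$, condition the first block on $\{\mS_m\ge-\alpha\}$ to get a meander limit and the factor $\Ren(\alpha)\cb_1^+$, and time-reverse the second block so that $\{S_n>\MS_{n-1}\}$ becomes $\{\underline{R}_{n/2}>0\}$ and gives a second, independent meander with factor $\cb_2^+$; the constraints on $\max_i(\MS_i-S_i)$ and on $S_n$ then pass to the meander events defining $\Psi^{a,b}$ and $\mathcal{C}_{a,b}$. The paper makes the uniform-in-$(x,h)$ step explicit via monotonicity of $\Psi_n^{a,b}$ and Dini's theorem rather than a domination argument, but the skeleton is the same.

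Two points worth flagging. First, your constant count is slightly muddled: there is no separate ``further $\cb_2^+$'' beyond the one produced by $\{\underline{R}_{n/2}>0\}$; the condition ``$S$ attains its maximum at time $n$'' is precisely what yields that second meander conditioning, not an additional local factor. Two meander normalisations appear ($\cb_1^+$ and $\cb_2^+$), together with the factor $2$ from $n/(\sqrt{m}\sqrt{m'})$. Second, for \eqref{sumcvg} the paper does not appeal to an abstract decoupling of local versus macroscopic scales; instead it proves directly, via \eqref{mSMSSbd+}, that $\P(E_n,\ \MS_{n/2}\ge S_n-n^{1/3})=o(n^{-1})$, so that on the remaining event one may replace $\sum_{j=1}^n e^{S_j-S_n}$ by $\sum_{n/2\le j\le n}e^{S_j-S_n}$ up to an $o(1)$ error (uniform continuity of $g$), and the latter sum depends only on the reversed second block $(R_k)_{k\le n/2}$, to which the joint convergence \eqref{mSjointcvg} applies cleanly. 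Your heuristic (``the sum is a functional of finitely many initial increments of $\hat S$'') is morally right but, as written, still leaves you to justify that the extra conditioning on the tail of $\hat S$ (coming from $\mS_m\ge-\alpha$ and the first-block drawdown) does not distort the law of those initial increments; the paper's truncation sidesteps this entirely.
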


Below we collect some more basic facts.
\begin{fact}
\begin{enumerate}[fullwidth]
\item For any $n\geq1$,
\begin{equation}\label{eSMSbd}
\E[e^{S_n}; \MS_n\leq 0] \leq \frac{c}{n^{3/2}}.
\end{equation}
\item For any $A>0$ and $n\geq1$, 
\begin{equation}\label{eSMSbd+}
\E[e^{S_n}; \MS_n\leq A]\leq \frac{c (1+A)e^A}{n^{3/2}}.
\end{equation}
\item For any $n\geq1$,
\begin{equation}\label{eSbd}
\E\Big[\frac{e^{S_n}}{\sum_{1\leq i\leq n}e^{S_n}}\Big]\leq \frac{c}{n^{1/2}}.
\end{equation}
\item For any $A, \delta>0$ and $\forall n\geq k_2>k_1\geq1$,
\begin{equation}\label{mSmSbd}
\P(\mS_{[k_1,k_2]}\leq A, \mS_{(1+\delta)n}\geq0)\leq \frac{c(1+A)}{\sqrt{\delta n k_1}}.
\end{equation}
\item If $\E[e^{\pm \theta S_1}]<\infty$ for some $\theta>0$ [see \eqref{hyp0+}], then for any $\delta>0$ there exists $c(\delta,\theta)>0$ such that for any $n\geq1$,
\begin{equation}\label{LDP}
\P(\MS_n \geq n^{1+\delta})\leq c(\delta,\theta)e^{-\theta n^{1+\delta}/2}. 
\end{equation}
\item Let $a>0$. With the same hypothesis as above there exists $c_2>0$ such that 
\begin{equation}
 \P(\MS_n \geq a\sqrt { n \log n}, \mS_n \geq - \alpha )\leq \frac{c}{ n^{a^2  c_2}}. \label{Last1}
\end{equation}
\end{enumerate}

\end{fact}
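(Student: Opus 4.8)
The plan is to reduce all six inequalities to the ballot bounds \eqref{mSbd}, \eqref{mSSbd} and to the Facts \eqref{mSMSbd}--\eqref{mSMSSbd+} already recorded, using two recurrent devices: time reversal of the i.i.d.\ increments (which converts a constraint on $\mS$ into one on a strict ascending ladder epoch) and slicing an exponential weight $e^{S_n}$ over unit windows of the endpoint so that it becomes a summable geometric factor taming the polynomial prefactors produced by the ballot estimates. For \eqref{eSMSbd+} I would write $\E[e^{S_n};\MS_n\le A]=\sum_{j\ge0}\E[e^{S_n};\MS_n\le A,\ S_n\in[A-j-1,A-j)]$; on the $j$-th slice $e^{S_n}<e^{A-j}$, and reflecting ($\tilde S=-S$) turns the event into $\{\mS_n\ge-A,\ \tilde S_n\in(j-A,j+1-A]\}$, so \eqref{mSSbd} bounds its probability by $c(1+A)(j+2)n^{-3/2}$; since $\sum_{j\ge0}e^{-j}(j+2)<\infty$ this gives \eqref{eSMSbd+}, and \eqref{eSMSbd} is the case $A=0$.

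For \eqref{eSbd}, since $\sum_{1\le i\le n}e^{S_i}\ge e^{\MS_n}$ it suffices to bound $\E[e^{S_n-\MS_n}]$. Decomposing over the first $k\le n$ with $S_k=\MS_n$ and applying the Markov property at $k$ gives $\E[e^{S_n-\MS_n}]=\sum_{k=1}^n\P\big(S_k>\max_{1\le j<k}S_j\big)\,\E[e^{S_{n-k}};\MS_{n-k}\le0]$, the $k=1$ and $k=n$ factors reading as $1$. Reversing the increments on $\{1,\dots,k\}$ identifies $\P(S_k>\max_{1\le j<k}S_j)=\P(\mS_{k-1}>0)\le c\,k^{-1/2}$ by \eqref{c+}, while $\E[e^{S_{n-k}};\MS_{n-k}\le0]\le c\,(n-k+1)^{-3/2}$ by \eqref{eSMSbd}; the convolution $\sum_k k^{-1/2}(n-k+1)^{-3/2}$, split at $k=n/2$, is $O(n^{-1/2})$. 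For \eqref{mSmSbd}, on $\{\mS_{(1+\delta)n}\ge0\}$ the walk is $\ge0$ throughout $[1,(1+\delta)n]\supset[k_1,k_2]$, so $\mS_{[k_1,k_2]}\le A$ forces a first epoch $\tau\in[k_1,k_2]$ with $S_\tau\in[0,A]$; decomposing over $\tau=k$ and using the Markov property at $k$ bounds the probability by $\sum_{k=k_1}^{k_2}\P(\mS_k\ge0,\ S_k\in[0,A])\sup_{0\le y\le A}\P_y(\mS_{(1+\delta)n-k}\ge0)$, which by \eqref{mSSbd} and by \eqref{mSbd} (using $(1+\delta)n-k\ge\delta n$) is $\le\sum_{k\ge k_1}c(1+A)^2k^{-3/2}\cdot c(1+A)(\delta n)^{-1/2}=O\big((1+A)^3(\delta n k_1)^{-1/2}\big)$; the sharp prefactor $1+A$ of \eqref{mSmSbd} then comes from replacing the crude first factor with the local estimates \eqref{mSMSSbd}--\eqref{mSMSSbd+}.

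The remaining two estimates follow from a single exponential-martingale bound. By \eqref{hyp0+} one has $\E[e^{\theta S_1}]<\infty$ for the relevant $\theta>0$, and since $S$ is centred $\E[e^{\theta S_1}]\ge1$; hence $(e^{\theta S_k})_{k\ge0}$ is a nonnegative submartingale and Doob's maximal inequality gives $\P(\MS_n\ge x)\le e^{-\theta x}\big(\E[e^{\theta S_1}]\big)^n$. With $x=n^{1+\delta}$ and that same $\theta$, the factor $e^{-\theta x}$ overwhelms $\big(\E[e^{\theta S_1}]\big)^n=e^{n\log\E[e^{\theta S_1}]}$ once $n$ is large, which is \eqref{LDP}. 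For \eqref{Last1} use instead $\log\E[e^{\theta S_1}]\le\sigma^2\theta^2$ for $|\theta|$ small, so $\P(\MS_n\ge x)\le e^{-\theta x+\sigma^2\theta^2 n}$; optimizing at $\theta=x/(2\sigma^2 n)$ with $x=a\sqrt{n\log n}$ (admissible once $n$ is large) yields $\P(\MS_n\ge a\sqrt{n\log n})\le\exp\!\big(-x^2/(4\sigma^2 n)\big)=n^{-a^2/(4\sigma^2)}$, and the extra constraint $\mS_n\ge-\alpha$ only lowers this; a routine case split in $a$ (using $\P(\mS_n\ge-\alpha)\le cn^{-1/2}$ from \eqref{mSbd} for $a$ below a fixed threshold) then gives \eqref{Last1} with any fixed $c_2<1/(4\sigma^2)$.

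The part needing the most care is \eqref{mSmSbd} together with the heat-kernel convolution in \eqref{eSbd}: there is no new idea, but one must be disciplined about which ballot estimate is invoked where, about the polynomial prefactors (the power of $1+A$, the $k_1^{-1/2}$ gain, the convolution exponents), and about the boundary cases $k=1,n$. By contrast \eqref{Last1}, which looks the most delicate, is in fact the cheapest---a short consequence of Doob's inequality and the finite exponential moments \eqref{hyp0+}.
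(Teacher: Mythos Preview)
Your argument is correct and, for \eqref{eSMSbd}, \eqref{eSbd}, \eqref{LDP} and \eqref{Last1}, essentially identical to the paper's: the same reflection $S\mapsto-S$, the same decomposition over the first time the running maximum is attained followed by the Markov property, and the same Doob/exponential-Chebyshev bound (the paper only says ``choose $\theta$ as a function of $n$'' for \eqref{Last1}; you make the optimisation $\theta=x/(2\sigma^2 n)$ explicit).

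There are two genuine differences worth noting. For \eqref{eSMSbd+} the paper decomposes over the hitting time of $\MS_n$, applies the Markov property to split into a factor $\E[e^{S_k};\mS_k>0,S_k\le A]$ (after time reversal) and a factor $\E[e^{S_{n-k}};\MS_{n-k}\le0]$, slices the first, and then convolves; your direct slicing of $S_n$ under the global constraint $\MS_n\le A$ followed by one application of \eqref{mSSbd} to $-S$ is shorter and avoids the convolution entirely. For \eqref{mSmSbd} the paper simply cites Ritter; your self-contained hitting-time argument gives the correct $\big(\delta n\,k_1\big)^{-1/2}$ rate but only $(1+A)^3$ in place of $(1+A)$, and your pointer to \eqref{mSMSSbd}--\eqref{mSMSSbd+} for the sharp prefactor is not quite on target (those estimates concern the event $S_n=\MS_n$, not $S_k\in[0,A]$ with $\mS$-constraints on both sides). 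The sharp $(1+A)$ does matter in one place downstream (the proof of \eqref{mSMSkbd}, where $A\sim k_n^{1/6}$), so if you want a fully self-contained version you would need either Ritter's lemma or a more careful two-sided local estimate at that step.
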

The following corollaries follow from above lemmas.
\begin{cor}
Let $a\geq 0$, $b_n\geq0$ and $\lim_n\frac{a_n}{\sqrt{n}}=\lim_n\frac{b_n}{\sqrt{n}}=0$. Take $n_0=n-n^{1/3}$, then
\begin{align}
\P\Big( \mS_{[n/2, n]}\leq b_n,\mS_n\geq-\alpha, S_n>\MS_{n-1}\Big)&=\frac{o_n(1)}{n}.\label{omSmScvg}
\end{align}
\end{cor}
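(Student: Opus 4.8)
\emph{Plan.} Denote by $\mathcal A$ the event $\{\mS_{[n/2,n]}\le b_n,\ \mS_n\ge-\alpha,\ S_n>\MS_{n-1}\}$. Note first that dropping the constraint $\mS_{[n/2,n]}\le b_n$ already gives $\P(\mS_n\ge-\alpha,\ S_n=\MS_n)\le c(1+\alpha)/n$ by \eqref{mSMSbd}; the content is that this extra constraint yields an $o_n(1)$ gain, and this is exactly where $b_n=o(\sqrt n)$ enters. The first step is a \textbf{time reversal}: set $U_i:=S_n-S_{n-i}$ for $0\le i\le n$. The increments of $U$ are those of $S$ read backwards, so $(U_i)_{0\le i\le n}\egloi(S_i)_{0\le i\le n}$, and a direct rewriting of the three conditions defining $\mathcal A$ gives, with $m:=n-\lfloor n/2\rfloor\asymp n/2$,
\[
\P(\mathcal A)=\P\Big(S_j>0\ \forall\,1\le j\le n,\ \MS_n\le S_n+\alpha,\ \MS_m\ge S_n-b_n\Big).
\]
On this event $S_n-b_n\le\MS_m\le\MS_n\le S_n+\alpha$, hence $\MS_n-\MS_m\le\alpha+b_n$: the running maximum grows by at most $\alpha+b_n$ after time $m$, so $S_k\le\MS_m+\alpha+b_n$ for $m<k\le n$.

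The second step is the \textbf{Markov property at time $m$}. With $u:=S_m$, $M':=\MS_m$, the remaining constraints say precisely that a fresh walk $(S'_j)_{0\le j\le t}$, $t:=n-m$, started at $S'_0=u$, must satisfy $S'_j>0$ ($1\le j\le t$), $S'_t\in[M'-\alpha,\,M'+b_n]$, and $\MS'_{[0,t]}\le S'_t+\alpha$; call this probability $q(u,M')$. Thus $\P(\mathcal A)=\E\big[\un_{\{S_j>0,\ 1\le j\le m\}}\,q(S_m,\MS_m)\big]$. On $\{\MS_m>B\sqrt n\}$ ($B$ a large constant) one has $S_n\ge\MS_m-\alpha>B\sqrt n-\alpha$, so that part of the expectation is, after reversing back, at most $\P(\mS_n\ge-\alpha,\ S_n=\MS_n\ge B\sqrt n-\alpha)\le c(1+\alpha)/(Bn)$ for $n$ large, by \eqref{mSMSSbd0}. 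For the main part, the claim is
\[
q(u,M')\ \le\ \frac{c\,(1+u)\,(1+b_n)}{t^{3/2}}\qquad\text{for }M'\le B\sqrt n\text{ and }n\text{ large,}
\]
where $c$ depends only on $\alpha,B$. Granting this and the standard uniform bound $\E\big[(1+S_m)\un_{\{S_j>0,\ 1\le j\le m\}}\big]\le C$ (which follows from Lyons' $h$-transform with $h=\Ren$, cf. \eqref{changeprobab} and the walk $(\zeta_n)$, together with $\Ren(x)\ge C_-(1+x)$ from \eqref{Ren}), the main part is $\le c\,(1+b_n)\,t^{-3/2}\,C=o_n(1)/n$ since $b_n=o(\sqrt n)$ and $t\asymp n$. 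Combining, $n\,\P(\mathcal A)\le o_n(1)+c(1+\alpha)/B$; letting $n\to\infty$ and then $B\to\infty$ gives $\P(\mathcal A)=o_n(1)/n$, which is \eqref{omSmScvg}.

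\emph{Main obstacle.} It is the bound on $q(u,M')$, and its whole point is that $S'_t$ lies in a window $[M'-\alpha,\,M'+b_n]$ of width $\alpha+b_n=o(\sqrt n)$ (rather than of width $\asymp\sqrt n$): combined with $S'_t$ being within $\alpha$ of its running maximum, the window estimate \eqref{mSMSSbd} produces the harmless factor $(1+b_n)$ in place of what would otherwise be $\asymp\sqrt n$. To carry this out rigorously one partitions over the last time $S'$ attains its running maximum on $[0,t]$, bounding the piece up to that time by \eqref{mSMSSbd} after a shift (and after absorbing the ``within $\alpha$'' by a bounded number of shifts in space/time) and the piece afterwards by \eqref{eSMSbd}/\eqref{mSSbd}; one must treat separately the sub-case $\MS_m\le S_m+2\alpha$ (the reversed walk near its first-half maximum at $m$), and use moderate-deviation estimates — available from \eqref{hyp0+}, cf. \eqref{LDP} — to kill the atypical configurations in which the two halves are coupled in a costly way. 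This is routine but somewhat technical; conceptually the only input is the narrow window together with \eqref{mSMSSbd}, \eqref{mSMSSbd0}. (Alternatively, \eqref{omSmScvg} follows immediately from weak convergence, under $\P(\,\cdot\mid\mS_n\ge-\alpha,\ S_n>\MS_{n-1})$, of the rescaled path to a positive limit process attaining its maximum at time $1$: the minimum of that process over $[1/2,1]$ is a.s.\ positive while $b_n/\sqrt n\to0$, and one multiplies by the bound $c(1+\alpha)/n$ of \eqref{mSMSbd}.)
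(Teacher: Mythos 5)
Your time reversal is the right first move, and it is also the paper's; but your main route leaves a genuine gap. After the Markov decomposition at time $m$, the entire weight of the argument falls on the claimed bound $q(u,M')\le c\,(1+u)(1+b_n)/t^{3/2}$. This is \emph{not} a direct application of \eqref{mSSbd}: applying \eqref{mSSbd} to the window $[M'-\alpha-u,\,M'+b_n-u]$ gives an extra factor of order $1+M'\asymp\sqrt n$, and killing it requires fully exploiting $\MS'_{[0,t]}\le S'_t+\alpha$. You cannot invoke \eqref{mSMSSbd} directly either, since that lemma requires $S_n=\MS_n$ exactly, whereas here the max may exceed the endpoint by up to $\alpha$. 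You acknowledge this, sketch a decomposition over the argmax time, and write it off as ``routine but somewhat technical,'' but it is precisely the nontrivial content of the corollary and is not carried out; as submitted the proof is incomplete.

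The paper's argument, which your parenthetical ``alternative'' comes closest to, sidesteps $q(u,M')$ entirely. It reverses only the second half: with $R_k:=S_n-S_{n-k}$, the event becomes a \emph{product} of constraints on the two independent walks $(S_k,\,k\le n/2)$ and $(R_k,\,k\le n/2)$, and the constraint $\mS_{[n/2,n]}\le b_n$ (together with $\mS_n\ge-\alpha$ and $S_n>\MS_{n-1}$) translates into $\underline R_{n/2}>0$ plus $(\overline R_{n/2}-R_{n/2})/\sqrt{n/2}$ lying in a window of width $(\alpha+b_n)/\sqrt{n/2}=o(1)$ around $S_{n/2}/\sqrt{n/2}$. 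Since the conditional law of $(\overline R_{n/2}-R_{n/2})/\sqrt{n/2}$ given $\underline R_{n/2}>0$ converges (invariance principle) to the continuous law of $\overline{\mB}_1-\mB_1$, this conditional window-probability is $o_n(1)$ \emph{uniformly} in the window location, so $\hat f_n(x)=o_n(1)/\sqrt n$ uniformly in $x$, and multiplying by $\P(\mS_{n/2}\ge-\alpha)\asymp 1/\sqrt n$ finishes. This needs no new random-walk lemma beyond \eqref{mSbd} and the well-known meander limit; no weak convergence of the full path under the conditioning $\{\mS_n\ge-\alpha,\ S_n>\MS_{n-1}\}$ is invoked (you would still have to justify that, so the parenthetical route, as stated, also has a gap). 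To repair your write-up, replace the $q(u,M')$ machinery with this two-sided split and the uniform invariance-principle estimate.
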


\begin{lem}
For any $\alpha>0$ fixed and $k_n=o(n^{1/2})$, the following estimates hold uniformly for $n/2\leq k\leq n$, 
 \begin{align}\label{mSMSkbd}
\P\Big( S_{k_n}\in[k_n^{1/3},k_n],\mS_{[k_n,k]}\leq k_n^{1/6}, S_k>\overline{S}_{k-1}, \underline{S}_k\geq-\alpha\Big)=&\frac{o_n(1)}{n},\\
\P\Big(S_{k_n}\notin[k_n^{1/3},k_n], \mS_k\geq-\alpha, S_k=\MS_k\Big)\leq & \frac{c}{nk_n^{1/2}}.\label{kmSMSbd}
\end{align}
\end{lem}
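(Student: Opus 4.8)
Both estimates are first-moment bounds for the centred, finite-variance walk $(S_k)$ (which has finite exponential moments by \eqref{hyp0+}), and the natural strategy is to condition at the two relevant scales --- $k_n$ and $k-k_n$ --- and feed in the fluctuation estimates of the Appendix. Throughout I write $m:=k-k_n$; since $k_n=o(\sqrt n)$ and $k\ge n/2$ we have $m\ge n/3$ for $n$ large, so $m\asymp n$ uniformly in $k\in[n/2,n]$.

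For \eqref{kmSMSbd} I would split the event according to $\{S_{k_n}<k_n^{1/3}\}$ (on which $S_{k_n}\in[-\alpha,k_n^{1/3})$ because $S_{k_n}\ge\mS_k\ge-\alpha$) and $\{S_{k_n}>k_n\}$, and apply the Markov property at time $k_n$, introducing the shifted walk $T_i:=S_{k_n+i}-S_{k_n}$. On the first piece the pre-$k_n$ factor $\P(\mS_{k_n}\ge-\alpha,\ -\alpha\le S_{k_n}<k_n^{1/3})$ is $\le c(\alpha)k_n^{-5/6}$ by \eqref{mSSbd}, while (dropping the harmless extra requirement $T_m\ge\overline{S}_{k_n}-S_{k_n}$) the post-$k_n$ factor equals $\P(\mS_m(T)\ge-(\alpha+S_{k_n}),\ T_m=\MS_m(T))\le c(1+\alpha+k_n^{1/3})/m$ by \eqref{mSMSbd}; multiplying gives $c(\alpha)k_n^{-1/2}/m=O(1/(nk_n^{1/2}))$. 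On the second piece a Chernoff bound gives $\P(S_{k_n}>k_n)\le e^{-ck_n}$, and since $(\log n)^6=o(k_n)$ this beats every power of $n$, so even after multiplying by the crude bound $\P(S_k=\MS_k)\le c/\sqrt m$ the contribution is $o(1/(nk_n^{1/2}))$.

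For \eqref{mSMSkbd} --- the delicate estimate --- I would reverse time on $[0,k]$; writing $\tilde S$ for the reversed walk (same law, $\tilde S_k=S_k$), the event becomes: $\tilde S$ stays positive on $[0,k]$, ends within $\alpha$ of its running maximum $\tilde S_k$, has $\tilde S_k-\tilde S_m\in[k_n^{1/3},k_n]$, and $\max_{i\le m}\tilde S_i\ge\tilde S_k-k_n^{1/6}$. Setting $W:=\max_{i\le m}\tilde S_i-\tilde S_m$, these constraints force $W\in[\tfrac12 k_n^{1/3},\,k_n+\alpha]$ and confine the displacement $\tilde S_k-\tilde S_m$ of the last $k_n$ steps to a window of width $O(k_n^{1/6})$ around $W$. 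Applying the Markov property at $m$, the last-$k_n$-steps factor is bounded --- after time-reversing that short block and using \eqref{mSSbd} sharpened by the moderate-deviation decay $e^{-cw^2/k_n}$ available since the displacement is a sum of $k_n$ increments --- by $c\,k_n^{1/6}(1+W)e^{-cW^2/k_n}/k_n^{3/2}$, while the bulk factor $\P(\mS_m(\tilde S)\ge0,\ W\in[w,w+1])$ I claim is $\le c/m$ \emph{uniformly in} $w\ge0$: decomposing at the time the minimum of the twice-reversed walk is attained and using \eqref{mSbd}, \eqref{mSSbd}, \eqref{mSMSbd}, together with the fact that attaining a drawdown of size $w$ costs of order $w^2$ steps, the relevant sum telescopes to an $O(1)$ constant rather than $O(1+w)$. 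Summing over $w$ then gives $\P(E)\le c\,m^{-1}k_n^{1/6-3/2}\sum_{w\ge0}(1+w)e^{-cw^2/k_n}\asymp c\,m^{-1}k_n^{1/6-3/2}k_n\asymp c/(nk_n^{1/3})=o_n(1)/n$, uniformly in $k$.

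The main obstacle is exactly this exponent bookkeeping in \eqref{mSMSkbd}: one must simultaneously exploit the narrow window of width $k_n^{1/6}$ for the last block, the Gaussian decay $e^{-cW^2/k_n}$ in the drawdown size, and the uniform (not linear in $w$) bound $c/m$ for the bulk drawdown probability --- weakening any one of these produces only $O(k_n^{\theta}/n)$ with some $\theta>0$, which is not $o(1/n)$. A minor technical point, shared by both parts, is that at the level of these bounds the strict condition $S_k>\overline{S}_{k-1}$ may be replaced by $S_k=\MS_k$ and the Appendix estimates applied verbatim.
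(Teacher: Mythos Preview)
Your treatment of \eqref{kmSMSbd} is correct and essentially the paper's argument: Markov at time $k_n$, then split $\{S_{k_n}<k_n^{1/3}\}$ versus $\{S_{k_n}>k_n\}$. The paper integrates the low piece a bit more carefully (summing \eqref{mSSbd} over unit intervals) and handles the high piece by a fourth-moment Markov inequality rather than Chernoff, but these are cosmetic differences.

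For \eqref{mSMSkbd} your time-reversal strategy is genuinely different from the paper's, and the gap is precisely where you flag the ``main obstacle''. Two of the three ingredients you need are not deliverable from the cited estimates:
\begin{itemize}
\item Your bulk bound $\P(\underline{\tilde S}_m>0,\ W\in[w,w+1])\le c/m$ \emph{uniformly in $w$} does not follow from \eqref{mSbd}, \eqref{mSSbd}, \eqref{mSMSbd} as you claim. Decomposing at the ladder time where the drawdown is created, those estimates give a pre-factor $c(1+w)/j^{3/2}$ and a post-factor $c/(m-j)$; the sum is $c(1+w)/m$, and the ``costs $w^2$ steps'' heuristic does not remove the $(1+w)$ --- for $j$ in a window of size $\Theta(w^2)$ around $w^2$ the summand is $\Theta(1/(w^2 m))$ and the window contributes $\Theta(1/m)$, but the small-$j$ tail still contributes $\Theta((1+w)/m)$ unless you feed in something sharper (a renewal theorem for ladder heights, or a local limit theorem for the conditioned walk). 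That sharper input is true but is not in the Appendix and is not what you cited.
\item The ``\eqref{mSSbd} sharpened by the moderate-deviation decay'' giving $c(1+w)k_n^{1/6}e^{-cw^2/k_n}/k_n^{3/2}$ for the short block is likewise not available: \eqref{mSSbd} has no Gaussian factor, and combining the ballot-type $n^{-3/2}$ decay with the moderate-deviation $e^{-cw^2/k_n}$ requires a Caravenna-type local limit theorem for walks conditioned to stay positive, which the paper does not prove or cite.
\end{itemize}
If either refinement is weakened to what the Appendix actually provides, your sum over $w$ produces $k_n^{\theta}/n$ with $\theta>0$ rather than $o(1)/n$ --- exactly the failure mode you anticipate. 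So the approach is not wrong in spirit, but as written it rests on two unproved estimates.

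The paper avoids all of this by working in the original time direction. It first strips off the possibility that the low value of $\mS_{[k_n,k]}$ occurs in $[k/2,k]$ via the ready-made estimate \eqref{omSmScvg}; on the complement it decomposes over the location $j$ of the global minimum $\mS_k$, splits $j\ge k_n/2$ (handled by \eqref{SmSbd} and \eqref{mSMSbd}) versus $j<k_n/2$ (handled by a further Markov step and the fluctuation bound \eqref{mSmSbd}), and everything closes with the existing Appendix inequalities --- no local limit theorems or renewal arguments needed.
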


\begin{cor}
\begin{enumerate}[fullwidth]
\item If $\alpha\in\r_+$ is fixed, then for any $\ell\geq1$ and $A\in\r_+$, 
\begin{equation}\label{eSMSmSbd}
\E\left[e^{S_\ell}; \MS_\ell\leq A, \mS_\ell\geq-\alpha\right]\leq \frac{ce^A(1+\alpha)(1+A+\alpha)}{\ell^{3/2}}.
\end{equation}
\item For $A>0$ sufficiently large and any $\ell\geq1$,
\begin{equation}\label{eSMSMMSmSbd}
\E\left[e^{S_\ell-\MS_\ell}; \max_{1\leq k\leq \ell}(\MS_k-S_k)\leq A,\mS_\ell\geq-\alpha\right]\leq  \frac{c(1+\alpha)}{\ell^{7/6}}+ \frac{c(1+\alpha)}{\ell}e^{-c' \frac{\ell}{A^2}}.
\end{equation}
\item For any $A\geq 1$ and $\ell\geq1$,
\begin{equation}\label{eSMSMSmSbd}
\E\left[e^{S_\ell-\MS_\ell}; \MS_\ell\geq A, \mS_\ell\geq-\alpha\right]\leq \frac{c(1+\alpha)}{\ell^{1/2}A}.
\end{equation}
\end{enumerate}

\end{cor}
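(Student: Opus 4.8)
The plan is to derive all three bounds from the elementary inequalities \eqref{mSbd}--\eqref{mSMSSbd+} together with \eqref{eSMSbd}--\eqref{eSMSbd+}, by decomposing the trajectory of $(S_k)$ at the first epoch at which its running maximum is attained and then applying the Markov property. Bound \eqref{eSMSmSbd} needs no work: on $\{\MS_\ell\le A\}$ one may drop the indicator $\mathbf 1_{\{\mS_\ell\ge-\alpha\}}\le1$ and conclude from \eqref{eSMSbd+} that $\E[e^{S_\ell};\MS_\ell\le A,\mS_\ell\ge-\alpha]\le\E[e^{S_\ell};\MS_\ell\le A]\le c(1+A)e^A\ell^{-3/2}\le ce^A(1+\alpha)(1+A+\alpha)\ell^{-3/2}$. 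For \eqref{eSMSMSmSbd} I would set $\Upsilon:=\inf\{k\ge1:S_k=\MS_\ell\}$ and split according to $\Upsilon=j$, $1\le j\le\ell$: on $\{\Upsilon=j\}$ one has $S_j=\MS_j>\MS_{j-1}$, $\max_{j\le k\le\ell}S_k=S_j$, $e^{S_\ell-\MS_\ell}=e^{S_\ell-S_j}$, while $\{\MS_\ell\ge A\}=\{S_j\ge A\}$ and $\{\mS_\ell\ge-\alpha\}\subseteq\{\mS_j\ge-\alpha\}$, so by the Markov property at $j$,
\[
\E\!\left[e^{S_\ell-\MS_\ell};\MS_\ell\ge A,\mS_\ell\ge-\alpha\right]\le\sum_{j=1}^{\ell}\E\!\left[\mathbf 1_{\{\mS_j\ge-\alpha,\ S_j=\MS_j\ge A\}}\,\E_{S_j}\!\left[e^{S_{\ell-j}-S_j};\MS_{\ell-j}\le S_j\right]\right].
\]
After a shift by $S_j$ the inner expectation is $\le\E[e^{S_{\ell-j}};\MS_{\ell-j}\le0]\le c(\ell-j)^{-3/2}$ by \eqref{eSMSbd} (and equals $1$ for $j=\ell$), while the outer probability is $\le c(1+\alpha)(Aj^{1/2})^{-1}$ by \eqref{mSMSSbd0}; since $\sum_{j\le\ell-1}j^{-1/2}(\ell-j)^{-3/2}\le c\,\ell^{-1/2}$ (split at $\ell/2$), this gives the asserted $c(1+\alpha)A^{-1}\ell^{-1/2}$.

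For \eqref{eSMSMMSmSbd} I would run the same decomposition, the only new constraint being $\{\max_{1\le k\le\ell}(\MS_k-S_k)\le A\}$. When $\ell\le A^2$ this drawdown constraint may simply be discarded: repeating the previous argument while retaining the weight $e^{S_\ell-\MS_\ell}$ and summing it over the integer value of $\MS_\ell-S_\ell$ (using \eqref{mSSbd} to localise $S_{\ell-j}$ and \eqref{mSMSbd} for the record at time $j$) gives $\E[e^{S_\ell-\MS_\ell};\mS_\ell\ge-\alpha]\le c(1+\alpha)\ell^{-1}$, and for $\ell\le A^2$ this is itself $\le c'(1+\alpha)\ell^{-1}e^{-c''\ell/A^2}$. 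When $\ell>A^2$ I again cut at $\Upsilon=j$: the ``future'' factor on $[j,\ell]$ is $\le c(\ell-j)^{-3/2}$ as before, and the ``past'' factor is now
\[
\P\!\left(\mS_j\ge-\alpha,\ S_j=\MS_j,\ \max_{1\le k\le j}(\MS_k-S_k)\le A\right)\ \le\ c(1+\alpha)\,j^{-1}e^{-c'j/A^2}.
\]
Summing over $j$, the part $j\le\ell/2$ contributes $\le c(1+\alpha)\ell^{-3/2}\sum_{j\le\ell/2}j^{-1}\le c(1+\alpha)\ell^{-3/2}\log\ell\le c(1+\alpha)\ell^{-7/6}$, and the part $j>\ell/2$ contributes $\le c(1+\alpha)\ell^{-1}e^{-c'\ell/(2A^2)}\sum_{m\ge1}m^{-3/2}$, which is the exponential term.

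The genuinely new input — and the main obstacle — is the displayed ``past'' bound, namely producing the ballot-type factor $j^{-1}$ and the confinement factor $e^{-c'j/A^2}$ \emph{simultaneously along a single path}. My plan is to cut $[0,j]$ into $[0,\lfloor j/4\rfloor]$, $[\lfloor j/4\rfloor,\lfloor j/2\rfloor]$, $[\lfloor j/2\rfloor,j]$, carrying three events that live on disjoint blocks of increments and are therefore independent: on $[0,\lfloor j/4\rfloor]$ the event $\{\mS_{\lfloor j/4\rfloor}\ge-\alpha\}$, of probability $\le c(1+\alpha)j^{-1/2}$ by \eqref{mSbd}; on the middle third the event that the walk never drops more than $A$ below its running maximum, which I would bound by $ce^{-c'j/A^2}$ through a block argument (partition the middle interval into sub-blocks of length $\asymp A^2$; inside a sub-block the maximum of the first half exceeds the minimum of the second half by at least $A$ with probability bounded below for $A$ large, by the local CLT, which forces $\MS_k-S_k\ge A$ at the minimising epoch, and the sub-block events are independent); and on $[\lfloor j/2\rfloor,j]$ the event $\{S_k\le S_j\ \forall k\in[\lfloor j/2\rfloor,j]\}$, which after a time reversal reads $\{\mS_{\lceil j/2\rceil}\ge0\}$, has probability $\le c\,j^{-1/2}$ by \eqref{mSbd}, and depends only on the increments over $[\lfloor j/2\rfloor,j]$. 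Multiplying the three factors yields $c(1+\alpha)j^{-1}e^{-c'j/A^2}$. The delicate bookkeeping is to check that ``no $A$-drawdown on a sub-interval'' is detected block by block irrespective of the global running maximum (so the block events really only involve that sub-interval's increments), and that no spurious factor of $A$ appears in front of the exponential — which is precisely why I keep the drawdown constraint only on the middle third, where it is cleanly independent of the two record-type constraints; everything else reduces to summing the inequalities already recorded in this section.
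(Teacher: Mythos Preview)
Your proposal is correct and, for parts \eqref{eSMSMMSmSbd} and \eqref{eSMSMSmSbd}, follows the same architecture as the paper: decompose at $\Upsilon_S=j$, bound the post-$j$ factor via \eqref{eSMSbd}, and for \eqref{eSMSMMSmSbd} cut $[0,j]$ into three blocks carrying independently the $\{\mS\ge-\alpha\}$, drawdown, and record constraints. The only cosmetic differences are that the paper cuts at $j/3,2j/3$ (you use $j/4,j/2$), splits the outer sum at $\ell-\ell^{1/3}$ (you use $\ell/2$), and cites \cite{HuShi15} for $\P(\max_{k\le m}(\MS_k-S_k)\le A)\le e^{-c\lfloor m/A^2\rfloor}$ rather than re-deriving it by your sub-block argument.

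For part \eqref{eSMSmSbd} your route is genuinely shorter: the paper runs the $\Upsilon_S$-decomposition together with \eqref{mSSbd} and \eqref{eSMSbd}, whereas you simply drop $\{\mS_\ell\ge-\alpha\}$ and invoke \eqref{eSMSbd+}. This is legitimate --- the $(1+\alpha)(1+A+\alpha)$ factor in the stated bound dominates $(1+A)$ --- and loses nothing, since \eqref{eSMSmSbd} is only used in the paper (proof of \eqref{largegenerations}) with $\alpha$ fixed.
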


\subsection{Proofs of \eqref{mSMSbd}--\eqref{omSmScvg} \label{A3}}
We show these results one by one.

\begin{proof}[Proof of \eqref{mSMSbd}] Let $R_k:=S_n-S_{n-k}$ for $0\leq k\leq n$. Clearly, $(R_k, 0\leq k\leq n/2)$ is an independent copy of $(S_k, 0\leq k\leq n/2)$. Hence,
\[
\P_u\Big(\mS_n\geq-\alpha, S_n=\MS_n\Big)\leq \P_u\Big(\mS_{n/2}\geq-\alpha, \underline{R}_{n/2}\geq0\Big)=\P\Big(\mS_{n/2}\geq-\alpha-u\Big)\P\Big( \underline{R}_{n/2}\geq0\Big).
\]
Applying \eqref{mSbd} to both $(S_\cdot)$ and $(R_\cdot)$ yields that
\begin{equation}
\P_u\Big(\mS_n\geq-\alpha, S_n=\MS_n\Big)\leq {c(1+\alpha+u)}{n^{-1}},
\end{equation}
which is exactly \eqref{mSMSbd}.
\end{proof}

\begin{proof}[Proof of \eqref{mSMSSbd}-\eqref{mSMSSbd0}] Using the same arguments as above, as $S_n=S_{n/2}+R_{n/2}$, we get that
\begin{align*}
\P_u\Big(\mS_n\geq-\alpha, S_n=\MS_n\in [a,b]\Big)=&\P\Big(\mS_n\geq-\alpha-u,S_n=\MS_n\in [a-u, b-u]\Big)\\
\leq &\P\Big(\mS_{n/2}\geq-\alpha-u, \underline{R}_{n/2}\geq0, S_{n/2}+R_{n/2}\in [a-u,b-u]\Big)\\
=&\E\Big(\psi(S_{n/2});\mS_{n/2}\geq-\alpha-u\Big),
\end{align*}
where $\psi(x):=\P(\underline{R}_{n/2}\geq0, R_{n/2}\in [(a-u-x)_+,b-u-x])\un_{ \{ -\alpha-u\leq x\leq b-u \} }$. By \eqref{mSSbd},
\[
\psi(x)\leq \frac{c(1+b-u-x)(1+b-a)}{n^{3/2}}\un_{\{ -\alpha-u\leq x\leq b-u \}}.
\]
It follows that
\begin{align}\label{mSmRbd}
\P_u\Big(\mS_n\geq-\alpha, S_n=\MS_n\in [a,b]\Big)\leq&\frac{c(1+b-a)}{n^{3/2}}\E\Big((1+b-u-S_{n/2})_+;\mS_{n/2}\geq-\alpha-u\Big)\\
\leq & \frac{c(1+b-a)}{n^{3/2}}(1+b+\alpha)\P\Big(\mS_{n/2}\geq-\alpha-u\Big),\nonumber
\end{align}
which by \eqref{mSbd} is less than $\frac{c(1+b-a)}{n^{3/2}}\frac{(1+\alpha+u)(1+b+\alpha)}{\sqrt{n}}=O(1)\frac{(1+b-a)(1+\alpha+u)}{n^{3/2}}$ since $b\vee\alpha\leq B\sqrt{n}$. This completes the proof of \eqref{mSMSSbd}. 

Similarly for \eqref{mSMSSbd0}, we have
\begin{multline*}
\P\left(\mS_n\geq-\alpha, \MS_n=S_n\geq A\right)\leq \P\left(\mS_{n/2}\geq-\alpha, \underline{R}_{n/2}\geq0, R_{n/2}+S_{n/2}\geq A\right)\\
\leq  \P\left(\mS_{n/2}\geq-\alpha, \underline{R}_{n/2}\geq0, R_{n/2}\geq A/2\right)+\P\left(\mS_{n/2}\geq-\alpha, \underline{R}_{n/2}\geq0, S_{n/2}\geq A/2\right),
\end{multline*}
which by independence between $(S_i, i\leq n/2)$ and $(R_i, i\leq n/2)$ and \eqref{mSbd}, is bounded by
\[
\frac{c(1+\alpha)\P\left(\underline{S}_{n/2}\geq0, S_{n/2}\geq A/2\right)}{n^{1/2}}+\frac{c\P\left(\mS_{n/2}\geq-\alpha, S_{n/2}\geq A/2\right)}{n^{1/2}}.
\]
By Lemma 2.3 in \cite{AidekonShi09}, there exists a constant $c$ such that for any $\alpha\geq0$,
\[
\sup_{n\geq1}\E\left[|S_n|;{\mS_n\geq-\alpha}\right]\leq c(\alpha+1).
\]
It follows from this lemma and Markov's inequality that for any $\alpha\geq0$,
\[
\P\left(\mS_{n/2}\geq-\alpha, S_{n/2}\geq A/2\right)\leq 2\E\left[\frac{|S_{n/2}|}{A}; {\mS_{n/2}\geq-\alpha}\right]\leq \frac{c(1+\alpha)}{A}.
\]
As a consequence, 
\[
\P\left(\mS_n\geq-\alpha, \MS_n=S_n\geq A\right)\leq \frac{c(1+\alpha)}{A \sqrt{n}}.
\] \end{proof}

\begin{proof}[Proof of \eqref{mSMSSbd+}]
To obtain \eqref{mSMSSbd+}, we consider the two independent random walks $(S_k, 0\leq k\leq m)$ and $(R_k, 0\leq k\leq n-m)$. As $S_n=R_{n-m}+S_{m}$, one immediately sees that
\begin{align*}
&\P\Big(\mS_n\geq-\alpha, S_n=\MS_n, \MS_m-S_n\geq-A, \MS_m-S_m\leq a\Big)\\
\leq &\P\Big(\mS_m \geq -\alpha, \MS_m-S_m\leq a, \underline{R}_{n-m}\geq0, R_{n-m}\in [\MS_m-S_m, \MS_m-S_m+A]\Big)\\
\leq & \E\left[ \P\left(\underline{R}_{n-m}\geq0, R_{n-m}\in [\MS_m-S_m, \MS_m-S_m+A]\Big\vert (S_k, 0\leq k\leq m)\right); {\mS_m \geq -\alpha, \MS_m-S_m\leq a}\right].
\end{align*}
Applying \eqref{mSSbd} to this conditional probability implies that
\begin{align*}
\P\Big(\mS_n\geq-\alpha, S_n=\MS_n, \MS_m-S_n\geq-A, \MS_m-S_m\leq a\Big)\leq & \E\left[c\frac{(1+A)(1+A+\MS_m-S_m)}{(n-m)^{3/2}}; {\mS_m \geq -\alpha, \MS_m-S_m\leq a}\right]\\
\leq & c\frac{(1+A)(1+A+a)}{(n-m)^{3/2}}\P\left(\mS_m\geq-\alpha\right),
\end{align*}
which by \eqref{mSbd} is bounded by
\[
c\frac{(1+\alpha)(1+A)(1+A+a)}{m^{1/2}(n-m)^{3/2}}.
\]
This ends the proof of \eqref{mSMSSbd+}.
\end{proof}

\begin{proof}[Proof of \eqref{SmSbd}] Let $T_k:=S_{n-k}-S_n=-R_k$. Then $(T_k, 0\leq k\leq n)$ is a random walk distributed as $(-S_k, 0\leq k\leq n)$. It follows from \eqref{mSSbd} that
\[
\P\Big(S_n=\mS_n\geq-\alpha\Big)\leq\P\Big(\underline{T}_n\geq 0, T_n\leq \alpha\Big)\leq  \frac{c(1+\alpha)^2}{n^{3/2}}.
\]
\end{proof}

\begin{proof}[Proof of \eqref{mSeSsum}] Observe that $e^{\MS_n-S_n}\leq \sum_{1\leq i\leq n}e^{S_i-S_n}\leq ne^{\MS_n-S_n}$, then
\[
\Big\{\sum_{1\leq i\leq n}e^{S_i-S_n}\in[a,b]\Big\}\subset\{\log a-\log n\leq\overline{S}_n-S_n\leq \log b\}.
\]
We thus bound the left hand side of \eqref{mSeSsum} as follows
\begin{align*}
LHS_\eqref{mSeSsum}:=&\P\Big(\sum_{1\leq i\leq n}e^{S_i-S_n}\in[a,b],\mS_n\geq-\alpha\Big)\leq\P(\log a-\log n\leq\overline{S}_n-S_n\leq \log b, \underline{S}_n\geq-\alpha)\\
=&\sum_{k=1}^n \P\left(\MS_{k-1}<S_k=\MS_n, \log a-\log n\leq\overline{S}_n-S_n\leq \log b, \underline{S}_n\geq-\alpha\right).
\end{align*}
By Markov property at the first hitting time $\MS_n$,
\begin{align*}
LHS_\eqref{mSeSsum}\leq\sum_{k=1}^n \P\left(\underline{S}_k\geq-\alpha, S_k=\overline{S}_k\right)\P\left(\overline{S}_{n-k}\leq0, S_{n-k}\in[-\log b,\log n-\log a]\right). 
\end{align*}
By \eqref{mSMSbd} and \eqref{mSSbd}, we deduce that
\begin{align*}
LHS_\eqref{mSeSsum}&\leq \sum_{k=1}^n \frac{c(1+\alpha)}{k}\frac{(1+\log b)(1+\log b-\log a+\log n)}{(n-k+1)^{3/2}}\\
&\leq \frac{c(1+\alpha)(1+\log b)(1+\log b-\log a+\log n)}{n},
\end{align*}
which ends the proof.
\end{proof}

\begin{proof}[Proof of \eqref{mSeS}] By \eqref{mSSbd}, one sees that
\begin{align*}
LHS_\eqref{mSeS}:=\E\left(e^{S_n};\mS_n\geq-\alpha, S_n\in[a,b]\right)\leq& e^b\P\left( \underline{S}_n\geq-\alpha, S_n\in[a,b]\right)\\
\leq& ce^b {(1+\alpha)(1+b-a)(b+\alpha+1)}{n^{-3/2}}.
\end{align*}
\end{proof}

 
\begin{proof}[Proof of \eqref{mSMScvg}] Consider the two independent random walks $(S_k, 0\leq k\leq n)$ and $(R_k, 0\leq k\leq n)$. One observes that
\[
\max_{1\leq i\leq n}(\MS_i-S_i)=\max\left\{\max_{1\leq i\leq n/2}(\MS_i-S_i), \max_{1\leq i\leq n/2}(R_k-\underline{R}_{[k,n/2]}), \MS_{n/2}-S_{n/2}+\overline{R}_{n/2}-R_{n/2}\right\},
\]
and that
\[
\{\mS_n\geq-\alpha, S_n>\MS_{n-1}\}=\{\mS_{n/2}\geq -\alpha\}\cap\{\overline{R}_{n/2}-R_{n/2}\leq S_{n/2}+\alpha\}\cap\{\underline{R}_{n/2}>0\}\cap\{R_{n/2}>\MS_{n/2}-S_{n/2}\}.
\]
Let
\[
\P_\eqref{mSMScvg}:=\P\left(\mS_n\geq-\alpha, S_n>\MS_{n-1}, \max_{1\leq i\leq i}(\MS_i-S_i)\leq a\sqrt{n}+a_n, S_n\geq b\sqrt{n}+b_n\right).
\]
It is immediate that
\begin{multline*}
\P_\eqref{mSMScvg}=\P\Big(\mS_{n/2}\geq -\alpha, \underline{R}_{n/2}>0, \max_{1\leq i\leq n/2}(\MS_i-S_i)\leq a\sqrt{n}+a_n, R_{n/2}+S_{n/2}> (b\sqrt{n}+b_n)\vee \MS_{n/2},\\
 \overline{R}_{n/2}-R_{n/2}\leq (a\sqrt{n}+a_n-\MS_{n/2}+S_{n/2})\wedge  (S_{n/2}+\alpha), \max_{1\leq i\leq n/2}(R_k-\underline{R}_{[k,n/2]})\leq a\sqrt{n}+a_n \Big)
\end{multline*}
which equals to
\[
\E\left(\Psi_n^{a,b}(\frac{S_{n/2}}{\sqrt{n/2}},\frac{\MS_{n/2}-S_{n/2}}{\sqrt{n/2}}); \mS_{n/2}\geq -\alpha, \max_{1\leq i\leq n/2}(\MS_i-S_i)\leq a\sqrt{n}+a_n\right)
\]
where
\begin{multline*}
\Psi^{a,b}_n(x,h):=\P\Big( \underline{R}_{n/2}>0\Big)\times\P\Big(\frac{R_{n/2}}{\sqrt{n/2}}> (\sqrt{2}b+\frac{b_n}{\sqrt{n/2}})\vee (x+h)-x, \\
\frac{\overline{R}_{n/2}-R_{n/2}}{\sqrt{n/2}}\leq (\sqrt{2} a-h+\frac{a_n}{\sqrt{n/2}})\wedge (x+\frac{\alpha}{\sqrt{n/2}}),  \max_{1\leq i\leq n/2}(R_k-\underline{R}_{[k,n/2]})\leq a\sqrt{n}+a_n\Big\vert \underline{R}_{n/2}>0\Big).
\end{multline*}
Again by invariance principle and \eqref{c+}, as $n\rightarrow\infty$, 
\begin{multline}
\sqrt{n/2}\Psi_n^{a,b}(x,h)\longrightarrow \Psi^{a,b}(x,h)= \\
\cb_2^+\P\left(\sigma m_1> (\sqrt{2}b-x)\vee h, \sigma\overline{ m}_1-\sigma m_1\leq (\sqrt{2} a-h)\wedge x, \max_{0\leq s\leq 1}\sigma(m_s-\underline{m}_{[s,1]})\leq \sqrt{2}a\right).
\end{multline}
Because $\Psi_n^{a,b}(x,h)$ is monotone for $x\geq0$ and $h\geq0$ and $\Psi^{a,b}$ is continuous, by Dini's theorem, we have uniformly for $(x,h)\in\r_+^2$,
\[
\Psi^{a,b}_n(x,h)=\frac{\Psi^{a,b}(x,h)+o_n(1)}{\sqrt{n/2}}.
\]
As a consequence, 
\begin{align*}
&\P_\eqref{mSMScvg}=\E\left(\frac{\Psi^{a,b}(\frac{S_{n/2}}{\sqrt{n/2}},\frac{\MS_{n/2}-S_{n/2}}{\sqrt{n/2}})+o_n(1)}{\sqrt{n/2}}; \mS_{n/2}\geq -\alpha, \max_{1\leq i\leq n/2}(\MS_i-S_i)\leq a\sqrt{n}+a_n\right)\\
=& \frac{1}{\sqrt{n/2}}\E\left(\Psi^{a,b}(\frac{S_{n/2}}{\sqrt{n/2}},\frac{\MS_{n/2}-S_{n/2}}{\sqrt{n/2}})+o_n(1);  \max_{1\leq i\leq n/2}(\MS_i-S_i)\leq a\sqrt{n}+a_n\Big\vert \mS_{n/2}\geq-\alpha\right)\P\left(\mS_{n/2}\geq-\alpha\right).
\end{align*}
Once again by invariance principle and the fact that $\lim_{n\rightarrow\infty}\sqrt{n}\P(\mS_n\geq-\alpha)=\Ren(\alpha)\cb_1^+$, 
\[
\P_\eqref{mSMScvg}=\frac{\Ren(\alpha) \mathcal{C}_{a,b}}{n}+\frac{o_n(1)}{n}, 
\]
with $\mathcal{C}_{a,b}$ defined in \eqref{cab}.
\end{proof}

\begin{proof}[Proof of \eqref{sumcvg}] We turn to consider 
\[
n\E\left[g\Big(\sum_{j=1}^n e^{S_j-S_n}\Big); \mS_n\geq-\alpha, S_n>\MS_{n-1}, \max_{1\leq i\leq n}(\MS_i-S_i)\leq a\sqrt{n}+a_n, S_n\geq b\sqrt{n}+b_n\right].
\]
First, we show that in this case with high probability, $\MS_{n/2}\leq S_n-n^{-1/3}$. In fact, 
\begin{align*}
&\E\left[g\Big(\sum_{j=1}^n e^{S_j-S_n}\Big); \mS_n\geq-\alpha, \MS_{n/2}\geq S_n-n^{-1/3}, S_n>\MS_{n-1}, \max_{1\leq i\leq n}(\MS_i-S_i)\leq a\sqrt{n}+a_n, S_n\geq b\sqrt{n}+b_n\right]\\
&\leq ||g||_\infty \P\left( \mS_n\geq-\alpha, \MS_{n/2}\geq S_n-n^{-1/3}, S_n=\MS_{n}, S_{n/2}-\MS_{n/2}\leq a\sqrt{n}+a_n\right)\\
&\leq c||g||_\infty \frac{a(1+\alpha)}{n^{1+1/6}},
\end{align*}
where the last inequality follows from \eqref{mSMSSbd+}. Now, given $\MS_{n/2}\leq S_n-n^{-1/3}$, $\sum_{j=1}^n e^{S_j-S_n}$ can be replaced by $\sum_{n/2\leq j\leq n}e^{S_j-S_n}$ which is independent of $(S_k; 0\leq k\leq n/2)$. Note that on $\{\MS_{n/2}\leq S_n-n^{-1/3}\}$,
\[
\sum_{n/2\leq j\leq n}e^{S_j-S_n}\leq \sum_{j=1}^n e^{S_j-S_n}\leq ne^{-n^{1/3}}+\sum_{n/2\leq j\leq n}e^{S_j-S_n}.
\]
and that $g$ is uniformly continuous. Hence,
\[
\Big\vert g\Big(\sum_{n/2\leq j\leq n}e^{S_j-S_n}\Big)-g\Big(\sum_{j=1}^n e^{S_j-S_n}\Big)\Big\vert =o_n(1).
\]
Therefore, we deduce that
\begin{align*}
&\E\left[g\Big(\sum_{j=1}^n e^{S_j-S_n}\Big); \mS_n\geq-\alpha, S_n>\MS_{n-1}, \max_{1\leq i\leq n}(\MS_i-S_i)\leq a\sqrt{n}+a_n, S_n\geq b\sqrt{n}+b_n\right]\\
=&\E\left[g\Big(\sum_{n/2\leq j\leq n} e^{S_j-S_n}\Big); \mS_n\geq-\alpha, S_n>\MS_{n-1}, \max_{1\leq i\leq n}(\MS_i-S_i)\leq a\sqrt{n}+a_n, S_n\geq b\sqrt{n}+b_n\right]+\frac{o_n(1)}{n}
\end{align*}
Now we use $(R_{k}, 0\leq k\leq n/2)$ in replace of $(S_n-S_{n-k}, 0\leq k\leq n/2)$ and recount on the same arguments as in the proof of \eqref{mSMScvg}. Thanks to \eqref{mSjointcvg}, \eqref{sumcvg} follows immediately.
\end{proof}

\begin{proof}[Proof of \eqref{eSMSbd}] Let $\uS_n:=-S_n$. Observe that
\begin{equation*}
\E[e^{S_n}; \MS_n\leq 0]=\E[e^{-\uS_n}; \underline{\uS}_n\geq0]\leq \sum_{k=0}^\infty e^{-k}\P\left[ \underline{\uS}_n\geq0, \uS_n\in[k,k+1)\right].
\end{equation*}
Applying \eqref{mSSbd} to $\uS$ implies that
\begin{align*}
\E[e^{S_n}; \MS_n\leq 0]&\leq \sum_{k=0}^\infty e^{-k} \frac{c(1+k)}{n^{3/2}}\leq \frac{c}{n^{3/2}},
\end{align*}
since $\sum_{k\geq0}(1+k)e^{-k}<\infty$.
\end{proof}

\begin{proof}[Proof of \eqref{eSMSbd+}] By applying Markov property at the first hitting time $\MS_n$, one sees that
\begin{align}
\E[e^{S_n}; \MS_n\leq A]=& \sum_{k=0}^n \E[e^{S_n}; \MS_{k-1}<S_k\leq A, S_k\geq \MS_{[k,n]}]\nonumber\\
=&\sum_{k=0}^n \E[e^{S_k}; \MS_{k-1}<S_k\leq A]\E[e^{S_{n-k}}; \MS_{n-k}\leq0]\nonumber\\
=&\sum_{k=0}^n \E[e^{S_k}; \mS_k>0, S_k\leq A]\E[e^{S_{n-k}}; \MS_{n-k}\leq0]\label{eSeS}
\end{align}
where the last equality follows from time-reversing. Next, one observes that for any $k\geq1$, by \eqref{mSSbd},
\begin{align*}
\E[e^{S_k}; \mS_k>0, S_k\leq A]\leq & \sum_{j\in [0, A)\cap\z}e^{j+1}\P\Big(\mS_k>0, S_k\in [j,j+1]\Big)\\
\leq & \frac{c}{k^{3/2}} \sum_{j\in [0, A)\cap\z}e^{j+1}(1+j)\leq \frac{c (1+A) e^{A}}{k^{3/2}},
\end{align*}
since $\sum_{j\in [0, A\sqrt{n})\cap\z}e^{j+1}(1+j)\leq c (A+1)e^{A}$. Plugging this inequality and \eqref{eSMSbd} into \eqref{eSeS} yields that
\begin{align*}
\E[e^{S_n}; \MS_n\leq A]\leq & \sum_{k=0}^n  \frac{c (1+A) e^{A}}{(k+1)^{3/2}(n-k+1)^{3/2}}\leq \frac{c (1+A)e^{A}}{n^{3/2}},
\end{align*}
which is what we need.
\end{proof}

\begin{proof}[Proof of \eqref{eSbd}] We have,
\begin{equation*}
\E\left[\frac{e^{S_n}}{\sum_{1\leq i\leq n}e^{S_i}}\right]\leq\E\left[e^{S_n-\MS_n}\right]=\sum_{k=0}^n\E\left[e^{S_n-S_k}; S_k>\MS_{k-1}, S_k\geq \MS_{[k,n]}\right],
\end{equation*}
then by Markov property and a time reversal for $(S_j, 0\leq j\leq k)$, one gets that
\begin{align*}
\E\left[\frac{e^{S_n}}{\sum_{1\leq i\leq n}e^{S_i}}\right]\leq&\sum_{k=0}^n\P\left(S_k>\MS_{k-1}\right)\E\left[e^{S_{n-k}}; \MS_{n-k}\leq0\right]\\
\leq & \sum_{k=0}^n\P\left(\mS_k>0\right)\E\left[e^{S_{n-k}}; \MS_{n-k}\leq0\right].
\end{align*}
By \eqref{mSbd} and \eqref{eSMSbd}
\begin{equation*}
\E\left[\frac{e^{S_n}}{\sum_{1\leq i\leq n}e^{S_i}}\right]\leq  \sum_{k=0}^n \frac{c}{(k+1)^{1/2}(n-k+1)^{3/2}}\leq \frac{c}{\sqrt{n}}.
\end{equation*}
\end{proof}

\eqref{mSmSbd} follows immediately from Lemma 3 in \cite{Ritter81}.

\begin{proof}[Proof of \eqref{LDP}] For $\theta>0$ such that $\varphi(\theta):=\log \E[e^{\theta S_1}]\in (-\infty,\infty)$, $\{e^{\theta S_n-n\varphi(\theta)}; n\geq0\}$ is a non-negative martingale. The existence of $\theta$ comes from \eqref{hyp0+}. Therefore, by Doob's inequality,
\begin{align*}
\P\left(\MS_n\geq n^{1+\delta}\right)\leq& \P\left(\max_{0\leq k\leq n}e^{\theta S_k-k \varphi(\theta)}\geq e^{\theta n^{1+\delta}-n\varphi(\theta)}\right)\\
\leq& e^{-\theta n^{1+\delta}+n\varphi(\theta)}\E\left[e^{\theta S_n-n\varphi(\theta)}\right]= e^{-\theta n^{1+\delta}+n\varphi(\theta)}.
\end{align*}
For $n$ large enough, $\theta n^{1+\delta}-n\varphi(\theta)\geq \theta n^{1+\delta}/2$. Hence, for any $n\geq1$,
\[
\P\left(\MS_n\geq n^{1+\delta}\right)\leq c(\delta,\theta)e^{-\theta n^{1+\delta}/2}.
\]
\eqref{Last1} can be treated similarly choosing $\theta$ properly as a, decreasing to zero,   function of $n$.
\end{proof}

\begin{proof}[Proof of \eqref{omSmScvg}] Let 
\[
\P_\eqref{omSmScvg}:=\P\Big( \mS_{[n/2, n]}\leq b_n,\mS_n\geq-\alpha, S_n>\MS_{n-1}\Big)
\]
Use again the notation $R_k=S_n-S_{n-k}$, we observe that
\begin{align*}
\P_\eqref{omSmScvg}&=\P\Big(\mS_{n/2}\geq-\alpha, \overline{R}_{n/2}-R_{n/2}\in[(S_{n/2}-b_n)_+, S_{n/2}+\alpha], \underline{R}_{n/2}>0, R_{n/2}> \MS_{n/2}-S_{n/2}\Big)\\
&\leq \E\Big[\mS_{n/2}\geq-\alpha, \hat{f}_n(\frac{S_{n/2}}{\sqrt{n/2}})\Big],
\end{align*}
where
\[
\hat{f}_n(x):=\P\left(\frac{\overline{R}_{n/2}-R_{n/2}}{\sqrt{n/2}}\in\Big[(x-\frac{b_n}{\sqrt{n/2}}), x+\frac{\alpha}{\sqrt{n/2}}\Big], \underline{R}_{n/2}>0\right).
\]
By invariance principle, $\P(\frac{\overline{R}_{n/2}-R_{n/2}}{\sqrt{n/2}}\leq x\vert \underline{R}_{n/2}>0)$ converges to $\P(\overline{\mB}_1-\mB_1\leq x)$ uniformly for $x\in\r_+$. Consequently,
\[
\hat{f}_n(x)= \frac{o_n(1)}{\sqrt{n}},\textrm{ uniformly for } x\in\r_+,
\]
so 
\[
\P_\eqref{omSmScvg} \leq \frac{o_n(1)}{\sqrt{n}} \P\Big(\mS_{n/2}\geq-\alpha\Big)= \frac{o_n(1)}{n}.
\]
\end{proof}

\begin{proof}[Proof of \eqref{mSMSkbd}]
{We need to  obtain an upper bound for $\P\Big( S_{k_n}\in[k_n^{1/3},k_n],\mS_{[k_n,k]}\leq k_n^{1/6}, S_k>\overline{S}_{k-1}, \underline{S}_k\geq-\alpha\Big)$. One sees that by \eqref{omSmScvg}, for any $n_0<k\leq n$,
\begin{align*}
&\P\Big( S_{k_n}\in[k_n^{1/3},k_n],\mS_{[k_n,k]}\leq k_n^{1/6}, S_k>\overline{S}_{k-1}, \underline{S}_k\geq-\alpha\Big)\\
\leq&\P\Big(\min_{k/2< j\leq k}S_j\leq k_n^{1/6}, S_k>\overline{S}_{k-1}, \underline{S}_k\geq-\alpha\Big)+\P\Big(\mS_{[k_n,k/2]}\leq k_n^{1/6},\mS_{(k/2,k]}>k_n^{1/6}, S_k>\overline{S}_{k-1}, \underline{S}_k\geq-\alpha\Big)\\
=&\frac{o_n(1)}{n}+\P\Big(\mS_{[k_n,k/2]}\leq k_n^{1/6}<\mS_{(k/2,k]}, S_k>\overline{S}_{k-1}, \underline{S}_k\geq-\alpha\Big).
\end{align*}
By \eqref{xi1}, to conclude that $\xi_1=o_n(1)$, it suffices to show that uniformly on $k\in [n_0,n]\cap\z$,
\[
\P\Big(\mS_{[k_n,k/2]}\leq k_n^{1/6}<\mS_{(k/2,k]}, S_k>\overline{S}_{k-1}, \underline{S}_k\geq-\alpha\Big)=\frac{o_n(1)}{n}.
\]
Considering the first  hitting time of $\mS_k$ which should be before $k/2$, one has 
\begin{align}
&\P\Big(\mS_{[k_n,k/2]}\leq k_n^{1/6}<\mS_{(k/2,k]}, S_k>\overline{S}_{k-1}, \underline{S}_k\geq-\alpha\Big)\nonumber\\
\leq & \sum_{0\leq j\leq k/2}\P\Big(\mS_{[k_n,k/2]}\leq k_n^{1/6}, S_k>\overline{S}_{k-1}, \mS_{j-1}>S_j=\underline{S}_k\geq-\alpha\Big).\label{xi1P}
\end{align}
For any $k_n/2\leq j\leq k/2$ and $n_0\leq k\leq n$, by Markov property at time $j$,
\begin{align*}
&\sum_{k_n/2\leq j\leq k/2}\P\Big(\mS_{[k_n,k/2]}\leq k_n^{1/6}, S_k>\overline{S}_{k-1}, \mS_{j-1}>S_j=\underline{S}_k\geq-\alpha\Big)\\
\leq&\sum_{k_n/2\leq j\leq k/2}\P\Big(\mS_{j-1}>S_j\geq-\alpha\Big)\P\Big(\mS_{k-j}\geq0, S_{k-j}=\MS_{k-j}\Big)
\end{align*}
which by \eqref{SmSbd} and \eqref{mSMSbd} is bounded by  $\sum_{k_n/2\leq j\leq k/2}\frac{c(1+\alpha)^2}{j^{3/2}n}=\frac{o_n(1)}{n}$. Also when $j\leq k_n/2$, applying Markov property at time $2k/3$ then at time $j$ implies that
\begin{align*}
&\P\Big(\mS_{[k_n,k/2]}\leq k_n^{1/6}, S_k>\overline{S}_{k-1}, \mS_{j-1}>S_j=\underline{S}_k\geq-\alpha\Big)\\
\leq & \P\Big(\mS_{[k_n,k/2]}\leq k_n^{1/6}, \mS_{j-1}>S_j=\underline{S}_{2k/3}\geq-\alpha\Big)\P\Big(S_{k/3}>\MS_{k/3-1}\Big)\\
\leq&\P\Big(\mS_{j-1}>S_j\geq-\alpha\Big)\P\Big(\mS_{7k/12}\geq0, \mS_{[k_n/2,k/2]}\leq k_n^{1/6}+\alpha\Big)\P\Big(S_{k/3}>\MS_{k/3-1}\Big),
\end{align*}
where for the random walk from the time $j$ to $2k/3$, we use the fact that $\{\mS_{[k_n-j,k/2-j]}\leq k_n^{1/6}+\alpha, \mS_{2k/3-j}\geq0\}\subset\{\mS_{7k/12}\geq0, \mS_{[k_n/2,k/2]}\leq k_n^{1/6}+\alpha\}$ as $j\leq k_n/2$.

By time reversal together with \eqref{mSbd}, $\P\Big(\MS_{k/3-1}<S_{k/3}\Big)\leq c/\sqrt{k/3}$. Also, in view of \eqref{SmSbd} and \eqref{mSmSbd}, for any $n_0\leq k\leq n$,
\[
\sum_{j\leq k_n/2}\P\Big(\mS_{[k_n,k/2]}\leq k_n^{1/6}, S_k>\overline{S}_{k-1}, \mS_{j-1}>S_j=\underline{S}_k\geq-\alpha\Big)\leq \sum_{j\leq k_n/2}\frac{c(1+\alpha)^2(1+\alpha+k_n^{1/6})}{(j+1)^{3/2}nk_n^{1/2}}=\frac{o_n(1)}{n}.
\]}
\end{proof}
\begin{proof}[Proof of \eqref{kmSMSbd}]
{Applying Markov property at time $k_n$ yields that
\[
\P_\eqref{kmSMSbd}=\E\Big(\P_{S_{k_n}}\Big(\mS_{k-k_n}\geq-\alpha, S_{k-k_n}=\MS_{k-k_n}\Big),\mS_{k_n}\geq -\alpha, S_{k_n}\notin[k_n^{1/3},k_n]\Big),
\]
and recall that $\P_u$ is for the distribution of the random walk starting from $u$.
 By \eqref{mSMSbd}, $\P_{S_{k_n}}\Big(\mS_{k-k_n}\geq-\alpha, S_{k-k_n}=\MS_{k-k_n}\Big)\leq c(1+\alpha+S_{k_n})/(k-k_n)$. This  yields
\[
\P_\eqref{kmSMSbd}\leq\E\Big[\frac{(1+\alpha+S_{k_n})}{k-k_n}; \mS_{k_n}\geq -\alpha, S_{k_n}\notin[k_n^{1/3},k_n]\Big].
\]
We now split the above expectation into two terms, first by Markov's inequality,
\begin{align*}
\E\Big[\frac{(1+\alpha+S_{k_n})}{k-k_n}; \mS_{k_n}\geq -\alpha, S_{k_n}\geq k_n\Big]&\leq \frac{c}{n(1+\alpha+k_n)^3}\E\Big[(1+\alpha+S_{k_n})^4\Big] \leq \frac{c}{n k_n},
\end{align*}
and also by \eqref{mSSbd}
\begin{align*}
&\E\Big[\frac{(1+\alpha+S_{k_n})}{k-k_n}; \mS_{k_n}\geq -\alpha, S_{k_n}\leq k_n^{1/3}\Big]\leq \sum_{l=-\alpha}^{k_n^{1/3}}\E\Big[\frac{(1+\alpha+S_{k_n})}{k-k_n}; \mS_{k_n}\geq -\alpha, S_{k_n}\in[l,l+1]\Big]\\
\leq &\sum_{l=-\alpha}^{k_n^{1/3}}\frac{c(1+\alpha+l)}{n}\P\Big(\mS_{k_n}\geq -\alpha, S_{k_n}\in[l,l+1]\Big)\leq \frac{c}{nk_n^{1/2}}.
\end{align*}
These two inequalities conclude \eqref{kmSMSbd}.}
\end{proof}
\begin{proof}[Proof of \eqref{eSMSmSbd}]
Arguing over the first time hitting $\MS_\ell$ then by Markov property, we have
\begin{align*}
\E_\eqref{eSMSmSbd}\leq&\sum_{j=1}^\ell\E\left[e^{S_j}; S_j\leq A, \mS_j\geq-\alpha\right]\E\left[e^{S_{\ell-j}}\un_{\{ \MS_{\ell-j}\leq 0\} }\right]\\
 \leq& \sum_{j=1}^\ell\sum_{-\alpha\leq k\leq A}e^{k+1}\P\left[S_j\in[k,k+1], \mS_j\geq-\alpha\right]\E\left[e^{S_{\ell-j}}\un_{\{ \MS_{\ell-j}\leq 0\} }\right].
\end{align*}
By \eqref{eSMSbd} and \eqref{mSSbd}, we have
\[
\E_\eqref{eSMSmSbd}\leq \sum_{j=1}^\ell\sum_{-\alpha\leq k\leq A}e^{k+1} \frac{c(1+\alpha)(1+k+\alpha)}{j^{3/2}(\ell-j+1)^{3/2}}\leq  \frac{ce^A(1+\alpha)(1+A+\alpha)}{\ell^{3/2}}.
\]
\end{proof}
\begin{proof}[Proof of \eqref{eSMSMMSmSbd}]
Arguing over the value of $\Upsilon_S$ implies that
\begin{align*}
\E_\eqref{eSMSMMSmSbd}=&\E\left[e^{S_\ell-\MS_\ell}; \max_{1\leq k\leq \ell}(\MS_k-S_k)\leq A,\mS_\ell\geq-\alpha\right]\\
= & \sum_{j=1}^\ell \E\left[e^{S_\ell-\MS_\ell}; \MS_{j-1}<S_j=\MS_\ell, \max_{1\leq k\leq \ell}(\MS_k-S_k)\leq A,\mS_\ell\geq-\alpha\right],
\end{align*}
which by Markov property at time $j$, is bounded by
\[
\sum_{j=1}^\ell \P\left[\MS_{j-1}<S_j, \max_{1\leq k\leq j}(\MS_k-S_k)\leq A,\mS_j\geq-\alpha\right]\E\left[e^{S_{\ell-j}}\un_{\{ \MS_{\ell-j}\leq0\}}\right].
\]
By \eqref{eSMSbd},
\begin{align}
\E_\eqref{eSMSMMSmSbd}\leq&\sum_{j=1}^\ell \P\left[\MS_{j-1}<S_j, \max_{1\leq k\leq j}(\MS_k-S_k)\leq A,\mS_j\geq-\alpha\right]\frac{c}{(\ell-j+1)^{3/2}}\nonumber\\
=:& \sum_{j=1}^\ell r_{\ell,j}\label{bdRI}.
\end{align}
We split this sum into two parts: $\sum_{j=1}^{\ell-\ell^{1/3}}$ and $\sum_{\ell-\ell^{1/3}\leq j\leq \ell}$. For the first sum,  by \eqref{mSMSbd}, one sees that
\begin{align*}
\sum_{j=1}^{\ell-\ell^{1/3}}r_{\ell,j}&\leq \sum_{j=1}^{\ell-\ell^{1/3}} \P\left(\mS_j\geq-\alpha, S_j=\MS_j\right)\frac{c}{(\ell-j+1)^{3/2}}\\
&\leq \sum_{j=1}^{\ell-\ell^{1/3}} \frac{c(1+\alpha)}{j(\ell-j+1)^{3/2}}\leq \frac{c(1+\alpha)}{\ell^{7/6}}.
\end{align*}
For the second sum, by Markov property at $j/3$ and $2j/3$ then by reversing time,
\begin{align*}
r_{\ell,j}\leq & \P\left(\mS_{j/3}\geq-\alpha\right)\P\left(\max_{k\leq j/3}(\MS_k-S_k)\leq A\right)\P\left(S_{j/3}=\MS_{j/3}\right)\frac{c}{(\ell-j+1)^{3/2}}\\
= & \P\left(\mS_{j/3}\geq-\alpha\right)\P\left(\max_{k\leq j/3}(\MS_k-S_k)\leq A\right)\P\left(\mS_{j/3}\geq0\right)\frac{c}{(\ell-j+1)^{3/2}}.
\end{align*}
It is known by \cite{HuShi15} that for sufficiently large $\lambda>0$, $\P(\max_{1\leq k\leq j}(\MS_k-S_k)\leq \lambda)\leq e^{-c\lfloor \frac{j}{\lfloor \lambda^2\rfloor}\rfloor}$. This together with \eqref{mSbd} implies that for any $n$ large enough,
\begin{align*}
\sum_{\ell-\ell^{1/3}\leq j\leq \ell}r_{\ell,j}&\leq \sum_{\ell-\ell^{1/3}\leq j\leq \ell} \frac{c'(1+\alpha)}{j(\ell-j+1)^{3/2}}e^{-c\frac{j}{A^2}} \leq  \frac{c'(1+\alpha)}{\ell}e^{-c \frac{\ell}{2A^2}},
\end{align*}
which ends the proof.
\end{proof}

\noindent \\ \textit{Proof of} \eqref{eSMSMSmSbd}
By Markov property at time $\Upsilon_S=j$,
\begin{align*}
\E_\eqref{eSMSMSmSbd}:=&\E\left[e^{S_\ell-\MS_\ell};  \MS_\ell\geq A, \mS_\ell\geq-\alpha\right]
= \sum_{j=1}^\ell\P\left(\mS_j\geq-\alpha, \MS_j=S_j\geq A\right)\E\left[e^{S_{\ell-j}}\un_{\{ \MS_{\ell-j}\leq0\} }\right],
\end{align*}
which by \eqref{eSMSbd}, is bounded by $\sum_{j=1}^\ell\P\left(\mS_j\geq-\alpha, \MS_j=S_j\geq A\right) {c}{(\ell-j+1)^{-3/2}}$.
Then by \eqref{mSMSSbd0}, 
$\E_\eqref{eSMSMSmSbd}\leq \sum_{j=1}^\ell\frac{c(1+\alpha)}{ j^{1/2}(\ell-j+1)^{3/2}A}
\leq  \frac{c(1+\alpha)}{\ell^{1/2}A}$. \hfill   $\square$

\noindent \\
Acknowledgment : We are grateful to an anonymous referee for comments that were very useful to help improve the presentation of the paper.

\bibliographystyle{plain}
\bibliography{thbiblio}

\begin{thebibliography}{10}

\bibitem{Afa93}
V.~I. Afanasyev.
\newblock A limit theorem for a critical branching process in a random
  environment.
\newblock {\em Diskret. Mat.}, 5:45--58, 1993.

\bibitem{Aid13}
E.~Aid\'ekon.
\newblock Convergence in law of the minimum of a branching random walk.
\newblock {\em Ann. probab.}, 41:1362--1426, 2013.

\bibitem{AidDer}
E.~Aid\'ekon and L.~de~Raph\'elis.
\newblock {\em Preprint}.

\bibitem{AidekonShi09}
E.~Aidekon and Z.~Shi.
\newblock The seneta-heyde scaling for the branching random walk.
\newblock {\em The Annals of Probability}, 42(3):959--993, 2014.

\bibitem{AndDeb1}
P.~Andreoletti and P.~Debs.
\newblock The number of generations entirely visited for recurrent random walks
  on random environment.
\newblock {\em J. Theoret. Probab.}, 27:\ 518 -- 538, 2014.

\bibitem{AndDeb2}
P.~Andreoletti and P.~Debs.
\newblock Spread of visited sites of a random walk along the generations of a
  branching process.
\newblock {\em Electronic Journal of Probab.}, 19:1--22, 2014.

\bibitem{BertoinDoney}
J.~Bertoin and R.A. Doney.
\newblock On conditioning a random walk to stay non-negative.
\newblock {\em Ann. probab.}, 33:167--178, 1997.

\bibitem{BianeYor}
P.~Biane and M.~Yor.
\newblock Quelques pr\'ecisions sur le m\'eandre brownien.
\newblock {\em Bulletin des sciences math\'ematiques,}, 112(1):101--109, 1988.

\bibitem{BigKyp}
J.~D. Biggins and A.E. Kyprianou.
\newblock Measure change in multitype branching.
\newblock {\em Adv. Appl. Probab.}, 36:544--581, 2004.

\bibitem{BigginsKyprianou}
J.D. Biggins and A.E. Kyprianou.
\newblock Senata-heyde norming in the branching random walk.
\newblock {\em Ann. Probab.}, \textbf{25}:\ 337--360, 1997.

\bibitem{Che15}
X.~Chen.
\newblock A necessary and sufficient condition for the non- a necessary and
  sufficient condition for the non-trivial limit of the derivative martingale
  in a branching random walk.
\newblock {\em Adv. Appl. Probab.}, To be published, 2015.

\bibitem{Faraud}
G.~Faraud.
\newblock A central limit theorem for random walk in a random environment on
  marked galton-watson trees.
\newblock {\em Electronic Journal of Probability}, \textbf{16}(6):174--215,
  2011.

\bibitem{HuShi10b}
G.~Faraud, Y.~Hu, and Z.~Shi.
\newblock Almost sure convergence for stochastically biased random walks on
  trees.
\newblock {\em to appear in Probab. Theory Relat. Fields}, 2011.

\bibitem{Feller}
W.~Feller.
\newblock {\em An Introduction to Probability Theory, Vol. 1}.
\newblock Wiley, New York, NY, third edition, 1968.

\bibitem{HuShi10a}
Y.~Hu and Z.~Shi.
\newblock Slow movement of recurrent random walk in random environment on a
  regular tree.
\newblock {\em Ann. Probab.}, \textbf{35}:1978--1997, 2007.

\bibitem{HuShi10}
Y.~Hu and Z.~Shi.
\newblock A subdiffusive behavior of recurrent random walk in random
  environment on a regular tree.
\newblock {\em Probab. Theory Related Fields}, \textbf{138}:521--549, 2007.

\bibitem{HuShi15b}
Y.~Hu and Z.~Shi.
\newblock The potential energy of biased random walks on trees.
\newblock {\em arXiv : http://arxiv.org/abs/1403.6799}, 2015.

\bibitem{HuShi15}
Y.~Hu and Z.~Shi.
\newblock The slow regime of randomly biased walks on trees.
\newblock {\em arxiXiv :http://arxiv.org/abs/1501.07700}, 2015.

\bibitem{Imhof}
J.-P. Imhof.
\newblock Density factorizations for brownian motion, meander and the
  three-dimensional bessel process, and applications.
\newblock {\em J. Appl. Probab.}, 21(3):500--510, 1984.

\bibitem{Lyons}
R.~Lyons.
\newblock Random walks and percolation on trees.
\newblock {\em Ann. Probab.}, 18:931--958, 1990.

\bibitem{Lyons2}
R.~Lyons.
\newblock Random walks, capacity and percolation on trees.
\newblock {\em Ann. Probab.}, 20:2043--2088, 1992.

\bibitem{LyonPema}
Russell Lyons and Robin Pemantle.
\newblock Random walk in a random environment and first-passage percolation on
  trees.
\newblock {\em Annals of Probability}, 20:125--136, 1992.

\bibitem{Madaule}
T.~Madaule.
\newblock First order transition for the branching random walk at the critical
  parameter.
\newblock {\em arXiv:http://arxiv.org/abs/1206.3835}, 2012.

\bibitem{MenPet}
M.V. Menshikov and D.~Petritis.
\newblock On random walks in random environment on trees and their relationship
  with multiplicative chaos.
\newblock {\em Math. Comput. Sci., 415-422}, 2002.

\bibitem{Ritter81}
G.A. Ritter.
\newblock Growth of random walks conditioned to stay positive.
\newblock {\em Ann. probab.}, 9:699--704, 1981.

\bibitem{Sinai}
Ya.~G. Sinai.
\newblock The limit behaviour of a one-dimensional random walk in a random
  medium.
\newblock {\em Theory Probab. Appl.}, \textbf{27}(2):\ 256--268, 1982.

\end{thebibliography}
\end{document}